\numberwithin{equation}{section}
\newcommand\cZt{\mathcal{X}}
\newcommand{\Cut}{\operatorname{Cut}}   
\newcommand{\ribbongraph}{G}
\newcommand{\poles}{p}
\newcommand{\Graph}{\Gamma}
\newcommand{\Separating}{\Delta}
\newcommand{\cyl}{\mathit{cyl}}
\newcommand{\separatingQ}{\chi_{\Graph}}
\newcommand{\carea}{\operatorname{c_{\mathit{area}}}}
\newcommand{\zeroes}{\ell} 
\newcommand{\dprinc}{d} 
\newcommand{\dVolMV}{d\!\Vol}
\newcommand{\elow}{\varepsilon_{\mathit{below}}(g)}
\newcommand{\eup}{\varepsilon_{\mathit{above}}(g)}
\newlength{\halfbls}\setlength{\halfbls}{.5\baselineskip}
\newcommand{\QuadStrat}{\operatorname{\mathcal{Q}}}
\newcommand{\cD}{\mathcal{D}}
\newcommand{\cI}{\mathcal{I}}
\newcommand{\cG}{\mathcal{G}}
\newcommand{\cH}{\mathcal{H}}
\newcommand{\cL}{\mathcal{L}}
\newcommand{\cM}{\mathcal{M}}
\newcommand{\cML}{\mathcal{ML}}
\newcommand{\cN}{\mathcal{N}}
\newcommand{\cO}{\mathcal{P}}
\newcommand{\cQ}{\mathcal{Q}}
\newcommand{\cR}{\mathcal{R}}
\newcommand{\cS}{\mathcal{S}}
\newcommand{\cY}{\mathcal{Y}}
\newcommand{\cZ}{\mathcal{Z}}
\newcommand{\cST}{\mathcal{ST}}
\newcommand{\E}{{\mathbb E}}
\newcommand{\N}{{\mathbb N}}
\newcommand{\Z}{{\mathbb Z}}
\newcommand{\Proj}{{\mathbb P}}
\newcommand{\Q}{{\mathbb Q}}
\newcommand{\R}{{\mathbb R}}
\newcommand{\C}{{\mathbb C}}
\newcommand{\Area}{\operatorname{Area}}
\newcommand{\Vol}{\operatorname{Vol}}
\newcommand{\Aut}{\operatorname{Aut}}
\newcommand{\CP}{{\mathbb C}\!\operatorname{P}^1}
\newcommand{\card}{\operatorname{card}}
\newcommand{\Mod}{\operatorname{Mod}}
\newcommand{\Stab}{\operatorname{Stab}}
\newcommand{\Sym}{\operatorname{Sym}}
\newcommand{\SL}{\operatorname{SL}(2,{\mathbb R})}
\newcommand{\SLZ}{\operatorname{SL}(2,{\mathbb Z})}
\newcommand{\GL}{\operatorname{GL}(2,\mathbb{R})}
\renewcommand{\epsilon}{\varepsilon}
\renewcommand{\Re}{\operatorname{Re}}
\renewcommand{\Im}{\operatorname{Im}}
\newtheorem{Theorem}{Theorem}[section]
\newtheorem{Proposition}[Theorem]{Proposition}
\newtheorem{Lemma}[Theorem]{Lemma}
\newtheorem{Corollary}[Theorem]{Corollary}
\newtheorem{Conjecture}[Theorem]{Conjecture}
\newtheorem*{NNTheorem}{Theorem}
\newtheorem*{NNLemma}{Lemma}
\theoremstyle{definition}
\newtheorem{Definition}[Theorem]{Definition}
\theoremstyle{remark}
\newtheorem{Example}[Theorem]{Example}
\newtheorem{Remark}[Theorem]{Remark}
\title[Masur--Veech volumes, simple closed geodesics, intersection numbers]
{Masur-Veech volumes, frequencies of simple closed geodesics and intersection numbers of moduli spaces of curves
}
\author[V.~Delecroix]{Vincent Delecroix}
\address{
LaBRI,
Domaine universitaire,
351 cours de la Lib\'eration, 33405 Talence, FRANCE
}
\email{20100.delecroix@gmail.com}
\author[\'E.~Goujard]{\'Elise Goujard}
\thanks{Research of E.~Goujard was partially supported by
PEPS}
\address{
Institut de Math\'ematiques de Bordeaux,
Universit\'e de Bordeaux,
351 cours de la Lib\'eration, 33405 Talence, FRANCE
}
\email{elise.goujard@gmail.com}
\author[P.~G.~Zograf]{Peter~Zograf}
\thanks{The results of Section~\ref{s:2:correlators}
were obtained at Saint Petersburg State University under support of the RSF grant 19-71-30002.
The work of P.~Zograf was partially supported by the Government of Russian Federation megagrant 14.W03.31.0030.}
\address{
St.~Petersburg Department, Steklov Math. Institute, Fontanka 27,
St. Petersburg 191023, and Chebyshev Laboratory,
St. Petersburg State University, 14th
Line V.O. 29B, St.Petersburg 199178 Russia}
\email{zograf@pdmi.ras.ru}
\author[A.~Zorich]{Anton Zorich}
\thanks{This material is based upon work supported by the NSF Grant DMS-1440140 while
part of the authors were in residence at the MSRI during
the Fall 2019 semester. It is also supported by the grant
ANR-19-CE40-0021.
}
\address{
Center for Advanced Studies, Skoltech;
Institut de Math\'ematiques de Jussieu --
Paris Rive Gauche,
Case 7012,
8 Place Aur\'elie Nemours,
75205 PARIS Cedex 13, France}
\email{anton.zorich@gmail.com}
\begin{document}

\begin{abstract}
We express the Masur--Veech volume and
the area Siegel--Veech constant of
the moduli space $\cQ_{g,n}$ of genus $g$ meromorphic quadratic
differentials with at most $n$ simple poles and no other poles as
polynomials in the intersection numbers
$\int_{\overline{\cM}_{g'\!,n'}} \psi_1^{d_1}\dots\psi_{n'}^{d_{n'}}$
with explicit rational coefficients, where $g'<g$ and $n'<2g+n$.
The formulae obtained in this article are derived from
lattice point counts involving the Kontsevich volume
polynomials $N_{g'\!,n'}(b_1, \ldots, b_{n'})$ that also appear
in Mirzakhani's recursion for the Weil--Petersson volumes
of the moduli spaces $\cM_{g'\!,n'}(b_1,\dots,b_{n'})$ of
bordered hyperbolic surfaces with geodesic boundaries
of lengths $b_1,\dots,b_{n'}$.

A similar formula for the Masur--Veech volume (though
without explicit evaluation) was obtained earlier by
M.~Mirzakhani through a completely different approach. We
prove a further result: the density of the mapping class
group orbit $\Mod_{g,n}\cdot\gamma$ of any simple closed
multicurve $\gamma$ inside the ambient set $\cML_{g,n}(\Z)$
of integral measured laminations computed by Mirzakhani,
coincides with the density of square-tiled surfaces having
horizontal cylinder decomposition associated to $\gamma$
among all square-tiled surfaces in $\cQ_{g,n}$.

We study the resulting densities (or, equivalently,
volume contributions) in more detail in the special case
when $n=0$. In particular, we compute explicitly the
asymptotic frequencies of separating and non-separating
simple closed geodesics on a closed hyperbolic surface of
genus $g$ for all small genera $g$ and we show that in
large genera the separating closed geodesics are
$\sqrt{\frac{2}{3\pi g}}\cdot\frac{1}{4^g}$
times less frequent.
\end{abstract}

\maketitle

\tableofcontents

\section{Introduction and statements of main theorems}
\label{s:intro}

\subsection{Masur--Veech volume of the moduli space of
quadratic differentials}
\label{ss:MV:volume}
Let $g,n$ be non-negative integers with $2g+n > 2$.
Consider the moduli space $\cM_{g,n}$ of complex curves of
genus $g$ with $n$ distinct labeled marked points. The
total space $\cQ_{g,n}$ of the cotangent bundle over
$\cM_{g,n}$ can be identified with the moduli space of
pairs $(C,q)$, where $C\in\cM_{g,n}$ is a smooth complex
curve with $n$ (labelled) marked points and $q$ is a meromorphic quadratic differential on
$C$ with at most simple poles at the marked points and no other
poles. In the case $n=0$ the quadratic differential $q$ is
holomorphic. Thus,
as any total space of the cotangent bundle,
the \textit{moduli space of quadratic
differentials} $\cQ_{g,n}$ is endowed with the canonical
(real) symplectic structure. The induced volume element
on $\cQ_{g,n}$ is called the \textit{Masur--Veech volume
element}. In Section~\ref{ss:background:strata}
we provide an alternative, more common definition
of the Masur--Veech volume element and explain why the
two definitions are equivalent
up to a global normalization constant.

A non-zero differential $q$ in $\cQ_{g,n}$ defines a flat
metric $|q|$ on the complex curve $C$. The resulting metric
has conical singularities at zeroes and simple poles of
$q$. The total area of $(C,q)$
$$
\Area(C,q)=\int_C |q|
$$
is positive and finite. For any real $a > 0$, consider the following
subset in $\cQ_{g,n}$:
\begin{equation}
\label{eq:ball:of:radius:a:in:Q:g:n}
\cQ^{\Area\le a}_{g,n} := \left\{(C,q)\in\cQ_{g,n}\,|\, \Area(C,q) \le a\right\}\,.
\end{equation}
Since $\Area(C,q)$ is a norm in each fiber of the bundle
$\cQ_{g,n} \to \cM_{g,n}$, the set $\cQ^{\Area \le
a}_{g,n}$ is a ball bundle over $\cM_{g,n}$. In particular,
it is non-compact. However, by independent results of
H.~Masur~\cite{Masur:82} and
W.~Veech~\cite{Veech:Gauss:measures}, the total mass of
$\cQ^{\Area\le a}_{g,n}$ with respect to the Masur--Veech
volume element is finite. One of the objectives of this
article is to provide a formula for this total mass. Our
construction relies on square-tiled surface counting that
we introduce next.

\subsection{Square-tiled surfaces, simple
closed multicurves and stable graphs}
\label{ss:Square:tiled:surfaces:and:associated:multicurves}

We have already mentioned that a non-zero meromorphic
quadratic differential $q$ on a complex curve $C$ defines a
flat metric with conical singularities. One can construct a
discrete collection of quadratic differentials of this kind
by assembling together identical flat squares in the
following way. Take a finite set of copies of the oriented
$1/2 \times 1/2$-square for which two opposite sides are
chosen to be horizontal and the remaining two sides are
declared to be vertical. Identify pairs of sides of the
squares by isometries in such way that horizontal sides are
glued to horizontal sides and vertical sides to vertical.
We get a topological surface $S$ without boundary. We
consider only those surfaces obtained in this way which are
connected and oriented. The form $dz^2$ on each square is
compatible with the gluing and endows $S$ with a complex
structure and with a non-zero quadratic differential $q$
with at most simple poles. The total area $\Area(S,q)$ is
$\frac{1}{4}$ times the number of squares. We call such a
surface a \textit{square-tiled surface}.

\begin{figure}[htb]
   %
   %
\includegraphics{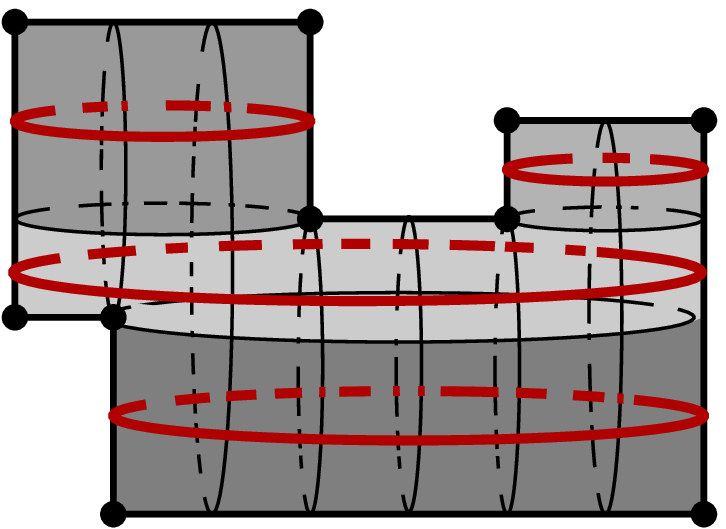}
\includegraphics{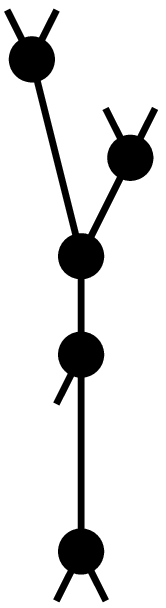}

\begin{picture}(0,0)(175,10)
\put(2,-13){$2\gamma_1$}
\put(113,-23){$\gamma_2$}
\put(1.5,-46){$\phantom{2}\gamma_3$}
\put(23,-77){$2\gamma_4$}
\end{picture}

\includegraphics{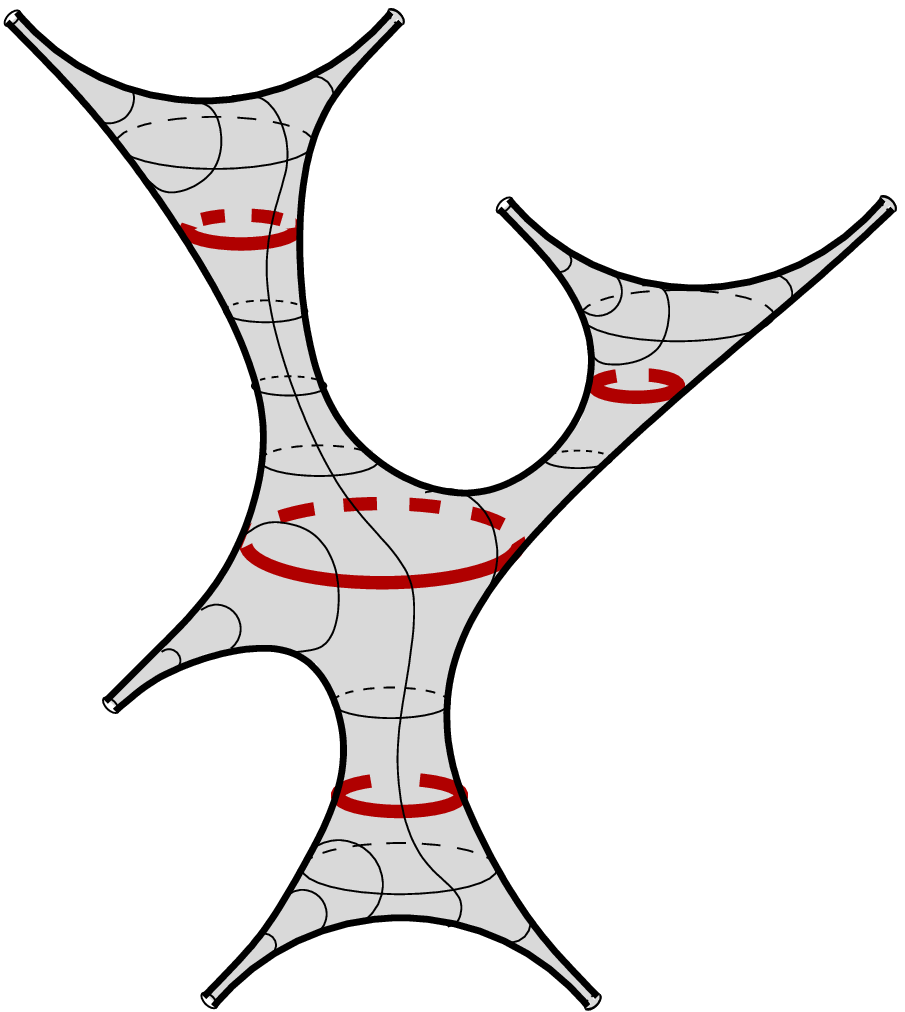}

\begin{picture}(0,0)(-26,-6)
\put(4,-20){$2\gamma_1$}
\put(53.5,-36.5){$\gamma_2$}
\put(15,-54){$\gamma_3$}
\put(20,-82){$2\gamma_4$}
\end{picture}
\vspace{95pt}
\caption{
\label{fig:square:tiled:surface:and:associated:multicurve}
A square-tiled surface in $\QuadStrat(1^3, -1^7)$,
and its associated multicurve and stable graph.
}
\end{figure}

In order to be consistent with the literature on
Masur--Veech volumes, we always label zeros and poles of
our square-tiled surfaces. Each square-tiled surface
uniquely determines the ambient stratum of quadratic
differentials. Given a list $\mu = (\mu_1, \ldots, \mu_m)$
of integers not smaller than $-1$, whose sum is $4g-4$, the
\textit{stratum of quadratic differentials}
$\QuadStrat(\mu)$ is the set of equivalence classes of
pairs: (complex curve $C$ with $m$ marked points $p_1$,
\ldots, $p_m$; a quadratic differential $q$ whose divisor
is $\sum_{i=1}^m \mu_i p_i$). The stratum $\QuadStrat(\mu)$
is naturally embedded in $\cQ_{g, m}$.

For any pair of nonnegative integers $(g,n)$ satisfying $2g+n > 3$, the \textit{principal stratum} of
meromorphic quadratic differentials in genus $g$ with $n$ simple poles
and with no other poles is $\QuadStrat(1^{4g-4+n}, -1^n)$.
By definition, $\QuadStrat(1^{4g-4+n}, -1^n)$ is a subset of
$\cQ_{g, 4g-4+2n}$. Under the morphism $\cQ_{g, 4g-4+2n} \to \cQ_{g,n}$
that forgets the points where the quadratic differential has a simple
zero, the image of the principal stratum $\QuadStrat(1^{4g-4+n}, -1^n)$ is open
and dense in $\cQ_{g,n}$. The fibers of $\QuadStrat(1^{4g-4+n}, -1^n) \to \cQ_{g,n}$ are discrete; they are in a bijective correspondence with
different ways in which one can label the $4g-4+n$ simple zeros of a
given generic quadratic differential in $\cQ_{g,n}$.

\begin{Remark}
The special cases $(g,n) = (0,3)$ and $(g,n) = (1,1)$ which
correspond to $2g+n=3$ are discussed in
Appendix~\ref{ss:Q03:and:Q11}. In these two cases
$\cQ_{g,n}$ does not admit any natural interpretation in
terms of meromorphic quadratic differentials with simple
zeros and simple poles.
\end{Remark}

We denote by $\cST(\cQ(\mu), N)$ the set of square-tiled surfaces in the
stratum $\cQ(\mu)$ made of at most $N$ squares. For example, the square-tiled
surface in Figure~\ref{fig:square:tiled:surface:and:associated:multicurve}
has genus $g=0$. It has $3$ simple zeros and $n=7$ conical singularities with angle $\pi$. Hence, it belongs to the principal stratum $\QuadStrat(1^3, -1^7)$.

We shall see in Section~\ref{ss:background:strata} that the
principal strata have a natural linear structure and that the
square-tiled surfaces form a covolume one lattice in
associated period coordinates. This justifies the following
definition of the Masur--Veech volume of $\cQ_{g,n}$ (for
$(g,n)$  different from $(0,3)$ and $(1,1)$):
\begin{equation}
\label{eq:Vol:sq:tiled}
\Vol\cQ_{g,n}
:= \Vol \QuadStrat(1^{4g-4+n}, -1^n)
= 2 d \cdot
\lim_{N\to+\infty}
\frac{\card(\cST(\QuadStrat(1^{4g-4+n}, -1^n), 2N)}{N^{d}}\,,
\end{equation}
where $d = \dim_\C \QuadStrat(1^{4g-4+n}, -1^n) = \dim_\C \cQ_{g,n} = 6g - 6 +
2n$. We emphasize that in the above formula we assume that all conical
singularities of square-tiled surfaces are labeled.
Formula~\ref{eq:Vol:sq:tiled} is the starting point of our expression for
$\Vol \cQ_{g,n}$.
\medskip

\noindent\textbf{Cylinder decomposition, multicurve and stable graph.}
A square-tiled surface admits a decomposition into
maximal horizontal cylinders filled with isometric closed
regular flat geodesics. Every such maximal horizontal
cylinder has at least one conical singularity on each of
the two boundary components. The square-tiled surface in
Figure~\ref{fig:square:tiled:surface:and:associated:multicurve}
has four maximal horizontal cylinders which are represented
in the picture by different shades.

Let $S$ be a square-tiled surface and let $S=\mathit{cyl}_1\cup \ldots\cup\mathit{cyl}_k$ be its decomposition into the set of maximal horizontal cylinders. To each cylinder $\mathit{cyl}_i$ we associate the corresponding
waist curve $\gamma_i$ considered up to a free homotopy.
The curves $\gamma_i$ are non-peripheral (i.e. none of them
bounds a disc containing a single pole) and pairwise
non-homotopic. We encode the number of circular horizontal bands of
squares contained in the corresponding maximal horizontal
cylinder by the integer weight $H_i$ associated to the
curve $\gamma_i$. The formal linear combination $\gamma=\sum
H_i\gamma_i$ is a simple closed integral multicurve in the
space $\cML_{g,n}(\Z)$ of measured laminations. For
example, the simple closed multicurve associated to the
square-tiled surface as in
Figure~\ref{fig:square:tiled:surface:and:associated:multicurve}
has the form $2\gamma_1+\gamma_2+\gamma_3+2\gamma_4$.

The multicurve $\gamma=\sum H_i\gamma_i$ as above defines the associated \textit{reduced} multicurve $\gamma_{\mathit{reduced}}=\sum \gamma_i$. Here we assume that $\gamma_i$ and $\gamma_j$ are pairwise non-isotopic for $i\neq j$. We associate to $\gamma_{\mathit{red}}$ its \textit{stable graph} $\Gamma(\gamma_{\mathit{reduced}})$ which should be thought as the dual
graph to $\gamma_{\mathit{reduced}}$. More precisely, $\Gamma(\gamma)$ is the decorated graph whose vertices represent the components of $S\setminus\gamma_{\mathit{reduced}}$ and are decorated with the genus of the corresponding component. By convention, when this number is not explicitly indicated, it is zero. The edges of $\Gamma(\gamma)$ represent the components $\gamma_i$ of $\gamma_{\mathit{reduced}}$, where the endpoints of the edge associated to $\gamma_i$ are the two vertices corresponding to the two components of $S\setminus\gamma_{\mathit{reduced}}$ adjacent to $\gamma_i$. When $\gamma_i$ has the same component of $S\setminus\gamma_{\mathit{reduced}}$ on both sides, the corresponding edge of $\Gamma(\gamma)$ is a loop. Finally, $\Gamma(\gamma)$ is endowed with $n$ ``legs`` (or half-edges) labelled from $1$ to $n$. The leg with label $i$ is attached to the vertex that represents the component that contains the $i$-th marked point of $S$. The right picture in Figure~\ref{fig:square:tiled:surface:and:associated:multicurve} shows the stable graph associated to the multicurve $\gamma$. A formal combinatorial definition of a stable graph is provided in Appendix~\ref{s:stable:graphs}.

The total number of stable graphs (considered up to
isomorphism) is finite and is equal to the number of
$\Mod_{g,n}$-orbits of reduced multicurves in
$\cML_{g,n}(\Z)$. For $2g+n > 2$, we denote by $\cG_{g,n}$
the set of stable graphs.
Table~\ref{tab:2:0} in
Section~\ref{ss:intro:Masur:Veech:volumes} and
Table~\ref{tab:1:2} in Appendix~\ref{a:1:2} list all stable
graphs in $\cG_{2,0}$ and $\cG_{1,2}$ respectively. The
special cases $\cG_{0,3}$ and $\cG_{1,1}$ are considered in
Appendix~\ref{ss:Q03:and:Q11}.

Given a pair of nonnegative integers $(g,n)$ satisfying $2g+n > 3$ and a stable graph $\Gamma$ in $\cG_{g,n}$, let us consider
the subset $\cST_{\Gamma,\mathbf{H}}(\QuadStrat(1^{4g-4+n}, -1^{n}))$ of those square-tiled
surfaces, for which the associated stable graph is $\Gamma$ and the heights of the cylinders are $\mathbf{H} = (H_1, \ldots, H_k)$. Let us denote by
$\cST_\Gamma(\QuadStrat(1^{4g-4+n}, -1^n))$ the analogous subset without
restriction on the heights. Let us define $\Vol(\Gamma, \mathbf{H})$ and
$\Vol(\Gamma)$ to be respectively the contributions to
$\Vol \cQ_{g,n}$ of square-tiled surfaces from the subsets
$\cST_{\Gamma, \mathbf{H}}(\QuadStrat(1^{4g-4+n}, -1^n))$
and $\cST_\Gamma(\QuadStrat(1^{4g-4+n}, -1^n))$:
\begin{align}
\label{eq:Vol:gamma:H}
\Vol(\Gamma, \mathbf{H})
&:= 2d\cdot
\lim_{N\to+\infty}
\frac{\card(\cST_{\Gamma, \mathbf{H}}(\QuadStrat(1^{4g-4+n}, -1^n), 2N)}{N^{d}}\,, \\
\label{eq:Vol:gamma}
\Vol(\Gamma)
&:= 2d\cdot
\lim_{N\to+\infty}
\frac{\card(\cST_\Gamma(\QuadStrat(1^{4g-4+n}, -1^n), 2N)}{N^{d}}\,,
\end{align}
where $d = 6g-6+2n$.
The results in~\cite{DGZZ:meanders:and:equidistribution}
imply that for any $\Gamma$ in $\cG_{g,n}$ the above limits
exist, are strictly positive, and that
\begin{equation}
\label{eq:Vol:Q:as:sum:of:Vol:gamma}
\Vol \cQ_{g,n}
= \sum_{\Gamma \in \cG_{g,n}} \Vol(\Gamma)\,
= \sum_{\Gamma \in \cG_{g,n}} \sum_{\mathbf{H} \in \N^{E(\Gamma)}} \Vol(\Gamma, \mathbf{H})\,.
\end{equation}
Dividing both sides of~\eqref{eq:Vol:Q:as:sum:of:Vol:gamma} by $\Vol \cQ_{g,n}$ we see that the ratio $\Vol(\Gamma)/\Vol \cQ_{g,n}$ can be interpreted as the ``asymptotic probability'' that a square-tiled surface taken at random has $\Gamma$ as stable graph associated to its horizontal cylinder decomposition.

\begin{Remark}
\label{rk:degeneration:horizontal:cylinders}
A stable graph is commonly used to encode the boundary classes of the Deligne-Mumford compactification $\overline{\cM}_{g,n}$ of $\cM_{g,n}$. Informally speaking, the stable curves in the boundary of $\overline{\cM}_{g,n}$ are obtained by pinching along appropriate multicurves $\gamma_{\mathit{reduced}}$. In our situation, this can be done algebraically in the following way. Let $(C, q)$ be a quadratic differential whose horizontal cylinders fill the associated flat surface $S$. Consider the sequence of quadratic differentials $(C_t, q_t)$ obtained as follows. Define $q_t := \Re(q) + i e^t \Im(q)$. The differential $q_t$ is meromorphic for a unique complex structure $C_t$. The horizontal cylinder decompositions of all $(C_t, q_t)$, $t\in\R$, are topologically identical. Metrically, cylinders of $(C_t, q_t)$ are $e^t$ times higher than cylinders of $(C,q)$. The path $(C_t, q_t)$ in $\overline{\cQ_{g,n}}$ converges towards a stable quadratic differential $(C_\infty, q_\infty)$ in $\overline{\cQ_{g,n}}$ with exactly double poles at the nodes of $C_\infty$. Each double pole corresponds to a half-infinite cylinder associated to the corresponding boundary component of $S\setminus\gamma_{\mathit{reduced}}$.
\end{Remark}

\subsection{Ribbon graphs, intersection numbers and volume polynomials}
\label{ss:intro:volume:polynomials}

In this section we introduce multivariate polynomials
$N_{g,n}(b_1, \ldots, b_n)$ that appear in different
contexts. They are an essential ingredient to our formula
for the Masur--Veech volume.

Let $g,n$ be non-negative integers with $2g+n > 2$. Let $b_1$, \ldots, $b_n$
be variables. For a multi-index $\boldsymbol{d} = (d_1, \ldots, d_n)$ we denote
by $b^{2\boldsymbol{d}}$ the product $b_1^{2d_1}\cdot\cdots\cdot b_n^{2d_n}$,
by $|\boldsymbol{d}|$ the sum $d_1 + \cdots + d_n$ and by
$\boldsymbol{d}!$ the product $d_1! \cdots d_n!$

Define the homogeneous polynomial $N_{g,n}(b_1,\dots,b_n)$ of degree
$6g-6 + 2n$ in the variables $b_1,\dots,b_n$ as
\begin{equation}
\label{eq:N:g:n}
N_{g,n}(b_1,\dots,b_n)=
\sum_{|\boldsymbol{d}|=3g-3+n}c_{\boldsymbol{d}} b^{2\boldsymbol{d}}\,,
\end{equation}
where
\begin{equation}
\label{eq:c:subscript:d}
c_{\boldsymbol{d}}=\frac{1}{2^{5g-6+2n}\, \boldsymbol{d}!}\,
\langle \psi_1^{d_1} \dots \psi_n^{d_n}\rangle
\end{equation}
\begin{equation}
\label{eq:correlator}
\langle \psi_1^{d_1} \dots \psi_n^{d_n}\rangle
=\int_{\overline{\cM}_{g,n}} \psi_1^{d_1}\dots\psi_n^{d_n}\,,
\end{equation}
where $\psi_1$, \ldots, $\psi_n$ are the $\psi$-classes on the Deligne--Mumford
compactification $\overline{\cM}_{g,n}$. That is, $\psi_i$ is the first Chern class of the $i$-th tautological bundle over $\overline{\cM}_{g,n}$.
Informally, the fiber of this bundle over $(C,p_1,\dots,p_n)$
is the cotangent line $T^\ast_{p_i}C$ to $C$ at the $i$-th marked point. Note that $N_{g,n}(b_1,\dots,b_n)$
contains only even powers of $b_i$, where $i=1,\dots,n$. For small $g$ and $n$
we get:
$$
\begin{array}{ll}
N_{0,3}(b_1,b_2,b_3)&=1\\
[-\halfbls] &\\
N_{0,4}(b_1,b_2,b_3,b_4)&=\cfrac{1}{4}(b_1^2+b_2^2+b_3^2+b_4^2)\\
[-\halfbls] &\\
N_{1,1}(b_1)&=\cfrac{1}{48}(b_1^2)\\
[-\halfbls] &\\
N_{1,2}(b_1,b_2)&=\cfrac{1}{384}(b_1^2+b_2^2)(b_1^2+b_2^2)\,.
\end{array}
$$

A \emph{ribbon graph} $G$ is a connected graph endowed at each vertex with a cyclic ordering of adjacent edges. The cyclic ordering determines faces, so one can consider a tubular neighborhood of $G$ as a surface with boundary, where boundary components correspond to faces of $G$. We denote by $\cR_{g,n}$ the set of isomorphism classes of trivalent ribbon graphs of genus $g$ with $n$ faces labeled from $1$ to $n$. For each trivalent ribbon graph $G$ in $\cR_{g,n}$ and any collection of integers $b_1$, \ldots, $b_n$, denote by $\cN_G(b_1, \ldots, b_n)$ the number of integral metrics on $G$ (assigning a positive integral length to each edge) such that the perimeters of the faces get lengths $b_1$, \ldots, $b_n$.

\begin{NNTheorem}[Kontsevich]
Consider a collection of positive integers $b_1,\dots, b_n$
such that $\sum_{i=1}^n b_i$ is even.
The weighted count of genus $g$ connected trivalent metric ribbon graphs $G$
with integer edges and with $n$ labeled boundary components of lengths $b_1,\dots,b_n$
is equal to $N_{g,n}(b_1,\dots,b_n)$ up to the lower order terms:
$$
\sum_{G \in \cR_{g,n}} \frac{1}{|\Aut(G)|}\, \cN_G(b_1,\dots,b_n)
=N_{g,n}(b_1,\dots,b_n)+\text{lower order terms}\,,
\,
$$
where $\cR_{g,n}$ denotes the set of (nonisomorphic)
trivalent ribbon graphs $G$ of genus $g$ and with $n$
boundary components.
\end{NNTheorem}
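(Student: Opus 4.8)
The plan is to pass from the purely combinatorial lattice count to the symplectic geometry of Kontsevich's combinatorial moduli space, and then to invoke Kontsevich's identification of its volume with the $\psi$-class intersection numbers. First I would observe that, for fixed perimeters $b=(b_1,\dots,b_n)$, the integral metrics on a trivalent ribbon graph $G\in\cR_{g,n}$ realizing those perimeters are exactly the lattice points of the convex polytope $P_G(b)$ cut out of $\R_+^{E(G)}$ by the $n$ affine perimeter equations, where $E(G)=6g-6+3n$. Since each edge borders precisely two faces (counted with multiplicity), one has $\sum_i b_i=2\sum_e \ell_e$, so $\sum_i b_i$ is forced to be even whenever a metric with integer edges exists; this is the origin of the parity hypothesis. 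The slice $P_G(b)$ has dimension $6g-6+2n=\dim_\R\cM_{g,n}$, so its Ehrhart leading term is its (lattice) volume, and
$$\cN_G(b)=\Vol(P_G(b))+(\text{lower-order terms in } b).$$
Summing against the orbifold weights $1/|\Aut(G)|$ therefore reduces the theorem to computing $\sum_{G}\frac{1}{|\Aut(G)|}\Vol(P_G(b))$, the total volume of the fixed-perimeter slice of the combinatorial moduli space.

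Next I would set up the combinatorial model. By the Jenkins--Strebel theorem, every $(C,p_1,\dots,p_n)\in\cM_{g,n}$ together with a tuple of positive perimeters $b\in\R_+^n$ determines a unique Jenkins--Strebel quadratic differential with double poles of prescribed quadratic residues at the $p_i$, whose critical graph is a metric ribbon graph of genus $g$ with $n$ labeled faces of perimeters $b_i$. This yields an orbifold homeomorphism $\cM_{g,n}\times\R_+^n\cong\bigsqcup_{G}\R_+^{E(G)}/\Aut(G)$, under which the trivalent graphs index the top-dimensional cells and $1/|\Aut(G)|$ is exactly the orbifold weight. Under this identification the total polytope volume above is the volume of the slice $\{\text{perimeters}=b\}$ measured in the piecewise-linear edge-length coordinates.

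The substance of the proof is then Kontsevich's evaluation of this volume. On each top cell one has the closed piecewise-linear $2$-form built from the edge-length coordinates and the cyclic (ribbon) structure; its top exterior power is the volume form used above, while its de Rham class on the compactified combinatorial space represents $\sum_i\tfrac{b_i^2}{2}\psi_i$. Integrating the resulting Duistermaat--Heckman-type exponential gives
$$\sum_{G}\frac{1}{|\Aut(G)|}\Vol(P_G(b))=\int_{\overline{\cM}_{g,n}}\exp\Big(\sum_{i=1}^n\tfrac{b_i^2}{2}\,\psi_i\Big)=\!\!\sum_{|\boldsymbol{d}|=3g-3+n}\frac{1}{\boldsymbol{d}!}\prod_{i=1}^n\frac{b_i^{2d_i}}{2^{d_i}}\,\langle\psi_1^{d_1}\cdots\psi_n^{d_n}\rangle,$$
only the top-degree terms surviving since $\overline{\cM}_{g,n}$ has complex dimension $3g-3+n$. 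Comparing with the definition \eqref{eq:N:g:n}--\eqref{eq:c:subscript:d} of $N_{g,n}$, the two sides agree after absorbing a single global power of $2$ (the factor $2^{5g-6+2n}$ reconciling the $b_i^2/2$ symplectic scaling and the lattice count with the $\tfrac12\times\tfrac12$-square convention). This identifies the leading term of the weighted count with $N_{g,n}(b)$, and the discarded lower-dimensional (non-trivalent) cells feed into the stated lower-order terms.

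The main obstacle is precisely the cohomological step above: proving that the piecewise-linear Kontsevich form represents $\sum_i\tfrac{b_i^2}{2}\psi_i$, equivalently that the combinatorial volume computes the intersection numbers. Kontsevich's original route establishes this by a Laplace transform reducing each cell to a product of edge contributions $2/(\lambda_{i(e)}+\lambda_{j(e)})$ and then resumming the graph expansion into a matrix-Airy integral whose asymptotics encode the intersection numbers; equivariant localization on the combinatorial model is an alternative. The delicate points are the orbifold bookkeeping of $\Aut(G)$ across cell boundaries and the exact tracking of the powers of $2$ in the normalization. A different and in some ways cleaner route, which I would keep in reserve, is to show directly that the lattice count $\sum_G\frac{1}{|\Aut(G)|}\cN_G(b)$ obeys the same topological-recursion relations as the numbers $N_{g,n}$, matched against the base cases $N_{0,3}=1$ and $N_{1,1}=\tfrac{1}{48}b_1^2$; this bypasses Strebel theory at the cost of proving the recursion for the lattice counts.
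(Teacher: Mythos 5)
The paper does not prove this statement: it is quoted verbatim as Kontsevich's theorem, with the remark immediately after it that ``this Theorem is a part of Kontsevich's proof \cite{Kontsevich} of Witten's conjecture,'' so there is no in-paper argument to compare yours against. Your sketch follows the standard route of \cite{Kontsevich} (Strebel correspondence, cell decomposition of $\cM_{g,n}\times\R_+^n$ indexed by ribbon graphs with orbifold weights $1/|\Aut(G)|$, Ehrhart leading term equal to the fixed-perimeter polytope volume, and evaluation of that volume via the piecewise-linear $2$-forms representing $\psi_i$), and as an outline it is faithful to how the result is actually established in the literature.

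The one place where you make a concrete quantitative claim is, however, off. Your displayed identity
$\sum_{G}\frac{1}{|\Aut(G)|}\Vol(P_G(b))=\int_{\overline{\cM}_{g,n}}\exp\bigl(\sum_i\tfrac{b_i^2}{2}\psi_i\bigr)$
has right-hand side with coefficients $\frac{1}{2^{3g-3+n}\,\boldsymbol{d}!}\langle\psi_1^{d_1}\cdots\psi_n^{d_n}\rangle$, which by \eqref{eq:N:g:n}--\eqref{eq:c:subscript:d} is $2^{2g-3+n}\cdot N_{g,n}(b)$, i.e.\ the top part $V^{top}_{g,n}(b)$ of Mirzakhani's volume polynomial rather than $N_{g,n}(b)$ itself --- this is exactly the discrepancy the paper records in \eqref{eq:Vgn:through:Ngn}. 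So if $\Vol(P_G(b))$ means the lattice-normalized volume (which is what the Ehrhart step requires), the displayed equation is false by the factor $2^{2g-3+n}$; if it means the symplectic volume, the link back to the lattice count is broken by that same factor. Your parenthetical reconciliation ``absorbing the factor $2^{5g-6+2n}$\,'' via the $\tfrac12\times\tfrac12$-square convention is not the right accounting: that convention concerns square-tiled surfaces elsewhere in the paper and plays no role here, where edge lengths are positive integers. What actually has to be computed is the covolume of the lattice of integral metrics inside the affine fixed-perimeter slice, measured against the top exterior power of Kontsevich's $2$-form (together with the parity constraint $\sum_i b_i\equiv 0\ (\mathrm{mod}\ 2)$, which cuts the perimeter lattice to index $2$); this covolume equals $2^{2g-3+n}$ and is a genuine, if standard, determinant computation rather than something that can be waved into the normalization. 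You do flag ``tracking of the powers of $2$'' as a delicate point, but the specific factor you name and the mechanism you propose for it are both incorrect, so this step of the sketch would need to be redone before the argument closes.
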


This Theorem is a part of Kontsevich's
proof~\cite{Kontsevich} of Witten's
conjecture~\cite{Witten}.

\begin{Remark}
P.~Norbury~\cite{Norbury} and
K.~Chapman--M.~Mulase--B.~Safnuk~\cite{Chapman:Mulase:Safnuk}
refined the count of Kontsevich proving that the function
counting lattice points in the moduli space $\cM_{g,n}$
corresponding to covers of the sphere ramified over three
points (the so-called \textit{Grothendieck's dessins d'enfants}) is a
quasi-polynomial in variables $b_i$. In other terms, when
considering all ribbon graphs (and not only trivalent ones)
the lower order terms in Kontsevich's theorem form a
quasi-polynomial. This quasi-polynomiality of the
expression on the left hand side endows the notion of
``lower order terms'' with a natural formal sense.
\end{Remark}

We also use the following common notation for the
intersection numbers~\eqref{eq:correlator}. Given an
ordered partition $d_1+\dots+d_n=3g-3+n$ of $3g - 3 + n$
into a sum of non-negative integers we define

\begin{equation}
\label{eq:tau:correlators}
\langle \tau_{d_1} \dots \tau_{d_n}\rangle
=\int_{\overline{\cM}_{g,n}} \psi_1^{d_1}\dots\psi_n^{d_n}\,.
\end{equation}

\subsection{Formula for the Masur--Veech volumes}
\label{ss:intro:Masur:Veech:volumes}

Following~\cite{AEZ:genus:0} we consider the following
linear operators $\cY(\boldsymbol{H})$ and $\cZ$ on the spaces of
polynomials in variables $b_1,b_2,\dots$, where $H_1, H_2, \dots$
are positive integers.
The operator $\cY(\boldsymbol{H})$ is defined on monomials as
\begin{equation}
\label{eq:cV}
\cY(\boldsymbol{H})\ :\quad
\prod_{i=1}^{k} b_i^{m_i} \longmapsto
\prod_{i=1}^{k} \frac{m_i!}{H_i^{m_i+1}}\,,
\end{equation}
and extended to arbitrary polynomials by linearity.
The operator $\cZ$ is defined on monomials as
\begin{equation}
\label{eq:cZ}
\cZ\ :\quad
\prod_{i=1}^{k} b_i^{m_i} \longmapsto
\prod_{i=1}^{k} \big(m_i!\cdot \zeta(m_i+1)\big)\,,
\end{equation}
and extended to arbitrary polynomials by linearity.
In the above
formula $\zeta$ is the Riemann zeta function
$$
\zeta(s) = \sum_{n \geq 1} \frac{1}{n^s}\,,
$$
so for any collection of strictly positive integers
$(m_1,\dots,m_k)$ one has
$$
\cZ\left(\prod_{i=1}^{k} b_i^{m_i}\right)
=\sum_{\boldsymbol{H}\in\N^k}
\cY(\boldsymbol{H})\left(\prod_{i=1}^{k} b_i^{m_i}\right)\,.
$$

\begin{Remark}
\label{rm:zeta:even:integers}
For even integers $2m$ we have
$$
\zeta(2m) = (-1)^{m+1} \frac{B_{2m} (2\pi)^{2m}}{2\, (2m)!}
$$
where $B_{2m}$ are the Bernoulli numbers. Consider a homogeneous polynomial in
$k$ variables of degree $2m-k$ with rational coefficients, such that all powers
of all variables in each monomial are odd.
The observation above implies that the value of $\cZ$ on
such polynomial is a rational number multiplied by
$\pi^{2m}$.
\end{Remark}

Given a stable graph $\Graph$ denote by $V(\Gamma)$ the set
of its vertices and by $E(\Gamma)$ the set of its edges. To
each stable graph $\Gamma\in\cG_{g,n}$ we associate the
following homogeneous polynomial $P_\Gamma$
of degree $6g-6+2n$. To
every edge $e\in E(\Gamma)$ we assign a formal variable
$b_e$. Given a vertex $v\in V(\Gamma)$ denote by $g_v$ the
integer number decorating $v$ and denote by $n_v$ the
valency of $v$, where the legs adjacent to $v$ are counted
towards the valency of $v$. Take a small neighborhood of
$v$ in $\Gamma$. We associate to each half-edge (``germ''
of edge) $e$ adjacent to $v$ the monomial $b_e$; we
associate $0$ to each leg. We denote by $\boldsymbol{b}_v$
the resulting collection of size $n_v$. If some edge $e$ is
a loop joining $v$ to itself, $b_e$ would be present in
$\boldsymbol{b}_v$ twice; if an edge $e$ joins $v$ to a
distinct vertex, $b_e$ would be present in
$\boldsymbol{b}_v$ once; all the other entries of
$\boldsymbol{b}_v$ correspond to legs; they are represented
by zeroes. To each vertex $v\in E(\Gamma)$ we associate the
polynomial $N_{g_v,n_v}(\boldsymbol{b}_v)$, where $N_{g,v}$
is defined in~\eqref{eq:N:g:n}. We associate to the stable
graph $\Gamma$ the polynomial obtained as the product
$\prod b_e$ over all edges $e\in E(\Graph)$ multiplied by
the product $\prod N_{g_v,n_v}(\boldsymbol{b}_v)$ over all
$v\in V(\Graph)$. We define $P_\Gamma$ as follows:
\begin{multline}
\label{eq:P:Gamma}
P_\Gamma(\boldsymbol{b})
=
\frac{2^{6g-5+2n} \cdot (4g-4+n)!}{(6g-7+2n)!}\cdot
\\
\frac{1}{2^{|V(\Graph)|-1}} \cdot
\frac{1}{|\operatorname{Aut}(\Graph)|}
\cdot
\prod_{e\in E(\Graph)}b_e\cdot
\prod_{v\in V(\Graph)}
N_{g_v,n_v}(\boldsymbol{b}_v)
\,.
\end{multline}

Table~\ref{tab:2:0} in
Section~\ref{ss:intro:Masur:Veech:volumes} and
Table~\ref{tab:1:2} in Appendix~\ref{a:1:2} list the polynomials associated to all stable
graphs in $\cG_{2,0}$ and in $\cG_{1,2}$.

\begin{Theorem}
\label{th:volume}
The Masur--Veech volume of the stratum of quadratic differentials with $4g-4+n$
simple zeros and $n$ simple poles has the following value:
\begin{multline}
\label{eq:square:tiled:volume}
\Vol \cQ_{g,n} = \Vol \QuadStrat(1^{4g-4+n}, -1^n)
\\= \sum_{\Graph \in \cG_{g,n}} \Vol(\Gamma)
= \sum_{\Graph \in \cG_{g,n}} \sum_{\mathbf{H} \in \N^{E(\Gamma)}} \Vol\left(\Gamma, \mathbf{H}\right)\,,
\end{multline}
where the contributions of individual stable graphs $\Gamma$ defined by~\eqref{eq:Vol:gamma:H} (respectively the contributions of pairs $(\Gamma,\boldsymbol{H})$ defined by~\eqref{eq:Vol:gamma}) are equal to
\begin{equation}
\label{eq:volume:contribution:of:stable:graph}
\Vol(\Gamma)=\cZ(P_\Gamma)
\quad \text{and} \quad
\Vol\left(\Gamma,\boldsymbol{H}\right) =\cY(\boldsymbol{H})(P_\Gamma).
\end{equation}
\end{Theorem}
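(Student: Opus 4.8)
The plan is to evaluate directly the lattice-point count defining $\Vol(\Graph,\boldsymbol H)$ in~\eqref{eq:Vol:gamma:H}, following the genus-zero computation of~\cite{AEZ:genus:0}, and then to recover $\Vol(\Graph)$ by summing over all height vectors $\boldsymbol H\in\N^{E(\Graph)}$. First I would describe the combinatorial anatomy of a square-tiled surface $S$ in $\QuadStrat(1^{4g-4+n},-1^n)$ whose associated stable graph is $\Graph$ and whose cylinder heights are $\boldsymbol H=(H_e)_{e\in E(\Graph)}$. Cutting $S$ along the core curves $\gamma_{\mathit{reduced}}$ yields one subsurface $P_v$ per vertex $v\in V(\Graph)$, and $P_v$ retracts onto the graph $G_v$ of horizontal saddle connections it contains, with the boundary circles of $P_v$ corresponding to the faces of $G_v$. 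Because every zero of a differential in the principal stratum is simple, it produces a trivalent vertex of $G_v$; thus $G_v$ is a trivalent ribbon graph of genus $g_v$ whose faces are indexed by the half-edges at $v$, the face at an edge $e$ having perimeter equal to the width (circumference) $w_e$ of the corresponding cylinder and the face at a leg having perimeter $0$. Consequently $S$ is encoded by a metric ribbon graph $G_v$ at each vertex together with, for each edge $e$, a width $w_e\in\N$, the fixed height $H_e$, and an integer twist $t_e\in\{0,\dots,w_e-1\}$.

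Next I would turn this anatomy into a count. Summing over the integer edge-lengths of $G_v$ with prescribed face perimeters gives, by Kontsevich's theorem, $\sum_{G_v}\frac{1}{|\Aut(G_v)|}\cN_{G_v}(\boldsymbol w_v)=N_{g_v,n_v}(\boldsymbol w_v)$ up to lower-order terms, where $\boldsymbol w_v$ records the widths at $v$ and $0$ at its legs; each cylinder contributes the twist factor $w_e$, producing the factor $\prod_e b_e$ of~\eqref{eq:P:Gamma}. Labelling the $4g-4+n$ simple zeros contributes $(4g-4+n)!$ and quotienting by graph symmetries contributes $1/|\Aut(\Graph)|$, while the fact that the total face-perimeter of any ribbon graph is twice its total edge-length forces $\sum_{e\ni v}w_e$ to be even at each vertex, so that only an index-$2^{|V(\Graph)|-1}$ sublattice of width-vectors contributes. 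With the number of squares bounded by $\sum_e w_eH_e\le 2N$, this gives
$$
\card\bigl(\cST_{\Graph,\boldsymbol H}(\QuadStrat(1^{4g-4+n},-1^n),2N)\bigr)
=\frac{(4g-4+n)!}{|\Aut(\Graph)|}
\sum_{\substack{\boldsymbol w:\ \sum_e w_eH_e\le 2N\\ \sum_{e\ni v}w_e\ \mathrm{even}\ \forall v}}
\prod_{e}w_e\prod_{v}N_{g_v,n_v}(\boldsymbol w_v)
\ +\ \text{lower order terms},
$$
in which $\prod_e b_e\prod_v N_{g_v,n_v}(\boldsymbol b_v)$ is, up to the explicit constant of~\eqref{eq:P:Gamma}, the polynomial $P_\Graph$, homogeneous of degree $6g-6+2n-|E(\Graph)|$ in the edge variables.

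The final step is the asymptotics of this constrained sum. On a monomial $\prod_e w_e^{m_e}$ the parity conditions contribute the density $2^{-(|V(\Graph)|-1)}$ and the remainder is a Riemann sum for $\int_{\sum_e x_eH_e\le 2N}\prod_e x_e^{m_e}\,d\boldsymbol x$; the substitution $u_e=x_eH_e$ together with the Dirichlet integral $\int_{\sum_e u_e\le T,\ u_e\ge0}\prod_e u_e^{m_e}\,d\boldsymbol u=\frac{\prod_e m_e!}{d!}\,T^{d}$ (valid because $\sum_e m_e+|E(\Graph)|=d:=6g-6+2n$) yields leading term $\frac{(2N)^{d}}{2^{|V(\Graph)|-1}\,d!}\prod_e\frac{m_e!}{H_e^{m_e+1}}$. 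Since $\prod_e\frac{m_e!}{H_e^{m_e+1}}=\cY(\boldsymbol H)\bigl(\prod_e b_e^{m_e}\bigr)$, multiplying by $2d$ and dividing by $N^{d}$ as in~\eqref{eq:Vol:gamma:H} produces the analytic constant $\frac{2^{d+1}d}{d!}=\frac{2^{6g-5+2n}}{(6g-7+2n)!}$, which combined with $\frac{(4g-4+n)!}{2^{|V(\Graph)|-1}|\Aut(\Graph)|}$ is exactly the prefactor of $P_\Graph$. Hence $\Vol(\Graph,\boldsymbol H)=\cY(\boldsymbol H)(P_\Graph)$ by linearity, and summing over $\boldsymbol H\in\N^{E(\Graph)}$ with $\sum_{H\ge1}H^{-(m+1)}=\zeta(m+1)$ turns $\cY(\boldsymbol H)$ termwise into $\cZ$, giving $\Vol(\Graph)=\cZ(P_\Graph)$.

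The hard part will be twofold. On the bookkeeping side, one must justify that labelled simple zeros correspond precisely to automorphism-weighted trivalent ribbon graphs, that the even-perimeter conditions are independent up to the single relation $\sum_v\sum_{e\ni v}w_e\equiv0\pmod 2$ (so that, since $\Graph$ is connected through its non-loop edges, the sublattice index is exactly $2^{|V(\Graph)|-1}$), and that these factors combine with the area normalization to reproduce the coefficient in~\eqref{eq:P:Gamma} with no residual constant. On the analytic side, which is more delicate, one must show that the lower-order terms from Kontsevich's theorem and the Riemann-sum error are genuinely $o(N^{d})$ after summation over the unbounded widths, and that the interchange of the $\boldsymbol H$-summation with the limit $N\to\infty$ is legitimate; the existence and positivity of these limits quoted from~\cite{DGZZ:meanders:and:equidistribution} is precisely what underwrites this last point.
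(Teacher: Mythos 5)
Your proposal follows essentially the same route as the paper: decompose a square-tiled surface into metric ribbon graphs at the vertices of $\Gamma$ plus widths, heights and twists of the cylinders, invoke Kontsevich's count for the vertex polynomials, impose the parity conditions giving the index-$2^{|V(\Gamma)|-1}$ sublattice, and evaluate the resulting lattice sum by the Riemann-sum/Dirichlet-integral computation of Lemma~3.7 of~\cite{AEZ:Dedicata}, tracking the constants into the prefactor of $P_\Gamma$. The one step you treat as a geometric tautology but which actually requires an argument is the handling of the simple poles: they appear as \emph{univalent vertices} of the ribbon graphs, not as faces of perimeter zero, and the statement that the weighted count of trivalent ribbon graphs with $p$ leaves has leading term $N_{g,n+p}(b_1,\dots,b_n,0,\dots,0)$ is the content of the paper's Proposition~\ref{pr:count:with:leaves}, proved by induction on $p$ using the string equation; with that lemma supplied, your argument matches the paper's proof.
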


Table~\ref{tab:2:0} below illustrates the computation of
the polynomials $P_\Gamma$ and of the contributions
$\Vol(\Gamma)$ to the Masur--Veech volume $\Vol\cQ_{g,n}$
in the particular case of $(g,n)=(2,0)$. To make the
calculation tractable, we follow the structure of
Formula~\eqref{eq:P:Gamma}.
The first numerical factor $\frac{128}{5}$ represents the factor
$\frac{2^{6g-5+2n} \cdot (4g-4+n)!}{(6g-7+2n)!}$. It is
common for all stable graphs in $\cG_{2,0}$. The second
numerical factor in the first line of each calculation
in Table~\ref{tab:2:0} is $\frac{1}{2^{|V(\Graph)|-1}}$, where $|V(\Graph)|$ is the
number of vertices of the corresponding stable graph
$\Gamma$ (equivalently ---
the number of connected components of the complement to the
associated reduced multicurve).
The third numerical factor is
$\frac{1}{|\Aut(\Graph)|}$. Recall that neither
vertices nor edges of $\Graph$ are labeled. We first evaluate the
order of the corresponding automorphism group
$|\Aut(\Graph)|$ (this group respects the decoration
of the graph), and only then we associate to edges of $\Graph$
variables $b_1, \dots, b_k$ in an arbitrary way.

\begin{table}[hbt]
$$
\hspace*{20pt}
\begin{array}{llr}


\includegraphics{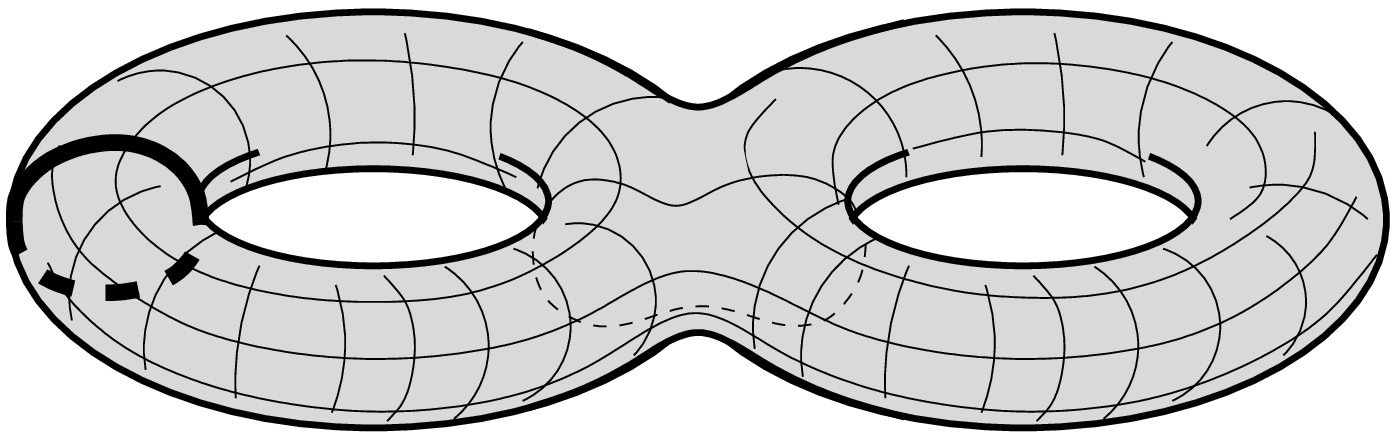}
\begin{picture}(0,0)(0,0)
\put(-24,0){$b_1$}
\end{picture}
\hspace*{68pt} 
\vspace*{5pt}

&
\frac{128}{5}\cdot
1\cdot
\frac{1}{2}\cdot
b_1 \cdot
N_{1,2}(b_1,b_1)=

&
\\


\includegraphics{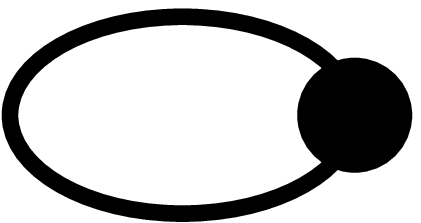}
\begin{picture}(0,0)(0,0)
\put(-10,-10.5){$b_1$}
\put(30,-10.5){$1$}
\end{picture}
\vspace*{5pt}

&
\hspace*{7pt}
=\frac{64}{5}\cdot b_1 \left(\frac{1}{384}(2b_1^2)(2b_1^2)\right)
=\frac{2}{15}\cdot b_1^5\xmapsto{\cZ}
&
\frac{2}{15}\cdot \big(5!\cdot \zeta(6)\big)
\\


&&
=\frac{16}{945}\cdot\pi^6

\vspace*{5pt}\\\hline&&\\

\includegraphics{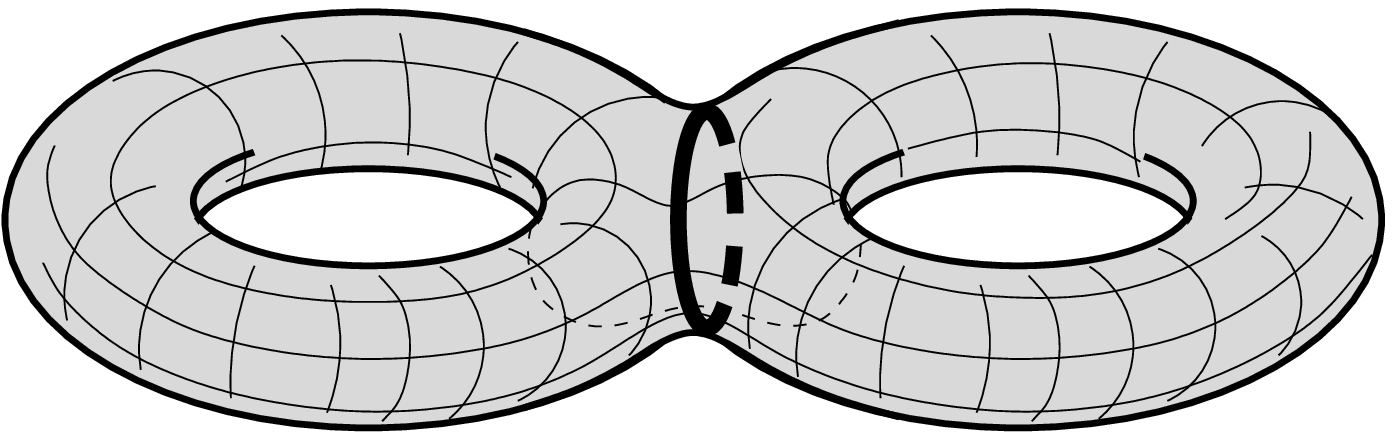}
\begin{picture}(0,0)(0,0)
\put(23,-14){$b_1$}
\end{picture}
\vspace*{5pt}

&
\frac{128}{5}\cdot
\frac{1}{2}\cdot
\frac{1}{2}\cdot
b_1 \cdot N_{1,1}(b_1)\cdot N_{1,1}(b_1)=

&
\\


\includegraphics{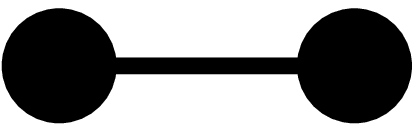}
\begin{picture}(0,0)(0,0)
\put(23,-20){$b_1$}
\put(6,-14){$1$}
\put(41,-14){$1$}
\end{picture}
\vspace*{5pt}

&
\hspace*{6pt}

=\frac{32}{5}\cdot b_1\cdot \left(\frac{1}{48}b_1^2\right)\left(\frac{1}{48}b_1^2\right)
=\frac{1}{360}\cdot b_1^5\xmapsto{\cZ}
&

\frac{1}{360}\cdot \big(5!\cdot \zeta(6)\big)

\\

&&

=\frac{1}{2835}\cdot\pi^6

\vspace*{5pt}\\\hline&&\\

\includegraphics{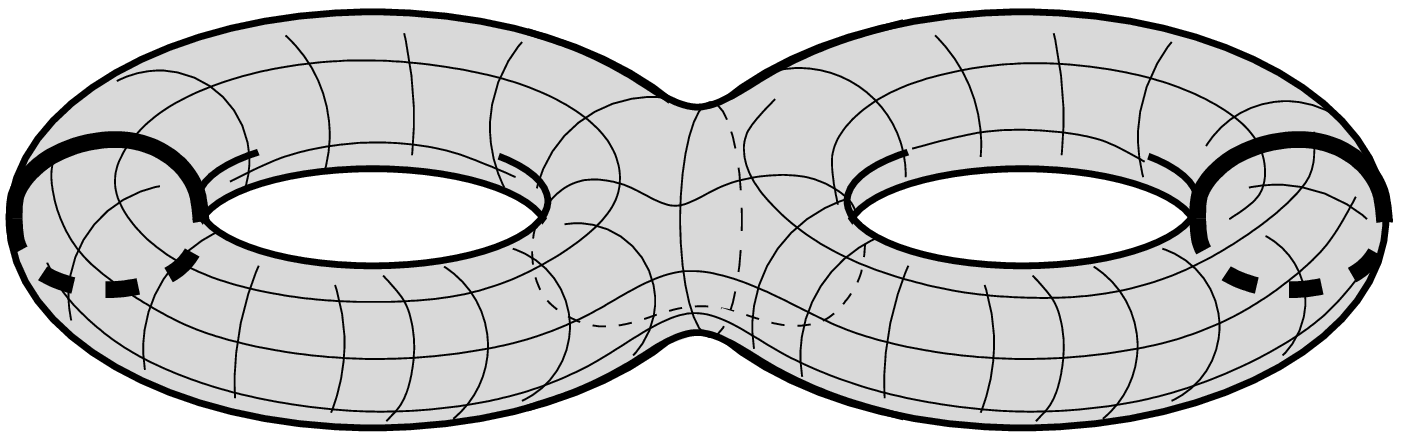}
\begin{picture}(0,0)(0,0)
\put(-24,0){$b_1$}
\put(66,0){$b_2$}
\end{picture}
\vspace*{5pt}

&
\frac{128}{5}\cdot
1 \cdot
\frac{1}{8}\cdot
b_1 b_2 \cdot
N_{0,4}(b_1,b_1,b_2,b_2)=

&

\\


\includegraphics{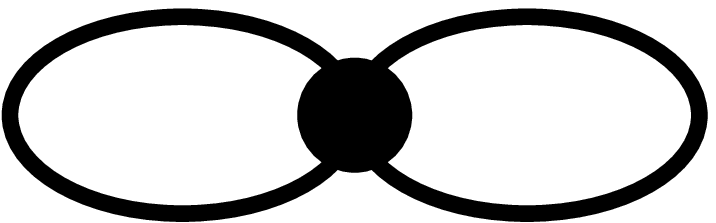}
\begin{picture}(0,0)(0,0)
\put(-9,-19){$b_1$}
\put(52,-19){$b_2$}
\put(23,-30){$0$}
\end{picture}
\vspace*{5pt}

&
\hspace*{20pt}
=\frac{16}{5}\cdot b_1 b_2\cdot\left(\frac{1}{4}(2b_1^2+2b_2^2)\right)=

&\\

&

\hspace*{80pt}
=\frac{8}{5}(b_1^3 b_2+b_1 b_2^3)\xmapsto{\cZ}

&

\frac{8}{5}\!\cdot\! 2\!\cdot\! 3!\zeta(4)\!\cdot\!1!\zeta(2)

\vspace*{5pt}\\ &&

=\frac{8}{225}\cdot \pi^6

\vspace*{5pt}\\\hline&&\\

\includegraphics{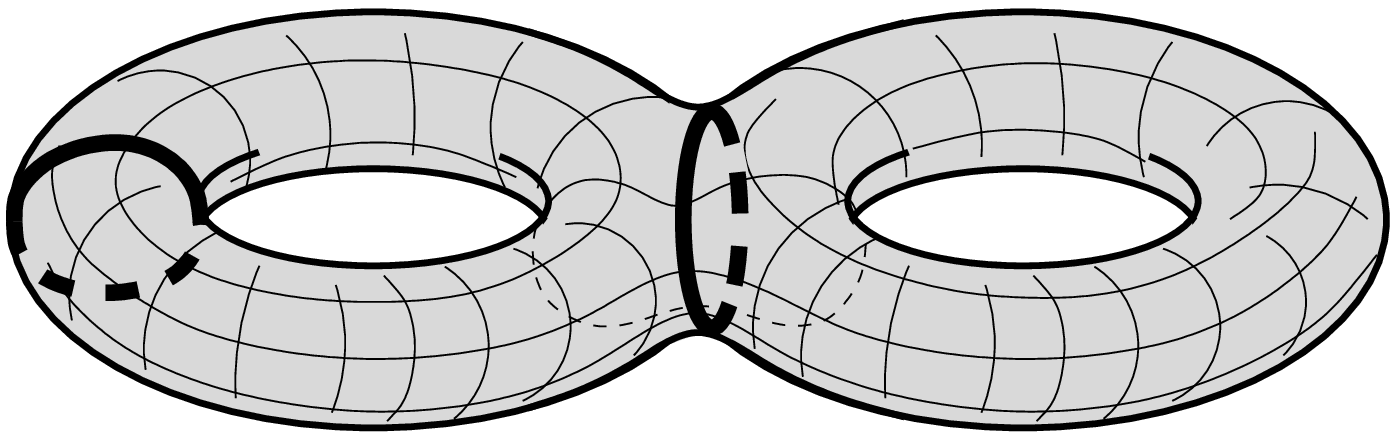}
\begin{picture}(0,0)(0,0)
\put(-24,0){$b_1$}
\put(23,-14){$b_2$}
\end{picture}
\vspace*{5pt}

&

\frac{128}{5}\!\cdot\!
\frac{1}{2}\!\cdot\!
\frac{1}{2}\!\cdot\!
b_1 b_2\!\cdot\! N_{0,3}(b_1,b_1,b_2)\!\cdot\! N_{1,1}(b_2)

&

\\


\includegraphics{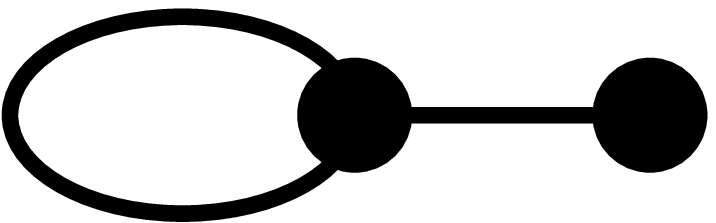}
\begin{picture}(0,0)(0,0)
\put(-19,-10){$b_1$}
\put(23,-16){$b_2$}
\put(6,-10){$0$}
\put(41,-10){$1$}
\end{picture}
\vspace*{5pt}

&
=\frac{32}{5}\cdot b_1 b_2\cdot\big(1\big) \cdot \left(\frac{1}{48} b_2^2\right)
=\frac{2}{15}\cdot b_1 b_2^3\xmapsto{\cZ}
&
\frac{2}{15}\!\cdot\! 1!\zeta(2)\!\cdot\!3!\zeta(4)
\\


&&
=\frac{1}{675}\cdot\pi^6

\vspace*{5pt}\\\hline&&\\

\includegraphics{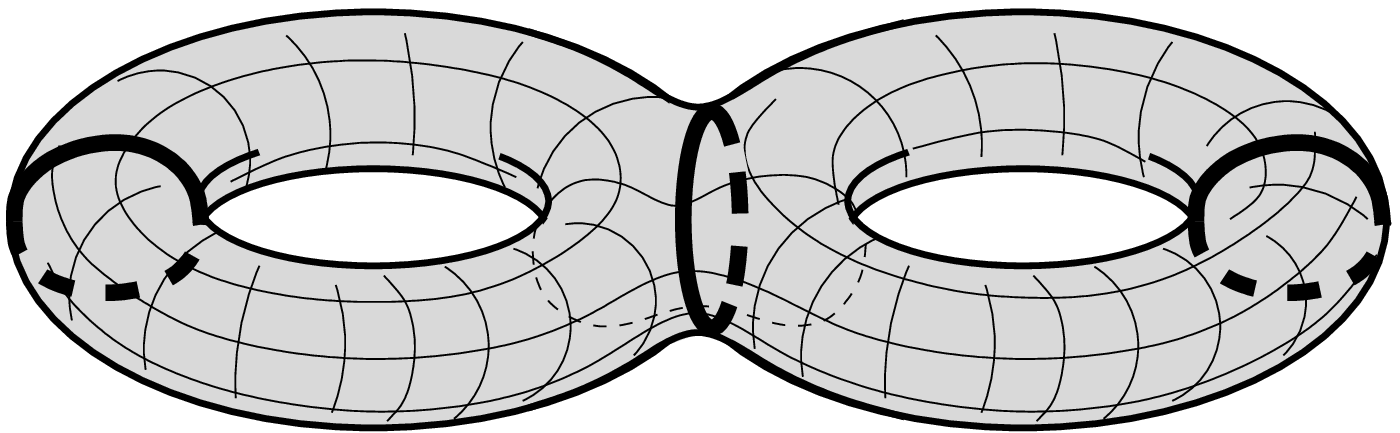}
\begin{picture}(0,0)(0,0)
\put(-24,0){$b_1$}
\put(23,-14){$b_2$}
\put(66,0){$b_3$}
\end{picture}
\vspace*{5pt}

&
\frac{128}{5}\!\cdot\!
\frac{1}{2}\!\cdot\!
\frac{1}{8}\!\cdot\!
b_1 b_2 b_3 \!\cdot\!
N_{0,3}(b_1,b_1,b_2)\cdot

&

\\


\includegraphics{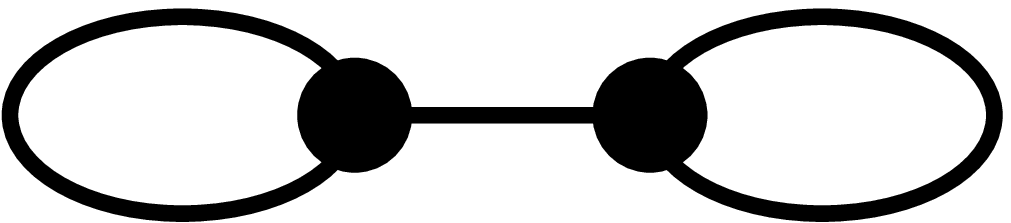}
\begin{picture}(0,0)(0,0)
\put(-18,-10){$b_1$}
\put(23,-16){$b_2$}
\put(63,-10){$b_3$}
\put(5.5,-10){$0$}
\put(42,-10){$0$}
\end{picture}
\vspace*{5pt}

&

\cdot N_{0,3}(b_2,b_3,b_3)
=\frac{8}{5}\!\cdot\! b_1 b_2 b_3\!\cdot\! (1) \!\cdot\! (1)
\xmapsto{\cZ}

&
\frac{8}{5}\cdot \left(1!\,\zeta(2)\right)^3
\\


&&
=\frac{1}{135}\cdot\pi^6

\vspace*{5pt}\\\hline&&\\

\includegraphics{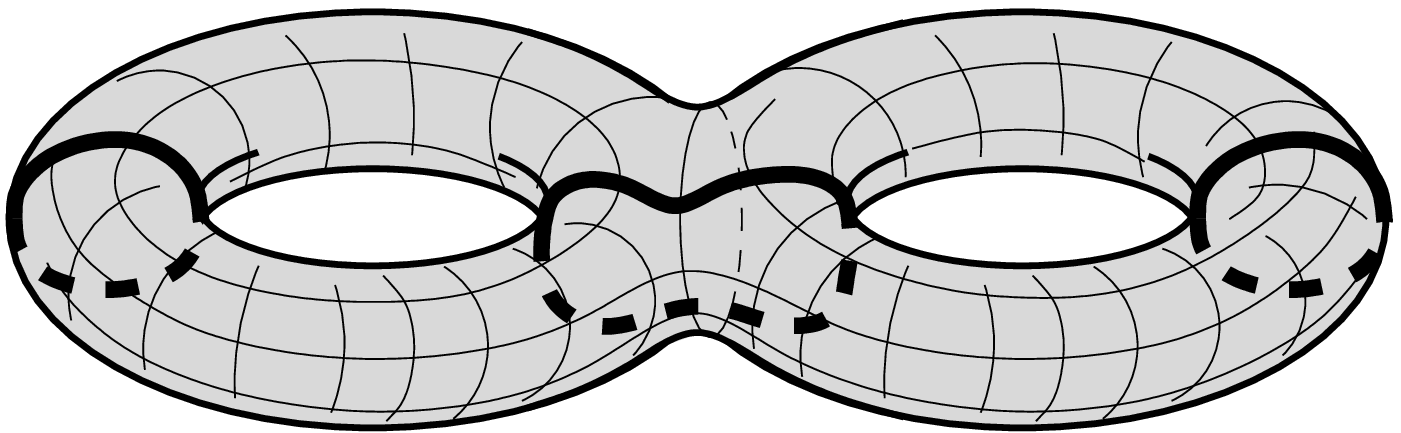}
\begin{picture}(0,0)(0,0)
\put(-24,0){$b_1$}
\put(20,12){$b_2$}
\put(66,0){$b_3$}
\end{picture}
\vspace*{5pt}

&

\frac{128}{5}\!\cdot\!
\frac{1}{2}\!\cdot\!
\frac{1}{12}\!\cdot\!
b_1 b_2 b_3 \!\cdot\!
N_{0,3}(b_1,b_1,b_2)\cdot

&

\\

\includegraphics{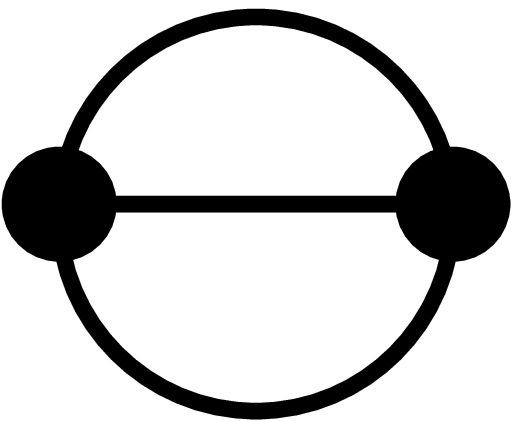}
\begin{picture}(0,0)(0,0)
\put(4,-16){$b_1$}
\put(27.5,-16){$b_2$}
\put(43,-16){$b_3$}
\put(31,2){$0$}
\put(31,-35){$0$}
\end{picture}
\vspace*{5pt}

&

\cdot N_{0,3}(b_2,b_3,b_3)
=\frac{16}{15}\!\cdot\! b_1 b_2 b_3\!\cdot\! (1) \!\cdot\! (1)
\xmapsto{\cZ}

&
\frac{16}{15}\cdot \left(1!\,\zeta(2)\right)^3
\\


&&
=\frac{2}{405}\cdot\pi^6

\vspace*{20pt}
\\
\end{array}
$$
\caption{
\label{tab:2:0}
Computation of $\Vol \cQ_{2,0}$.
The left column represents
the stable graphs $\Gamma$
and their associated multicurves;
the middle column gives
the polynomials
$P_\Gamma$; the right column
provides $\Vol(\Gamma)$.
}
\end{table}

Taking the sum of the six contributions
we obtain the answer, matching the value found in~\cite{Goujard:volumes}
by implementing the method of A.~Eskin and A.~Okounkov~\cite{Eskin:Okounkov:pillowcase}.
$$
\Vol\cQ_{2,0}=
\left(
\left(\tfrac{16}{945}+\tfrac{1}{2835}\right)
+
\left(\tfrac{8}{225}+\tfrac{1}{675}\right)
+
\left(\tfrac{1}{135}+\tfrac{2}{405}\right)
\right) \pi^6
=
\tfrac{1}{15} \pi^6\,.
$$
   %


\begin{Remark}
In genus $0$, the formula simplifies considerably.
It was conjectured by M.~Konstevich and proved by
J.~Athreya, A.~Eskin and A.~Zorich in~\cite{AEZ:genus:0}
that for all $n \geq 4$
\begin{equation}
\label{eq:vol:genus:0}
\Vol \cQ_{0,n} = \Vol \QuadStrat(1^{n-4}, -1^n) = \frac{\pi^{2n-6}}{2^{n-5}}\,.
\end{equation}

Note that in genus $0$ all correlators of $\psi$-classes
admit a closed explicit expression. Rewriting all polynomials
$N_{0,n_v}(\boldsymbol{b}_v)$ for all stable graphs in
$\cG_{0,n}$ in Formula~\eqref{eq:square:tiled:volume} for
$\Vol \cQ_{0,n}$ in terms of the corresponding multinomial
coefficients we get the formula originally obtained
in~\cite{AEZ:Dedicata}. A lot of techniques in this
article are borrowed from~\cite{AEZ:Dedicata}. Note,
however, that the proof of~\eqref{eq:vol:genus:0} is
indirect and is based on analytic
Riemann--Roch--Hierzebruch formula and on fine comparison
of asymptotics of determinants of flat and hyperbolic
Laplacians as $(X,q)$ in $\cQ_{0,n}$ approaches the
boundary.
\end{Remark}

\begin{Remark}
\label{rm:Chen:Moeller:Sauvaget}
In the recent paper~\cite{Chen:Moeller:Sauvaget}
D.~Chen, M.~M\"oller, A.~Sauvaget
proved
an alternative formula for $\Vol \QuadStrat(1^{4g-4+n}, -1^n)$
expressing it
as a weighted sum of certain linear Hodge integrals. Their approach is based on intersection theory.
Combined with the recursive formula for the
linear Hodge integrals obtained by M.~Kazarian in~\cite{Kazarian}, it allows to compute
the exact values of $\Vol \QuadStrat(1^{4g-4+n}, -1^n)$ for $g$ up to $250$
and, basically, for any $n$. In particular, it provides
an alternative proof of~\eqref{eq:vol:genus:0}.
\end{Remark}

Note the following important feature of
Formula~\eqref{eq:square:tiled:volume} which distinguishes
it from the approach of
Eskin--Okounkov~\cite{Eskin:Okounkov:Inventiones,
Eskin:Okounkov:pillowcase} based on the quasimodularity
of certain generating functions or from the approach of
Chen--M\"oller--Sauvaget~\cite{Chen:Moeller:Sauvaget}
based on intersection theory.
Formula~\eqref{eq:square:tiled:volume} allows us to analyze
the contribution of individual stable graphs to
$\Vol \cQ_{g,n}$. In particular, it allows us to study the
statistical geometry of random square-tiled surfaces as in Sections~\ref{ss:Combinatorial:geometry:of:random:st:surfaces}
and statistical properties of random simple closed hyperbolic geodesics as in Section~\ref{s:2:correlators}.

Formula~\eqref{eq:square:tiled:volume} also implies
the following asymptotic lower bound for the Masur--Veech
volume $\Vol \cQ_{g,0}$ and a conjectural asymptotic value.

\begin{Theorem}
\label{th:asymptotic:lower:bound}
The following inequality holds for any $g\ge 2$:
\begin{equation}
\label{eq:asymptotic:lower:bound}
\Vol \cQ_{g,0}
>\sqrt{\frac{2}{3\pi g}}
\cdot\left(\frac{8}{3}\right)^{4g-4}
\cdot\left(1-\frac{2}{6g-7}\right)\,.
\end{equation}
\end{Theorem}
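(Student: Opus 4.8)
The plan is to exploit the fact, recorded after~\eqref{eq:Vol:gamma}, that every contribution $\Vol(\Graph)$ with $\Graph\in\cG_{g,0}$ is strictly positive. Hence for any single $\Graph_0\in\cG_{g,0}$ one has $\Vol\cQ_{g,0}>\Vol(\Graph_0)$, and it suffices to find one graph whose contribution already exceeds the right-hand side of~\eqref{eq:asymptotic:lower:bound}. The natural choice is the stable graph $\Graph_0$ consisting of a single vertex of genus $g-1$ carrying a single loop (the graph of a non-separating simple closed curve); it is the genus-$g$ analogue of the first graph in Table~\ref{tab:2:0}, and its polynomial involves only the two-point volume polynomial, which makes the computation transparent. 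Here $|V(\Graph_0)|=1$ and $|\Aut(\Graph_0)|=2$, so from~\eqref{eq:P:Gamma} and the homogeneity $N_{g-1,2}(b,b)=N_{g-1,2}(1,1)\,b^{6g-8}$ together with $\cZ(b^{6g-7})=(6g-7)!\,\zeta(6g-6)$ I would first establish the exact evaluation
\[
\Vol(\Graph_0)=\cZ(P_{\Graph_0})
=2^{6g-6}\,(4g-4)!\;N_{g-1,2}(1,1)\;\zeta(6g-6).
\]
(As a check, for $g=2$ this returns $\tfrac{16}{945}\pi^6$, matching the first line of Table~\ref{tab:2:0}.)

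\textbf{Extracting the leading constant.} The next step is to rewrite $N_{g-1,2}(1,1)$ through~\eqref{eq:N:g:n}--\eqref{eq:c:subscript:d} as a weighted sum of intersection numbers,
\[
N_{g-1,2}(1,1)=\frac{1}{2^{5g-7}}\sum_{d_1+d_2=3g-4}\frac{\langle\tau_{d_1}\tau_{d_2}\rangle_{g-1}}{d_1!\,d_2!},
\]
and to evaluate the associated ``model'' sum obtained by inserting the conjectural large-genus value $\langle\tau_{d_1}\tau_{d_2}\rangle_{g-1}\sim\frac{(6g-7)!!}{24^{g-1}(g-1)!\,(2d_1+1)!!(2d_2+1)!!}$. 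Using $\frac{1}{d!\,(2d+1)!!}=\frac{2^{d}}{(2d+1)!}$ and the closed form $\sum_{d_1+d_2=3g-4}\frac{1}{(2d_1+1)!(2d_2+1)!}=\frac{2^{6g-7}}{(6g-6)!}$, which is the coefficient of $x^{6g-6}$ in $\sinh^2 x=\tfrac12(\cosh 2x-1)$, this model sum collapses to
\[
\Vol(\Graph_0)\ \approx\ \frac{2^{4g-4}}{3^{g-1}}\binom{4g-4}{g-1}\,\zeta(6g-6).
\]
A Stirling estimate of $\binom{4g-4}{g-1}\sim\sqrt{\tfrac{2}{3\pi(g-1)}}\,\tfrac{4^{4g-4}}{3^{3g-3}}$, combined with $\tfrac{2^{4g-4}4^{4g-4}}{3^{g-1}3^{3g-3}}=\bigl(\tfrac{8}{3}\bigr)^{4g-4}$, then recovers exactly the target growth $\sqrt{\tfrac{2}{3\pi g}}\bigl(\tfrac{8}{3}\bigr)^{4g-4}$, with room to spare since $\sqrt{1/(g-1)}>\sqrt{1/g}$ and $\zeta(6g-6)>1$.

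\textbf{Turning the asymptotics into an honest inequality.} The main obstacle is that the displayed $\approx$ must be upgraded to a rigorous lower bound valid for \emph{all} $g\ge2$, with the explicit multiplicative defect $\bigl(1-\tfrac{2}{6g-7}\bigr)$. This requires two uniform ingredients. First, a lower bound on the two-point intersection numbers of the shape $\langle\tau_{d_1}\tau_{d_2}\rangle_{g-1}\ge c(g)\cdot\frac{(6g-7)!!}{24^{g-1}(g-1)!\,(2d_1+1)!!(2d_2+1)!!}$, uniform in the split $d_1+d_2=3g-4$, which I would obtain either from the large-genus estimates for $\psi$-intersection numbers or, in a self-contained way, by inducting on the DVV/Virasoro recursion to control the ratio of $\langle\tau_{d_1}\tau_{d_2}\rangle_{g-1}$ to its model value. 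Second, the Stirling approximation of $\binom{4g-4}{g-1}$ must be replaced by Robbins-type two-sided bounds with explicit exponential remainders. The final step is bookkeeping: one shows that the product of these two controlled errors, together with the favourable factors $\zeta(6g-6)>1$ and $\sqrt{(g)/(g-1)}>1$, is bounded below by $1-\tfrac{2}{6g-7}$, which completes~\eqref{eq:asymptotic:lower:bound}. I expect the uniform correlator lower bound to be the delicate point, since the exponential rate $(8/3)^{4g-4}$ is driven by the balanced splits $d_1\approx d_2$ rather than by the extreme terms (where $\langle\tau_0\tau_{3g-4}\rangle_{g-1}=\langle\tau_{3g-5}\rangle_{g-1}=\tfrac{1}{24^{g-1}(g-1)!}$ is known exactly), so no single term suffices and the estimate genuinely has to hold across the whole range.
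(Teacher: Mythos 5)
Your proposal follows essentially the same route as the paper: lower-bound $\Vol\cQ_{g,0}$ by the contribution of the one-loop graph with a genus-$(g-1)$ vertex, evaluate it exactly as $\frac{2^{4g-4}}{3^{g-1}}\binom{4g-4}{g-1}\zeta(6g-6)$ times a weighted sum of normalized $2$-correlators, and control that sum by a uniform two-sided bound on $\langle\tau_{d_1}\tau_{d_2}\rangle_{g-1}$ relative to the model value $\frac{(6g-7)!!}{24^{g-1}(g-1)!(2d_1+1)!!(2d_2+1)!!}$. The ``delicate point'' you correctly isolate is exactly the paper's Proposition~\ref{pr:main:bounds} (proved in Appendix~\ref{a:proof:2:correlators} from Zograf's explicit formula for differences of $2$-correlators, i.e.\ essentially your DVV-recursion induction), whose lower constant $1-\tfrac{2}{6(g-1)-1}=1-\tfrac{2}{6g-7}$ is precisely the defect factor in~\eqref{eq:asymptotic:lower:bound}; the remaining Stirling step is handled in the paper via Batir's explicit bounds, as you propose.
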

Theorem~\ref{th:asymptotic:lower:bound} is proved in
Section~\ref{s:simple:closed:geodesics:asymptotics}.

\begin{Conjecture}
\label{conj:Vol:Qg}
The Masur--Veech volume of the moduli space of holomorphic
quadratic differentials has the following large genus
asymptotics\footnote{After a journal submission of the current paper
a stronger form of this Conjecture was proved by A.~Aggarwal
in~\cite{Aggarwal:correlators}.}:
\begin{equation}
\label{eq:Vol:Qg}
\Vol \cQ_{g,0}
\overset{?}{=}
\frac{4}{\pi}
\cdot\left(\frac{8}{3}\right)^{4g-4}
\cdot\left(1+O\left(\frac{1}{g}\right)\right)
\quad\text{as }g\to+\infty\,.
\end{equation}
\end{Conjecture}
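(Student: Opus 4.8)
The plan is to take Theorem~\ref{th:volume} as the starting point, writing $\Vol\cQ_{g,0}=\sum_{\Graph\in\cG_{g,0}}\cZ(P_\Graph)$ and expanding each $P_\Graph$ through the definition~\eqref{eq:N:g:n} of the polynomials $N_{g_v,n_v}$. This turns every summand into a weighted sum of products of $\psi$-class correlators $\langle\tau_{d_1}\dots\tau_{d_{n_v}}\rangle_{g_v}$, the weights being the products of factorials and Riemann $\zeta$-values produced by $\cZ$. Since each edge variable $b_e$ enters $P_\Graph$ with an odd exponent, every $\cZ(P_\Graph)$ is a rational multiple of $\pi^{6g-6}$, so that~\eqref{eq:Vol:Qg} becomes a purely asymptotic statement about the resulting rational coefficients. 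The essential analytic input is the uniform large-genus asymptotics of the correlators,
\[
\langle\tau_{d_1}\dots\tau_{d_n}\rangle_g\ \sim\ \frac{(6g-5+2n)!!}{24^g\,g!\,\prod_{i=1}^n(2d_i+1)!!}\,,
\]
which is conjectured in Section~\ref{s:2:correlators} and established in~\cite{Aggarwal:correlators}.

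\textbf{The leading graph.} First I would isolate the contribution of the simplest stable graph: a single vertex of genus $g-1$ carrying one loop, i.e. the orbit of a non-separating simple closed curve. Using $d!\,(2d+1)!!=(2d+1)!/2^d$ together with the power-series identity $\sinh^2 x=\tfrac12(\cosh 2x-1)$, the corresponding $\cZ$-sum over the degree partition collapses, up to elementary factors, to a single binomial coefficient $\binom{4(g-1)}{g-1}$. Stirling's formula then gives exactly $\sqrt{2/(3\pi g)}\cdot(8/3)^{4g-4}$, which is the lower bound of Theorem~\ref{th:asymptotic:lower:bound}. This already pins down the exponential rate $(8/3)^{4g-4}$; what remains is to recover the polynomial factor $2\sqrt{6g/\pi}$ separating this single-graph bound from the conjectural constant $4/\pi$.

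\textbf{Resummation over handles.} The key structural point is that this missing $\sqrt g$ is not produced by any one graph but by summing over a whole family. For the one-vertex graph with $k$ loops (vertex genus $g-k$, valence $2k$, automorphism group of order $2^k k!$) the same $\sinh$-type manipulation reduces its $\cZ$-sum to $[x^{3g-3}]\bigl(-\log(1-x)\bigr)^k$, i.e. to Stirling numbers of the first kind. I would then resum the $k$-series into a function of $-\log(1-x)$ and extract the coefficient of $x^{3g-3}$ by the saddle-point method, the $k$-dependent factors $(6g-2k-5)!!$, $(g-k)!$ and $24^{-k}$ being expanded around the dominant range. A short computation locates the saddle at $k^\ast\sim\tfrac12\log g$ and shows that the handle-sum contributes precisely a factor of order $\sqrt g$, restoring the correct power of $g$. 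Assembling the exact constant $4/\pi$ then requires carrying the saddle-point expansion to the order at which the remaining families of stable graphs (a high-genus vertex decorated by lower-complexity pieces) enter at the same order, and verifying that every other graph is of strictly lower order.

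\textbf{Main obstacle.} The crux is uniformity together with the breakdown of fixed-topology asymptotics in the dominant regime. Because the handle-sum is governed by $k^\ast\sim\tfrac12\log g\to\infty$, neither the fixed-$k$ correlator asymptotics above nor the fixed-$k$ Stirling-number asymptotics are automatically valid there, and the exact constant turns out to be sensitive to the subleading corrections (for instance to the distinction between $\log N$ and the harmonic number $H_{N-1}$ in the Stirling estimate, with $N=3g-3$). One therefore needs the error term in the correlator asymptotics to be uniform over both the distribution of genus among the vertices and the degree partition, with explicit control as the number of marked points grows, and one must simultaneously bound the super-exponentially many stable graphs outside the dominant family so that their total contribution is $O(1/g)$ relative to the main term. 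Once this uniform control is in place, the saddle-point evaluation producing the constant $4/\pi$ is delicate but routine.
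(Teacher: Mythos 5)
You are attempting to prove a statement that the paper itself does not prove: Conjecture~\ref{conj:Vol:Qg} is stated as a conjecture, supported only by numerical evidence (via \cite{Chen:Moeller:Sauvaget} and \cite{Kazarian}) and by the lower bound of Theorem~\ref{th:asymptotic:lower:bound}; the full asymptotics was established only later by Aggarwal~\cite{Aggarwal:correlators}. So there is no proof in the paper to compare against, and the only part of your proposal that can be matched is your ``leading graph'' step. That step does reproduce the paper's actual argument (Theorem~\ref{th:contribution:Gamma:1}): the one-vertex, one-loop graph $\Graph_1(g)$ contributes $\sqrt{2/(3\pi g)}\cdot(8/3)^{4g-4}(1+O(1/g))$, and your arithmetic for the missing factor $2\sqrt{6g/\pi}$ is correct. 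Note, however, a difference in input: the paper does not use the general $n$-point asymptotics you quote (which appears in this paper only as the $n=2$ statement of Section~\ref{s:2:correlators}, proved via Zograf's closed formula~\cite{Zograf:2:correlators} and the uniform bounds of Proposition~\ref{pr:main:bounds}); the general-$n$ formula is exactly Aggarwal's later theorem, so invoking it here is circular relative to what was available.

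Your resummation program is, in hindsight, the right architecture — it is now known (\cite{DGZZ:statistics}, \cite{Aggarwal:correlators}) that the dominant stable graphs are one-vertex graphs with $k$ loops and that $k$ concentrates near $\tfrac12\log g$, so the handle-sum does supply the missing $\sqrt g$ exactly as your heuristic $\sum_k (\tfrac12\log g)^k/k! = \sqrt g$ suggests — but as written two steps genuinely fail rather than being merely unfinished. First, the correlator asymptotics with a fixed-$n$ error term cannot be summed over $k\sim\tfrac12\log g$ loops, i.e.\ over $n=2k\to\infty$ marked points: the constant $4/\pi$ lives precisely at the order of the non-uniform errors (you correctly flag the $\log N$ versus $H_{N-1}$ sensitivity in the Stirling-number estimate), so without an error term uniform in $n$ and in the degree partition the saddle-point evaluation is not legitimate. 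Second, since every $\Vol(\Gamma)$ is strictly positive and $|\cG_{g,0}|$ grows super-exponentially in $g$, no term-by-term bound multiplied by a graph count can show the remaining graphs contribute $O(1/g)$ relatively; one needs graph-dependent estimates that remain summable over all of $\cG_{g,0}$, which is the technical heart of \cite{Aggarwal:correlators}. In short: your proposal correctly identifies both the exponential rate (matching the paper's proved lower bound) and the mechanism producing the $\sqrt g$ enhancement, but the two obstacles you name are not routine technicalities — they are the reason the statement could only be a conjecture in this paper, and closing them required a separate, substantial work.
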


The exact values of $\Vol \cQ_{g,0}$ obtained by combining
results~\cite{Chen:Moeller:Sauvaget} and~\cite{Kazarian}
corroborate the conjectural Formula~\eqref{eq:Vol:Qg}.
The work~\cite{Yang:Zagier:Zhang} also corroborates and
develops this conjecture suggesting several first terms
of the asymptotic expansion in the negative powers of $g$.

A conjectural generalization of
Formula~\eqref{eq:Vol:Qg} to other strata of meromorphic
quadratic differentials and numerical evidence beyond this
conjecture are presented in~\cite{ADGZZ:conjecture}.
The statistical geometry of random square-tiled surfaces of
large genus and of random simple closed multicurves on
surfaces of large genus is discussed in the separate
paper~\cite{DGZZ:statistics}.

\begin{Remark}
\label{rm:Vol:power:of:pi}
By the result of A.~Eskin and
A.~Okounkov~\cite{Eskin:Okounkov:Inventiones} the
Masur--Veech volume of any stratum of Abelian differentials
is a rational multiple of $\pi^{2g}$. For the strata of
meromorphic quadratic differentials the
results~\cite{Eskin:Okounkov:pillowcase} directly imply
slightly weaker property: the Masur--Veech volume is a
polynomial in powers of $\pi^2$ with rational coefficients,
see the proof of Proposition~5.10
in~\cite{Goujard:volumes}. Actually, one can derive from the
results~\cite{Eskin:Okounkov:pillowcase} a
stronger statement, namely, that the Masur--Veech volume of
any stratum of meromorphic quadratic differentials is a
rational multiple of $\pi^{2g_{\mathit{eff}}}$, where the
integer number $g_{\mathit{eff}}$ denotes the
\textit{effective genus} of the stratum. However, this
implication is already non-trivial and was never written down.
Alternative proofs of the latter statement were recently
obtained by D.~Chen, M.~M\"oller, and A.~Sauvaget for the
principal stratum in~\cite{Chen:Moeller:Sauvaget} and by
V.~Koziarz and \mbox{D.-M.~Nguyen} in~\cite{Koziarz:Nguyen} for
certain more general arithmetic affine $\GL$-invariant
submanifolds of the strata.

By construction, the polynomial $P_\Gamma(\boldsymbol{b})$
associated to a stable graph $\Graph\in\cG_{g,n}$ by
Expression~\eqref{eq:P:Gamma} is a homogeneous polynomial
of degree $6g-6+2n-|E(\Gamma)|$. Moreover, each variable
$b_e$ in each monomial appears with an odd power.
Thus, Formula~\eqref{eq:square:tiled:volume} implies
that the contribution $\Vol(\Gamma)$ of each stable graph
$\Gamma\in\cG_{g,n}$ to $\Vol \cQ_{g,n}$ is already a
rational multiple of $\pi^{6g-6+2n}$. Moreover, using the
refined version of $N_{g,n}(b_1, \ldots, b_n)$ due to
Norbury~\cite{Norbury} expressing the counting functions of
ribbon graphs as quasi-polynomials in $b_i$, one can even
show that the generating series of square-tiled surfaces
corresponding a given stable graph is a quasi-modular form.
This result develops the results of
Eskin-Okounkov~\cite{Eskin:Okounkov:pillowcase} that say
that in each stratum, the generating series for the count
of pillowcase covers (in the sense of A.~Eskin and
A.~Okounkov) is a quasimodular form and the analogous
result of P.~Engel~\cite{Engel1}, \cite{Engel2} for the
count of square-tiled surfaces.
\end{Remark}

\begin{Remark}
\label{rm:ref:to:Engel}
Up to a normalization constant given by the explicit
Formula~\eqref{eq:Vgn:through:Ngn} (depending only on $g$ and $n$),
the polynomial $N_{g,n}(b_1,\dots,b_n)$
coincides with the top homogeneous part of Mirzakhani's
volume polynomial $V_{g,n}(b_1,\dots,b_n)$ providing the
Weil--Petersson volume of the moduli space of bordered
Riemann
surfaces~\cite{Mirzakhani:simple:geodesics:and:volumes}.
The classical Weil--Petersson volume of
$\cM_{g,n}$ corresponds to the constant term of
$V_{g,n}(b_1,\dots,b_n)$ when the lengths of all boundary
components are equal to zero. This relation between correlators $\langle \psi_1^{d_1}
\ldots \psi_n^{d_n}\rangle$ and Weil--Petersson volumes
allowed Mirzakhani to provide an alternative proof
of Witten's conjecture.

In the paper~\cite{ABCDGLW},
J.~E.~Andersen, G.~Borot, S.~Charbonniery, V.~Delecroix, A.~Giacchetto,
D.~Lewa\'nski, and C.~Wheeler define a \textit{Masur--Veech}
polynomial whose top degree term is also (a rescaled) version of $N_{g,n}(b_1,
\ldots, b_n)$ and its constant term is the Masur--Veech volume.

Both the Weil--Petersson polynomial and the Masur--Veech
polynomial satisfy a topological recursion that allows
direct computations without relying on formulae such as~\eqref{eq:square:tiled:volume}
which involves the (huge) list of stable graphs.

Note that contrarily to the volume polynomial $V_{g,n}$, the topological
recursion for the Masur--Veech polynomial does not admit a geometric
interpretation yet.
\end{Remark}

\begin{Remark}
\label{rm:to:complete}
Formulae~\eqref{eq:P:Gamma}--\eqref{eq:volume:contribution:of:stable:graph}
admit a generalization allowing one to express Masur--Veech
volumes of those strata $\cQ(d_1,\dots, d_n)$, for which
all zeroes have odd degrees $d_i$, in terms of intersection
numbers of $\psi$-classes with the combinatorial cycles in
$\overline{\cM}_{g,n}$ associated to the strata (denoted by
$W_{m_*,n}$ in \cite{Arbarello:Cornalba}, where $m_*$ is the sequence of
multiplicities of the zeroes). In this more general case
the formula requires additional correction subtracting the
contribution of those quadratic differentials which
degenerate to squares of globally defined Abelian
differentials. This generalization is a work in progress.
\end{Remark}

\begin{Remark}
Note that the contribution of square-tiled surfaces having
a fixed number of cylinders to the Masur--Veech volume of
more general strata of quadratic differentials might have a
much more sophisticated arithmetic nature.
In~\cite{DGZZ:one:cylinder:Yoccoz:volume} we describe the contribution of square-tiled surfaces having a
single maximal horizontal cylinder to the Masur--Veech
volume of any stratum of Abelian or quadratic
differentials. We conjecture that the contribution
of $k$-cylinder square-tiled surfaces to the Masur--Veech volume of the ambient stratum is expressed as a polynomial with rational coefficients in multiple-zeta values.
\end{Remark}

\subsection{Siegel--Veech constants}
\label{ss:intro:Siegel:Veech:constants}
We now turn to a formula for the Siegel--Veech constants of
$\cQ_{g,n}$. We first recall the definition of
Siegel--Veech constants that involves the flat geometry of
quadratic differentials.

Let $(C,q)$ be a non-zero quadratic differential in
$\cQ_{g,n}$. It naturally defines a
Riemannian metric $|q|$ which is flat with conical
singularities exactly at the zeros and poles of $q$. This
metric allows to define geodesics and we say that a
geodesic is \textit{regular} if it does not pass through
the singularities of $q$. Closed regular flat geodesics
appear in families composed of parallel closed geodesics of
the same length. Each such family fills a maximal flat
cylinder $\mathit{cyl}$ having a conical singularity
(possibly the same) at each of the two boundary components.
The length of any regular geodesic in this family
is called the \textit{width} (or \textit{circumference}) of the cylinder .
The \textit{height} of the cylinder is the distance between its boundary components measured inside the cylinder. In
particular, the flat area of the cylinder is the product of its width
and its height.

For $S=(C,q)$ in $\cQ_{g,n} \setminus \{0\}$ and any $L\in\R$, the
number of maximal cylinders in $S$ filled with regular closed
geodesics of  bounded  length  $w(cyl)\le L$ is finite.
Thus the following quantity is well-defined:
\begin{equation}
\label{eq:N:area}
N_{\mathit{area}}(S,L):=
\frac{1}{\Area(S)}
\sum_{\substack{
\mathit{cyl}\subset S\\
w(\mathit{cyl})\le L}}
\Area(cyl)\,.
\end{equation}

For any pair of nonnegative integers $(g,n)$ satisfying $2g+n>3$,
choose the Masur--Veech volume element $\dVolMV$ in $\cQ_{g,n}$
which coincides with the Masur--Veech volume element on the principal
stratum $\cQ(1^{4g-4+n}, -1^n)$. This volume element induces a canonical volume element
$\dVolMV_1$ on any level hypersurface $\cQ^{\Area=a}_{g,n}$.
The following theorem is a special case
of the fundamental result of W.~Veech, \cite{Veech:Siegel}
developed by Y.~Vorobets in~\cite{Vorobets}.

\begin{NNTheorem}[W.~Veech; Ya.~Vorobets]
Let $(g,n)$ be a pair of non-negative integers such that
$2g+n>3$. There exists a strictly positive constant
$\carea(\cQ_{g,n})$ such that for any strictly
positive numbers
$a$ and $L$ the following holds:
\begin{equation}
\label{eq:SV:constant:definition}
\frac{a}{\pi L^2}\int_{\cQ^{\Area=a}_{g,n}}
N_{\mathit{area}}(S,L)\,d\Vol_1(S)
=
\Vol_1\cQ^{\Area=a}_{g,n}
\ \cdot\ \carea(\cQ_{g,n})\,.
\end{equation}
\end{NNTheorem}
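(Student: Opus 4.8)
The plan is to realize $N_{\mathit{area}}(S,L)$ as an \emph{area-weighted Siegel--Veech transform} and to extract the constant from the $\SL$-invariance of the Masur--Veech measure together with the Masur--Veech ergodicity theorem. For a bounded, compactly supported, sign-invariant function $f\colon\R^2/\{\pm1\}\to\R$ (holonomies of cylinders on a quadratic differential are defined only up to sign) set
\[
\widehat f_{\mathit{area}}(S)=\frac{1}{\Area(S)}\sum_{\mathit{cyl}\subset S}\Area(\mathit{cyl})\cdot f\big(\mathrm{hol}(\mathit{cyl})\big),
\]
where $\mathrm{hol}(\mathit{cyl})\in\R^2/\{\pm1\}$ has Euclidean length $w(\mathit{cyl})$. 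Taking $f=\mathbf 1_{\{|v|\le L\}}$ gives exactly $\widehat f_{\mathit{area}}(S)=N_{\mathit{area}}(S,L)$. Because $w(tS)=t\,w(S)$ and $\Area(tS)=t^2\Area(S)$, the transform is homogeneous: $N_{\mathit{area}}(tS,tL)=N_{\mathit{area}}(S,L)$. Hence it suffices to prove the identity on the area-one hypersurface $\cQ^{\Area=1}_{g,n}$ and transport it to level $a$ by the rescaling $S\mapsto a^{-1/2}S$, which sends $L\mapsto a^{-1/2}L$ and produces the normalizing factor $a/(\pi L^2)$.

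The analytic heart --- and the step I expect to be the main obstacle --- is to show that $\widehat f_{\mathit{area}}$ lies in $L^1(\cQ^{\Area=1}_{g,n},d\Vol_1)$, i.e.\ that the accumulated area of cylinders of bounded width is integrable over the non-compact thin part of the stratum. Here the area weighting is a genuine help over the naive cylinder count: a cylinder that is geometrically short in width carries proportionally little area, so the weight damps precisely the configurations that proliferate near the boundary. The required bound follows from the Masur--Veech finiteness of $\Vol_1\cQ^{\Area=1}_{g,n}$ combined with a uniform control of holonomy vectors in the cusp (the number of cylinders of width $\le L$ grows at most quadratically in $L$, with a constant uniform over the stratum); this is exactly the content developed by Veech~\cite{Veech:Siegel} and Vorobets~\cite{Vorobets}.

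Granting integrability, the functional $I\colon f\mapsto\int_{\cQ^{\Area=1}_{g,n}}\widehat f_{\mathit{area}}(S)\,d\Vol_1(S)$ is positive and continuous on $C_c(\R^2/\{\pm1\})$, so by the Riesz representation theorem it is given by a Radon measure $\nu$. The action of $\SL$ on $\cQ_{g,n}$ transforms every holonomy vector by the linear action of $g$ on $\R^2$, preserves the areas of the cylinders and the total area, and preserves $d\Vol_1$; consequently $\widehat f_{\mathit{area}}(g\cdot S)=\widehat{(f\circ g)}_{\mathit{area}}(S)$, whence $I(f\circ g)=I(f)$ and $\nu$ is $\SL$-invariant. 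Since $\SL$ acts transitively on $\R^2\setminus\{0\}$ with unipotent (hence unimodular) point stabilizer, the only $\SL$-invariant Radon measures on $\R^2\setminus\{0\}$ are the scalar multiples of Lebesgue measure, and holonomies of cylinders never vanish; therefore $\nu=\carea(\cQ_{g,n})\cdot\Vol_1\cQ^{\Area=1}_{g,n}\cdot(\text{Lebesgue})$ for a constant $\carea(\cQ_{g,n})\ge 0$. Evaluating on $f=\mathbf 1_{\{|v|\le L\}}$, whose Lebesgue integral is $\pi L^2$, yields the asserted identity on the area-one locus, and rescaling back to $\cQ^{\Area=a}_{g,n}$ finishes the proof. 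Strict positivity $\carea(\cQ_{g,n})>0$ holds because the set of $S$ possessing at least one cylinder of width $\le L$ has positive $\Vol_1$-measure --- every square-tiled surface, for instance, decomposes into horizontal cylinders --- so $I(\mathbf 1_{\{|v|\le L\}})>0$.
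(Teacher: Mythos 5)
The paper does not prove this statement: it is quoted as a special case of W.~Veech's theorem on Siegel measures, as developed by Ya.~Vorobets, so there is no internal proof to compare against. Your argument is precisely the standard one from those references --- realize $N_{\mathit{area}}$ as an area-weighted Siegel--Veech transform, reduce to the area-one hypersurface by homogeneity, and identify the induced $\SL$-invariant Radon measure on $\R^2\setminus\{0\}$ with a multiple of Lebesgue measure via transitivity of the action and unimodularity of the (unipotent) point stabilizer --- and it is correct in outline, with the genuine analytic content (the $L^1$-integrability of the transform over the thin part of the stratum) correctly isolated and attributed. One small repair: square-tiled surfaces form a $\Vol_1$-null set, so for strict positivity you should add that a cylinder of width $<L$ persists on an open neighborhood in period coordinates; it is this openness, not the square-tiled examples themselves, that yields a set of positive measure on which $N_{\mathit{area}}(\cdot,L)>0$.
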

This formula is called the \textit{Siegel--Veech formula}, and the
corresponding  constant $\carea(\cQ_{g,n})$ is called the
\textit{Siegel--Veech constant}. Note that $\carea(\cQ_{g,n})$, actually,
does not depend on the choice of the normalization of the Masur--Veech volume.

Eskin and Masur~\cite{Eskin:Masur} proved
that for almost all $S=(C,q)$ in $\cQ_{g,n}$
(with respect to the Masur--Veech measure)
\begin{equation}
\label{eq:SV:asymptotics}
\lim_{L\to+\infty}
\Area(S) \cdot \frac{N_{\mathit{area}}(S,L)}{\pi L^2}=
\carea(\cQ_{g,n})\,.
\end{equation}

\begin{Remark}
Beyond its geometrical relevance, let us mention that the
area Siegel--Veech constant is the most important
ingredient in the Eskin--Kontsevich--Zorich formula for the
sum of the Lyapunov exponents of the Hodge bundle along the
Teichm\"uller geodesic flow~\cite{Eskin:Kontsevich:Zorich}.
\end{Remark}

An edge of a connected graph is called a \textit{bridge} if
the operation of removing this edge breaks the graph into
two connected components.
We define the following function $\separatingQ: E(\Graph)
\to \{\tfrac{1}{2}, 1\}$ on the set of edges of
any connected graph $\Gamma$:
$$
\separatingQ(e)=
\begin{cases}
\tfrac{1}{2}&\text{if the edge $e$ is a bridge\,,}\\
1  &\text{otherwise}\,.
\end{cases}
$$

We define the following operator $\partial_{\Graph}$ on
polynomials $P$ in variables $b_e$ associated to the edges
of stable graphs $\Graph\in\cG_{g,n}$. For every
$e\in E(\Gamma)$ let
\begin{equation}
\label{eq:operator:D:e}
\partial^e_{\Graph} P
:=\separatingQ(e)\, b_e\, \left.\frac{\partial P}{\partial b_e}\right|_{b_e=0}\,,
\end{equation}
and let
\begin{equation}
\label{eq:operator:D}
\partial_{\Graph} P
:=
\sum_{e \in E(\Graph)}
\partial^e_{\Graph} P\,.
\end{equation}

\begin{Theorem}
\label{th:carea}
Let $g,n$ be non-negative integers satisfying $2g+n > 3$.
The Siegel--Veech constant $\carea(\cQ_{g,n})$
satisfies the following relation:
\begin{equation}
\label{eq:carea}
\Vol \cQ_{g,n}
 \cdot
\carea(\cQ_{g,n})
=
\frac{3}{\pi^2}
\cdot
\sum_{\Graph \in \cG_{g,n}}
\cZ\left(\partial_{\Graph}
P_\Gamma\right)\,.
\end{equation}
\end{Theorem}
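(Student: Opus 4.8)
The plan is to obtain $\carea(\cQ_{g,n})$ from the very same square-tiled surface count that yields Theorem~\ref{th:volume}, with the area weight prescribed by the Siegel--Veech formula~\eqref{eq:SV:constant:definition} inserted into the count. Recall the structure of that count: for a fixed stable graph $\Gamma$ one sums, over the widths $b_e$ and heights $H_e$ of the horizontal cylinders, the assembly factor $\prod_{v}N_{g_v,n_v}(\boldsymbol{b}_v)$ times the twist factor $\prod_{e}b_e$, under the area bound $\tfrac14\sum_e b_eH_e\le N$; summing the widths for fixed heights produces $\cY(\boldsymbol{H})(P_\Gamma)$, and summing over heights produces $\Vol(\Gamma)=\cZ(P_\Gamma)$.

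The first step is the geometric reduction of the area Siegel--Veech constant to such a weighted count. By the Eskin--Masur asymptotics~\eqref{eq:SV:asymptotics} for $N_{\mathit{area}}(S,L)$, combined with the equidistribution of square-tiled surfaces from~\cite{DGZZ:meanders:and:equidistribution}, the product $\Vol\cQ_{g,n}\cdot\carea(\cQ_{g,n})$ is computed by the same normalised count as $\Vol\cQ_{g,n}$, now with each surface weighted by $\tfrac{1}{\Area(S)}\sum_{\cyl}\Area(\cyl)$, the sum taken over maximal cylinders of width at most $L$ and the result renormalised by $1/(\pi L^2)$ as $L\to+\infty$. Grouping the square-tiled surfaces first by their stable graph $\Gamma$ and then by the cylinder being weighted --- equivalently, by the edge $e\in E(\Gamma)$ to which that cylinder corresponds --- splits the count as $\Vol\cQ_{g,n}\cdot\carea(\cQ_{g,n})=\sum_{\Gamma\in\cG_{g,n}}\sum_{e\in E(\Gamma)}c(\Gamma,e)$, where $c(\Gamma,e)$ is the contribution of the horizontal cylinder indexed by $e$, weighted by its own area.

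The core of the proof is the evaluation of a single $c(\Gamma,e)$. Relative to the plain volume count the marked cylinder now carries the weight $\Area(\cyl_e)=\tfrac14 b_eH_e$, its width is cut off at $b_e\le L$, and everything is renormalised by $1/(\pi L^2)$. The elementary mechanism behind the answer is that $\tfrac{1}{L^2}\sum_{b_e=1}^{L}b_e^{\,k}$ converges to a finite non-zero limit exactly when $k=1$, with value $\tfrac12$; the area renormalisation therefore extracts from the count precisely its component that is linear in the width $b_e$ of the marked cylinder, and discards all other powers. Bookkeeping the twist factor $b_e$ and setting to zero the boundary length $b_e$ wherever it occurs in the vertex polynomials $N_{g_v,n_v}$ identifies this surviving part with $\separatingQ(e)\,b_e\,\tfrac{\partial P_\Gamma}{\partial b_e}\big|_{b_e=0}$, that is, with $P_\Gamma$ pinched along the marked cylinder and weighted by the bridge factor $\separatingQ(e)$, while the remaining cylinders $e'\neq e$ are summed by $\cZ$ exactly as in Theorem~\ref{th:volume}. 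Using the formal identity $\tfrac12=\tfrac{3}{\pi^2}\,\zeta(2)$ together with $\cZ(b_e^{1})=\zeta(2)$ and the multiplicativity of $\cZ$, this reads $c(\Gamma,e)=\tfrac{3}{\pi^2}\,\cZ\bigl(\partial^e_{\Graph}P_\Gamma\bigr)$. Summing over $e\in E(\Gamma)$ and over $\Gamma\in\cG_{g,n}$ gives Formula~\eqref{eq:carea}.

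I expect the principal difficulties to be two. The genuinely analytic one is the reduction of the first step: justifying that the geometric constant defined through~\eqref{eq:SV:constant:definition}, which counts cylinders in all directions, is computed by the horizontal-cylinder count above requires the interchange of the limit $L\to+\infty$ with the lattice-point limit $N\to+\infty$, and rests on the quadratic-growth estimates of~\cite{Eskin:Masur} and on the equidistribution of square-tiled surfaces. The delicate combinatorial one is to pin down the two universal factors in $c(\Gamma,e)$: the constant $\tfrac{3}{\pi^2}$ and the weight $\separatingQ(e)$, which equals $\tfrac12$ precisely when $e$ is a bridge. The value of $\separatingQ(e)$ comes from the local model for collapsing the waist of the marked cylinder and from the automorphism bookkeeping already built into $P_\Gamma$; both factors can be fixed unambiguously by matching the resulting expression against independently known low-genus Siegel--Veech constants, for instance for $\cQ_{2,0}$.
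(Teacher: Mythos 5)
Your target formula and the combinatorial shape of the answer (a sum over pairs $(\Gamma,e)$, extraction of the part of $P_\Gamma$ linear in $b_e$, a factor $\tfrac12$ for bridges, the constant $\tfrac{3}{\pi^2}$) are all correct, but the central mechanism you propose does not work, and it is not the one the paper uses. The factor $\pi L^2$ in the Siegel--Veech formula~\eqref{eq:SV:constant:definition} is the area of the disc of radius $L$ in the plane of holonomy vectors: it normalises the count of cylinders over \emph{all directions}, whose number grows like $L^2$. If you restrict to the horizontal cylinders of a square-tiled surface, then for every surface $\sum_{e}\Area(\cyl_e)/\Area(S)\le 1$, so your quantity $\tfrac{1}{\pi L^2}\sum_{e:\,b_e\le L}\Area(\cyl_e)/\Area(S)$ tends to $0$ as $L\to+\infty$, not to $\carea$. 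Consequently the claimed mechanism --- that $\tfrac{1}{L^2}\sum_{b_e\le L}b_e^k$ converges to a nonzero limit only for $k=1$ and thereby ``extracts the linear part'' --- is not a step of any correct argument; the cutoff $b_e\le L$ on the horizontal width is not where the $L^2$ lives. The identity $\tfrac12=\tfrac{3}{\pi^2}\zeta(2)$ is then being used to reverse-engineer the known answer rather than to derive it.

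What the paper actually does is first invoke the Eskin--Kontsevich--Zorich formula~\eqref{eq:SVconstant:for:square:tiled} and its consequence due to Chen--Eskin~\eqref{eq:SVconstant:for:stratum}: $\carea$ equals $\tfrac{3}{\pi^2}$ times the limit of the average over all square-tiled surfaces of $M(S)=\sum_i h_i/w_i$ --- the sum of the ratios height over width of the horizontal cylinders, with no cutoff and no $L$ at all. The $\SL(2,\Z)$-averaging over directions is entirely absorbed into the constant $\tfrac{3}{\pi^2}=\tfrac{1}{2\zeta(2)}$ and into the replacement of the area $w_ih_i$ by the modulus $h_i/w_i=w_ih_i/w_i^2$; this is the analytic input you cannot bypass. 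The extraction of the linear part is then a degree count in the lattice sum over $\boldsymbol{b}\cdot\boldsymbol{H}\le 2N$: the weight $H_e/b_e$ effectively swaps the roles of $b_e$ and $H_e$, and any monomial of $\prod_v N_{g_v,n_v}$ actually containing $b_e$ contributes only $o(N^{d})$, so only $\left.\prod_v N_{g_v,n_v}\right|_{b_e=0}$ survives. Finally, the bridge factor $\separatingQ(e)=\tfrac12$ is not something to be ``fixed by matching against $\cQ_{2,0}$'': it is derived from the parity constraints~\eqref{eq:sublattice:L} --- when $e$ is a bridge the parameter $b_e$ is forced to be even, so its summation yields $\tfrac14\zeta(2)$ in place of $\zeta(2)$, while the index of the congruence sublattice in the remaining variables drops from $2^{|V(\Graph)|-1}$ to $2^{|V(\Graph)|-2}$, for a net factor $\tfrac12$; when $e$ is not a bridge both effects cancel.
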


As an illustration of the above Theorem we compute
$\carea(\cQ_{2,0})$ and $\carea(\cQ_{1,2})$ in
appendices~\ref{a:2:0} and~\ref{a:1:2} respectively.

\subsection{Masur--Veech Volumes and Siegel--Veech constants}
\label{ss:intro:Siegel:Veech:Masur:Veech}
The Siegel--Veech constant can be expressed in terms of the
Masur--Veech volumes of certain boundary strata. The
formula for strata of Abelian differentials was obtained
in~\cite{Eskin:Masur:Zorich} and for strata of quadratic
differentials in~\cite{Goujard:carea}. Before presenting a
reformulation of Corollary~1 in~\cite{Goujard:carea} we
introduce the following conventions for $(g,n)$ being
$(0,3)$ or $(1,1)$ (see Appendix~\ref{ss:Q03:and:Q11} for a
discussion)
\begin{eqnarray}
\label{eq:convention:Q03}
 &\Vol \cQ_{0,3} := 4\,,
 \\
\label{eq:convention:Q11}
 &\Vol \cQ_{1,1} :=\frac{2\pi^2}{3}\,.
\end{eqnarray}

\begin{NNTheorem}[\cite{Goujard:carea}]
Let $g,n$ be non-negative integers with $2g+n > 3$. Under
Conventions~\eqref{eq:convention:Q03}--\eqref{eq:convention:Q11}
the following formula is
valid:
\begin{align}
\label{eq:carea:Elise}
\carea(\cQ_{g,n}) \cdot \Vol \cQ_{g,n} =
\frac{1}{8}
 \sum_{\substack{g_1+g_2=g
 \\ n_1+n_2=n+2 \\
 g_i\geq 0, n_i\geq 1, d_i\geq 1}}
 \frac{\ell!}{\ell_1!\ell_2!}\frac{n!}{(n_1-1)!(n_2-1)!}\cdot
 \\
\notag
\cdot\frac{(d_1-1)!(d_2-1)!}{(d-1)!}
\Vol \cQ_{g_1, n_1} \times\Vol \cQ_{g_2,n_2}
\ +\hspace*{80pt}
\\+\
\notag
\frac{1}{16}\cdot\frac{(4g-4+n)n(n-1)}{(6g-7+2n)(6g-8+2n)}
\Vol \cQ_{0, 3} \times\Vol \cQ_{g,n-1}\ +
\\
\notag
+\ \frac{\ell!}{(\ell-2)!}\frac{(d-3)!}{(d-1)!}
 \Vol \cQ_{g-1,n+2}\,.
\end{align}
Here $d=\dim_{\C}\cQ_{g,n}=6g-6+2n$, $d_i=6g_i-6+2n_i$,
$\ell=4g-4+n$, $\zeroes_i=4g_i-4+n_i$\,.

For $g=0$ and any integer $n$ satisfying $n\ge 4$
the following formula is valid:
\begin{multline}
\label{eq:carea:g0:Elise}
\carea(\cQ_{0,n}) \cdot \Vol \cQ_{0,n}=
\frac{1}{8}
 \sum_{\substack{ n_1+n_2=n+2 \\  n_i\geq 4}}
 \frac{(n-4)!}{(n_1-4)!(n_2-4)!}
\cdot
\\
\cdot
 \frac{n!}{(n_1-1)!(n_2-1)!}
\cdot\frac{(2n_1-7)!(2n_2-7)!}{(2n-7)!}
\Vol \cQ_{0, n_1}\times\Vol \cQ_{0,n_2}
+\\+
\frac{1}{16}\cdot\frac{(n-4)n(n-1)}{(2n-7)(2n-8)}
\Vol \cQ_{0, 3}\times\Vol \cQ_{0,n-1}\,.
\end{multline}
\end{NNTheorem}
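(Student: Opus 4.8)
The plan is to derive this formula from the general Eskin--Masur--Zorich description of the area Siegel--Veech constant as a sum over degenerations, in the form adapted to quadratic differentials by Goujard. The starting point is the Siegel--Veech formula~\eqref{eq:SV:constant:definition} together with the almost-everywhere asymptotics~\eqref{eq:SV:asymptotics}: the constant $\carea(\cQ_{g,n})$ records the asymptotic density, weighted by relative area, of maximal cylinders of bounded width. The Eskin--Masur--Zorich machinery expresses this density as a finite sum over \emph{admissible configurations} of parallel cylinders, each configuration contributing a term of the shape (explicit combinatorial multiplicity)$\,\times\,\Vol(\text{boundary stratum})/\Vol\cQ_{g,n}$, where the boundary stratum is obtained by collapsing the cylinders of the configuration and passing to the associated stable quadratic differential exactly as in Remark~\ref{rk:degeneration:horizontal:cylinders}.

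First I would classify the configurations that actually occur in the principal stratum $\cQ(1^{4g-4+n},-1^n)$. Since every zero is simple, a generic maximal cylinder whose width tends to zero is bounded on each side by saddle connections joining simple zeros, and collapsing its waist curve produces a single node. Up to the topological type of this waist curve there are three cases: (i) the curve is non-separating, so collapsing it yields a connected surface in $\cQ_{g-1,n+2}$, the two preimages of the node becoming two new marked points; (ii) the curve separates the surface into two pieces of genera $g_1+g_2=g$ with labels distributed so that $n_1+n_2=n+2$, yielding a boundary stratum governed by $\Vol\cQ_{g_1,n_1}\times\Vol\cQ_{g_2,n_2}$; (iii) the degenerate separating case in which the collapsed curve bounds a thrice-punctured sphere carrying a simple pole, which accounts for the distinguished $\Vol\cQ_{0,3}\times\Vol\cQ_{g,n-1}$ term. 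The conventions~\eqref{eq:convention:Q03}--\eqref{eq:convention:Q11} are precisely what make these degenerate pieces consistent with the general formula.

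Next I would compute the multiplicity attached to each configuration. The combinatorial factors count the admissible ways to build it: the multinomials $\frac{\ell!}{\ell_1!\ell_2!}$ and $\frac{n!}{(n_1-1)!(n_2-1)!}$ encode the ways of distributing the $\ell=4g-4+n$ simple zeros and the $n$ labeled poles between the two sides of a separating curve, the shift $n_i\mapsto n_i-1$ reflecting that one marked point per side is occupied by the node, while in the non-separating term $\frac{\ell!}{(\ell-2)!}=\ell(\ell-1)$ records the ordered choice of the pair of simple zeros adjacent to the two sides of the node. The remaining rational factors $\frac{(d_1-1)!(d_2-1)!}{(d-1)!}$ and $\frac{(d-3)!}{(d-1)!}$, with $d=6g-6+2n$, are the Beta-type normalization constants produced by integrating out the cylinder parameters and the area weight $\tfrac{a}{\pi L^2}$ against the induced volume element on $\cQ^{\Area=a}_{g,n}$, and the overall factors $\tfrac18$ and $\tfrac{1}{16}$ collect the configuration symmetries (the interchange of the two sides and the automorphisms of the pinched node). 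Substituting $d_i=6g_i-6+2n_i$ and $\ell_i=4g_i-4+n_i$ and assembling the three families reproduces~\eqref{eq:carea:Elise}; specializing to $g=0$ removes the non-separating term and any genus-carrying vertex, leaving~\eqref{eq:carea:g0:Elise}.

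The hard part will be the bookkeeping of the normalization constants in the third step: one must track simultaneously the passage from the flat volume on $\cQ_{g,n}$ to the product of flat volumes on the boundary strata, the Jacobian of the collapsing map, and the area weighting built into the Siegel--Veech transform, all while correctly accounting for the weights that distinguish a separating (bridge) waist curve from a non-separating one. Getting these Beta-function factors and the symmetry denominators exactly right, rather than only up to an unspecified constant, is where the genuine work lies; by contrast the topological classification of the first step and the multinomial counting of the second are comparatively routine once the Eskin--Masur--Zorich framework is in place.
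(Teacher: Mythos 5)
You should first note that the paper does not prove this statement at all: it is imported, with attribution, as a reformulation of Corollary~1 of \cite{Goujard:carea}, which in turn rests on the Eskin--Masur--Zorich description of Siegel--Veech constants via the principal boundary \cite{Eskin:Masur:Zorich} adapted to quadratic differentials. Your plan is therefore a sketch of the proof in the cited source rather than a route through anything in the present paper, and at the structural level it is the right sketch: the three families of terms do correspond to the non-separating degeneration (landing in $\cQ_{g-1,n+2}$), the separating degenerations (governed by $\Vol\cQ_{g_1,n_1}\times\Vol\cQ_{g_2,n_2}$), and the special case $(g_1,n_1)=(0,3)$ that the conventions \eqref{eq:convention:Q03}--\eqref{eq:convention:Q11} are designed to absorb. (One small slip: the $\cQ_{0,3}$ piece carries \emph{two} of the $n$ labeled simple poles plus the node --- that is what the factor $n(n-1)$ counts --- not a single pole.)

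As a proof, however, the proposal has a genuine gap, which you name yourself: everything quantitative --- the factors $\tfrac{(d_1-1)!(d_2-1)!}{(d-1)!}$ and $\tfrac{(d-3)!}{(d-1)!}$, the prefactors $\tfrac18$ and $\tfrac1{16}$, and the different weights of separating versus non-separating configurations --- is deferred to ``the hard part'' and never carried out; but this \emph{is} the content of Corollary~1 of \cite{Goujard:carea}. The classification step is also oversimplified in a way that matters for those constants. The Eskin--Masur--Zorich sum is indexed not by the topological type of the waist curve of a cylinder but by the admissible configurations of the rigid collections of parallel saddle connections bounding it, together with the local surgeries (breaking up a zero, creating a pair of poles, bubbling a handle) performed at each boundary component; for quadratic differentials, even in the principal stratum, several distinct configurations degenerate onto the same boundary stratum, and their multiplicities and normalization constants must be computed separately and summed before the formula collapses to the shape \eqref{eq:carea:Elise}. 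Your reading of $\tfrac{n!}{(n_1-1)!(n_2-1)!}$ and $\tfrac{\ell!}{(\ell-2)!}$ is consistent with the answer, but without executing that configuration-by-configuration count one cannot distinguish the stated constants from, say, the same expression with $\tfrac18$ replaced by $\tfrac14$; so the argument as written establishes the formula only up to undetermined rational multiplicities on each term.
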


The terms which involve $(g,n)=(0,3)$ in the formulae above can be interpreted as particular cases of the
corresponding general terms under the following
convention. In the context of the formulae above
it is natural to define
\begin{equation}
\label{eq:minus:one:factorial}
\left.\frac{(d_i-1)!}{\ell_i!}\right|_{\substack{g=0\\n_i=3}}:=
\left.\frac{(2n_i-7)!}{(n_i-4)!}\right|_{n_i=3}:=
\lim_{n\to 3}\frac{\Gamma(2n-6)}{\Gamma(n-3)}=\frac{1}{2}\,.
\end{equation}

Note that the expressions appearing in the right-hand sides
of~\eqref{eq:carea}, \eqref{eq:carea:Elise}
and~\eqref{eq:carea:g0:Elise} can be seen as polynomials in
correlators. More precisely, in the definition of $N_{g_v,
n_v}(\boldsymbol{b}_v)$ one can keep the
correlators
$\langle\psi_1^{d_1} \ldots \psi_k^{d_k}\rangle
=\langle\tau_{d_1} \ldots \tau_{d_k}\rangle$
in~\eqref{eq:c:subscript:d}
without evaluation. We extend the operators $\cZ$
and $\partial_\Gamma$
to polynomials in the variables $b_e$ and
in ``unevaluated'' correlators
by linearity.
For example, under such convention one gets
\[
\Vol \cQ_{0,5}
=
\frac{\pi^2}{9}
\left( 5 \langle \tau_0^3 \tau_1\rangle\, \langle \tau_0^3 \rangle
+ 4 \langle \tau_0^3 \rangle^3 \right)
\quad \text{and} \quad
\frac{\pi^2}{3} \carea(\cQ_{0,5}) \Vol \cQ_{0,5} = \frac{5}{9} \langle \tau_0^3 \rangle^3.
\]
Numerical values of volumes $\Vol \cQ_{g,n}$
and of Siegel--Veech constants
$\carea(\cQ_{g,n})$
for small $g$ and $n$ are
presented in Table~\ref{tab:Vol:Q:g:n}
in Appendix~\ref{a:tables}. The corresponding expressions in terms of
the intersection numbers are available
in~\cite{DGZZ:tables}.

Viewed in this way, the right-hand sides
of~\eqref{eq:carea} and of~\eqref{eq:carea:Elise} in the
case of $g\ge 1$ (respectively, the right-hand sides
of~\eqref{eq:carea} and of~\eqref{eq:carea:g0:Elise}
in the case $g=0$)
provide identities between polynomials in intersection numbers.
We show that these identities are, actually, trivial.

\begin{Theorem}
\label{th:same:SV}
The right-hand sides of~\eqref{eq:carea} and
of~\eqref{eq:carea:Elise} for $g\ge 1$
(respectively, the right-hand sides of~\eqref{eq:carea} and
of~\eqref{eq:carea:g0:Elise} for $g=0$) considered
as polynomials in intersection numbers of $\psi$-classes
coincide.
\end{Theorem}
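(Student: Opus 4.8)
The plan is to transform the right-hand side of~\eqref{eq:carea} into the right-hand side of~\eqref{eq:carea:Elise} (respectively~\eqref{eq:carea:g0:Elise}) as an identity between \emph{formal} polynomials in the correlators, never evaluating the intersection numbers. The whole point is that the numerical coincidence of the two sides (both equal $\carea(\cQ_{g,n})\cdot\Vol\cQ_{g,n}$) does not by itself force equality of the underlying polynomials, since the correlators $\langle\tau_{d_1}\dots\tau_{d_k}\rangle$ satisfy many relations; so I must exhibit a term-by-term matching at the level of these unevaluated symbols. The guiding geometric principle is that distinguishing an edge $e$ of a stable graph $\Graph\in\cG_{g,n}$ and collapsing the associated cylinder produces a boundary degeneration of $\cQ_{g,n}$ of exactly the type recorded by~\eqref{eq:carea:Elise}, while the edges of $\Graph$ other than $e$ organize into a stable graph of that boundary stratum.

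First I would make $\partial_{\Graph}$ from~\eqref{eq:operator:D}--\eqref{eq:operator:D:e} completely explicit on $P_\Graph$. Since every variable $b_e$ occurs in $P_\Graph$ only with odd powers, hence with minimal power $1$, the prefactor $\prod_{e}b_e$ in~\eqref{eq:P:Gamma} already contributes $b_e^1$, so $\partial^e_{\Graph}$ simply extracts the part of $P_\Graph$ linear in $b_e$: one has $\partial^e_{\Graph}P_\Graph=\separatingQ(e)\,b_e\,Q_e$, where $Q_e$ is obtained from $P_\Graph$ by deleting $b_e$ from the edge-product and setting $b_e=0$ inside the polynomials $N_{g_v,n_v}(\boldsymbol{b}_v)$ of the one or two vertices adjacent to $e$. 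Setting a boundary variable to zero turns the corresponding half-edge into an ordinary leg, i.e.\ into a new marked point: this is precisely the combinatorial shadow of pinching the cylinder $e$. Applying $\cZ$ and using its multiplicativity in the variable $b_e$ together with $\zeta(2)=\pi^2/6$, I get $\tfrac{3}{\pi^2}\,\cZ(\partial^e_{\Graph}P_\Graph)=\tfrac{1}{2}\separatingQ(e)\,\cZ(Q_e)$, so that each distinguished edge contributes with the clean rational weight $\tfrac{1}{4}$ (bridges) or $\tfrac{1}{2}$ (non-bridges).

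Next I would reorganize the double sum $\sum_{\Graph}\sum_{e\in E(\Graph)}$ according to the isomorphism type of the collapsed configuration. For a bridge $e$ the graph $\Graph$ splits into two stable graphs $\Graph_1\in\cG_{g_1,n_1}$ and $\Graph_2\in\cG_{g_2,n_2}$ with $g_1+g_2=g$ and $n_1+n_2=n+2$, each carrying one of the two legs created by the pinch; summing $\cZ(Q_e)$ over all ways to complete $\Graph_1$ and $\Graph_2$ reconstructs $\Vol\cQ_{g_1,n_1}\times\Vol\cQ_{g_2,n_2}$ via Theorem~\ref{th:volume} and~\eqref{eq:square:tiled:volume}. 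Crucially, this factorization is valid already for the formal polynomials, because the vertices of $\Graph_1,\Graph_2$ carry exactly the same unevaluated correlators as the vertices of $\Graph$ away from $e$, so the bijection matches correlator-monomials without any numerical input. For a non-bridge edge (in particular a loop) the pinch keeps the surface connected, lowers the genus by one and creates two marked points, so the completed configurations run over $\cG_{g-1,n+2}$ and reassemble into $\Vol\cQ_{g-1,n+2}$. The mixed term $\Vol\cQ_{0,3}\times\Vol\cQ_{g,n-1}$ appears as the degenerate separating case in which one side of the bridge is a thrice-marked sphere, consistent with Convention~\eqref{eq:convention:Q03} and with~\eqref{eq:minus:one:factorial}; the genus-zero statement for~\eqref{eq:carea:g0:Elise} is the same argument with the genus-dropping term absent.

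The main obstacle is the matching of the numerical prefactors, and this is where the real work lies. I would track the normalization $\tfrac{2^{6g-5+2n}(4g-4+n)!}{(6g-7+2n)!}$ of~\eqref{eq:P:Gamma}: writing $d=6g-6+2n$ and $\ell=4g-4+n$ it equals $\tfrac{2^{d+1}\ell!}{(d-1)!}$, and a short computation using $d_1+d_2=d-2$ and $\ell_1+\ell_2=\ell-2$ shows that the ratio of the normalization of $\Graph$ to those of $\Graph_1,\Graph_2$ produces exactly the factorial cluster $\tfrac{\ell!}{\ell_1!\ell_2!}\cdot\tfrac{(d_1-1)!(d_2-1)!}{(d-1)!}$ (and analogously $\tfrac{\ell!}{(\ell-2)!}\tfrac{(d-3)!}{(d-1)!}$ in the non-separating case). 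The remaining factors $\tfrac{1}{8}$, $\tfrac{n!}{(n_1-1)!(n_2-1)!}$ must then be recovered from (i) the weight $\tfrac{1}{4}$ or $\tfrac{1}{2}$ found above, (ii) the splitting $\tfrac{1}{2^{|V(\Graph)|-1}}=\tfrac{1}{2}\cdot\tfrac{1}{2^{|V(\Graph_1)|-1}}\cdot\tfrac{1}{2^{|V(\Graph_2)|-1}}$, (iii) the distribution of the $n$ legs among the two sides, and (iv) the passage from $|\Aut(\Graph)|$ to $|\Aut(\Graph_1)|\,|\Aut(\Graph_2)|$, including the extra symmetry when $\Graph_1\cong\Graph_2$ and the ordered-versus-unordered accounting of the pair $\{\Graph_1,\Graph_2\}$. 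Getting this automorphism and symmetry bookkeeping exactly right, rather than up to an unexplained power of two, is the delicate heart of the argument; once it is settled, the two expressions agree monomial by monomial in the correlators, so the identity holds formally and is in this sense trivial.
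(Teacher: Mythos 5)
Your proposal is correct and follows essentially the same route as the paper: cut the distinguished edge $e$ (splitting $\Gamma$ into $\Gamma_1\sqcup\Gamma_2$ when $e$ is a bridge, or producing $\Gamma'\in\cG_{g-1,n+2}$ otherwise), observe that $\partial^e_\Gamma$ extracts the part of $P_\Gamma$ linear in $b_e$ so that $\cZ$ factors off a $1!\,\zeta(2)=\pi^2/6$ which cancels the $3/\pi^2$, and then match the resulting sum over pairs $(\Gamma,e)$ term by term, as formal polynomials in the correlators, against the Goujard expansion via Theorem~\ref{th:volume}. Your bookkeeping of the normalization ratio $\tfrac{2^{d+1}\ell!}{(d-1)!}$ and the residual combinatorial factors is, if anything, slightly more explicit than the paper's own closing step.
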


Theorem~\ref{th:same:SV} is proved in
Section~\ref{eq:proof:of:coincidence:of:two:SV:expressions}.

\subsection{Frequencies of multicurves (after M.~Mirzakhani)}
\label{ss:Frequencies:of:simple:closed:curves}

Let $g,n$ be non-negative integers with $2g+n>2$.
We say that two integral multicurves on the same
smooth surface of genus $g$ with $n$ punctures
\textit{have the same topological type}
if they belong to the same orbit of the mapping class group
$\Mod_{g,n}$. As we have already seen,
topological types of primitive multicurves (respectively
multicurves) are in bijection with stable graphs in
$\cG_{g,n}$ (respectively stable graphs in $\cG_{g,n}$
labeled with a height $\boldsymbol{H}_i$ at each edge).

Let $C$ be a complex curve, $C\in\cM_{g,n}$. We denote by $X$ the underlying
Riemann surface endowed with its hyperbolic metric of constant curvature $-1$.
Following M.~Mirzakhani, given an integral multicurve $\gamma$ in
$\cML_{g,n}(\Z)$ we define $s_X(L,\gamma)$ as the number
of simple closed geodesic multicurves on $X$ of length at
most $L$ having the same topological type as $\gamma$.
M.~Mirzakhani analyzed the asymptotic behavior of $s_X(L, \gamma)$
which involves several quantities that we define now.

The hyperbolic length function $\ell_X$ defined on multicurves
admits a continuous extension to $\cML_{g,n}$, see~\cite{Kerckhoff}. Hence, we can consider
the unit ball $B_X$ defined as
\begin{equation}
\label{eq:unit:ball}
B_X:= \{\gamma \in \cML: \ell_X(\gamma) \leq 1\}\,.
\end{equation}
in $\cML_{g,n}$ and the associated volume with
respect to Thurston's measure\footnote{Thurston's measure on $\cML_{g,n}$ admits two natural normalizations which differ by a constant factor. Following
W.~Thurston and M.~Mirzakhani,
we use the normalization under which the set
$\cML_{g,n}(\Z)$
of integral multicurves, playing the role of an
integer lattice, has covolume one in the ambient piecewise-linear
space $\cML_{g,n}$. The alternative normalization is
induced from the symplectic structure, see~\cite{Monin:Telpukhovskiy}, \cite{Erlandsson:Souto} and Corollary~\ref{cor:c:tilde} below for further details
and for the value of the constant factor relating the two normalizations.}
$$
B(X)=\mu_{\mathrm{Th}}(B_X)\,.
$$
Next, we define the number $b_{g,n}$ as the mean value of $B(X)$
\begin{equation}
\label{eq:b:g:n}
b_{g,n}:=\int_{\cM_{g,n}} B(X)\,dX\,.
\end{equation}
Here we integrate with respect to the Weil--Petersson volume form
$dX$ on $\cM_{g,n}$.

We can now state one of the main results of M.~Mirzakhani
from~\cite{Mirzakhani:grouth:of:simple:geodesics}.
\begin{NNTheorem}[M.~Mirzakhani]
Let $(g,n)$ be non-negative integers with $2g+n > 2$.
Let $\gamma$ be a multicurve in $\cML_{g,n}(\Z)$. Then there exists a positive
constant $c(\gamma)$ such that for any Riemann surface $X$
of genus $g$ with $n$ punctures we have
$$
s_X(L,\gamma)\sim B(X)\cdot\frac{c(\gamma)}{b_{g,n}}\cdot L^{6g-6+2n}\,,
$$
as $L\to+\infty$.
\end{NNTheorem}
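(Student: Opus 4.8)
The plan is to recognize $s_X(L,\gamma)$ as the mass a rescaled counting measure assigns to the unit ball $B_X$, and then to prove that these measures converge weakly to a multiple of Thurston measure. Put $d=6g-6+2n$ and, for each $L>0$, define the measure on $\cML_{g,n}$
\[
\nu_L := \frac{1}{L^{d}} \sum_{\alpha \in \Mod_{g,n}\cdot\gamma} \delta_{\alpha/L},
\]
where $\delta_{\alpha/L}$ is the Dirac mass at the rescaled lamination $\tfrac1L\alpha$. Since $\ell_X$ is homogeneous of degree one, $\alpha\in L\cdot B_X$ exactly when $\ell_X(\alpha)\le L$, so $\nu_L(B_X)=L^{-d}\,s_X(L,\gamma)$, and the theorem reduces to computing $\lim_{L\to\infty}\nu_L(B_X)$.

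First I would record two symmetries. Each $\nu_L$ is \emph{exactly} $\Mod_{g,n}$-invariant: any $\phi\in\Mod_{g,n}$ permutes the orbit $\Mod_{g,n}\cdot\gamma$ and commutes with the rescaling $\lambda\mapsto\tfrac1L\lambda$, so $\phi_*\nu_L=\nu_L$. Moreover, writing $m_t$ for multiplication by $t$ on $\cML_{g,n}$, a change of variables gives $(m_t)_*\nu_L=t^{d}\,\nu_{L/t}$. Consequently any weak-$\ast$ limit $\nu$ of a subsequence $\nu_{L_j}$ is simultaneously $\Mod_{g,n}$-invariant and homogeneous of degree $d$ under scaling. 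Invoking the classification of locally finite $\Mod_{g,n}$-invariant measures on $\cML_{g,n}$ (Masur's ergodicity of Thurston measure, in the form of Lindenstrauss--Mirzakhani), the only such measures homogeneous of the full degree $d$ are the positive multiples of $\mu_{\mathrm{Th}}$, so $\nu=c\cdot\mu_{\mathrm{Th}}$ for some $c\ge 0$. Since $\partial B_X=\{\ell_X=1\}$ is a hypersurface and hence $\mu_{\mathrm{Th}}$-null, weak convergence upgrades to $\nu_{L_j}(B_X)\to c\cdot\mu_{\mathrm{Th}}(B_X)=c\cdot B(X)$.

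Next I would pin down the constant and promote the subsequential statement to a genuine limit by integrating over moduli space. Integrating $\nu_L(B_X)=L^{-d}s_X(L,\gamma)$ against the Weil--Petersson form and applying Mirzakhani's integration (unfolding) formula, the quantity $\int_{\cM_{g,n}} s_X(L,\gamma)\,dX$ becomes an explicit integral, over the lengths $x_i=\ell_X(\gamma_i)$ of the reduced components of $\gamma$ subject to $\sum_i c_i x_i\le L$, of the product $\prod_i x_i\cdot\prod_v V_{g_v,n_v}(\mathbf{x}_v)$ of Mirzakhani volume polynomials of the pieces cut out by $\gamma$. A degree count shows this grows like $L^{d}$, so $c(\gamma):=\lim_{L\to\infty}L^{-d}\int_{\cM_{g,n}} s_X(L,\gamma)\,dX$ exists and is strictly positive, its leading coefficient being governed precisely by the top homogeneous parts of the $V_{g_v,n_v}$, that is, by the polynomials $N_{g_v,n_v}$ of Section~\ref{ss:intro:volume:polynomials}. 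Integrating the pointwise limit instead gives $c\cdot\int_{\cM_{g,n}}B(X)\,dX=c\cdot b_{g,n}$; matching the two expressions forces $c=c(\gamma)/b_{g,n}$, the same value along every subsequence, so the full limit exists and $s_X(L,\gamma)\sim B(X)\,\tfrac{c(\gamma)}{b_{g,n}}\,L^{d}$.

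The main obstacle I anticipate is analytic rather than algebraic: justifying the interchange of $L\to\infty$ with integration over the non-compact space $\cM_{g,n}$, and, more fundamentally, establishing tightness of $\{\nu_L\}$ so that no mass escapes to infinity in $\cML_{g,n}$ (equivalently, that very short geodesics on $X$ do not cause the count to concentrate). This requires uniform upper bounds on $s_X(L,\gamma)$ as $X$ degenerates into the thin part, controlling the number of short closed geodesics there, combined with the polynomial-in-$L$ control furnished by the volume-polynomial structure. This is exactly the delicate core of Mirzakhani's argument, and it is what makes the \emph{convergence} of $L^{-d}s_X(L,\gamma)$, and not merely its boundedness, uniform enough to transfer the integrated asymptotics to the pointwise statement.
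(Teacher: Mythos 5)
This statement is not proved in the paper: it is quoted verbatim from Mirzakhani~\cite{Mirzakhani:grouth:of:simple:geodesics}, so there is no internal proof to compare against. Your outline is, however, a faithful reconstruction of Mirzakhani's actual argument: rescaled orbit-counting measures $\nu_L$, identification of any weak limit with a multiple of Thurston measure via $\Mod_{g,n}$-invariance together with ergodicity/classification, evaluation of the constant by integrating $s_X(L,\gamma)$ over $\cM_{g,n}$ with the unfolding formula (which is exactly how $c(\gamma)$ and $b_{g,n}$ arise), and the observation that $\partial B_X$ is $\mu_{\mathrm{Th}}$-null so that $\nu_{L_j}(B_X)\to c\,B(X)$. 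The one thing to be clear-eyed about is that what you defer to the last paragraph — tightness of $\{\nu_L\}$, the uniform integrable majorant for $L^{-d}s_X(L,\gamma)$ over the thin part of $\cM_{g,n}$ needed to interchange the limit with the Weil--Petersson integral, and hence the independence of the subsequential constant — is not a technical afterthought but the substantive content of Mirzakhani's proof; as written your text is a correct architecture with that core acknowledged rather than supplied.
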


Note that in this beautiful asymptotic formula all information about the
hyperbolic metric $X$ is carried by the factor $B(X)$ (which does not
depend on $\gamma$) and the topological information about $\gamma$ is carried
by the constant $c(\gamma)$ (which does not depend on $X$).
Mirzakhani showed furthermore that
\begin{equation}
\label{eq:b:g:n:as:sum:of:c:gamma}
b_{g,n}=\sum_{[\gamma]\in\cO} c(\gamma)\,,
\end{equation}
where the sum of $c(\gamma)$ is taken with respect to
representatives $[\gamma]$ of all orbits $\cO$ of
the mapping class group $\Mod_{g,n}$ in $\cML_{g,n}(\Z)$
as in the sum~\eqref{eq:Vol:Q:as:sum:of:Vol:gamma}.

This allows to interpret the ratio
$\tfrac{c(\gamma)}{b_{g,n}}$ as the probability to get a
multicurve of type $\gamma$ taking a ``long random''
multicurve (in the same sense as the probability that
the coordinates of a ``random'' point in $\Z^2$ are coprime
equals $\tfrac{6}{\pi^2}$). More precisely, M.~Mirzakhani
showed that the asymptotic frequency
$\frac{c(\gamma)}{b_{g,n}}$ represents the density of the
orbit $\Mod_{g,n}\cdot\gamma$ inside the set of all
integral simple closed multicurves $\cML_{g,n}(\Z)$.
This
density is analogous to the density $\frac{6}{\pi^2}$ of
integral points with coprime coordinates in $\Z^2$
represented by the $\SLZ$-orbit of the vector $(1,0)$.

We now present a consequence of the bridge between
square-tiled surfaces and multicurves that will appear in
the next section. Let $g \geq 2$. There is a single
topological type of a nonseparating simple closed curve in $\cML_{g,0}$ and
$\lfloor g / 2 \rfloor$ classes of separating simple closed
curves. We define
\[
c_{g,nonsep} := c(\gamma_{nonsep})\,,
\qquad
c_{g,sep} := \sum_{[\gamma_{sep}]} c(\gamma_{sep})\,,
\]
where $\gamma_{nonsep}$ is the non-separating simple closed curve and
the sum in the second term is over the $\lfloor g/2 \rfloor$ classes
of separating simple closed curves.
\begin{Theorem}
\label{th:separating:over:non:separating}
The frequency of separating simple closed geodesics
on a closed hyperbolic surface of large genus $g$ is exponentially
small with respect to the frequency of non-separating
simple closed geodesics:
\begin{equation}
\label{eq:log:sep:over:non:sep}
\frac{c_{g,sep}}{c_{g,nonsep}}
\sim
\sqrt{\frac{2}{3\pi g}}\cdot\frac{1}{4^g}\quad\text{as }g\to+\infty\,.
\end{equation}
\end{Theorem}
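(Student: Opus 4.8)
The plan is to compute the large-genus asymptotics of both $c_{g,\mathit{sep}}$ and $c_{g,\mathit{nonsep}}$ separately, using the key identification (announced in the abstract and to be proved in the body) that the Mirzakhani frequency $c(\gamma)$ of a multicurve type is, up to a universal normalization depending only on $(g,n)$, equal to the volume contribution $\Vol(\Graph)$ of the corresponding stable graph. Concretely, for a reduced \emph{simple closed curve} $\gamma$ (a single curve with unit weight), the associated stable graph $\Graph$ has a single edge, so $c(\gamma)$ is proportional to $\cZ(P_\Graph)$ where $P_\Graph$ carries exactly one variable $b_1$. Since the proportionality constant is the same for separating and non-separating curves, it cancels in the ratio $c_{g,\mathit{sep}}/c_{g,\mathit{nonsep}}$, and I only need to compare the two sums of $\cZ(P_\Graph)$.

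First I would enumerate the relevant stable graphs. The non-separating simple closed curve corresponds to the unique one-vertex graph with a single loop, carrying decoration $g-1$ and two half-edges (both labelled $b_1$); by Formula~\eqref{eq:P:Gamma} its polynomial is a constant multiple of $b_1\cdot N_{g-1,2}(b_1,b_1)$, and $\chi_\Graph(e)=1$ since the loop is not a bridge. The $\lfloor g/2\rfloor$ separating types correspond to two-vertex graphs joined by a single \emph{bridge}, with decorations $(h,g-h)$ for $1\le h\le \lfloor g/2\rfloor$; each contributes a constant multiple of $b_1\cdot N_{h,1}(b_1)\cdot N_{g-h,1}(b_1)$, now with the bridge factor $\chi_\Graph(e)=\tfrac12$ and automorphism factor $|\Aut(\Graph)|=2$ when $h=g-h$. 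Applying $\cZ$ to $b_1^{6g-7}$ (the degree is forced by homogeneity) turns each into an explicit rational multiple of $(6g-7)!\,\zeta(6g-6)\sim(6g-7)!\cdot 1$ times $\pi^{6g-6}$, so the ratio reduces to comparing sums of the leading coefficients $c_{\boldsymbol d}$ of the Kontsevich polynomials $N_{g',n'}$, i.e.\ to comparing intersection numbers $\langle\tau_{d_1}\cdots\rangle$.

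The crux is therefore an \textbf{asymptotic estimate of $\psi$-intersection numbers in large genus}. For the non-separating term I must extract the top coefficient of $b_1\cdot N_{g-1,2}(b_1,b_1)$, which is governed by $\langle\tau_{d_1}\tau_{d_2}\rangle$ on $\overline{\cM}_{g-1,2}$ with $d_1+d_2=3g-4$; for the separating term I need $\langle\tau_{3h-2}\rangle_h\cdot\langle\tau_{3(g-h)-2}\rangle_{g-h}$ on the two factors and then to \emph{sum over $h$}. The hard part will be showing that this sum over $h$ is dominated by its two extreme terms $h=1$ and $h=g-1$ and evaluating them: the one-pointed numbers $\langle\tau_{3k-2}\rangle_k=\int_{\overline{\cM}_{k,1}}\psi_1^{3k-2}$ have the classical closed form $\frac{1}{24^k\,k!}$, so the $h=1$ term involves $\langle\tau_1\rangle_1=\tfrac{1}{24}$ against $\langle\tau_{3g-5}\rangle_{g-1}$, producing a factor that decays like $24^{-g}$ relative to the non-separating count. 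I would use the known large-genus asymptotics of $\langle\tau_{d_1}\tau_{d_2}\rangle_{g}$ (of Aggarwal / Delecroix--Goujard--Zograf--Zorich type, equivalently the normalized numbers concentrate and sum to a $\binom{}{}$-type total) to pin down the constant $\sqrt{2/(3\pi g)}$ and the power $4^{-g}$.

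Assembling these, the ratio becomes
\begin{equation}
\label{eq:ratio:sketch}
\frac{c_{g,\mathit{sep}}}{c_{g,\mathit{nonsep}}}
=\frac{\tfrac12\sum_{h}\tfrac{1}{|\Aut|}\,[\,b_1^{6g-7}\,]\,N_{h,1}N_{g-h,1}}
{[\,b_1^{6g-7}\,]\,N_{g-1,2}(b_1,b_1)}\,(1+o(1)),
\end{equation}
and the remaining task is to verify that the right-hand side is asymptotic to $\sqrt{\tfrac{2}{3\pi g}}\,4^{-g}$. I expect the main obstacle to be precisely controlling the subleading $h$ in the separating sum and matching the two $\Gamma$-function prefactors (the $4^{-g}$ arises from a ratio of central binomial–type normalizations, and the $\sqrt{2/(3\pi g)}$ from Stirling applied to the one-pointed intersection numbers together with the bridge factor $\tfrac12$); these are routine but delicate Stirling estimates once the dominant terms are isolated.
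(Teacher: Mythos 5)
Your overall framework is the same as the paper's: reduce both frequencies to volume contributions of one-edge stable graphs via the comparison with Mirzakhani's $c(\gamma)$ (so the normalization constant cancels in the ratio), evaluate the non-separating loop graph through the $2$-correlators $\langle\tau_{d_1}\tau_{d_2}\rangle$ on $\overline{\cM}_{g-1,2}$ with uniform large-genus control, and evaluate each separating graph through Witten's closed formula $\langle\tau_{3h-2}\rangle_h=\tfrac{1}{24^h h!}$. Up to and including your Formula~\eqref{eq:ratio:sketch}, this matches Theorem~\ref{th:contribution:Gamma:1} and Proposition~\ref{pr::contribution:Gamma:1}.

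However, there is a genuine error in your plan for the separating sum. You propose to show that $\sum_h$ is dominated by the extreme terms $h=1$ and $h=g-1$ and that this produces a $24^{-g}$ decay. This is backwards: the factor $24^{-g}$ coming from the one-point correlators is \emph{independent of $h$} (since $24^{-h}\cdot 24^{-(g-h)}=24^{-g}$), and the genuine $h$-dependence of $\Vol(\Separating(h,g-h))$ is, after normalization, exactly $\binom{g}{h}\binom{3g-4}{3h-2}$ (see~\eqref{eq:asymptotic:contribution:segment}). This product of binomial coefficients is \emph{maximized near the balanced splitting} $h\approx g/2$ and is only $O(g^2)$ at $h=1$, i.e.\ the extreme terms are exponentially negligible compared to the total
\begin{equation*}
S(g)=\sum_{h=1}^{g-1}\binom{g}{h}\binom{3g-4}{3h-2}\sim\frac{2^{4g-4}}{\sqrt{6\pi g}}\,,
\end{equation*}
which the paper establishes by a de Moivre--Laplace concentration argument (Lemma~\ref{lm:sum:of:products:of:two:binomials}). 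The constants in the theorem arise precisely from this central-limit evaluation combined with Stirling applied to $\binom{4g-4}{g}\cdot 12^{-g}$: one gets $3^g\cdot 12^{-g}=4^{-g}$ and one factor of $(6\pi g)^{-1/2}$ from each of $S(g)$ and $\binom{4g-4}{g}$. If you keep only the extreme terms as proposed, the separating count comes out exponentially too small (of order $g^{2}\cdot 64^{-g}$ relative to the non-separating count rather than $4^{-g}$), so the asymptotics~\eqref{eq:log:sep:over:non:sep} cannot be recovered along that route. You must replace the "extreme terms dominate" step by the concentration-at-the-center analysis of $S(g)$.
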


Theorem~\ref{th:separating:over:non:separating} follows from
analyzing some individual contributions of particularly simple
stable graphs to the Masur--Veech volume of $\cQ_{g,0}$ and is
proved in Section~\ref{ss:simple:closed:geodesics}. The proof is
based on the large genus asymptotic formulae for
$2$-correlators
$\langle \psi_1^{d_1}\psi_2^{d_2}\rangle$
uniform for all partitions $d_1+d_2=3g-1$.
This formula is
obtained in Section~\ref{s:2:correlators} using results
of~\cite{Zograf:2:correlators}.

\begin{Remark}
In order to go beyond the case of simple closed curve, one
has to carry a much more involved asymptotic analysis of
correlators that we perform in full generality
in~\cite{DGZZ:statistics}.
\end{Remark}

\begin{figure}[htb]
   %
   %
\includegraphics{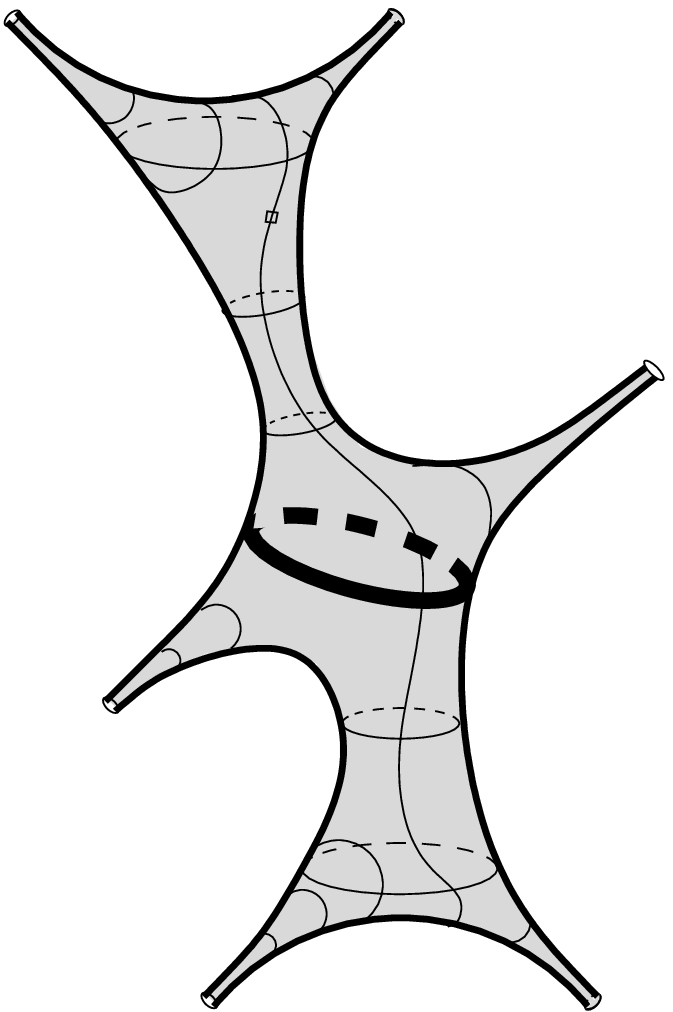}

\includegraphics{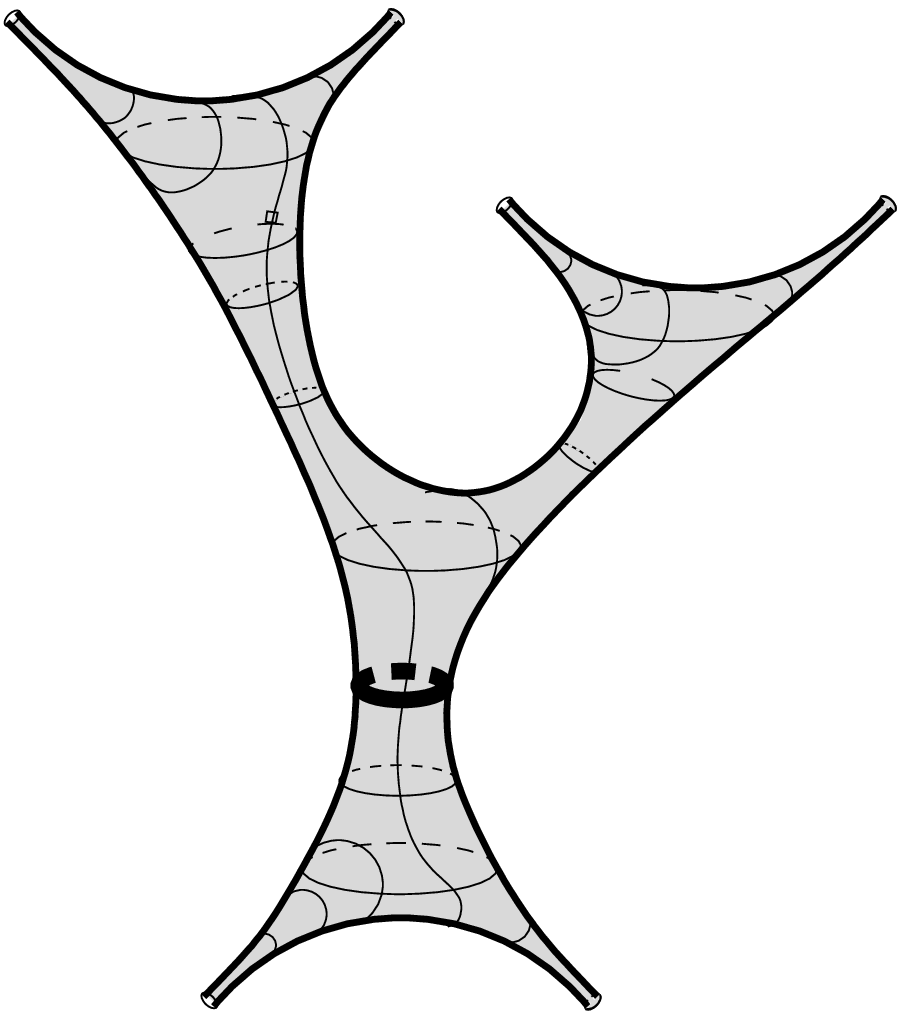}

\vspace{110pt}
\caption{
\label{fig:six:punctured:spheres}
Simple closed curves on a six-punctured sphere}
\end{figure}

Using hyperbolic geometry and the recursion relations for
Weil--Petersson volumes, M.~Mirzakhani found an explicit expression for the
coefficient $c(\gamma)$ and for the global normalization
constant $b_{g,n}$ in terms of the intersection numbers of
$\psi$-classes.

\begin{Example}
\label{ex:c:gamma:for:six:punctured:spheres} For any
hyperbolic metric $X$ on a sphere with $6$ cusps as in
Figure~\ref{fig:six:punctured:spheres}, a long simple
closed geodesic separates the cusps into groups of $3+3$
cusps with probability $\frac{4}{7}$ and into $2+4$ cusps
with probability $\frac{3}{7}$ (see (2) on page~123
in~\cite{Mirzakhani:grouth:of:simple:geodesics} for
calculation).
\end{Example}

\begin{Remark}
These values were confirmed experimentally in 2017 by
M.~Bell; see~\cite{Bell} for a description of the computations.
They were also confirmed by more implicit
independent computer experiment by V.~Delecroix.
\end{Remark}

\subsection{Frequencies of square-tiled surfaces of fixed
combinatorial type}
\label{ss:Frequencies:of:square:tiled:surfaces}

The following Theorem bridges flat and hyperbolic
count. Recall from Section~\ref{ss:MV:volume} that
weighted stable graphs are in bijection with topological
classes of multicurves.

\begin{Theorem}
\label{th:our:density:equals:Mirzakhani:density}
Let $(g,n)$ be a pair of non-negative integers satisfying $2g+n > 3$ and
different from $(2,0)$. Let $\gamma\in\cML_{g,n}(\Z)$ be
a multicurve and $(\Gamma, \boldsymbol{H})$ the associated
stable graph and weights. Then the volume contribution
$\Vol(\Gamma, \boldsymbol{H})$ to the Masur--Veech
volume $\Vol\cQ_{g,n}$ coincides with the Mirzakhani's
asymptotic frequency $c(\gamma)$ of closed geodesic
multicurves of topological type $\gamma$
up to an explicit factor depending only on $g$ and $n$:
\begin{equation}
\label{eq:Vol:gamma:c:gamma}
\Vol(\Gamma, \boldsymbol{H})
=
2\cdot(6g-6+2n)\cdot
(4g-4+n)!\cdot 2^{4g-3+n}\cdot
c(\gamma)\,.
\end{equation}
\end{Theorem}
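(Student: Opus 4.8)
The plan is to derive both sides of \eqref{eq:Vol:gamma:c:gamma} from explicit formulas built out of the same polynomials $N_{g_v,n_v}$, and then to match the numerical prefactors. For the left-hand side I will invoke Theorem~\ref{th:volume}, which gives $\Vol(\Gamma,\boldsymbol{H})=\cY(\boldsymbol{H})(P_\Gamma)$ with $P_\Gamma$ as in \eqref{eq:P:Gamma}. For the right-hand side I will use Mirzakhani's explicit integral formula for $c(\gamma)$: writing $\gamma=\sum_i H_i\gamma_i$ and $x_i=\ell_X(\gamma_i)$, the frequency $c(\gamma)$ is, up to the symmetry factor $2^{M}|\Sym(\gamma)|$, the integral over the simplex $\{x_e\ge 0,\ \sum_e H_e x_e\le 1\}$ of the product $\prod_{v\in V(\Gamma)}V_{g_v,n_v}(\boldsymbol{x}_v)$ of Weil--Petersson volume polynomials of the complementary pieces $S\setminus\gamma_{\mathit{reduced}}$, taken against the measure $\prod_e x_e\,dx_e$; here $M$ counts the extra half-twist symmetries arising from components $\gamma_i$ that cut off a one-handle. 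Since $c(\gamma)$ records only the top-order $L^{6g-6+2n}$ asymptotics, only the top-homogeneous parts of the $V_{g_v,n_v}$ survive.

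First I would replace each such top part by $N_{g_v,n_v}$. By \eqref{eq:Vgn:through:Ngn} the top part of $V_{g_v,n_v}$ is a rescaling of $N_{g_v,n_v}$, and comparing its coefficients with \eqref{eq:c:subscript:d} yields the clean factor $2^{2g_v-3+n_v}N_{g_v,n_v}$. Multiplying over the vertices and using $\sum_v n_v=2|E(\Gamma)|+n$ together with the genus relation $\sum_v g_v=g-|E(\Gamma)|+|V(\Gamma)|-1$, the accumulated power of $2$ becomes $2^{2g-2-|V(\Gamma)|+n}$. Consequently both $\Vol(\Gamma,\boldsymbol{H})$ and $c(\gamma)$ are expressed through one and the same polynomial $Q:=\prod_{e\in E(\Gamma)}b_e\cdot\prod_{v\in V(\Gamma)}N_{g_v,n_v}(\boldsymbol{b}_v)$, which is homogeneous of degree $d-|E(\Gamma)|$ with $d=6g-6+2n$.

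Next I would show that the two operations applied to $Q$ differ only by a universal scalar. On a monomial $\prod_e b_e^{m_e}$ of $Q$ (each $m_e$ odd, with $\sum_e m_e=d-|E(\Gamma)|$) the operator $\cY(\boldsymbol{H})$ returns $\prod_e m_e!/H_e^{m_e+1}$ by \eqref{eq:cV}, whereas the simplex integral of the same monomial equals $\tfrac{1}{d!}\prod_e m_e!/H_e^{m_e+1}$, since $\sum_e(m_e+1)=d$. Hence Mirzakhani's integral of $Q$ is exactly $\tfrac{1}{d!}\cY(\boldsymbol{H})(Q)$, and the whole comparison collapses to the arithmetic of the prefactors: the factor $\tfrac{2^{6g-5+2n}(4g-4+n)!}{(6g-7+2n)!}\cdot\tfrac{1}{2^{|V(\Gamma)|-1}|\Aut(\Gamma)|}$ from \eqref{eq:P:Gamma} against $\tfrac{2^{2g-2-|V(\Gamma)|+n}}{d!\cdot 2^{M}|\Sym(\gamma)|}$. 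Carrying out this bookkeeping, the dependence on $|V(\Gamma)|$ cancels and the ratio reduces to $2\cdot(6g-6+2n)\cdot(4g-4+n)!\cdot 2^{4g-3+n}$ multiplied by the residual factor $\tfrac{|\Sym(\gamma)|}{|\Aut(\Gamma)|}\,2^{M}$.

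The main obstacle is therefore the purely combinatorial identity $|\Aut(\Gamma)|=2^{M}\,|\Sym(\gamma)|$, which is exactly what is needed to cancel the residual factor and produce the asserted $\Gamma$-independent constant. I expect to verify it by matching the generators of $\Aut(\Gamma)$ --- permutations of vertices and of parallel edges, and flips of loops --- against the symmetries of $\gamma$ realised by the mapping class group together with the elliptic involutions of the one-handle pieces counted by $M$; the cases of loops and of genus-one one-boundary vertices must be treated with particular care, as these are the sources of the surviving factors of $2$. This matching is precisely where $(g,n)=(2,0)$ breaks down: in genus two the hyperelliptic involution acts trivially on $\cML_{2,0}$ and lies in the stabiliser of every multicurve, inserting an extra central factor of $2$ into Mirzakhani's orbit count that spoils the identity above, which is why this case is excluded and must be treated on its own.
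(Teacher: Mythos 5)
Your overall architecture coincides with the paper's: both sides are reduced to the same polynomial $\prod_{e}b_e\cdot\prod_{v}N_{g_v,n_v}(\boldsymbol{b}_v)$, the simplex integral of a monomial is compared with $\cY(\boldsymbol{H})$ exactly as you do (producing the factor $1/(6g-6+2n)!$), and the accumulated power of $2$ is computed from the same Euler--characteristic relation $\sum_v(2g_v-3+n_v)=2g-2+n-|V(\Gamma)|$. The difficulty is indeed concentrated in the final combinatorial matching, and that is where your argument fails: the identity $|\Aut(\Gamma)|=2^{M(\gamma)}|\Sym(\gamma)|$ is false. For a non-separating simple closed curve on a closed surface of genus $g\ge 3$ the stable graph is a single vertex of genus $g-1$ carrying a loop, so $|\Aut(\Gamma)|=2$ (the flip of the loop), while $M(\gamma)=0$ and $|\Sym(\gamma)|=1$. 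For the separating curve cutting off a one-handle from a closed surface of genus $g\ge 3$ the stable graph is a single edge joining vertices of genera $1$ and $g-1$, so $|\Aut(\Gamma)|=1$, while $M(\gamma)=1$ and $|\Sym(\gamma)|=1$. The discrepancy goes in both directions, so it cannot be absorbed into the universal constant.

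The source of the error is that you dropped the factor $1/N(\gamma)$ from Mirzakhani's formula: her polynomial $\Vol_{\mathrm{WP}}(\cM_{g,n}(\gamma,x))$ is $\frac{1}{N(\gamma)}\prod_j V_{g_j,n_j}(x)$, where $N(\gamma)=\bigl|\bigcap_i\Stab(\gamma_i)\big/\bigcap_i\Stab_0(\gamma_i)\bigr|$ records the orientation-reversing symmetries of the components. The correct combinatorial identity is $|\Aut(\Gamma)|=|\Sym(\gamma)|\cdot N(\gamma)$; it is $N(\gamma)$, not $2^{M(\gamma)}$, that accounts for loop flips and edge reversals of $\Gamma$. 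The factor $2^{-M(\gamma)}$ plays an entirely different role: the relation $V^{top}_{g_v,n_v}=2^{2g_v-3+n_v}N_{g_v,n_v}$ you invoke fails precisely at vertices with $(g_v,n_v)=(1,1)$, where Mirzakhani's normalization $V_{1,1}(b)=\tfrac{1}{24}(b^2+4\pi^2)$ carries an extra factor $2$ relative to the orbifold convention behind $N_{1,1}$; the $2^{-M(\gamma)}$ in her frequency formula cancels exactly one such anomaly per one-handle component of $S\setminus\gamma$. Keeping these two corrections separate, your bookkeeping does close up and yields the asserted constant; as written, the residual factor $2^{M(\gamma)}|\Sym(\gamma)|/|\Aut(\Gamma)|$ is not identically $1$ and the proof is incomplete. (Your explanation of the $(2,0)$ exclusion via the hyperelliptic involution is consistent with the paper's, but it rests on the same symmetry bookkeeping that needs to be repaired first.)
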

Theorem~\ref{th:our:density:equals:Mirzakhani:density}
is proved in Section~\ref{s:comparison:with:Mirzakhani}.

\begin{Example}
\label{ex:Vol:gamma:for:six:punctured:spheres} A
one-cylinder square-tiled surface in the moduli space
$\cQ_{0,6}$ can have $3$ simple poles on each of the two
boundary component of the maximal horizontal cylinder or
can have $2$ simple poles on one boundary component and $4$
simple poles on the other boundary component. The
asymptotic frequency of
square-tiled surfaces of the first type is
$\frac{4}{7}$ and the asymptotic frequency of the
square-tiled surfaces of second type
is $\frac{3}{7}$; compare to
Example~\ref{ex:c:gamma:for:six:punctured:spheres}.
\end{Example}

Combining~\eqref{eq:Vol:Q:as:sum:of:Vol:gamma},
\eqref{eq:b:g:n:as:sum:of:c:gamma} and~\eqref{eq:Vol:gamma:c:gamma} we
get the following immediate Corollary:

\begin{Corollary}
\label{cor:Vol:as:b:g:n}
For any pair of non-negative integers satisfying $2g+n > 3$ and different from
$(2,0)$, the Masur--Veech volume $\Vol\cQ_{g,n}$ and the average
Thurston measure of a unit ball $b_{g,n}$ are related as
follows:

\begin{equation}
\label{eq:Vol:g:n:b:g:n}
\Vol\cQ_{g,n}
=2\cdot(6g-6+2n)\cdot
(4g-4+n)!\cdot 2^{4g-3+n}\cdot
b_{g,n}\,.
\end{equation}
\end{Corollary}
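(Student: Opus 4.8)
The plan is to combine the three displayed facts that precede the statement, exactly as the surrounding text indicates; the only genuine ingredient is the bijection between the index sets of two sums. First I would recall the correspondence set up in Section~\ref{ss:Square:tiled:surfaces:and:associated:multicurves}: topological types of integral multicurves in $\cML_{g,n}(\Z)$ --- equivalently, the orbit representatives $[\gamma]\in\cO$ of $\Mod_{g,n}$ --- are in canonical bijection with pairs $(\Gamma,\boldsymbol{H})$ consisting of a stable graph $\Gamma\in\cG_{g,n}$ together with a weight vector $\boldsymbol{H}\in\N^{E(\Gamma)}$. Under this dictionary the reduced multicurve fixes the graph $\Gamma$ and the multiplicities $H_i$ are the edge weights, so the multicurve attached to $(\Gamma,\boldsymbol{H})$ is precisely the $\gamma$ appearing in Theorem~\ref{th:our:density:equals:Mirzakhani:density}.

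With this identification in hand, I would start from the volume decomposition~\eqref{eq:Vol:Q:as:sum:of:Vol:gamma},
\[
\Vol\cQ_{g,n}=\sum_{\Graph\in\cG_{g,n}}\ \sum_{\boldsymbol{H}\in\N^{E(\Gamma)}}\Vol(\Gamma,\boldsymbol{H})\,,
\]
and substitute the identity~\eqref{eq:Vol:gamma:c:gamma} of Theorem~\ref{th:our:density:equals:Mirzakhani:density} into each summand. Since the proportionality constant $2\cdot(6g-6+2n)\cdot(4g-4+n)!\cdot2^{4g-3+n}$ depends only on $g$ and $n$, it pulls out of the double sum, leaving
\[
\Vol\cQ_{g,n}=2\cdot(6g-6+2n)\cdot(4g-4+n)!\cdot2^{4g-3+n}\cdot\sum_{\Graph\in\cG_{g,n}}\ \sum_{\boldsymbol{H}\in\N^{E(\Gamma)}}c(\gamma)\,.
\]
Finally I would use the bijection to rewrite the remaining double sum as a single sum over orbit representatives $[\gamma]\in\cO$ and invoke Mirzakhani's formula~\eqref{eq:b:g:n:as:sum:of:c:gamma}, $\sum_{[\gamma]\in\cO}c(\gamma)=b_{g,n}$, obtaining the asserted identity.

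There is essentially no obstacle beyond correctly matching the two indexings, so the entire mathematical weight of the corollary has already been discharged in Theorem~\ref{th:our:density:equals:Mirzakhani:density}; the hypothesis $(g,n)\neq(2,0)$, together with $2g+n>3$, is inherited verbatim from that theorem (the genus-two separating curve is the source of the exceptional case). The one point meriting a line of care is that the term-by-term manipulation of the sum over $\boldsymbol{H}$, which is infinite, is legitimate: the strict positivity and convergence of $\sum_{\boldsymbol{H}}\Vol(\Gamma,\boldsymbol{H})=\Vol(\Gamma)$ and the finiteness of $b_{g,n}=\sum_{[\gamma]\in\cO}c(\gamma)$ are both guaranteed by the cited results, so pulling the constant factor through the sum and reindexing are both valid.
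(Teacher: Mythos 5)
Your proposal is correct and coincides with the paper's own argument: the corollary is obtained exactly by summing the term-by-term identity~\eqref{eq:Vol:gamma:c:gamma} over all weighted stable graphs, identifying that index set with the set of $\Mod_{g,n}$-orbits of multicurves, and invoking~\eqref{eq:Vol:Q:as:sum:of:Vol:gamma} together with Mirzakhani's formula~\eqref{eq:b:g:n:as:sum:of:c:gamma}. The paper carries out precisely this summation (with the constant pulled out of the sum over $\boldsymbol{H}$ and over $\Gamma$) at the end of the proof of Theorem~\ref{th:our:density:equals:Mirzakhani:density} in Section~\ref{s:comparison:with:Mirzakhani}.
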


\begin{Remark}
In the case, when $n=0$, M.~Mirzakhani established in Theorem~1.4 in~\cite{Mirzakhani:earthquake} a relation
similar to~\eqref{eq:Vol:g:n:b:g:n} between $b_g=b_{g,0}$  (computed in Theorem~5.3 in~\cite{Mirzakhani:grouth:of:simple:geodesics}) and the volume of $\cQ_g$.
However, Mirzakhani does not give any formula for
the value of the normalization constant
presented in~\eqref{eq:Vol:g:n:b:g:n}.
This constant was recently computed by
F.~Arana--Herrera~\cite{Arana:Herrera:square:tiled} and by L.~Monin and I.~Telpukhovkiy~\cite{Monin:Telpukhovskiy} simultaneously
and independently of us by different methods. The same
value of the constant in~\eqref{eq:Vol:g:n:b:g:n} is obtained by V.~Erlandsson
and J.~Souto in~\cite{Erlandsson:Souto} through an approach different from all
the ones mentioned above.

Despite the fact that our main formula in
Theorem~\ref{th:volume} is obtained by completely different
method, it has, basically, the same structure as
Mirzakhani's formula for $b_{g,n}$. We provide a detailed
comparison of these two formulae in
Section~\ref{s:comparison:with:Mirzakhani}.

Our announcement of the explicit relation between flat and
hyperbolic counts described in the current paper
inspired F.~Arana--Herrera to suggest
in~\cite{Arana:Herrera:square:tiled} an alternative
geometric proof of these results in the spirit of
M.~Mirzakhani.
\end{Remark}

\begin{Remark}
\label{rm:11:and:20}
The pairs $(0,3)$, $(1,1)$ and $(2,0)$ of $(g,n)$ are
exceptional by several reasons, which affect, in
particular, Expression~\eqref{eq:b:g:n} for $b_{g,n}$. The
cases of $(0,3)$ and $(1,1)$ are discussed in
appendix~\ref{ss:Q03:and:Q11} and in
Remark~\ref{rm:Kazarian}; see also the footnotes~1 and~2 on
pages~9 and~12--13 respectively in~\cite{Wright}.

The following observation distinguished the case
$(g,n)=(2,0)$. Any complex curve $C$ in $\cM_2$ admits a
hyperelliptic involution $\tau$. Any holomorphic quadratic
differential $q$ is invariant under this involution,
$\tau^\ast q= q$. Suppose that $q$ belongs to the principal
stratum $\cQ(1^4)$, i.e., suppose that $q$ has four
distinct simple zeroes. Then, the zeroes of $q$ are
arranged into two groups of two zeroes in each group, and
the hyperelliptic involution interchanges the zeroes in
each of the two groups. Thus, there are $4!/2$ ways to
label these $4$ zeroes and not $4!$ ways as suggests the
factor $(4g-4+n)!$ in the general
Formulae~\eqref{eq:Vol:gamma:c:gamma}
and~\eqref{eq:Vol:g:n:b:g:n}.
\end{Remark}

Formula~\eqref{eq:square:tiled:volume} from
Theorem~\ref{th:volume}
allows to compute $\Vol\cQ_{g,n}$ for
all sufficiently small values of $(g,n)$.
Since recently, more efficient methods are now known (see Remark~\ref{rm:Chen:Moeller:Sauvaget}).
Corollary~\ref{cor:Vol:as:b:g:n} thus provides explicit values of $b_{g,n}$
for all such pairs.

When $g=0$ the value $\Vol\cQ_{0,n}$ admits closed
Formula~\eqref{eq:vol:genus:0} obtained
in~\cite{AEZ:genus:0}. Corollary~\ref{cor:Vol:as:b:g:n}
translates this formula into the following explicit
expression for $b_{0,n}$.

\begin{Corollary}
The quantity $b_{0,n}$ defined in~\eqref{eq:b:g:n}
is equal to:
\begin{equation}
b_{0,n}=
\frac{1}{(n-3)!}
\cdot
\left(\frac{\pi}{2}\right)^{2(n-3)}\,.
\end{equation}
\end{Corollary}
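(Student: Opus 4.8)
The plan is to derive this as a direct consequence of Corollary~\ref{cor:Vol:as:b:g:n} combined with the Athreya--Eskin--Zorich evaluation~\eqref{eq:vol:genus:0} of $\Vol\cQ_{0,n}$. First I would specialize the relation~\eqref{eq:Vol:g:n:b:g:n} to $g=0$. Evaluating the factors $6g-6+2n$, $(4g-4+n)!$, and $2^{4g-3+n}$ at $g=0$ gives
$$
\Vol\cQ_{0,n}=2\cdot(2n-6)\cdot(n-4)!\cdot 2^{n-3}\cdot b_{0,n}\,,
$$
which is valid for $n\ge 4$, the common range of applicability since for $g=0$ the hypothesis $2g+n>3$ of Corollary~\ref{cor:Vol:as:b:g:n} reads $n\ge 4$, exactly the range where~\eqref{eq:vol:genus:0} holds. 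Solving for $b_{0,n}$ expresses it as an explicit quotient.

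Next I would substitute $\Vol\cQ_{0,n}=\pi^{2n-6}/2^{n-5}$ from~\eqref{eq:vol:genus:0} into this quotient and simplify. The only work is elementary bookkeeping on the powers of $2$ and the factorials: writing $2n-6=2(n-3)$ frees one factor of $2$ and turns $(n-3)\cdot(n-4)!$ into $(n-3)!$, while the denominator's powers of two collect as $2^{n-5}\cdot 2\cdot 2^{n-3}=2^{2n-7}$, which together with the extra factor from $2n-6$ gives $2^{2n-6}$. This yields
$$
b_{0,n}=\frac{\pi^{2n-6}/2^{n-5}}{2\cdot(2n-6)\cdot(n-4)!\cdot 2^{n-3}}=\frac{\pi^{2n-6}}{2^{2n-6}\,(n-3)!}=\frac{1}{(n-3)!}\left(\frac{\pi}{2}\right)^{2(n-3)}\,,
$$
which is the asserted formula.

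Since the statement is a purely algebraic corollary of two previously established results, there is no genuine obstacle: the argument is a single substitution followed by elementary simplification. The only point requiring attention is the range of validity. The identity is stated for $n\ge 4$, matching the hypotheses of both inputs, and I would note explicitly that the excluded value $n=3$ is consistent with the exclusion of the exceptional pair $(g,n)=(0,3)$ throughout the paper (indeed, formally setting $n=3$ would give the vacuous value $b_{0,3}=1$, outside the regime covered by Corollary~\ref{cor:Vol:as:b:g:n}).
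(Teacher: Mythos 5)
Your proposal is correct and is exactly the paper's argument: the paper derives this corollary by specializing relation~\eqref{eq:Vol:g:n:b:g:n} to $g=0$ and substituting the value~\eqref{eq:vol:genus:0} of $\Vol\cQ_{0,n}$, which is precisely your computation. Your bookkeeping of the powers of $2$ and the factorials, as well as your remark on the range $n\ge 4$, checks out.
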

By Stirling's formula we get the following asymptotics for large
$n$:
\begin{equation}
b_{0,n}
\sim\frac{1}{\sqrt{2\pi n}}\cdot
\left(\frac{\pi^2 e}{4n}\right)^{n-3}\quad\text{as}\ n\to+\infty\,.
\end{equation}

M.~Mirzakhani notes in~\cite{Mirzakhani:grouth:of:simple:geodesics}
with a reference to~\cite{Mirzakhani:thesis}
that the frequency of simple closed curves of any fixed topological type $\gamma$ can be described in a purely topological way as
$$
\lim_{N\to+\infty}
\frac{\card\big(\{\lambda \text{ an integral multi-curve }| \iota(\lambda,\gamma)\le N\}/\operatorname{Stab}(\gamma)\big)}{N^{6g-6+2n}}
=\tilde{c}(\gamma)\,.
$$
J.~Souto attracted our attention to the fact that
Theorem~\ref{th:our:density:equals:Mirzakhani:density}
implies the following Corollary.

\begin{Corollary}[J.~Souto]
\label{cor:c:tilde}
The frequencies $c(\gamma)$ and $\tilde{c}(\gamma)$ are related by the following scaling factor:
\begin{equation}
\label{eq:c:c:tilda}
c(\gamma)=2^{2g-3+n}\cdot \tilde{c}(\gamma)\,.
\end{equation}
\end{Corollary}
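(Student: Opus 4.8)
The plan is to reinterpret the purely topological frequency $\tilde c(\gamma)$ as a lattice-point count and to match it, through the correspondence between square-tiled surfaces and integral measured laminations, against the volume contribution $\Vol(\Gamma,\boldsymbol{H})$ already evaluated in Theorem~\ref{th:our:density:equals:Mirzakhani:density}. First I would note that $\iota(\cdot,\gamma)$ is a continuous, degree-one homogeneous, $\Stab(\gamma)$-invariant function on $\cML_{g,n}$, so that the set $\{\iota(\cdot,\gamma)\le N\}$ is $N$ times $\{\iota(\cdot,\gamma)\le 1\}$ and is preserved by $\Stab(\gamma)$. Since $\cML_{g,n}(\Z)$ has covolume one for Thurston's measure $\mu_{\mathrm{Th}}$, the defining limit for $\tilde c(\gamma)$ is exactly the Thurston volume of the corresponding quotient region:
\[
\tilde c(\gamma)=\mu_{\mathrm{Th}}\big(\{\lambda\in\cML_{g,n} : \iota(\lambda,\gamma)\le 1\}/\Stab(\gamma)\big).
\]

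The geometric heart of the argument is the following bridge. Let $S$ be a square-tiled surface of combinatorial type $\Gamma$ whose horizontal cylinders carry the weighted waist multicurve $\gamma=\sum_i H_i\gamma_i$. Its vertical foliation determines an integral measured lamination $\lambda$ transverse to $\gamma$, and the width $w_i$ of the $i$-th cylinder equals $\iota(\gamma_i,\lambda)$, so that the total number of unit squares of $S$ is $\sum_i H_i w_i=\iota(\gamma,\lambda)$. Thus counting squares on the flat side is literally counting intersection number with $\gamma$ on the lamination side. Consequently the square-tiled surfaces of type $(\Gamma,\boldsymbol{H})$ with at most $S$ squares are, after labelling the $4g-4+n$ simple zeros and quotienting by the symmetries that permute the combinatorial data (which play the role of $\Stab(\gamma)$), in an asymptotically measure-preserving bijection with $\{\lambda\in\cML_{g,n}(\Z) : \iota(\lambda,\gamma)\le S\}/\Stab(\gamma)$.

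Carrying out this dictionary, I expect to obtain
\[
\Vol(\Gamma,\boldsymbol{H})=2\cdot(6g-6+2n)\cdot(4g-4+n)!\cdot 2^{6g-6+2n}\cdot\tilde c(\gamma),
\]
where the factor $2\cdot(6g-6+2n)$ and one factor $2^{6g-6+2n}$ record the normalization of $\Vol(\Gamma,\boldsymbol{H})$ (passing from the count with at most $2N$ squares divided by $N^{6g-6+2n}$ to the intrinsic density in the number of squares), while $(4g-4+n)!$ records the labelling of the simple zeros. Comparing with Theorem~\ref{th:our:density:equals:Mirzakhani:density}, which reads $\Vol(\Gamma,\boldsymbol{H})=2\cdot(6g-6+2n)\cdot(4g-4+n)!\cdot 2^{4g-3+n}\cdot c(\gamma)$, and cancelling the common factors gives
\[
\frac{c(\gamma)}{\tilde c(\gamma)}=2^{(6g-6+2n)-(4g-3+n)}=2^{2g-3+n},
\]
which is the claim; the exceptional pairs, in particular $(2,0)$, are excluded exactly as in Theorem~\ref{th:our:density:equals:Mirzakhani:density}.

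The hard part will be the bookkeeping of the powers of two: everything hinges on verifying that the bijection of the second step is genuinely measure-preserving with constant one, i.e. that the various factors of $\tfrac12$ (the $1/2\times 1/2$ size of the squares, the half-translation versus translation normalization, and the Dehn--Thurston parity constraints between the intersection and twist parameters of integral laminations) introduce no residual, and in particular no $\gamma$-dependent, constant beyond the clean global $2^{6g-6+2n}$ coming from the threshold mismatch. The uniformity of the answer $2^{2g-3+n}$ is itself a strong internal consistency check: conceptually it reflects that $\tilde c$ and $c$ are the \emph{same} density of the orbit $\Mod_{g,n}\cdot\gamma$ computed against the two standard normalizations of Thurston's measure, the lattice (covolume-one) normalization and the symplectic one, which differ precisely by this global factor.
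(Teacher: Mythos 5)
Your proposal is correct and follows essentially the same route as the paper: it identifies the lattice count defining $\tilde c(\gamma)$ (up to the negligible non-filling pairs) with the count of unlabelled square-tiled surfaces of type $(\Gamma,\boldsymbol{H})$, arrives at the same intermediate identity $\Vol(\Gamma,\boldsymbol{H})=2(6g-6+2n)\cdot(4g-4+n)!\cdot 2^{6g-6+2n}\cdot\tilde c(\gamma)$ with the factors explained in the same way (zero labelling and the $2N$-versus-$N$ threshold), and then divides by the relation of Theorem~\ref{th:our:density:equals:Mirzakhani:density}. The power-of-two bookkeeping you flag as the delicate point is handled in the paper with exactly the same brevity, so there is nothing further to add.
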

\begin{proof}
The subset of the set
$\{\text{an integral multi-curve }\lambda\, |\, \iota(\lambda,\gamma)\le N\}/\operatorname{Stab}(\gamma)$ for which the pair $(\lambda,\gamma)$ is not filling has cardinality
$o\big(N^{6g-6+2n}\big)$. The complimentary subset for which
the pair $(\lambda,\gamma)$ is filling
are images under the morphism that forgets the labelling
of square-tiled surfaces in $\cST_\Gamma(\cQ(1^{4g-4+n}, -1^n))$
defined in Section~\ref{ss:Square:tiled:surfaces:and:associated:multicurves}.
Hence~\eqref{eq:Vol:gamma} implies that
$$
\Vol(\Gamma,\boldsymbol{H})=2(6g-6+2n)\cdot(4g-4+n)!\cdot 2^{6g-6+2n}\cdot\tilde{c}(\gamma)\,,
$$
where $(\Gamma, \boldsymbol{H})$ is the weighted stable graph corresponding
to $\gamma$. The factor $(4g-4+n)!$ comes from
possible ways to label the $(4g-4+n)$ zeroes
and the factor $2^{6g-6+2n}$ comes from
$$
\card\big(\cST_\gamma(\QuadStrat(1^{4g-4+n}, -1^n), 2N)\big)\sim
2^{6g-6+2n}\cdot\card\big(\cST_\gamma(\QuadStrat(1^{4g-4+n}, -1^n), N)\big)\,.
$$
It remains to apply~\eqref{eq:Vol:gamma:c:gamma}.
\end{proof}

An alternative proof based on the result~\cite{Monin:Telpukhovskiy}
of L.~Monin and I.~Telpukhovskiy is suggested in~\cite{Erlandsson:Souto}.

\subsection{Statistical geometry of square-tiled surfaces}
\label{ss:Combinatorial:geometry:of:random:st:surfaces}

Theorem~\ref{th:volume} provides a detailed description of
the statistical geometric properties of square-tiled surfaces
in $\cQ_{g,n}$ tiled with large number of squares. It has the
same spirit as the result~\cite[Theorem
 1.2]{Mirzakhani:statistics} of M.~Mirzakhani describing
statistics of lengths of simple closed geodesics in random
pants decomposition. More precisely, she fixes a
reduced multicurve $\gamma=\gamma_1+\dots+\gamma_{3g-3}$
decomposing the surface of genus $g$ into pairs of pants and
considers its $\Mod_g$-orbit. For any hyperbolic metric,
she describes the asymptotic distribution of
(normalized) lengths of simple closed geodesics represented
by the components $\gamma_i$, $i=1,\dots,3g-3$, of the
multicurve $\gamma$.

Our result concerns, in particular, the asymptotic
statistics of (normalized) perimeters of a random
square-tiled surface corresponding to a given stable graph
$\Gamma$ tiled with large number of squares. The
resulting statistics disclose the geometric meaning of the
coefficients of the polynomials $P_\Gamma$ associated to a
stable graph $\Gamma$ appearing in our formulae for the
Masur--Veech volumes as in Theorem~\ref{th:volume} and for
the Siegel--Veech constants as in Theorem~\ref{th:carea}.

As we have seen in
Section~\ref{ss:Frequencies:of:square:tiled:surfaces},
asymptotic statistical properties of random square-tiled
surfaces can be translated into asymptotic statistical
properties of geodesic multicurves on random hyperbolic
surfaces and vice versa. This general correspondence
translates the results mentioned above into analogs of a
mean version of Theorem 1.2 in~\cite{Mirzakhani:statistics},
in the sense that we obtain the average of her
statistics over all hyperbolic surfaces in
$\cM_g$, where the average is computed using the
Weil--Petersson measure on $\cM_g$.

Let us define the operator
$\cZt(\boldsymbol{x},\boldsymbol{H})$ on polynomials $\Q[b_1, \ldots, b_k]$ as
follows. We define it on monomials
\[
\cZt(\boldsymbol{x},\boldsymbol{H}) \left( \prod_{i=1}^k b_i^{m_i} \right)
=(m_1+\dots+m_k+k)!\cdot
\prod_{i=1}^k \frac{x_i^{m_i}}{H_i^{m_i+1}}
\]
and extend it by $\Q$-linearity. We denote by
$$
\Delta^k :=
\{(x_1, \ldots, x_k)\,\vert\, x_i \geq 0;\, x_1 + \ldots + x_k \leq
1\}
$$
the standard $k$-dimensional simplex. The operator $\cZt$ generalizes
both $\cY(\boldsymbol{H})$ and $\cZ$
(defined by~\eqref{eq:cV} and~\eqref{eq:cZ} respectively)
in the sense that we can recover
$\cY(\boldsymbol{H})$ and $\cZ$ by integration
\begin{align}
\label{eq:recover:Y}
\cY(\boldsymbol{H}) (P) &= \int_{\Delta_k}
\cZt(\boldsymbol{x},\boldsymbol{H})(P)\, dx_1 \ldots dx_k\,,
\\
\label{eq:recover:Z}
\cZ(P) &= \sum_{H_i \geq 1} \int_{\Delta_k}
\cZt(\boldsymbol{x},\boldsymbol{H})(P)\, dx_1 \ldots dx_k\,.
\end{align}

To any square-tiled surface $S$ in $\cST(\QuadStrat(1^{4g-4+n}, -1^n))$ we associate
the following data
\[
\left(\Gamma, \boldsymbol{H},
\frac{\boldsymbol{b}}{2N}\right)
\in \cG_{g,n} \times \N^k \times\Delta^k\,,
\]
where $\Gamma$ is the stable graph associated to the
horizontal cylinder decomposition,
$k$ is the number of maximal horizontal cylinders
(i.e., number of edges of $\Gamma$),
and
$\boldsymbol{H}=(H_1, \ldots, H_k)$ and
$\boldsymbol{b} = (b_1, \ldots, b_k)$
are respectively the heights and perimeters
of the maximal horizontal cylinders
measured in those units, in which the square of the tiling
has unit sides.
Consider the set
$\cST(\QuadStrat(1^{4g-4+n}, -1^n), 2N)$ of all square-tiled
surfaces in $\QuadStrat(1^{4g-4+n}, -1^n)$
tiled with at most $2N$ squares as in~\eqref{eq:Vol:sq:tiled}.
Recall that $\cST_{\Gamma, \boldsymbol{H}}(\QuadStrat(1^{4g-4+n}, -1^n), 2N)$
denotes the set of square-tiled surfaces associated to the stable graph $\Gamma$ and having the
vector of heights $\boldsymbol{H}$ with at most $2N$ squares.

For each stable graph $\Gamma\in\cG_{g,n}$, each
$\boldsymbol{H}\in\N^k$,
where $k=|E(\Gamma)|$,
and each $N\in\N$, we can define the
following measure $\mu^{\gamma(\Gamma,\boldsymbol{H})}_{g,n,N}$ on the simplex
$\Delta^k$:
\[
\mu^{\gamma(\Gamma,\boldsymbol{H})}_{g,n,N}
:= 2 (6g-6+2n)\cdot \frac{1}{N^d}
\sum_{S \in \cST_{\Gamma,\boldsymbol{H}}(\QuadStrat(1^{4g-4+n}, -1^n), 2N)} \frac{1}{|\Aut(S)|}
\,\delta_{\boldsymbol{b}(S)/(2N)}\,,
\]
where $\delta_{\boldsymbol{b}(S)/(2N)}$ is the Dirac measure concentrated at the point $\frac{\boldsymbol{b}(S)}{2N}\in\Delta^k$.
We can disintegrate the discrete
part $(\Gamma,\boldsymbol{H})$ defining the following measure on $\Delta^k$:
\[
\mu^\Gamma_{g,n,N} = \sum_{\boldsymbol{H}}
\mu^{\gamma(\Gamma,\boldsymbol{H})}_{g,n,N}\,.
\]

Let $g,n$ be non-negative integers with $2 g + n > 3$. Let
$\mu^{\gamma(\Gamma,\boldsymbol{H})}_{g,n,N}$
and $\mu^\Gamma_{g,n,N}$ be the measures defined above.

\begin{Theorem}
\label{thm:statistics}
For each stable graph $\Gamma\in\cG_{g,n}$ and each
$\boldsymbol{H}\in\N^k$ we have weak convergence of
measures:
\begin{equation}
\label{eq:measure:mu:Gamma:H}
\left(\mu^{\gamma(\Gamma,\boldsymbol{H})}_{g,n,N}\right)_N \to
\cZt(\boldsymbol{x},\boldsymbol{H})(P_\Gamma)\, dx
\quad\text{as }N\to+\infty\,.
\end{equation}
Here $P_\Gamma = P_\Gamma(b_1, \ldots, b_k)$
is the global polynomial associated to the stable graph
$\Gamma$ by Formula~\eqref{eq:P:Gamma} and $dx=dx_1\dots dx_k$ is the Lebesgue measure on the simplex $\Delta^k$.

Similarly, we have the weak convergence of measures:
\begin{equation}
\label{eq:measure:mu}
(\mu^\Gamma_{g,n,N})_{N} \to \sum_{\boldsymbol{H}}
\cZt(x,\boldsymbol{H})(P_\Gamma)\,d x\,.
\end{equation}
\end{Theorem}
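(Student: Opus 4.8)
The goal is to prove weak convergence of the discrete measures $\mu^{\gamma(\Gamma,\boldsymbol{H})}_{g,n,N}$ to the explicit measure $\cZt(\boldsymbol{x},\boldsymbol{H})(P_\Gamma)\,dx$ on the simplex $\Delta^k$. The natural strategy is to test against a monomial $\prod_{i=1}^k x_i^{s_i}$ and show that the integral against $\mu^{\gamma(\Gamma,\boldsymbol{H})}_{g,n,N}$ converges to $\int_{\Delta^k}\prod x_i^{s_i}\cdot\cZt(\boldsymbol{x},\boldsymbol{H})(P_\Gamma)\,dx$; since polynomials are dense in $C(\Delta^k)$ (the simplex is compact), this suffices for weak convergence. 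Pairing the measure against $\prod x_i^{s_i}$ produces the weighted sum $\frac{2(6g-6+2n)}{N^d}\sum_{S}\frac{1}{|\Aut(S)|}\prod_i(b_i(S)/2N)^{s_i}$, which is nothing but a weighted lattice-point count of square-tiled surfaces with combinatorics $(\Gamma,\boldsymbol{H})$, but now with each surface weighted by a monomial in its cylinder perimeters.

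\textbf{Key steps.} First I would recall the combinatorial description of square-tiled surfaces of type $(\Gamma,\boldsymbol{H})$ that already underlies the proof of Theorem~\ref{th:volume}: fixing the stable graph $\Gamma$ and heights $\boldsymbol{H}$, such a surface is built by choosing, for each vertex $v$, a trivalent metric ribbon graph on the component with integral edge lengths whose face perimeters are the $\boldsymbol{b}_v$, and then twisting each cylinder. By Kontsevich's theorem the leading-order count of ribbon graphs with prescribed boundary perimeters $b_1,\dots,b_k$ is governed by $\prod_v N_{g_v,n_v}(\boldsymbol{b}_v)$, and the twist of cylinder $i$ contributes a factor $b_i$ (the number of twist positions) together with the constraint coming from $H_i$. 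This is precisely what assembles into $P_\Gamma(\boldsymbol{b})=\frac{2^{6g-5+2n}(4g-4+n)!}{(6g-7+2n)!}\cdot\frac{1}{2^{|V(\Gamma)|-1}}\cdot\frac{1}{|\Aut(\Gamma)|}\prod_e b_e\prod_v N_{g_v,n_v}(\boldsymbol{b}_v)$. The second step is to perform the perimeter sum: the area constraint $\sum_i H_i b_i\le 2N$ confines the perimeters $\boldsymbol{b}$ to a scaled simplex, so the monomial-weighted count becomes a Riemann sum. Concretely, writing $P_\Gamma=\sum_{\boldsymbol{m}}c_{\boldsymbol{m}}\prod_i b_i^{m_i}$, the weighted count with extra weight $\prod_i b_i^{s_i}$ over the region $\sum H_i b_i\le 2N$ behaves asymptotically like $\sum_{\boldsymbol{m}}c_{\boldsymbol{m}}\int_{\sum H_ib_i\le 2N}\prod_i b_i^{m_i+s_i}\,db$, and evaluating this integral gives exactly $(2N)^{d}$ times the expression $\int_{\Delta^k}\prod x_i^{s_i}\cZt(\boldsymbol{x},\boldsymbol{H})(P_\Gamma)\,dx$ after the substitution $x_i=H_ib_i/2N$ and a standard Dirichlet-type integral. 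The normalization factor $2(6g-6+2n)$ and the division by $N^d$ are tailored to make the $N$-dependence cancel, matching the definitions~\eqref{eq:Vol:gamma:H} and~\eqref{eq:recover:Y} in the limit.

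\textbf{The main obstacle.} The essential point, and where care is required, is the passage from the exact lattice-point count to the Riemann-integral approximation, uniformly in the test monomial, while controlling the lower-order terms in Kontsevich's count. The count of integral metric ribbon graphs with prescribed perimeters is only quasi-polynomial, equal to $\prod_v N_{g_v,n_v}(\boldsymbol{b}_v)$ up to terms of lower degree in the $b_i$; since the leading homogeneous part has degree $6g-6+2n-|E(\Gamma)|$ and the summation over perimeters with the area constraint contributes $|E(\Gamma)|$ additional dimensions, the leading term scales as $N^d$ while the lower-order corrections scale as $o(N^d)$ and hence vanish in the normalized limit. I would verify this degree bookkeeping explicitly and check that the twist factors and the $\frac{1}{|\Aut(S)|}$ weighting are correctly absorbed into the combinatorial constants of $P_\Gamma$, exactly as in the proof of Theorem~\ref{th:volume}; once the monomial test functions are handled, the full statement~\eqref{eq:measure:mu:Gamma:H} follows. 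Finally, equation~\eqref{eq:measure:mu} is obtained by summing~\eqref{eq:measure:mu:Gamma:H} over $\boldsymbol{H}\in\N^k$, the interchange of the limit with the (absolutely convergent, by~\eqref{eq:recover:Z}) sum over heights being justified by the summability that underlies the identity $\cZ=\sum_{\boldsymbol{H}}\cY(\boldsymbol{H})$.
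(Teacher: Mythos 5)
Your proposal follows essentially the same route as the paper: the paper reduces both convergences to the generalized lattice-sum asymptotics of Lemma~\ref{lm:general:evaluation:for:monomial} (Lemma~\ref{lm:evaluation:for:monomial} "stated in terms of densities"), in which the count of square-tiled surfaces of type $(\Gamma,\boldsymbol{H})$ from the proof of Theorem~\ref{th:volume} is weighted by a test function of the normalized cylinder data and evaluated as a Riemann/Dirichlet integral over the simplex, exactly as you do with monomial test functions plus Stone--Weierstrass and the same bookkeeping of lower-order quasi-polynomial terms. The one point to watch is the location of the atoms: the moments match $\cZt(\boldsymbol{x},\boldsymbol{H})(P_\Gamma)\,dx$ only if the Dirac mass of $S$ sits at the normalized cylinder areas $\bigl(b_iH_i/(2N)\bigr)_i$ (the substitution you actually use in the integral, and the one appearing in Lemma~\ref{lm:general:evaluation:for:monomial}), whereas the pairing you write down, $\prod_i (b_i(S)/2N)^{s_i}$, uses the normalized perimeters and differs from the former by the factor $\prod_i H_i^{s_i}$ --- harmless for fixed $\boldsymbol{H}$ if stated consistently, but the summed statement~\eqref{eq:measure:mu} forces the area convention.
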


Theorem~\ref{thm:statistics}
is proved in Section~\ref{ss:Proof:of:the:volume:formula}.

Comparing~\eqref{eq:recover:Y}
and~\eqref{eq:recover:Z}
with~\eqref{eq:volume:contribution:of:stable:graph}
we conclude that the total masses of the
limiting measures are finite and have the following geometric
meaning:
\begin{align*}
\int_{\Delta^k}&
\cZt(x,\boldsymbol{H})(P_\Gamma)\, dx
=\cY(\boldsymbol{H})(P_\Gamma)
=\Vol\big(\Gamma,\boldsymbol{H}\big)
\,,
\\
\int_{\Delta^k}&
\sum_{\boldsymbol{H}}
\cZt(x,\boldsymbol{H})(P_\Gamma)\,d x
=
\sum_{\boldsymbol{H}}
\int_{\Delta^k}
\cZt(x,\boldsymbol{H})(P_\Gamma)\,d x
=\cZ(P_\Gamma)
=\Vol(\Gamma)
\,.
\end{align*}

The above theorem allows us to describe some statistical properties of
random square-tiled surfaces. For example, it allows us to compute
the asymptotic probability that a random square-tiled
surface tiled with a large number of squares corresponds to a
given stable graph $\Gamma$. Considering only square-tiled
surfaces associated to a given stable graph $\Gamma$, we
can compute asymptotic distributions of the heights
$\boldsymbol{H}$ of the maximal horizontal cylinders and
asymptotic distribution of their areas normalized by the
area of the surface. We can also compute asymptotic
statistics of perimeters of the cylinders under appropriate
normalization; for example statistics of the ratios of any
two perimeters. Note, that for the ratios of length
variables, the unit of measurement becomes irrelevant, in
particular,
$$
\frac{H_i}{H_j}=\frac{h_i}{h_j}
\qquad\text{and}\quad
\frac{b_i}{b_j}=\frac{w_i}{w_j}\,.
$$

We will use the notation $\E_\Gamma$
(respectively $\E_{\Gamma,\boldsymbol{H}}$) to
denote the asymptotic expectation values of quantities
evaluated on square-tiled surfaces with given cylinder
decomposition associated to $\Gamma$ (respectively
associated to $\Gamma$ and given heights $\boldsymbol{H}$).
Let us consider several simple examples.

\begin{Example}
Consider the following stable graph $\Phi_1$ in $\cG_{2,0}$
and the associated reduced multicurve:

\includegraphics{genus_two_graph_22.eps}
\includegraphics{genus_two_22.eps}
\begin{picture}(0,0)(-48,1)
\put(-19,-10){$b_1$}
\put(23,-16){$b_2$}
\put(6,-10){$0$}
\put(41,-10){$1$}
\put(-40,-10){$\Phi_1$}
\put(117,-10){$b_1$}
\put(164,-23){$b_2$}
\end{picture}
\vspace*{30pt}

\noindent
From Table~\ref{tab:2:0} we see that
$\Vol(\Phi_1)=\tfrac{1}{675}\cdot\pi^6$ and
$\Vol\cQ_{2,0}=\tfrac{1}{15}\cdot\pi^6$.
Thus, a random square-tiled surface in $\cQ_{2,0}$
(tiled with very large number of squares)
corresponds to the stable graph $\Phi_1$
with (asymptotic) probability
$\frac{\Vol(\Phi_1)}{\Vol\cQ_{2,0}}=\tfrac{1}{45}$.

We have also computed in Table~\ref{tab:2:0}
the polynomial
$P_{\Phi_1} = \tfrac{2}{15} b_1 b_2^3$.
For any given $\boldsymbol{H}= (H_1, H_2)$ we get
\begin{multline*}
\E_{\Phi_1,\boldsymbol{H}}\left(\frac{b_1}{b_2}\right)
\ =\
\frac{\int_{\Delta^2}\cZt(\boldsymbol{x},\boldsymbol{H})\left(\tfrac{b_1}{b_2}P_{\Phi_1}(b_1,b_2)\right)dx_1 dx_2}
{\int_{\Delta^2}\cZt(\boldsymbol{x},\boldsymbol{H})\left(P_{\Phi_1}(b_1,b_2)\right)dx_1 dx_2}
\ =\\=\
\frac{\int_{\Delta^2}\left(\frac{x_1^2 x_2^2}{H_1^3 H_2^3}\right)dx_1 dx_2}
{\int_{\Delta^2}\left(\frac{x_1 x_2^3}{H_1^2 H_2^4}\right)dx_1 dx_2}
\ =\
\frac{2! \cdot 2!}{1! \cdot 3!}\ \frac{H_2}{H_1}
\ =\
\frac{2}{3} \cdot \frac{H_2}{H_1}\,.
\end{multline*}
In other words, if we impose to a square-tiled surface
``of type $\Phi_1$'' to have cylinders of the same height,
then the perimeter $b_2$ of the
``separating''
cylinder is in average slightly longer
than the perimeter $b_1$ of the ``non-separating'' cylinder. However, if
we impose a large height $H_2$ to the ``separating'' cylinder,
its perimeter becomes proportionally shorter in average,
which is quite natural.

What might seem somehow counterintuitive is that
if we do not fix $H$, we obtain
\begin{multline*}
\E_{\Phi_1}\left(\frac{b_1}{b_2}\right)
\ =\
\frac{\int_{\Delta^2}
\sum_{\boldsymbol{H}}
\cZt(\boldsymbol{x},\boldsymbol{H})\left(\tfrac{b_1}{b_2}P_{\Phi_1}(b_1,b_2)\right)\,dx_1 dx_2}
{\int_{\Delta^2}
\sum_{\boldsymbol{H}}
\cZt(\boldsymbol{x},\boldsymbol{H})\left(P_{\Phi_1}(b_1,b_2)\right)\,dx_1 dx_2}
\ =\\=\
\frac{\int_{\Delta^2}
\sum_{\boldsymbol{H}}
\left(\frac{x_1^2 x_2^2}{H_1^3 H_2^3}\right)dx_1 dx_2}
{\int_{\Delta^2}
\sum_{\boldsymbol{H}}
\left(\frac{x_1 x_2^3}{H_1^2 H_2^4}\right)dx_1 dx_2}
\ =\
\frac{
\left(\sum_{\boldsymbol{H}\in\N^2}
\frac{1}{H_1^3 H_2^3}\right)
\cdot
\int_{\Delta^2}
x_1^2 x_2^2\,dx_1 dx_2}
{\left(\sum_{\boldsymbol{H}\in\N^2}
\frac{1}{H_1^2 H_2^4}\right)
\cdot
\int_{\Delta^2}
x_1 x_2^3\,dx_1 dx_2}
\ =\\=\
\frac{\zeta(3)\zeta(3)}{\zeta(2)\zeta(4)}
\cdot
\frac{2!\cdot 2!}{1!\cdot 3!}
\approx
0.811605 \cdot \frac{2}{3}
\approx 0.54107\,,
\end{multline*}
while
\begin{multline*}
\E_{\Phi_1}\left(\frac{b_2}{b_1}\right)
\ =\
\frac{\int_{\Delta^2}
\sum_{\boldsymbol{H}}
\cZt(\boldsymbol{x},\boldsymbol{H})\left(\tfrac{b_2}{b_1}P_{\Phi_1}(b_1,b_2)\right)\,dx_1 dx_2}
{\int_{\Delta^2}
\sum_{\boldsymbol{H}}
\cZt(\boldsymbol{x},\boldsymbol{H})\left(P_{\Phi_1}(b_1,b_2)\right)\,dx_1 dx_2}
\ =\\=\
\frac{\int_{\Delta^2}
\sum_{\boldsymbol{H}}
\left(\frac{x_2^4}{H_1 H_2^5}\right)dx_1 dx_2}
{\int_{\Delta^2}
\sum_{\boldsymbol{H}}
\left(\frac{x_1 x_2^3}{H_1^2 H_2^4}\right)dx_1 dx_2}
\ =\
\frac{
\left(\sum_{\boldsymbol{H}\in\N^2}
\frac{1}{H_1 H_2^5}\right)
\cdot
\int_{\Delta^2}
x_2^4\,dx_1 dx_2}
{\left(\sum_{\boldsymbol{H}\in\N^2}
\frac{1}{H_1^2 H_2^4}\right)
\cdot
\int_{\Delta^2}
x_1 x_2^3\,dx_1 dx_2}
\ =\\=\
\frac{\zeta(1)\zeta(5)}{\zeta(2)\zeta(4)}
\cdot
\frac{0!\cdot 4!}{1!\cdot 3!}
= +\infty\,.
\end{multline*}
\end{Example}

\begin{Example}
Now consider the following graph $\Phi_2$ with two edges:

\includegraphics{genus_two_graph_21.eps}
\begin{picture}(0,0)(-40,0)
\put(1,-19){$b_1$}
\put(62,-19){$b_2$}
\put(33,-30){$0$}
\put(-30,-19){$\Phi_2$}
\end{picture}

\includegraphics{genus_two_21.eps}
\begin{picture}(0,0)(-189,9)
\put(-24,0){$b_1$}
\put(66,0){$b_2$}
\end{picture}
\vspace*{30pt}

\noindent
It was computed in Table~\ref{tab:2:0} that
$P_{\Phi_2} = \tfrac{8}{5} (b_1^3 b_2 + b_1 b_2^3)$. Then
for any given $\boldsymbol{H} = (H_1, H_2)$ we have
\[
\E_{\Phi_2,\boldsymbol{H}}\left(\frac{b_1}{b_2}\right) =
\frac{\frac{4!\cdot 0!}{H_1^5 H_2} + \frac{2!\cdot 2!}{H_1^3 H_2^3}}
{\frac{3!\cdot 1!}{H_1^4 H_2^2} + \frac{1!\cdot 3!}{H_1^2 H_2^4}}
=
\frac{2 H_2 \left(H_1^2+6 H_2^2\right)}{3 H_1
   \left(H_1^2+H_2^2\right)}\,.
\]
In particular, in the symmetric case, when $H_1=H_2$, we have
$$
\E_{\Phi_2(2),\boldsymbol{H}}\left(\frac{b_1}{b_2}\right) =
\E_{\Gamma,\boldsymbol{H}}\left(\frac{b_2}{b_1}\right) =
\frac{7}{3}\,.
$$
\end{Example}

\begin{Example}
Note also, that we get for free the averaged version of
\cite[Theorem~1.2]{Mirzakhani:statistics}. Namely, when the
stable graph $\Gamma\in\cG_{g,0}$ has the maximal possible
number $3g-3$ of vertices (i.e., when the corresponding
multicurve provides a pants decomposition of the surface),
Equation~\eqref{eq:P:Gamma} for $P_\Gamma$
takes the following form:
$$
P_\Gamma(\boldsymbol{b})
=(\text{numerical factor})\cdot
b_1\dots b_{3g-3}\cdot\prod_{v\in V(\Gamma)}
N_{0,3}(\boldsymbol{b}_v)\,.
$$
Since $N_{0,3}=1$ identically, we conclude that for
$H_1=H_2=\dots=H_{3g-3}$ the density function of
statistics of the normalized
lengths is the product $x_1\cdot\dots\cdot x_{3g-3}$ up
to a constant normalization factor depending only on the genus $g$
\begin{multline*}
\mu^{\gamma(\Gamma,(1,\dots,1))}_{g,n,N} \to
\cZt(\boldsymbol{x},(1,\dots,1))(C_1(g)\cdot b_1\dots b_{3g-3})\,dx
=\\=C_2(g)\cdot x_1\cdot\dots\cdot x_{3g-3}\, dx_1\dots dx_{3g-3}
\quad\text{as }N\to+\infty\,.
\end{multline*}

Mirzakhani proved in
\cite[Theorem~1.2]{Mirzakhani:statistics} that the same
asymptotic length statistics is valid for any individual
hyperbolic surface in $\cM_g$ (and not only in average, as
we do). F.~Arana~Herrera and M.~Liu
independently proved in~\cite{Arana:Herrera:Equidistribution:of:horospheres}, \cite{Arana:Herrera:Counting:multi:geodesics}
and in~\cite{Mingkun}
a generalization of this result
to arbitrary multicurves. Namely, for any stable graph $\Gamma\in\cG_{g,n}$,
any associated collection of positive integer weights $\boldsymbol{H}$ and any hyperbolic surface $X\in\cM_{g,n}$, the asymptotic statistics of normalized lengths of components of hyperbolic geodesic multicurves in $\Mod_{g,n}\cdot \gamma(\Gamma,\boldsymbol{H})$
coincides
(up to a global normalization constant depending only on $g$ and $n$) with
$\cZt(\boldsymbol{x},\boldsymbol{H})(P_\Gamma)\, dx$. In particular, it does not depend on the hyperbolic metric $X$.
\end{Example}

We complete this section considering two examples
describing statistics of heights of the maximal
horizontal cylinders of a random square-tiled surfaces.
We start with the following elementary lemma.

\begin{Lemma}
\label{lm:height:of:one:cylinder}
Consider a random square-tiled surface in $\cQ_{g,n}$
having a single maximal horizontal cylinder. The asymptotic
probability that this cylinder is represented by a single band
of squares (i.e. that $H_1=1$) equals
$\cfrac{1}{\zeta(6g-6+2n)}$.
\end{Lemma}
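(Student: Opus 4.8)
The plan is to invoke Theorem~\ref{th:volume}, which converts the two relevant counting problems into the evaluation of the operators $\cZ$ and $\cY(\boldsymbol{H})$ on the single polynomial $P_\Graph$, and then to exploit the fact that for a one-cylinder surface this polynomial degenerates to a single monomial. First I would translate the geometric hypothesis: a square-tiled surface with a single maximal horizontal cylinder has an associated reduced multicurve with one component, so its stable graph $\Graph\in\cG_{g,n}$ has exactly one edge, $|E(\Graph)|=1$, and the height vector is a single positive integer $\boldsymbol{H}=(H_1)$. By Remark~\ref{rm:Vol:power:of:pi} the polynomial $P_\Graph$ is homogeneous of degree $6g-6+2n-|E(\Graph)|=6g-7+2n$ in the unique edge variable $b_1$; since a homogeneous polynomial in one variable is a monomial, we have $P_\Graph(b_1)=c_\Graph\,b_1^{6g-7+2n}$ for some positive rational constant $c_\Graph$, whose precise value will turn out to be irrelevant.

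Next I would evaluate the two operators on this monomial. From the definitions~\eqref{eq:cV} and~\eqref{eq:cZ} one reads off $\cY(H_1)\bigl(b_1^{6g-7+2n}\bigr)=(6g-7+2n)!\,H_1^{-(6g-6+2n)}$ and $\cZ\bigl(b_1^{6g-7+2n}\bigr)=(6g-7+2n)!\,\zeta(6g-6+2n)$. Substituting these into~\eqref{eq:volume:contribution:of:stable:graph} yields
\[
\Vol\bigl(\Graph,(H_1)\bigr)=\frac{c_\Graph\,(6g-7+2n)!}{H_1^{6g-6+2n}}\,,
\qquad
\Vol(\Graph)=c_\Graph\,(6g-7+2n)!\,\zeta(6g-6+2n)\,,
\]
and the consistency relation $\sum_{H_1\ge1}\Vol(\Graph,(H_1))=\Vol(\Graph)$ is then immediate from $\sum_{H_1\ge1}H_1^{-(6g-6+2n)}=\zeta(6g-6+2n)$.

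Finally I would assemble the probability. By the definitions~\eqref{eq:Vol:gamma:H}--\eqref{eq:Vol:gamma} the quantities $\Vol(\Graph,\boldsymbol{H})$ and $\Vol(\Graph)$ record, up to one common normalization $2d/N^{d}$ with $d=6g-6+2n$, the asymptotic cardinalities of the corresponding families of square-tiled surfaces; hence the sought asymptotic probability is the ratio of the $H_1=1$ contribution to the total one-cylinder contribution. For a fixed one-edge graph this ratio is
\[
\frac{\Vol\bigl(\Graph,(1)\bigr)}{\Vol(\Graph)}=\frac{1}{\zeta(6g-6+2n)}\,,
\]
with $c_\Graph$ and the factorial cancelling. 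Because this value is independent of which one-edge stable graph is chosen, summing the numerators and the denominators over all $\Graph\in\cG_{g,n}$ with $|E(\Graph)|=1$ (the loops giving nonseparating curves and the bridges giving separating ones) leaves the ratio unchanged, which proves the claim.

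The only genuinely delicate point is this last bookkeeping step, namely that passing from the volume contributions to the actual asymptotic frequency of the underlying lattice points is legitimate. However this is furnished verbatim by the existence, strict positivity, and additivity of the limits asserted in Theorem~\ref{th:volume}, so that ratios of the $\Vol(\Graph,\boldsymbol{H})$ and $\Vol(\Graph)$ literally equal ratios of asymptotic counts; no further analytic input is required and the remainder is the short monomial computation carried out above.
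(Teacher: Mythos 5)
Your proposal is correct and follows essentially the same route as the paper: the paper's own (one-line) proof observes that a one-edge stable graph forces $P_\Gamma$ to be a monomial $c_\Gamma\, b_1^{6g-7+2n}$, whence the ratio $\cY(1)(P_\Gamma)/\cZ(P_\Gamma)=1/\zeta(6g-6+2n)$. You merely spell out the details the paper leaves implicit, including the harmless but worthwhile observation that the ratio is independent of which one-edge graph (loop or bridge) occurs, so aggregating over all of them does not change it.
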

\begin{proof}
When a stable graph
$\Gamma\in\cG_{g,n}$ has a single edge $b_1$,
Formula~\eqref{eq:P:Gamma} gives
$P_\Gamma=(\text{numerical factor})
\cdot b_1^{6g-7+2n}$,
and the Lemma follows.
\end{proof}

In terms of multicurves this means that a random
single-component integral multicurve $n\gamma$ in $\cML(\Z)$ (where
$n\in\N$ and $\gamma$ is a simple closed curve) is reduced
(i.e. $n=1$) with asymptotic probability
$\frac{1}{\zeta(6g-6+2n)}$.

Note that $\zeta(x)$ tends to $1$ exponentially rapidly as
the real-valued argument $x$ grows. Thus, our result
implies, that when at least one of $g$ or $n$ is large
enough, a random one-cylinder square-tiled surface is tiled
with a single horizontal band of squares with a very large
probability, and a random single-component integral
multicurve is just a simple closed curve with
a very large probability.

The polynomial $P_\Gamma$ enables to compute analogous
probabilities for any given stable graph $\Gamma$. For
example, a random square-tiled surface in $\cQ_2$
associated to the stable graph $\Phi_1$, considered earlier
in this section, has both cylinders of height $H_1=H_2=1$
with probability
$$
\frac{\cY(1,1)(P_{\Phi_1})}{\cZ(P_{\Phi_1})}
=\frac{1}{\zeta(2)\zeta(4)}
=\frac{540}{\pi^6}\approx 0.561687\,.
$$
A random square-tiled surface in $\cQ_2$ associated to the
stable graph $\Phi_2$, considered earlier in this
section, has heights $H_i$ of both horizontal cylinders
bounded by $2$ with probability
\begin{multline*}
\frac{\cY(1,1)(P_{\Phi_2})
+\cY(1,2)(P_{\Phi_2})
+\cY(2,1)(P_{\Phi_2})
+\cY(2,2)(P_{\Phi_2})}{\cZ(P_{\Phi_2})}
\ =\\=\
\frac{2+\frac{2}{16}+\frac{2}{4}+\frac{2}{64}}{2\zeta(2)\zeta(4)}
=\frac{\frac{85}{64}}{\zeta(2)\zeta(4)}
\approx 0.745991\,.
\end{multline*}

\subsection{Structure of the paper}

In Section~\ref{s:proofs} we prove Theorem~\ref{th:volume}
stated in Section~\ref{ss:intro:Masur:Veech:volumes}
providing the formula for the Masur--Veech volume
$\Vol\cQ_{g,n}$ and Theorem~\ref{th:carea} stated in
Section~\ref{ss:intro:Siegel:Veech:constants} providing the
formula for the area Siegel--Veech constant
$\carea(\cQ_{g,n})$.

In Section~\ref{s:comparison:with:Mirzakhani} we compare
our Formula~\ref{eq:square:tiled:volume} for
$\Vol\cQ_{g,n}$ with Mirzakhani's formula for $b_{g,n}$ and
our Formula~\eqref{eq:volume:contribution:of:stable:graph}
for $\Vol(\Gamma)$ for a stable graph $\Gamma$ with
Mirzakhani's formula for the associated $c(\gamma)$ for the
associated multicurve $\gamma$. We elaborate the translation
between the two languages and prove
Theorem~\ref{th:our:density:equals:Mirzakhani:density}
stated in
Section~\ref{ss:Frequencies:of:square:tiled:surfaces}
evaluating the normalization constant~\ref{eq:Vol:gamma:c:gamma}
between the corresponding quantities.

In Section~\ref{s:2:correlators} we state a uniform
asymptotic Formula~\eqref{eq:a:g:k:difference} for
correlators $\langle\tau_{d_1}\tau_{d_2}\rangle$, which
has independent interest. We apply it to the computation of
the asymptotic frequencies $c_{g,sep}$ and
$c_{g,nonsep}$ of separating and of non-separating
simple closed hyperbolic geodesics on a hyperbolic surface
of large genus $g$ thus proving
Theorem~\ref{th:separating:over:non:separating} stated in
Section~\ref{ss:Frequencies:of:simple:closed:curves}.

In Appendix~\ref{a:proof:2:correlators} we present the proof of the asymptotic
Formula~\eqref{eq:a:g:k:difference} used in Section~\ref{s:2:correlators}. This
proof consists of a sequence of combinatorial manipulations with expressions in
binomial coefficients.
For the sake of completeness, we present a detailed formal
definition of a stable graph in
Appendix~\ref{s:stable:graphs}.
Appendix~\ref{s:explicit:calculations} provides examples of
explicit calculations of the Masur--Veech volume
$\Vol\cQ_{g,n}$ and of the Siegel--Veech constant
$\carea(\cQ_{g,n})$ for small $g$ and $n$.
Appendix~\ref{a:tables} presents tables of
$\Vol\cQ_{g,n}$ and of $\carea(\cQ_{g,n})$ for small $g$
and $n$.

\subsection{Acknowledgements}
Numerous results of this paper were directly or indirectly
inspired by beautiful ideas of Maryam~Mirzakhani. Working
on this paper we had a constant feeling that we are
following her steps. This concerns in particular the
relation between Masur--Veech volume $\Vol\cQ_{g,n}$ and
frequencies of hyperbolic multicurves and relation between
large genus asymptotics of the volumes of moduli spaces and
intersection numbers of $\psi$-classes.

A.~Eskin planned to use Kontsevich formula for computation
of Masur--Veech volumes before the invention of the
approach of
Eskin--Okounkov~\cite{Eskin:Okounkov:Inventiones}. We thank
him for useful conversations and for indicating to us that
our technique of evaluation of the Masur--Veech volumes
admits generalization to strata with only odd zeroes, see
Remark~\ref{rm:to:complete}.

We are grateful to M.~Kazarian for his
computer code evaluating intersection numbers which we used
on the early stage of this project.
We are particularly grateful to him for the recursion for linear Hodge integrals
obtained in~\cite{Kazarian}.
Combination of recent results from~\cite{Chen:Moeller:Sauvaget}
and from~\cite{Kazarian} confirmed all numerical values
of the Masur--Veech volumes $\Vol\cQ_{g,n}$ and of the Siegel--Veech constants
$\carea(\cQ_{g,n})$ which appear in the current paper and provided very serious
numerical evidence for the Conjecture~\ref{conj:Vol:Qg} (stated prior to these results).

We highly appreciate the computer experiments of M.~Bell~\cite{Bell} which
provided computer evidence independent of theoretical
predictions of frequencies of multicurves on surfaces of
low genera.

We are extremely grateful to A.~Aggarwal and to J.~Athreya
who carefully read the manuscripts of the current paper and
made numerous helpful remarks.

We thank F.~Arana--Herrera, S.~Barazer, A.~Borodin, C.~Ball, G.~Borot, D.~Chen, E.~Duryev, M.~Liu, M.~M\"oller, L.~Monin, B.~Petri, K.~Rafi, A.~Sauvaget, J.~Souto, I.~Telpukhovky, S.~Wolpert, A.~Wright, D.~Zagier for useful discussions.

We are grateful to MPIM in Bonn, where a considerable part of
this work was performed,
and to MSRI in Berkeley for providing us with friendly and
stimulating environment.

We thank anonymous referees for attentive reading of
the manuscript and for important suggestions which helped us
to improve the presentation.

\section{Proofs of the formulae for the Masur--Veech volumes and for the Siegel--Veech constants}
\label{s:proofs}

We start this section by recalling the necessary background
and normalization conventions that are used in the
subsequent sections of the paper.

\subsection{The principal stratum and Masur--Veech measure}
\label{ss:background:strata}
In this section we recall the canonical construction of the
Masur--Veech measure on $\QuadStrat(1^{4g-4+n}, -1^n)$ and its link with the
integral structure given by square-tiled surfaces.

Consider a compact nonsingular complex curve $C$ of genus
$g$ endowed with a meromorphic quadratic differential $q$
with $\zeroes=4g-4+n$ simple zeroes and with $n$ simple
poles. Any such pair $(C,q)$ defines a canonical ramified
double cover $\pi:\hat C\to C$ such that $\pi^\ast
q=\hat\omega^2$, where $\hat\omega$ is an Abelian
differential $\widehat\omega$ on the double cover $\widehat
C$. The ramification points of $\pi$ are exactly the zeroes
and poles of $q$. The double cover $\widehat C$ is endowed
with the canonical involution $\iota$ interchanging the two
preimages of every regular point of the cover. The stratum
$\QuadStrat(1^{\zeroes},-1^n)$ of such differentials is modelled
on the subspace of the relative cohomology of the double
cover $\widehat C$, anti-invariant with respect to the
involution $\iota$. This anti-invariant subspace is denoted
by $H^1_-(\widehat C,\{\widehat P_1,\dots,\widehat
P_{\zeroes}\};\C)$, where $\{\widehat P_1,\dots,\widehat
P_{\zeroes}\}$ are zeroes of the induced Abelian
differential $\widehat\omega$. For $(g,n)$ with $2g+n>3$,
the image of the stratum $\QuadStrat(1^{\zeroes}, -1^n)$ is
open and dense in $\cQ_{g,n}$ (its complement in $\cQ_{g,n}$
is closed and of positive codimension). In what follows we
always work with the principal stratum.

We define a lattice in $H^1_-(\widehat
S,\{\widehat{P}_1,\ldots,\widehat{P}_{\zeroes}\};\C)$ as
the subset of those linear forms which take values in
$\Z\oplus i\Z$ on $H^-_1(\widehat S,\{\widehat
P_1,\dots,\widehat P_{\zeroes}\};\Z)$. The integer points
in $\cQ_{g,n}$ are exactly those quadratic differentials
for which the associated flat surface with the metric $|q|$
can be tiled with $1/2 \times 1/2$ squares. In this way the
integer points in $\QuadStrat(1^{\zeroes}, -1^n)$ are represented by
\textit{square-tiled surface} as defined in
Section~\ref{ss:Square:tiled:surfaces:and:associated:multicurves}.

We define the Masur--Veech volume
element $d\!\Vol$ on $\QuadStrat(1^{\zeroes},-1^n)$ as the linear volume
element in the vector space
$H^1_-(S,\{\widehat{P}_1,\ldots,\widehat{P}_{\zeroes}\};\C{})$ normalized in such a
way that the fundamental domain of the above lattice has unit
volume. The Masur--Veech volume element $\dVolMV$ in $\QuadStrat(1^{\zeroes}, -1^n)$
induces a volume element on the level sets of the $\Area$ function. In particular
on the level hypersurface $\cQ^{\Area=\frac{1}{2}}_{g,n}$ we get
\begin{equation}
\label{eq:def:Vol:Q:g:n}
\Vol \QuadStrat(1^{\zeroes}, -1^n)
:= \Vol_1 \QuadStrat^{\Area=\frac{1}{2}}(1^{\zeroes}, -1^n)
= 2 \cdot d \cdot \Vol \QuadStrat^{\Area\le\frac{1}{2}}(1^{\zeroes}, -1^n)\,.
\end{equation}
Here $d = 6g-6+2n = \dim_{\C}\cQ_{g,n}$.
The right-hand side term in the above formula can be considered as the definition of the middle term and of the left-hand-side term.

By construction, the volume element $d\!\Vol^{symplectic}$
in $\cQ_{g,n}$ induced by the canonical symplectic
structure considered in Section~\ref{ss:MV:volume} and the
linear volume element $d\!\Vol^{period}$ in period
coordinates defined in this section
belong to the same Lebesgue measure class. It was proved by
H.~Masur in~\cite{Masur:Hamiltonian} that the Teichm\"uller
flow is Hamiltonian, in particular, that
$d\!\Vol^{symplectic}$ is preserved by the
Teichm\"uller flow. By the results of
H.~Masur~\cite{Masur:82} and
W.~Veech~\cite{Veech:Gauss:measures}, the volume element
$d\!\Vol^{period}$ is also preserved by the Teichm\"uller
flow. Ergodicity of the Teichm\"uller flow now implies that
the two volume forms are pointwise proportional with
constant coefficient.

We postpone evaluation of this constant factor to another
paper. Throughout this paper we consider the normalization
of the Masur--Veech volume element
$d\!\Vol=d\!\Vol^{period}$ as defined in the current
section and then define $\Vol\cQ_{g,n} = \Vol \QuadStrat(1^{\zeroes}, -1^n)$ by
means of~\eqref{eq:def:Vol:Q:g:n}. This definition
incorporates the conventions on the choice of the lattice,
on the choice of the level of the area function, and the
convention on the dimensional constant. We
follow~\cite{AEZ:Dedicata}, \cite{AEZ:genus:0},
\cite{Goujard:volumes},
\cite{DGZZ:meanders:and:equidistribution},
\cite{DGZZ:one:cylinder:Yoccoz:volume} in the choice of
these conventions; see Section~4.1 in~\cite{AEZ:genus:0} and
Appendix~A in~\cite{DGZZ:meanders:and:equidistribution} for
the arguments in favor of this normalization.

\subsection{Jenkins--Strebel differentials and stable graphs}
\label{ss:Jenkins:Strebel}
A quadratic differential $q$ in $\cQ_{g,n}$ is called
\textit{Jenkins-Strebel} if its horizontal foliation
contains only closed leaves. Any Jenkins--Strebel
differential can be decomposed into maximal horizontal
cylinders with zeroes and simple poles located on the
boundaries of these cylinders. We  call these boundaries
\textit{singular layers}. Each singular layer defines a
metric ribbon graph representing an oriented surface with
boundary. When the quadratic differential belongs to the
principal stratum $\cQ(1^{4g-4+n},-1^n)$, the ribbon graph has
vertices of valence three at simple zeroes of $q$, vertices
of valence one at simple poles of $q$ and no vertices of
any other valency. Throughout this paper we always assume
that the quadratic differential $q$ belongs to the
principal stratum.

Every ribbon graph $\ribbongraph$ considered as an oriented
surface with boundary has a certain genus
$g(\ribbongraph)$, number $n(\ribbongraph)$ of
boundary components, and number $p(\ribbongraph)$
of univalent vertices often called \textit{leaves} of the
graph. The number $m(\ribbongraph)$ of trivalent vertices
can be expressed through these quantities as
\begin{equation}
\label{eq:m:g:p:n}
m(\ribbongraph)=4g(\ribbongraph)-4+2n(\ribbongraph)+p(\ribbongraph)
\end{equation}

\begin{figure}[htb]
  %
\includegraphics{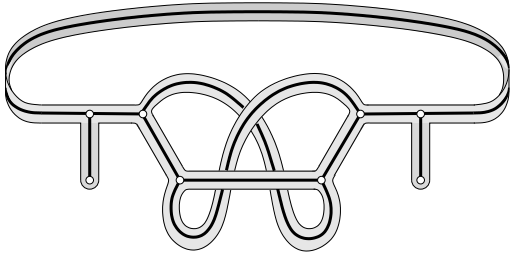}
\vspace{85pt}
\caption{
\label{fig:ribbon:graph}
This ribbon graph $\ribbongraph$ has
genus $g(\ribbongraph)=1$; it has $n(\ribbongraph)=2$ boundary components
and $p(\ribbongraph)=2$ univalent vertices.
}
\end{figure}

To every Jenkins--Strebel differential $q$ as above we
associate a \textit{stable graph} $\Graph =(V, H, \alpha,
\iota, \boldsymbol{g}, L)$ in the same way as we did it in
Section~\ref{ss:Square:tiled:surfaces:and:associated:multicurves}
in the particular case when Jenkins--Strebel differential
$q$ represents a square-tiled surface. (A formal definition
of a stable graph can be found, for example,
in~\cite{Okounkov:Pandharipande}; we reproduce it in
Appendix~\ref{s:stable:graphs} for completeness.) We recall
briefly the construction of $\Gamma$.

The vertices of $\Graph$ encode the singular layers. The
set of all vertices (singular layers) is denoted by $V$. A
tubular neighborhood of a singular layer $v$ is a surface
with boundary, which has some genus $g_v$. The
\textit{genus decoration} $\boldsymbol{g} = (g_v)_v$
associates to each $v\in V$ the non-negative integer $g_v$.
Any maximal horizontal cylinder of $q$ has two boundary
components which are canonically identified with
appropriate boundary components of tubular neighborhoods of
appropriate singular layers $v_i, v_j$ (where $v_i$ and
$v_j$ coincide when the cylinder goes from the singular layer
to itself). In this way, each maximal horizontal
cylinder defines an edge of $\Graph$ joining the
boundary layers $v_i, v_j$. Finally, simple poles of
$q$ are encoded by the \textit{legs} of $\Graph$. By
convention the $n$ simple poles are labeled, so the legs of
$\Gamma$ inherit the labeling $L$.
Relation~\eqref{eq:m:g:p:n}
implies the stability condition $2 g_v - 2 + n_v > 0$ for
every vertex $v$ of $\Graph$.

\begin{Remark}
Consider the following trivial stable graph: it has a
unique vertex decorated by the integer $g$; it has $n$
legs; it has no edges. Such graph does not correspond to
any (non-zero) Strebel differential. For $(g,n) \not= (0,3)$
it provides zero contribution in
Formulae~\eqref{eq:square:tiled:volume}
for $\Vol \cQ_{g,n}$ and~\eqref{eq:carea} for $\carea(\cQ_{g,n})$.
\end{Remark}

\subsection{Conditions on the lengths of the waist curves of the cylinders}
\label{eq:conditions:on:lengths}

Having a square-tiled surface and its associated stable
graph $\Graph$, we denote by $k=|E(\Graph)|$ the number of
maximal cylinders filled with closed horizontal
trajectories. Denote by $w_1, \dots, w_k$ the lengths of
the waist curves of these cylinders. Since every edge of
any singular layer $v$ is followed by the boundary of the
corresponding ribbon graph twice, the sum of the lengths of
all boundary components of each singular layer $V$ is
integral (and not only half-integral).

Let $\Graph$ be a stable graph and let us consider the collection of linear
forms $l_v=\sum_{e\in E_v(\Graph)}w_e$ in variables $w_1,\dots, w_k$, where
$k=|E(\Graph)|$, $v$ runs over the vertices $V(\Graph)$, and $E_v(\Graph)$ is
the set of edges adjacent to the vertex $v$ (ignoring legs).
It is immediate to see that the $(\Z/2\Z)$-vector space
spanned by all such linear forms has dimension $|V(\Graph)|-1$.

Let us make a change of variables passing from half-integer
to integer parameters
$b_i:=2w_i$ where $i=1,\dots,k$. Consider the integer sublattice
$\mathbb{L}_\Graph\subset\Z^k$ defined by the linear relations
\begin{equation}
\label{eq:sublattice:L}
l_{v}(b_1,\dots,b_k)=\sum_{e\in E_v(\Graph)}b_{e}=0\ (\operatorname{mod} 2)
\end{equation}
for all vertices $v\in V(\Graph)$. By the above
remark, the sublattice $\mathbb{L}_\Graph$ has index $2^{|V(\Graph)|-1}$
in $\Z^k$. We summarize the observations of this section in the following
criterion.

\begin{Corollary}
\label{cor:criterion}
A collection of strictly positive numbers
$w_1,\dots,w_{k}$,
where $w_i\in\tfrac{1}{2}\N$ for $i=1,\dots,k$,
corresponds to a square-tiled surface
realized by a stable graph $\Graph\in\cG_{g,n}$ if and only if $k=|E(\Graph)|$ and the
corresponding vector $\boldsymbol{b}=2\boldsymbol{w}$ belongs to the sublattice
$\mathbb{L}_\Graph$. This sublattice has index
$|\Z^{k}:\mathbb{L}_\Graph|=2^{|V(\Graph)|-1}$
in the integer lattice $\Z^k$.
\end{Corollary}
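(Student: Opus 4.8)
The plan is to establish the two assertions of the corollary separately: the realizability criterion (the ``if and only if''), which is geometric, and the index formula $|\Z^k:\mathbb{L}_\Graph|=2^{|V(\Graph)|-1}$, which is linear algebra over $\Z/2\Z$. For the criterion, necessity is the easy direction: if $w_1,\dots,w_k$ come from an honest square-tiled surface with stable graph $\Graph$, then the maximal horizontal cylinders are in bijection with $E(\Graph)$, forcing $k=|E(\Graph)|$, while each $w_i\in\tfrac12\N$ because the waist curves are tiled by $1/2\times 1/2$ squares; the one substantial point, that $\boldsymbol{b}=2\boldsymbol{w}\in\mathbb{L}_\Graph$, is exactly the integrality of the total boundary length of each singular layer, established just above from the fact that every edge of the ribbon graph carried by a vertex is traversed twice by its boundary. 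For sufficiency I would invert the construction of Section~\ref{ss:Jenkins:Strebel}: given $\boldsymbol{b}\in\mathbb{L}_\Graph$, the congruence $l_v(\boldsymbol{b})\equiv 0\pmod 2$ is precisely the parity hypothesis under which a connected trivalent ribbon graph $\ribbongraph_v\in\cR_{g_v,n_v}$ with integer edge lengths and the prescribed boundary perimeters exists (as in Kontsevich's theorem quoted above); fixing one such $\ribbongraph_v$ at every vertex and gluing, along each edge $e$, a flat cylinder of width $w_e$ and an arbitrary positive integer height reassembles a square-tiled surface realizing $\Graph$ with the required waist lengths.

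For the index, let $\phi\colon\Z^k\to(\Z/2\Z)^{V(\Graph)}$ be the reduction $\boldsymbol{b}\mapsto(l_v(\boldsymbol{b})\bmod 2)_v$, so that $\mathbb{L}_\Graph=\ker\phi$; since $2\Z^k\subseteq\mathbb{L}_\Graph$ we get $|\Z^k:\mathbb{L}_\Graph|=|\phi(\Z^k)|=2^{r}$, where $r$ is the $\Z/2\Z$-dimension of the span of the reduced forms. The crucial point is that a loop at $v$ contributes its width to two boundary components, hence enters $l_v$ with multiplicity two, i.e.\ as $2b_e\equiv 0\pmod 2$; thus modulo $2$ each $l_v$ reduces to the sum of the $b_e$ over the \emph{non-loop} edges at $v$. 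These reduced forms are precisely the rows of the unoriented vertex--edge incidence matrix over $\Z/2\Z$ of the graph $\Graph'$ obtained from $\Graph$ by deleting every loop. A vector in the left kernel of this matrix must take equal values at the two endpoints of each edge, hence is constant on every connected component of $\Graph'$, so $r=|V(\Graph)|$ minus the number of components of $\Graph'$. As $\Graph$ is connected and removing loops cannot disconnect it, $\Graph'$ is connected (and for $|V(\Graph)|=1$ the matrix is empty), whence $r=|V(\Graph)|-1$ and $|\Z^k:\mathbb{L}_\Graph|=2^{|V(\Graph)|-1}$; this is consistent with the single relation $\sum_{v}l_v\equiv 0\pmod 2$ noted implicitly above.

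The index computation is clean once one notices that loops drop out modulo $2$, so the step I expect to demand the most care is the sufficiency half of the criterion. There one must guarantee that the parity condition $\boldsymbol{b}\in\mathbb{L}_\Graph$ genuinely suffices to realize the prescribed (and possibly small) integer perimeters by an actual integer-length trivalent ribbon graph at each vertex; this rests on the surjectivity and integral realizability built into the Jenkins--Strebel/Kontsevich parametrization of metric ribbon graphs, rather than on any large-perimeter positivity of the counting functions $N_{g_v,n_v}$.
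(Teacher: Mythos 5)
Your computation of the index is correct and is exactly the argument the paper leaves implicit: the text merely asserts that it is ``immediate to see'' that the forms $l_v$ span a $(|V(\Graph)|-1)$-dimensional space over $\Z/2\Z$, and your observation that a loop enters $l_v$ with multiplicity two (hence vanishes mod $2$), reducing the question to the rank over $\Z/2\Z$ of the incidence matrix of the loop-deleted graph, is the right way to make this precise. It is also the reading of $l_v$ forced by the geometry and by the sanity check that a single vertex carrying a single loop must give index $2^{0}=1$. The necessity half of the criterion likewise matches the paper, which derives $l_v(\boldsymbol{b})\equiv 0 \pmod 2$ from the fact that the boundary of the ribbon graph at each singular layer traverses every edge twice.

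The genuine gap is where you predicted it, in the sufficiency half, and the fix you propose does not close it. There is no ``integral realizability built into the Kontsevich parametrization'': the counting functions $\cN_{g_v,n_v,p_v}$ agree with the polynomials $N_{g_v,n_v+p_v}$ only up to lower-order terms and can vanish for small perimeters satisfying the parity condition. Concretely, take the stable graph in $\cG_{2,0}$ with one genus-$0$ vertex and two loops; here $\mathbb{L}_\Graph=\Z^2$, yet $\boldsymbol{b}=(1,1)$ is not realizable, because the singular layer would have to be a connected trivalent genus-$0$ ribbon graph with four boundary components, hence with six edges, forcing the total perimeter $b_1+b_1+b_2+b_2=4$ to equal twice the sum of six positive integer edge lengths, which is at least $12$. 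So the ``if'' direction of the Corollary is false as literally stated for small $\boldsymbol{w}$. In fairness, the paper does not prove this direction either: the Corollary is introduced as a ``summary of observations,'' and everything downstream uses only the necessity direction together with the asymptotic counts of Proposition~\ref{pr:count:with:leaves}, for which the congruence $\boldsymbol{b}\in\mathbb{L}_\Graph$ is the only constraint surviving in the leading term. A correct formulation would say that $\boldsymbol{b}\in\mathbb{L}_\Graph$ is necessary and that, among vectors satisfying it, the non-realizable ones contribute only to the lower-order terms of the lattice-point count.
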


We complete this section with a generalization
of Lemma~3.7 in~\cite{AEZ:Dedicata} which would be used in
the proof of our main
Formula~\eqref{eq:volume:contribution:of:stable:graph} for
the Masur--Veech volume $\Vol\cQ_{g,n}$.

\begin{Lemma}
\label{lm:evaluation:for:monomial}
Let $\mathbb{L}$ be a sublattice of finite index
$|\Z^k:\mathbb{L}|$ in the integer lattice $\Z^k$
and let $m_1,\dots,m_k\in\N$ be any positive integers.
The following formula holds
\begin{multline}
\label{eq:3.7}
\lim_{N\to+\infty}
\frac{1}{N^{|m|+k}}
\sum_{\substack{\boldsymbol{b}\cdot\boldsymbol{H}\le N\\b_i, H_i\in\N\\ \boldsymbol{b}\in\mathbb{L}}}
b_1^{m_1}\cdots b_k^{m_k}
\ =\\=\
\frac{1}{|\Z^k:\mathbb{L}|}\cdot
\frac{1}{(|m|+k)!}
\cdot
\prod_{i=1}^k \Bigg(m_i!\cdot \zeta(m_i+1)\Bigg)
\ =\\=\
\frac{1}{|\Z^k:\mathbb{L}|}\cdot
\frac{1}{(|m|+k)!}
\cdot
\cZ(b_1^{m_1}\cdots b_k^{m_k})
\,.
\end{multline}

Moreover, the sum in $\boldsymbol{H}$ and the limit commute:
$$
\lim_{N\to+\infty}
\frac{1}{N^{|m|+k}}
\sum_{\substack{\boldsymbol{b}\cdot\boldsymbol{H}\le N\\b_i, H_i\in\N\\ \boldsymbol{b}\in\mathbb{L}}}
b_1^{m_1}\cdots b_k^{m_k}
\ =\
\sum_{\substack{\boldsymbol{H}\\H_i \in \N}}
\lim_{N\to+\infty}
\frac{1}{N^{|m|+k}}
\sum_{\substack{\boldsymbol{b}\cdot\boldsymbol{H}\le N\\b_i \in\N\\ \boldsymbol{b}\in\mathbb{L}}}
b_1^{m_1}\cdots b_k^{m_k}
$$
and we have
$$
\lim_{N\to+\infty}
\frac{1}{N^{|m|+k}}
\sum_{\substack{\boldsymbol{b}\cdot\boldsymbol{H}\le N\\b_i \in\N\\ \boldsymbol{b}\in\mathbb{L}}}
b_1^{m_1}\cdots b_k^{m_k}
\ =\
\frac{1}{|\Z^k:\mathbb{L}|}\cdot
\frac{1}{(|m|+k)!}
\cdot
\cY(\boldsymbol{H})(b_1^{m_1}\cdots b_k^{m_k})\,.
$$
\end{Lemma}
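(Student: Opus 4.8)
The plan is to reduce the lattice-point count to a Riemann-sum approximation of an integral, and then to evaluate that integral using the standard factorization of the zeta-sum. First I would rewrite the constraint $\boldsymbol{b}\cdot\boldsymbol{H}\le N$ as follows: for each fixed height vector $\boldsymbol{H}=(H_1,\dots,H_k)\in\N^k$, the admissible $\boldsymbol{b}$ range over points of $\mathbb{L}$ in the region $\{b_i>0,\ \sum_i H_i b_i\le N\}$. Setting $b_i = N\, x_i/H_i$ rescales this region to the standard simplex $\Delta^k$, and the monomial $b_1^{m_1}\cdots b_k^{m_k}$ becomes $N^{|m|}\prod_i (x_i/H_i)^{m_i}$. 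Because $\mathbb{L}$ has finite index $|\Z^k:\mathbb{L}|$ in $\Z^k$, the number of lattice points of $\mathbb{L}$ in a nice region of volume $V$ is asymptotically $V/|\Z^k:\mathbb{L}|$; combined with the extra factor $N^{|m|}$ from the monomial and the Jacobian $N^k/(H_1\cdots H_k)$ from the change of variables, this turns the inner sum, after dividing by $N^{|m|+k}$, into
\[
\frac{1}{|\Z^k:\mathbb{L}|}\cdot\frac{1}{H_1^{m_1+1}\cdots H_k^{m_k+1}}\cdot\int_{\Delta^k} x_1^{m_1}\cdots x_k^{m_k}\,dx_1\cdots dx_k
\]
in the limit $N\to\infty$. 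The simplex integral is the classical Dirichlet integral, equal to $\prod_i m_i!\,/\,(|m|+k)!$. This already yields the stated formula for a fixed $\boldsymbol{H}$, identifying the right-hand side with $\frac{1}{|\Z^k:\mathbb{L}|}\cdot\frac{1}{(|m|+k)!}\cdot\cY(\boldsymbol{H})(b_1^{m_1}\cdots b_k^{m_k})$ by the definition~\eqref{eq:cV} of $\cY$.

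Next I would sum over $\boldsymbol{H}\in\N^k$. Summing the factor $\prod_i H_i^{-(m_i+1)}$ over all $\boldsymbol{H}$ factors as $\prod_i \sum_{H_i\ge 1} H_i^{-(m_i+1)} = \prod_i \zeta(m_i+1)$, which converges precisely because each $m_i\ge 1$ forces the exponent $m_i+1\ge 2$. Multiplying by $\prod_i m_i!$ from the Dirichlet integral gives $\prod_i\big(m_i!\,\zeta(m_i+1)\big)$, which is exactly $\cZ(b_1^{m_1}\cdots b_k^{m_k})$ by~\eqref{eq:cZ}. This establishes the first displayed identity of the Lemma, modulo the interchange of the $\boldsymbol{H}$-sum with the $N\to\infty$ limit.

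Justifying that interchange is the step I expect to be the main obstacle, since it is the one genuinely analytic point rather than a formal manipulation. The difficulty is that for large $\boldsymbol{H}$ the rescaled region becomes very thin and the Riemann-sum approximation of the Dirichlet integral degrades; one must control the error uniformly. The cleanest route is to first perform the full sum over both $\boldsymbol{b}$ and $\boldsymbol{H}$ directly (this is the left-hand side), and separately compute the $\boldsymbol{H}$-by-$\boldsymbol{H}$ sum of limits (the right-hand side), then show they agree. For the interchange I would invoke a dominated-convergence argument: the summand in $\boldsymbol{H}$, after normalization, is bounded uniformly in $N$ by a constant times $\prod_i H_i^{-(m_i+1)}$, which is summable over $\boldsymbol{H}\in\N^k$ since each $m_i+1\ge 2$. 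This is precisely the generalization of Lemma~3.7 in~\cite{AEZ:Dedicata}, so the uniform tail estimate there transfers with the sublattice index $|\Z^k:\mathbb{L}|$ entering only as the global multiplicative constant governing the lattice-point density. Once the dominating bound is in place, dominated convergence yields the commutation of sum and limit, and the three displayed formulae follow.
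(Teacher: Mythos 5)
Your proposal is correct and follows essentially the same route as the paper: the paper's proof likewise reduces to the Riemann-sum/Dirichlet-integral computation (by citing Lemma~3.7 of~\cite{AEZ:Dedicata} rather than unpacking it), accounts for the sublattice by the same covolume rescaling producing the factor $|\Z^k:\mathbb{L}|^{-1}$, and justifies the interchange of the $\boldsymbol{H}$-sum with the limit by dominated convergence with the same summable majorant $\prod_i H_i^{-(m_i+1)}$. The only difference is that you spell out the change of variables and the simplex integral explicitly instead of quoting the earlier lemma.
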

\begin{proof}
The limit with omitted restriction
$\boldsymbol{b}\in\mathbb{L}$ is computed in Lemma~3.7
in the paper~\cite{AEZ:Dedicata}. Note that the inversion of sum and
limits here is valid by virtue of the dominated convergence
theorem. More precisely, under the substitution $x_i =
\frac{b_i H_i}{N}$ the sum approximates the integral
from below.

The restriction $\boldsymbol{b}\in\mathbb{L}$ rescales
the volume element in the corresponding integral sum by the
index $|\Z^k:\mathbb{L}|$ of the sublattice $\mathbb{L}$
in $\Z^k$ which
produces the extra factor $|\Z^k:\mathbb{L}|^{-1}$.

Finally, the last equality in~\eqref{eq:3.7}
and the last equality in the last line
of the assertion of Lemma~\ref{lm:evaluation:for:monomial}
are just the
Definitions~\eqref{eq:cZ} and~\eqref{eq:cV} of $\cZ$
and of $\cY$ respectively.
\end{proof}

\subsection{Counting trivalent metric ribbon graphs with leaves}
\label{ssec:trivalent:graphs:with:leaves}
We need the following elementary generalization of the
theorem of M.~Kontsevich stated in
section~\ref{ss:intro:volume:polynomials} allowing to our
metric ribbon graph have univalent vertices
(\textit{leaves}) in addition to trivalent vertices.

We use the letter $p$ to denote the number of leaves.
Consider a collection of positive integers $b_1,\dots, b_n$
such that $\sum_{i=1}^n b_i$ is even. Similarly to
$\cN_{g,n}(b_1, \ldots, b_n)$ defined in
section~\ref{ss:intro:volume:polynomials}, let us denote by
$\cN_{g,n,p}(b_1,\dots,b_n)$ the weighted count of
connected metric ribbon graphs $G$  of genus $g$ with $n$
labeled boundary components of integer lengths
$b_1,\dots,b_n$ and $p$ univalent vertices. In other words
$$
\cN_{g,n,p}(b_1,\dots,b_n):=\sum_{G \in \cR_{g,n,p}} \frac{1}{|\Aut(G)|} N_G(b_1,\dots,b_n)\,,
$$
where $\cR_{g,n,p}$ is the set of equivalence classes of
ribbon graphs of genus $g$, with $n$ boundary components
and $p$ univalent vertices. The counting function
$\cN_{g,n,p}$ generalizes Kontsevich's polynomials.

\begin{Proposition}
\label{pr:count:with:leaves}
Consider $n$-tuples of large positive integers
$b_1,\dots,b_n$ such that
$\sum_{i=1}^n b_i$ is even. The following relation holds:
\begin{align}
\cN_{g,n,p}(b_1,\dots,b_n)
&= \cN_{g,n+p,0}(b_1,\dots,b_n,\underbrace{0,\dots,0}_p) \\
&= N_{g,n+p}(b_1,\dots,b_n,\underbrace{0,\dots,0}_p)
+\text{lower order terms}
\,,
\end{align}
where the Kontsevich polynomials $N_{g,n}$ are defined by Formula~\eqref{eq:N:g:n}.
\end{Proposition}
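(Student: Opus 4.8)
The plan is to prove the two displayed equalities separately, since they are of quite different natures. The second equality is the easy one: it is Kontsevich's theorem applied to the index $(g,n+p)$, combined with the quasi-polynomiality of the full counting function established by Norbury and by Chapman--Mulase--Safnuk. Concretely, Kontsevich's theorem gives $\cN_{g,n+p,0}(b_1,\dots,b_{n+p}) = N_{g,n+p}(b_1,\dots,b_{n+p}) + (\text{l.o.t.})$ as an identity of quasi-polynomials in all $n+p$ variables, so one may legitimately specialize $b_{n+1}=\dots=b_{n+p}=0$. The monomials of $N_{g,n+p}$ surviving this specialization are exactly those with $d_{n+1}=\dots=d_{n+p}=0$, and these carry total degree $6g-6+2(n+p)$ in the remaining variables $b_1,\dots,b_n$; hence the specialized top part $N_{g,n+p}(b_1,\dots,b_n,0,\dots,0)$ stays homogeneous of the expected degree while the remaining quasi-polynomial stays of lower order. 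This yields the second equality with essentially no extra work.

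The first equality carries all the content, and here I would set up a surgery relating univalent vertices to additional boundary faces of length zero. The model move replaces a univalent vertex $w$, carrying its single edge $e$ to a trivalent vertex $v$, by a trivalent vertex $w'$ equipped with a contractible loop $\lambda$ bounding a new boundary face; the stem edge of the resulting ``lollipop'' inherits the length of $e$, while the loop is assigned length $0$. An Euler-characteristic bookkeeping — each leaf removal trades one univalent vertex for one trivalent vertex, adds one edge, and adds one boundary face, so that $(g,n,p)$ passes to $(g,n+p,0)$ — confirms that this is the correct combinatorial type. One checks directly that the move preserves the genus, preserves the perimeters $b_1,\dots,b_n$ of the original faces (the length-zero loop contributes nothing and the stem reproduces the $2\ell_e$ that the leaf used to contribute to the ambient face), matches the automorphism groups, and is inverted by contracting a length-zero monogon back to a leaf. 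This identifies the genuine integer-metric leaf count with the count of trivalent graphs in which the last $p$ faces are monogons carrying zero-length loops.

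The main obstacle is precisely the reconciliation of this honest ``monogon'' count with the quasi-polynomial $\cN_{g,n+p,0}$ specialized to zero, which is what the right-hand side of the first equality really denotes. The specialization at zero is not a naive count: for a fixed trivalent graph whose $(n{+}j)$-th face is bounded by more than one edge-side, setting that perimeter to $0$ forces several edge lengths to vanish and, by Ehrhart reciprocity, contributes to the quasi-polynomial at $0$ with a sign (for instance a digon face contributes the polynomial $s-1$, hence $-1$ at $s=0$, rather than $0$). Thus the specialized counting function is an alternating sum over all boundary combinatorics of the vanishing faces, and the assertion of the first equality is that this alternating sum collapses precisely onto the monogon/leaf contribution, i.e.\ the non-monogon terms cancel. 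I would establish this either (a) combinatorially, by organizing the graphs according to the subgraph spanned by the zero-length edges around each vanishing face and running an inclusion--exclusion whose only surviving term is the single contracted loop; or (b) analytically, by passing to Kontsevich's Laplace-transformed graph sum, in which the $b_i$ are dual to variables attached to the faces and setting $b_{n+j}=0$ becomes a clean residue/limit operation that manifestly inserts a univalent vertex. I expect route (a), the sign-cancellation bookkeeping for the zero-specialization, to be the genuinely delicate step; route (b) trades this for control of the Laplace transform but makes the vanishing of the non-monogon contributions transparent.
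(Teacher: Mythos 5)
Your reduction of the second displayed equality to Kontsevich's theorem plus the Norbury/Chapman--Mulase--Safnuk quasi-polynomiality is fine, and your leaf-to-monogon surgery is a sensible picture. But the proof is incomplete at exactly the point you flag yourself: the reconciliation of the honest leaf count $\cN_{g,n,p}(b_1,\dots,b_n)$ with the quasi-polynomial $\cN_{g,n+p,0}$ specialized at $b_{n+1}=\dots=b_{n+p}=0$. You correctly observe that this specialization is an alternating (Ehrhart-reciprocity) sum over all ways the vanishing faces can degenerate, and that one must show the non-monogon contributions cancel; but you then only name two possible routes ((a) an inclusion--exclusion on the subgraph of zero-length edges, (b) a Laplace-transform argument) without carrying either out. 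Since this cancellation is the entire content of the first equality in your framing, the argument as written has a genuine gap --- and route (a) is not routine bookkeeping, because distinct vanishing faces can share zero-length edges and the degenerations interact, so the sign cancellation has to be organized globally rather than face by face.

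The paper sidesteps this issue entirely by never evaluating a counting quasi-polynomial at zero. It proves the proposition by induction on $p$: one lemma (modelled on Lemma~3.5 of~\cite{AEZ:Dedicata}) shows that inserting one more leaf turns the leading term of the count into $\tfrac12\cI$ applied to the previous leading term, where $\cI$ is the sum of the integration operators $\cI_{b_i}$; a second lemma shows, via the string equation for $\psi$-class intersection numbers, that $N_{g,n+p+1}(b_1,\dots,b_n,0,\dots,0)=\tfrac12\,\cI\bigl(N_{g,n+p}(b_1,\dots,b_n,0,\dots,0)\bigr)$, where the zeros are formal substitutions into a polynomial rather than evaluations of a count. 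Both sides of the desired identity therefore satisfy the same recursion in $p$ with the same base case $p=0$ (Kontsevich's theorem), and the proposition follows. To salvage your approach you would need to actually prove the cancellation in (a) or establish the residue identity in (b); otherwise the inductive route via the string equation is shorter and avoids the Ehrhart subtleties altogether.
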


The proof of Proposition~\ref{pr:count:with:leaves}
is the combination of the following two Lemmas.

\begin{NNLemma}
   %
Suppose that for some $g,n,p$ the leading term of
$\cN_{g,n,p}(b_1,\dots,b_n)$ is a homogeneous polynomial
$N^{top}_{g,n,\poles}(b_1,\dots, b_n)$ in $b_1,\dots,b_n$.
Then the leading term of $\cN_{g,n,\poles+1}(b_1,\dots,
b_n)$ is also a homogeneous polynomial
$N^{top}_{g,n,\poles+1}(b_1,\dots, b_n)$ in
$b_1,\dots,b_n$. Moreover, it satisfies the relation
\begin{equation}
\label{eq:g:m:k}
N^{top}_{g,n,\poles+1}=\frac{1}{2}\cdot\cI(N^{top}_{g,n,\poles})\,,
\end{equation}
where  $\cI=\sum_{i=1}^n  \cI_{b_i}$, and the operators  $\cI_{b_i}$
are defined on monomials  by
$$
\cI_{b_i}(b_1^{j_1}\dots b_{i-1}^{j_{i-1}}
\cdot b_i^{j_i}\cdot
b_{i+1}^{j_{i+1}}  \dots b_n^{j_n})
=
b_1^{j_1}\dots b_{i-1}^{j_{i-1}} \cdot
\frac{b_i^{j_i+2}}{j_i+2}\cdot
b_{i+1}^{j_{i+1}}  \dots b_n^{j_n}
$$
and  are  extended  to arbitrary polynomials by linearity.
\end{NNLemma}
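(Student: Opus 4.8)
The plan is to prove the Lemma by a direct combinatorial operation that builds a trivalent metric ribbon graph with $p+1$ leaves out of one with $p$ leaves, and then to extract the leading term of the resulting count. The operation is \emph{leaf insertion}: given a metric ribbon graph $G$ counted by $\cN_{g,n,p}$, choose one of the $n$ boundary faces, say face $i$, choose an integer point on the boundary walk of that face, split the edge-side there to create a new trivalent vertex $w$, and attach to $w$ a new leaf whose edge has integer length $c\ge1$, with the cyclic order at $w$ arranged so that the leaf pokes into face $i$. Since the boundary of face $i$ now runs up and back down the new leaf-edge, the perimeter of face $i$ increases by $2c$ while all other perimeters are unchanged. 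The inverse operation — remove the \emph{marked} new leaf and smooth the resulting bivalent vertex — exhibits this as a bijection onto graphs carrying one extra, distinguished leaf. The boundary walk of face $i$ in $G$ has length equal to the perimeter of that face, so there are $b_i-2c$ insertion points up to a bounded (hence lower-order) correction, giving the leading-order recursion
\[
\cN_{g,n,p+1}(b_1,\dots,b_n)
= \sum_{i=1}^n \sum_{c\ge1}(b_i-2c)\,\cN_{g,n,p}(b_1,\dots,b_i-2c,\dots,b_n)
+ (\text{l.o.t.})\,.
\]

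First I would substitute the hypothesis $\cN_{g,n,p}=N^{top}_{g,n,p}+(\text{l.o.t.})$, with $N^{top}_{g,n,p}$ homogeneous of degree $6g-6+2n+2p$, into the right-hand side and read off the top-degree part monomial by monomial. For a monomial $\prod_k b_k^{j_k}$ of $N^{top}_{g,n,p}$, its contribution from face $i$ is $\prod_{k\ne i}b_k^{j_k}\sum_{c\ge1}(b_i-2c)^{j_i+1}$. The inner sum runs over the values $u=b_i-2c$, i.e.\ over the nonnegative integers of fixed parity below $b_i$; comparing with $\tfrac12\int_0^{b_i}u^{j_i+1}\,du$ shows its leading term is $\tfrac12\cdot\frac{b_i^{j_i+2}}{j_i+2}$. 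This is exactly $\tfrac12\,\cI_{b_i}$ applied to the monomial, and summing over $i$ yields $\tfrac12\,\cI$. Hence the top-degree part of $\cN_{g,n,p+1}$ equals $\tfrac12\,\cI(N^{top}_{g,n,p})$; since $\cI$ sends a homogeneous polynomial of degree $D$ to one of degree $D+2$, this top-degree part is itself an honest homogeneous polynomial (independent of the parity of the $b_i$), which establishes both assertions and identifies $N^{top}_{g,n,p+1}=\tfrac12\,\cI(N^{top}_{g,n,p})$.

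The delicate points are the two places where a spurious constant could slip in. First, the recursion must \emph{not} acquire a multiplicity equal to the number of leaves: this is why the inserted leaf is singled out, matching the convention under which the univalent vertices — the simple poles — are labelled, so that de-insertion of the marked leaf is canonical; I would verify this against the elementary case $g=0$, $n=1$, $p=2$, where the single-segment graph must be counted with $|\Aut|=1$ to give $N^{top}_{0,1,2}=1$. Second, the factor $\tfrac12$ of the statement is precisely the factor produced by the parity restriction in the sum over $c$, namely that $u=b_i-2c$ ranges over only every second integer. I would also confirm that the lower-order effects — the $O(1)$ gap between the perimeter and the number of genuine subdivision points, and the degenerate configurations where smoothing the bivalent vertex fails to be generic — do not disturb the top-degree term, which is the main technical (though routine) obstacle. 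As a consistency check, under the identification $N^{top}_{g,n,p}=N_{g,n+p}(b_1,\dots,b_n,0,\dots,0)$ that the Proposition will establish, the relation $N^{top}_{g,n,p+1}=\tfrac12\,\cI(N^{top}_{g,n,p})$ is exactly the string equation for the intersection numbers $\langle\tau_{d_1}\cdots\tau_{d_k}\rangle$, which reassures that all constants are correct.
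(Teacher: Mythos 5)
Your argument is correct and is essentially the paper's: the paper proves this Lemma by citing Lemma~3.5 of~\cite{AEZ:Dedicata}, whose proof is precisely the leaf insertion/removal bijection you describe, leading to the recursion $\cN_{g,n,p+1}(b)=\sum_i\sum_{c\ge1}(b_i-2c)\,\cN_{g,n,p}(\dots,b_i-2c,\dots)+(\text{l.o.t.})$ and the factor $\tfrac12$ from the parity restriction on $u=b_i-2c$. You have simply written out the details (including the genus-independence of the combinatorics) that the paper leaves to the citation.
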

\begin{proof}
This Lemma mimics Lemma 3.5 in~\cite{AEZ:Dedicata}. Formally
speaking, in~\cite{AEZ:Dedicata} the corresponding statement
is formulated only for $g=0$, but it is immediate to see that the
inductive proof is applicable without any changes to any genus as
soon as the base of induction, corresponding to $\poles=0$ is valid.
\end{proof}

\begin{NNLemma}
The polynomials $N_{g,n}$
defined by Equation~\eqref{eq:N:g:n} satisfy the relations:
\begin{equation}
\label{eq:g:m:plus:k}
N_{g,n+\poles+1}(b_1,\dots,b_n,0,\dots,0)=
\frac{1}{2}\cdot\cI(N_{g,n+\poles}(b_1,\dots,b_n,0,\dots,0))\,.
\end{equation}
\end{NNLemma}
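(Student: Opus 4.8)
The plan is to expand both sides of~\eqref{eq:g:m:plus:k} directly in terms of the intersection numbers $\langle\tau_{d_1}\dots\rangle$ using the defining Formulae~\eqref{eq:N:g:n}--\eqref{eq:correlator}, and then to recognize the resulting relation between correlators as a single instance of the string equation. The guiding observation is that substituting $b_i=0$ in $N_{g,n}$ retains only those monomials $b^{2\boldsymbol{d}}$ with $d_i=0$, which amounts to placing a factor $\tau_0$ at the corresponding marked point. Thus both sides of~\eqref{eq:g:m:plus:k} are generating series for correlators of the form $\langle\tau_{d_1}\dots\tau_{d_n}\tau_0^{\poles+1}\rangle$ and $\langle\tau_{d_1}\dots\tau_{d_n}\tau_0^{\poles}\rangle$, and the operator $\tfrac12\cI$ is exactly what implements the passage from the latter to the former.

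First I would write, using $0!=1$, the restricted polynomial
$$
N_{g,n+\poles}(b_1,\dots,b_n,0,\dots,0)
=\sum_{|\boldsymbol{d}|=3g-3+n+\poles}
\frac{1}{2^{5g-6+2(n+\poles)}\,\boldsymbol{d}!}
\,\langle\tau_{d_1}\dots\tau_{d_n}\tau_0^{\poles}\rangle\;
b_1^{2d_1}\dots b_n^{2d_n}\,,
$$
where now $\boldsymbol{d}=(d_1,\dots,d_n)$ is a multi-index in the first $n$ variables and $\boldsymbol{d}!=d_1!\cdots d_n!$. Next I would record the effect of $\cI_{b_i}$ on a monomial $b_1^{2e_1}\dots b_n^{2e_n}$: it produces $b_1^{2e_1}\dots b_i^{2(e_i+1)}\dots b_n^{2e_n}$ together with the scalar factor $\tfrac{1}{2(e_i+1)}$. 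Setting $d_i=e_i+1$ and $d_j=e_j$ for $j\ne i$, this factor combines with $1/\boldsymbol{e}!$ to give precisely $\tfrac{1}{2\,\boldsymbol{d}!}$, because $\boldsymbol{d}!=\boldsymbol{e}!\cdot(e_i+1)$. Summing over $i=1,\dots,n$ and inserting the overall $\tfrac12$ I obtain
$$
\tfrac12\cI\big(N_{g,n+\poles}(b_1,\dots,b_n,0,\dots,0)\big)
=\sum_{\boldsymbol{d}}
\frac{1}{2^{5g-6+2(n+\poles)}}\cdot\frac{1}{4\,\boldsymbol{d}!}
\Big(\sum_{i:\,d_i\ge1}\langle\tau_{d_1}\dots\tau_{d_i-1}\dots\tau_{d_n}\tau_0^{\poles}\rangle\Big)
b_1^{2d_1}\dots b_n^{2d_n}\,.
$$

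At this stage the string equation
$\langle\tau_0\,\tau_{d_1}\dots\tau_{d_n}\tau_0^{\poles}\rangle
=\sum_{i}\langle\tau_{d_1}\dots\tau_{d_i-1}\dots\tau_{d_n}\tau_0^{\poles}\rangle$
identifies the inner sum with $\langle\tau_{d_1}\dots\tau_{d_n}\tau_0^{\poles+1}\rangle$. The only marked points that contribute when lowering an index are the first $n$, since lowering any of the $\poles$ remaining $\tau_0$'s would create a $\tau_{-1}=0$; this is exactly why $\cI$ ranges over $i=1,\dots,n$ only, so the match is automatic. Finally I would reconcile the normalizations: passing from $n+\poles$ to $n+\poles+1$ marked points replaces $2^{5g-6+2(n+\poles)}$ by $2^{5g-6+2(n+\poles+1)}$, i.e.\ inserts an extra factor $4$ in the denominator, so the coefficient of $b^{2\boldsymbol{d}}$ in the left-hand side $N_{g,n+\poles+1}(b_1,\dots,b_n,0,\dots,0)$ equals $\tfrac{1}{4}\cdot 2^{-(5g-6+2(n+\poles))}\,\boldsymbol{d}!^{-1}\,\langle\tau_{d_1}\dots\tau_{d_n}\tau_0^{\poles+1}\rangle$, in agreement term by term with the display above.

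I do not expect a genuine obstacle in this argument, since~\eqref{eq:g:m:plus:k} is an exact identity between homogeneous polynomials and reduces, via the dictionary ``$b_i=0 \leftrightarrow \tau_0$ insertion,'' to the classical string equation. The only delicate point is the bookkeeping of the constants: one must verify that the factor $\tfrac12$ carried by the operator, the factor $\tfrac12$ produced by the relation $\boldsymbol{d}!=\boldsymbol{e}!\,(e_i+1)$, and the factor $4$ coming from the additional marked point combine to the single overall factor demanded by~\eqref{eq:g:m:plus:k}. Once this is checked, the identity of the two generating series follows coefficient by coefficient.
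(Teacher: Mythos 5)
Your proof is correct and follows essentially the same route as the paper: both sides are expanded via~\eqref{eq:N:g:n}--\eqref{eq:correlator} into correlators with $\tau_0$ insertions at the vanishing arguments, and the identity reduces to the string equation; your constant bookkeeping (the $\tfrac12$ from $\cI$, the $\tfrac12$ from $\boldsymbol{d}!=\boldsymbol{e}!\,(e_i+1)$, and the factor $4$ from the shift $2^{5g-6+2(n+\poles)}\to 2^{5g-6+2(n+\poles+1)}$) checks out. The paper merely states the equivalence with the string equation without writing out these details, so your version is a fleshed-out form of the same argument.
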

\begin{proof}
Using the explicit expressions of $N_{g,n+\poles+1}$ and of
$N_{g,n+\poles}$ in terms of $\psi$-classes
we see that the above
relation is equivalent to the following one:
\begin{multline*}
\langle \psi_1^{d_1}\psi_2^{d_2}\dots \psi_n^{d_n} \rangle_{g,n+p+1}
=
\langle \psi_1^{d_1-1}\psi_2^{d_2}\dots \psi_n^{d_n} \rangle_{g,n+p}
+\\+
\langle \psi_1^{d_1}\psi_2^{d_2-1}\dots \psi_n^{d_n} \rangle_{g,n+p}
+\dots+
\langle \psi_1^{d_1}\psi_2^{d_2}\dots \psi_n^{d_n-1} \rangle_{g,n+p}\,,
\end{multline*}
which is the well-known identity for the intersection
numbers known as ``string equation'', see~\cite{Witten}.
\end{proof}

\begin{proof}[Proof of Proposition~\ref{pr:count:with:leaves}]
For $\poles=0$ (when there are no poles at all) the statement
corresponds to the original theorem of Kontsevich stated in
Section~\ref{ss:intro:volume:polynomials}. We use this as the base of
induction in $p$ for any fixed pair $(g,n)$. It remains to notice that
Equations~\eqref{eq:g:m:k} and~\eqref{eq:g:m:plus:k} recursively
define the corresponding polynomials for any $(g,n,p)$ starting from
$(g,n,0)$.
\end{proof}

\subsection{Proof of the volume formula}
\label{ss:Proof:of:the:volume:formula}

A square-tiled surface corresponding to a fixed stable graph
$\Graph$ can be described by three groups of
parameters. The parameters in different groups can
be varied independently. The parameters in the first group are
responsible for the lengths of horizontal saddle connections. In this
group we fix only the lengths $w_1, \dots, w_k$ of the waist curves
of the cylinders filled with closed horizontal trajectories, where $k$
is the number of edges in $\Graph$. This
leaves a certain freedom for the choice of the lengths of horizontal
saddle connections. The criterion of admissibility of a given
collection $\boldsymbol{w}=(w_1,\dots,w_k)$ is given by
Corollary~\ref{cor:criterion}. The count for the number of choices of
the lengths of all individual saddle connections for a fixed choice
of $\boldsymbol{w}$ is given in
Proposition~\ref{pr:count:with:leaves}.

There are no restrictions on the choice of strictly positives integer
or half-integer heights $h_1,\dots, h_k$ of the cylinders.

Having chosen the widths $w_1, \dots, w_k$ of all maximal cylinders
(i.e. the lengths of the closed horizontal trajectories) and the
heights $h_1, \dots, h_k$ of the cylinders, the flat area of the
entire surface is already uniquely determined as the sum
$\boldsymbol{w}\cdot\boldsymbol{h}=w_1 h_1+\dots +w_k h_k$ of flat areas of
individual cylinders.

However, when the lengths of all horizontal saddle connections and
the heights $h_i$ of all cylinders are fixed, there is still a freedom
in the third independent group of parameters. Namely, we can twist
each cylinder by some twist $\phi_i\in\frac{1}{2}\N$ before attaching
it to the layer. Applying, if necessary, appropriate Dehn twist we
can assume that $0\le\phi_i<w_i$, where $w_i$ is the perimeter
(length of the waist curve) of the corresponding cylinder. Thus, for
any choice of lengths of horizontal saddle connections realizing some
square-tiled surface with the stable graph $\Graph$ and for any
choice $h_1,\dots, h_k$ of heights of the cylinders we get
$(2w_1)\cdot\ldots\cdot(2w_k)$ square-tiled surfaces sharing the same
lengths of the horizontal saddle connections and same heights of the
cylinders.

In Proposition~\ref{pr:count:with:leaves} we assume that the lengths
of the edges of the metric ribbon graph are integer. Clearly, if we
allow these lengths to be also half-integer, we get
$N_{g,n+p}\big(2w_1,\dots,2w_n,0,\dots,0\big)$ as the leading
term of the new count. The realizability condition
$2\boldsymbol{w}\in\mathbb{L}_\Graph$ from Corollary~\ref{cor:criterion}
translates as the compatibility condition of the parity of the sum of
the lengths of the boundary components of each individual connected
ribbon graph as in Proposition~\ref{pr:count:with:leaves}.

We are ready to write a formula for the leading term in
the number of all square-tiled surfaces tiled with at most $2N$
squares represented by the stable graph $\Graph$ when the integer
bound $N$ becomes sufficiently large:
\begin{multline*}
\card(\cST_\Graph(\cQ(1^{4g-4+n}, 2N)))
\sim\\
\sim
(4g-4+n)!\cdot
\frac{1}{|\operatorname{Aut}(\Graph)|}\cdot
\sum_{\substack{\boldsymbol{w}\cdot\boldsymbol{h}\le N/2\\w_i, h_i\in\frac{1}{2}\N\\ 2\boldsymbol{w}\in\mathbb{L}_\Graph}}
(2w_1) \cdots (2w_k)\cdot
\prod_{v\in V(\Graph)}
N_{g_v,n_v}(2\boldsymbol{w}_v)\,,
\end{multline*}
The notation in the above expression mimic notation
in~\eqref{eq:square:tiled:volume}, namely $k=|E(\Graph)|$,
and $\boldsymbol{w}_v$ is defined analogously to
$\boldsymbol{b}_v$ in~\eqref{eq:square:tiled:volume}.
The factor $(4g-4+n)!$ represents the number of ways
to label the $4g-4+n$ trivalent vertices of the ribbon graphs,
which correspond to $\ell=4g-4+n$ simple zeroes of the corresponding
Strebel quadratic differential $q$. Note that by convention
the univalent vertices (leaves) (corresponding to simple poles
of $q$ and also to $n$ marked points) are already labeled.

Making a change of variables $b_h:=(2w_h)\in\N$ and
$H_i:=(2h_i)\in\N$ we can rewrite the above expression as
\begin{multline}
\label{eq:last}
\card(\cST_\Gamma(\QuadStrat(1^{4g-4+n}, -1^n), 2N))
\sim\\
\sim
(4g-4+n)!\cdot
\frac{1}{|\operatorname{Aut}(\Graph)|}\cdot
\sum_{\substack{\boldsymbol{b}\cdot\boldsymbol{H}\le 2N\\b_i, H_i\in\N\\ \boldsymbol{b}\in\mathbb{L}^k}}
b_1 \cdots b_k
\cdot
\prod_{v\in V(\Graph)}
N_{g_v,n_v}(\boldsymbol{b}_v)\,.
\end{multline}

The expression above is a homogeneous polynomial of degree
$6g-6+2n-k$. For any individual monomial the corresponding
sum was evaluated in
Lemma~\ref{lm:evaluation:for:monomial}. It remains to adapt
Formula~\eqref{eq:3.7} to our specific context.

By Corollary~\ref{cor:criterion} the sublattice
$\mathbb{L}_\Graph$ in the above formula has index
$2^{|V(\Graph)|-1}$ in $\Z^k$. The corresponding
factor $1/2^{|V(\Graph)|-1}$ appears
as the first factor in the second line of
Definition~\eqref{eq:P:Gamma}
of $P_\Gamma(\boldsymbol{b})$.

The degree of the homogeneous polynomial denoted by $|m|$ in
Formula~\eqref{eq:3.7} equals in our case to $6g-6+2n-k$, so
$$
|m|+k=6g-6+2n=\dim_{\C}\cQ(1^{4g-4+n},-1^n)=\dprinc\,.
$$
Note also that
in~\eqref{eq:3.7} we perform the summation under the condition
$\boldsymbol{b}\cdot\boldsymbol{H}\le N$ while in the above formula
we sum over the region $\boldsymbol{b}\cdot\boldsymbol{H}\le 2N$.
This provides an extra factor $2^{\dprinc}$.
Finally, passing from
$\card(\cST_\Gamma(\QuadStrat(1^{4g-4+n}, -1^n), 2N))$
in~\eqref{eq:last} to $\Vol\cQ_{g,n}$
by~\eqref{eq:Vol:sq:tiled}
we introduce the extra factor
$2(6g-6+2n)$.
The resulting product factor
$$
2(6g-6+2n)\cdot\frac{2^{\dprinc}}{\dprinc !} \cdot (4g-4+n)!
=
\frac{2^{6g-5+2n} \cdot (4g-4+n)!}{(6g-7+2n)!}
$$
is the factor in the first line of
Definition~\eqref{eq:P:Gamma} of
$P_\Gamma(\boldsymbol{b})$.

Theorem~\ref{th:volume} is proved.

The Lemma below is a straightforward combination of
Lemma~\ref{lm:evaluation:for:monomial} stated in terms of
densities and of Theorem~\ref{th:volume}. This Lemma
corresponds to Relation~\eqref{eq:measure:mu} from
Theorem~\ref{thm:statistics}. The other statements of
Theorem~\ref{thm:statistics} are proved analogously.

\begin{Lemma}
\label{lm:general:evaluation:for:monomial}
Let $F: \Delta^k \times \N^k \to \R$ be a continuous
function integrable with respect to the density
$\cZt(b_1^{m_1} \ldots b_k^{m_k}) \delta_{\boldsymbol{H}}
d\boldsymbol{x}$ defined in~\eqref{eq:measure:mu}.
Let $\mathbb{L}$ be a lattice of finite index in $\Z^k$.
Then
\begin{multline*}
\lim_{N\to+\infty}
\frac{1}{N^{|m|+k}}
\sum_{\substack{\boldsymbol{b}\cdot\boldsymbol{H}\le 2N
  \\b_i, H_i\in\N
  \\ \boldsymbol{b}\in\mathbb{L}}}
F\left( \left(\frac{b_1 H_1}{2N},\dots,\frac{b_k H_k}{2N}\right),
\boldsymbol{H}\right)
\,b_1^{m_1}\cdots b_k^{m_k}
\ =\\= \
\frac{1}{|\Z^k:\mathbb{L}|}\cdot
\sum_{\boldsymbol{H}}
\int_{\Delta^k} F(\boldsymbol{x}, \boldsymbol{H})\,
\cZt(b_1^{m_1}\cdots b_k^{m_k})\, d\boldsymbol{x}\,.
\end{multline*}
\end{Lemma}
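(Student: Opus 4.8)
The plan is to read the asserted identity as the weak convergence of a family of measures on $\Delta^k\times\N^k$ and to deduce it from the ``density'' form of Lemma~\ref{lm:evaluation:for:monomial}. By $\Q$-linearity in the weight it suffices to treat a single monomial $b_1^{m_1}\cdots b_k^{m_k}$, and by approximation (Stone--Weierstrass on the compact simplex, combined with the integrability hypothesis on $F$) it is enough to test against continuous $F$. So I would introduce, for each $N$, the discrete measure
\[
\lambda_N:=\frac{1}{N^{|m|+k}}\sum_{\substack{\boldsymbol{b}\cdot\boldsymbol{H}\le 2N\\ b_i,H_i\in\N,\ \boldsymbol{b}\in\mathbb{L}}} b_1^{m_1}\cdots b_k^{m_k}\,\delta_{((b_iH_i/2N)_i,\,\boldsymbol{H})}
\]
on $\Delta^k\times\N^k$, and show that $\int F\,d\lambda_N$ converges to the right-hand side, i.e.\ that $\lambda_N$ converges against $F$ to $\tfrac{1}{|\Z^k:\mathbb{L}|}\,\cZt(b_1^{m_1}\cdots b_k^{m_k})\,d\boldsymbol{x}\,\delta_{\boldsymbol{H}}$.

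First I would fix $\boldsymbol{H}$ and analyse the inner sum over $\boldsymbol{b}$. As $\boldsymbol{b}$ runs over $\mathbb{L}\cap\N^k$ with $\sum_i b_iH_i\le 2N$, the points $\boldsymbol{x}=(b_iH_i/2N)_i$ fill $\Delta^k$ and form the positive part of the lattice $\mathbb{L}$ sheared by $\operatorname{diag}(H_i/2N)$; this image lattice has covolume $|\Z^k:\mathbb{L}|\prod_i(H_i/2N)$, hence point density $(2N)^k/\bigl(|\Z^k:\mathbb{L}|\prod_iH_i\bigr)$. Writing $b_i^{m_i}=(2N)^{m_i}x_i^{m_i}/H_i^{m_i}$ and invoking equidistribution of this scaled lattice in $\Delta^k$ (a Riemann-sum convergence, valid because $F(\cdot,\boldsymbol{H})$ is continuous on the compact simplex and $\partial\Delta^k$ is Lebesgue-null), the inner sum divided by $N^{|m|+k}$ converges to a fixed multiple of $\int_{\Delta^k}F(\boldsymbol{x},\boldsymbol{H})\prod_i x_i^{m_i}\,d\boldsymbol{x}$. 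Using the Dirichlet integral $\int_{\Delta^k}\prod_i x_i^{m_i}\,d\boldsymbol{x}=\prod_i m_i!/(|m|+k)!$ and the definition of $\cZt$, this reassembles exactly the $\boldsymbol{H}$-summand on the right; the covolume correction $1/|\Z^k:\mathbb{L}|$ and the numerical constant are pinned down by the special case $F\equiv 1$, which is precisely Lemma~\ref{lm:evaluation:for:monomial} (with the bound $2N$ in place of $N$ accounted for by rescaling $N$).

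It then remains to interchange $\lim_{N\to\infty}$ with the sum over $\boldsymbol{H}\in\N^k$. I would dominate the $\boldsymbol{H}$-term by the majorant obtained on replacing $F$ by its supremum; the $\boldsymbol{H}$-sum of these majorants is proportional to $\prod_i m_i!\,\zeta(m_i+1)=\cZ(b_1^{m_1}\cdots b_k^{m_k})$, which is finite precisely because every $m_i\ge 1$. This is the same domination used in the second half of Lemma~\ref{lm:evaluation:for:monomial}: under the substitution $x_i=b_iH_i/2N$ the finite-$N$ sums approximate the integral monotonically from below, so dominated convergence applies and the per-$\boldsymbol{H}$ limits from the previous step may be summed.

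The main obstacle is exactly this interchange when $F$ is assumed only integrable against the limiting density rather than bounded: for large $\boldsymbol{H}$ the sheared lattice has few points in $\Delta^k$, so the Riemann-sum approximation degrades, and one must simultaneously control the region where $F$ is large. The clean route is to prove the statement first for $F$ bounded and supported on finitely many values of $\boldsymbol{H}$, then exhaust $\N^k$ and pass to the limit using the integrability hypothesis on $F$ with respect to $\cZt(b_1^{m_1}\cdots b_k^{m_k})\,\delta_{\boldsymbol{H}}\,d\boldsymbol{x}$; that hypothesis is exactly what forces the tail contributions in $\boldsymbol{H}$ to vanish and closes the argument.
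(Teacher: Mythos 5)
Your proposal is correct and follows essentially the same route as the paper: the paper disposes of this lemma in one line as a ``straightforward combination'' of Lemma~\ref{lm:evaluation:for:monomial} stated in terms of densities, and the proof of that lemma is exactly your argument --- a Riemann-sum/equidistribution computation over the rescaled lattice for each fixed $\boldsymbol{H}$, the index factor $|\Z^k:\mathbb{L}|^{-1}$ coming from the sublattice restriction, and dominated convergence (monotone approximation from below under the substitution $x_i=b_iH_i/(2N)$) to interchange the limit with the sum over $\boldsymbol{H}$. Your closing discussion of the truncation needed to handle merely integrable $F$ is more careful than anything the paper writes down.
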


\subsection{Yet another expression for the Siegel--Veech constant}

For any square-tiled surface $S$ define the following quantity. Suppose that
$S$ has $k$ maximal cylinders filled with closed horizontal trajectories.
Denote as usual by $w_i$ the length of the closed horizontal trajectory (length of the
waist curve) of the $i$-th cylinder and denote by $h_i$ its height.
Define
$$
M(S):=
\sum_{i=1}^k
\frac{h_{i}}{w_{i}}\,.
$$

The $\GL$-orbit of any square-tiled surface $S$ is a closed
invariant submanifold $\cL(S)$ in the ambient stratum of
quadratic (or Abelian) differentials. In the same way in
which we defined in
Section~\ref{ss:intro:Siegel:Veech:constants} the area
Siegel--Veech constant $\carea(\cQ_{g,n})$ for $\cQ_{g,n}$,
we can define the area Siegel--Veech constant $\carea(\cL(S))$
for $\cL(S)$. It satisfies, in particular, analogs
of~\eqref{eq:SV:constant:definition}
and~\eqref{eq:SV:asymptotics}. Theorem 4 in~\cite{Eskin:Kontsevich:Zorich} proves
the following assertion.

\begin{NNTheorem}[\cite{Eskin:Kontsevich:Zorich}]
For any square-tiled surface $S$, its Siegel--Veech constant satisfies:
\begin{equation}
\label{eq:SVconstant:for:square:tiled}
\carea(\cL(S))=
\cfrac{3}{\pi^2}\cdot
\cfrac{1}{\card(\SLZ\cdot S)}\
\sum_{S_i\in\SLZ\cdot S} M(S)
\end{equation}
\end{NNTheorem}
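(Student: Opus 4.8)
The plan is to compute $\carea(\cL(S))$ directly from the Siegel--Veech formula, using that for a square-tiled surface the affine orbit $\cL(S)=\SL\cdot S$ is closed and isomorphic to $\SL/\Gamma$, where $\Gamma\subset\SLZ$ is the (arithmetic, finite-index) Veech group; here $[\SLZ:\Gamma]=\card(\SLZ\cdot S)=:n$, and the invariant probability measure $\mu_1$ on a fixed-area locus is the image of normalized Haar measure on $\SL/\Gamma$. First I would fix a compactly supported test function $f$ on $\R^2$ and introduce the area-weighted Siegel--Veech transform $\hat f(S')=\sum_{\mathit{cyl}\subset S'}\Area(\mathit{cyl})\,f(\operatorname{hol}(\mathit{cyl}))$, so that, by the orbit-closure version of the asymptotics~\eqref{eq:SV:asymptotics}, the area Siegel--Veech constant is characterized by $\int_{\cL(S)}\hat f\,d\mu_1=\carea(\cL(S))\int_{\R^2}f\,dx\,dy$ (after rescaling $S$ to unit area, which is harmless as both sides are scale-invariant).

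The core step is to unfold $\int_{\SL/\Gamma}\hat f(gS)\,dg$. Every $g\in\SL$ preserves areas and sends $\operatorname{hol}(c)\mapsto g\cdot\operatorname{hol}(c)$, so the integrand is $\sum_{c}\Area(c)\,f(g\,\operatorname{hol}(c))$, a sum over the countable $\Gamma$-invariant family of all cylinders of $S$ in all periodic directions. Grouping by $\Gamma$-orbits and applying the standard unfolding $\int_{\SL/\Gamma}\sum_{\gamma\in\Gamma/\Gamma_c}F(g\gamma)=\int_{\SL/\Gamma_c}F$, each orbit contributes $\Area(c)\cdot\Vol(U_c/\Gamma_c)\cdot\kappa_0$, where $U_c$ is the one-parameter unipotent subgroup fixing $\operatorname{hol}(c)$, $\Gamma_c=\Stab_\Gamma(c)$ is generated by the minimal affine multitwist in the direction of $c$, and $\kappa_0$ is the density of the pushforward of the $\SL/U_c$-measure to $\R^2$. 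A direct Haar computation in the coordinates $g\mapsto g\cdot(w,0)^{\top}$ gives $\kappa_0=1/w^2$ with $w=|\operatorname{hol}(c)|$, so each cylinder contributes $\Area(c)\cdot\Vol(U_c/\Gamma_c)/w^2$.

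Next I would reorganize the orbit sum by cusps of $\SL/\Gamma$, i.e.\ by $\Gamma$-orbits of rational directions, which are in bijection with the surfaces of $\SLZ\cdot S$ modulo the horizontal parabolic, the number of surfaces over a cusp $\delta$ being its width $N_\delta$. For a direction $\delta$ with primitive integer vector $p_\delta$, the cylinders of $S$ have core holonomies $c_j p_\delta$, which on the square-tiled representative $S_{i(\delta)}$ having $\delta$ horizontal become horizontal cylinders of integer width $m_j=c_j$. Tracking the two rescalings, the geometric modulus sum equals $M(S_{i(\delta)})/|p_\delta|^2$ while the minimal twist satisfies $\Vol(U_c/\Gamma_c)=\sigma_*(\delta)=|p_\delta|^2 N_\delta$; the factors $|p_\delta|^2$ cancel, and since $\sum_j\Area(c_j)/w_j^2=\sum_j h_j/w_j=M$, the contribution of each cusp collapses to $N_\delta\,M(S_{i(\delta)})$. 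Summing over cusps and using that the $N_\delta$ surfaces over $\delta$ share the same $M$-value yields $\int_{\SL/\Gamma}\hat f\,dg=\big(\sum_{S_i\in\SLZ\cdot S}M(S_i)\big)\int_{\R^2}f$. Dividing by $\Vol(\SL/\Gamma)=n\cdot\Vol(\SL/\SLZ)$ to pass to the $\mu_1$-average, and calibrating on the torus (where $n=1$, $M=1$, and the classical count of primitive lattice vectors up to sign gives $\carea=\tfrac{1}{2\zeta(2)}=\tfrac{3}{\pi^2}$, forcing $\Vol(\SL/\SLZ)=\pi^2/3$), produces exactly the constant $3/\pi^2$ and the asserted Formula~\eqref{eq:SVconstant:for:square:tiled}.

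I expect the main obstacle to be the bookkeeping of the third paragraph: reconciling the Euclidean normalization $\kappa_0=1/w^2$ and the geometric moduli of $S$ with the arithmetic widths $m_j$ and cusp widths $N_\delta$ of the square-tiled representatives, so that the $|p_\delta|^2$ factors cancel cusp by cusp. Secondary technical points are the factor-$2$ (equivalently the passage to the $\pm I$-reduced group) arising from counting cylinders up to the sign of their core curve, and the case in which $\Gamma$ permutes several cylinders within one direction, where the enlarged stabilizer $\Gamma_c$ must compensate for the reduced number of $\Gamma$-orbits.
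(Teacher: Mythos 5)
The paper does not actually prove this statement: it is quoted as Theorem~4 of \cite{Eskin:Kontsevich:Zorich}, so there is no internal proof to compare against. Your unfolding argument is, in outline, precisely the proof given in that reference (resting on Veech's Siegel--Veech formula and the Eskin--Masur--Zorich computation for arithmetic Teichm\"uller discs): identify $\cL(S)$ with $\SL/\Gamma$ where $[\SLZ:\Gamma]=\card(\SLZ\cdot S)$, unfold the area-weighted Siegel--Veech transform over $\Gamma$-orbits of cylinders, regroup by cusps so that the cusp over a direction $\delta$ contributes $N_\delta\cdot M(S_{i(\delta)})$, and fix the Haar normalization by calibrating on the torus. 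The two subtleties you single out --- the sign ambiguity in the holonomy of a core curve, and the enlargement of the stabilizer $\Gamma_c$ when the parabolic generator of a cusp permutes the cylinders in that direction --- are exactly the right ones, and both cancel as you indicate (cylinders exchanged by an affine automorphism have equal moduli, so the longer minimal twist compensates for the merged $\Gamma$-orbits). I see no gap in the proposal.
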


Recall that we denote by $\cST(\QuadStrat(1^{4g-4+n}, -1^n), N)$
the set of square-tiled surfaces in the principal stratum
$\QuadStrat(1^{4g-4+n}, -1^n)$ tiled with at most
$N$ squares. Define now the quantity:
$$
\cD\cST(\QuadStrat(1^{4g-4+n}, -1^n), N) :=
\sum_{S \in \cST(\QuadStrat(1^{4g-4+n}, -1^n), N)} M(S)\,.
$$

The latter quantity has the following geometric interpretation.
Consider all square-tiled surfaces obtained from square-tiled
surfaces as above by cutting exactly one cylinder $\cyl_i$ along the
closed horizontal trajectory at the level $h$, where $0\le h< h_i$
and $h$ is integer in the case of Abelian differentials and
half-integer in the case of quadratic differentials. In other words,
we do not chop the squares along the cut. The above quantity
enumerates bordered square-tiled surfaces obtained in this way.
Indeed, we lose the twist parameter $w_i$ (correspondingly $2w_i$)
along the cylinder which is now cut open, but we gain the new height
parameter $h_i$ (correspondingly $2h_i$) responsible for the level of
the cut.

As a corollary of the above theorem, D.~Chen and A.~Eskin
proved the following result.

\begin{NNTheorem}[D.~Chen, A.~Eskin, Appendix A in~\cite{Chen}]
The area Siegel--Veech constant $\carea(\cQ_{g,n})=\carea(\QuadStrat(1^{4g-4+n},-1^n))$ has the following value:
\begin{equation}
\label{eq:SVconstant:for:stratum}
\carea(\QuadStrat(1^{4g-4+n},-1^n))=
\cfrac{3}{\pi^2}\cdot
\lim_{N\to\infty}
\frac{\cD\cST(\QuadStrat(1^{4g-4+n}, -1^n), 2N)}
{\card(\cST(\QuadStrat(1^{4g-4+n}, -1^n), 2N))}\,.
\end{equation}
\end{NNTheorem}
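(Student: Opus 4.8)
The plan is to reduce the statement, by means of the Eskin--Kontsevich--Zorich formula~\eqref{eq:SVconstant:for:square:tiled}, to an equidistribution statement for square-tiled surfaces. First I note that $\SLZ$ preserves the lattice $\Z^2$, hence sends every square-tiled surface to a square-tiled surface with the same number of squares; therefore the finite set $\cST(\QuadStrat(1^{4g-4+n},-1^n),2N)$ is $\SLZ$-invariant and decomposes into finitely many finite orbits $\mathcal{O}$. Applying~\eqref{eq:SVconstant:for:square:tiled} to a representative $S_{\mathcal{O}}$ of each orbit gives $\sum_{S_i\in\mathcal{O}}M(S_i)=\tfrac{\pi^2}{3}\,\card(\mathcal{O})\,\carea(\cL(S_{\mathcal{O}}))$. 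Summing over all orbits, $\tfrac{3}{\pi^2}\cD\cST(\QuadStrat(1^{4g-4+n},-1^n),2N)=\sum_{\mathcal{O}}\card(\mathcal{O})\,\carea(\cL(S_{\mathcal{O}}))=\sum_{S\in\cST(\dots,2N)}\carea(\cL(S))$, since $\carea(\cL(\cdot))$ is constant along each orbit, and dividing by $\card(\cST(\dots,2N))$ yields
\[
\frac{3}{\pi^2}\cdot\frac{\cD\cST(\QuadStrat(1^{4g-4+n},-1^n),2N)}{\card\big(\cST(\QuadStrat(1^{4g-4+n},-1^n),2N)\big)}
=\frac{1}{\card(\cST(\dots,2N))}\sum_{S\in\cST(\dots,2N)}\carea(\cL(S))\,.
\]
So the content of the theorem is the convergence of the average of the individual Siegel--Veech constants $\carea(\cL(S))$, over all square-tiled surfaces with at most $2N$ squares, to the Siegel--Veech constant $\carea(\cQ_{g,n})$ of the stratum.

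To prove this convergence I would not regard $\carea(\cL(\cdot))$ as a function evaluated at equidistributing points: it is discontinuous and at a typical square-tiled surface it differs from $\carea(\cQ_{g,n})$, the orbit closure $\cL(S)$ being a proper arithmetic Teichm\"uller curve. Instead I re-expand each $\carea(\cL(S))$ through~\eqref{eq:SVconstant:for:square:tiled}: writing $M(S)=\sum_i \Area(\cyl_i)/w_i^2$ shows that $M$ records the horizontal cylinders of $S$, while averaging over the orbit $\SLZ\cdot S$ carries the horizontal direction to every rational direction. Hence the right-hand side above is a single area-weighted count of cylinders in all directions over the whole set $\cST(\dots,2N)$, i.e. the average of a Siegel--Veech transform. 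Square-tiled surfaces of large area equidistribute towards the Masur--Veech measure $\dVolMV_1$ on $\cQ^{\Area=1/2}_{g,n}$---the same equidistribution that underlies the existence of the limits~\eqref{eq:Vol:gamma:H}--\eqref{eq:Vol:gamma} and is established in~\cite{DGZZ:meanders:and:equidistribution}---so this average converges to the Masur--Veech average of that transform, which by the Veech--Vorobets formula~\eqref{eq:SV:constant:definition} equals $\carea(\cQ_{g,n})$, the Eskin--Masur asymptotics~\eqref{eq:SV:asymptotics} guaranteeing that the pointwise and integral descriptions of $\carea(\cQ_{g,n})$ agree.

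The hard part is that the Siegel--Veech transform being averaged is unbounded, so weak convergence of the counting measures alone does not justify passing to the limit; one needs a uniform-integrability (``no escape of mass'') estimate. The large values arise from square-tiled surfaces carrying an anomalously thin horizontal cylinder, for which $\Area(\cyl_i)/w_i^2$ is large. The key estimate is thus a bound showing that surfaces possessing a horizontal cylinder of circumference $w\le\varepsilon\sqrt{N}$ contribute a fraction of the weighted count that tends to $0$ as $\varepsilon\to0$, uniformly in $N$. I would obtain it by removing such a cylinder, bounding the number of square-tiled surfaces filling the complementary subsurface by a count governed by a stratum of strictly smaller dimension, summing the weights $h_i/w_i$ over the cylinder parameters, and comparing with the leading asymptotics $\card(\cST(\dots,2N))\sim\tfrac{1}{2d}\,\Vol\cQ_{g,n}\cdot N^{d}$ with $d=6g-6+2n$. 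With the thin-cylinder contribution controlled uniformly, the equidistribution of the counting measures upgrades to convergence of the average of $\carea(\cL(\cdot))$, and the theorem follows.
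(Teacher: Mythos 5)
Your opening reduction is correct and cleanly executed: $\SLZ$ preserves the number of squares, so $\cST(\QuadStrat(1^{4g-4+n},-1^n),2N)$ decomposes into finite orbits, and summing \eqref{eq:SVconstant:for:square:tiled} over them identifies $\tfrac{3}{\pi^2}\,\cD\cST/\card\,\cST$ with the average of $\carea(\cL(S))$ over all square-tiled surfaces with at most $2N$ squares. From there you pass to equidistribution of square-tiled surfaces towards the Masur--Veech measure together with a uniform-integrability bound for the unbounded Siegel--Veech transform, the key input being that anomalously thin horizontal cylinders contribute negligibly. This is in substance the original Chen--Eskin argument of Appendix~A in \cite{Chen}, which the paper cites rather than reproduces; you have correctly isolated the one genuinely hard analytic point (the no-escape-of-mass estimate for cylinders of circumference $o(\sqrt{N})$), although you only outline it --- in a self-contained write-up that estimate, of the same nature as the Eskin--Masur integrability bounds and the counting estimates of \cite{DGZZ:meanders:and:equidistribution}, is where essentially all the work sits. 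The paper's own text takes a softer alternative route from the same starting identity: rather than re-expanding the average of $\carea(\cL(S))$ as a Siegel--Veech count, it observes that the $\SL$-invariant probability measures on the (non-exceptional) orbits converge to the Masur--Veech measure, invokes Theorem~2.8 of \cite{Bonatti:Eskin:Wilkinson} to obtain convergence of the Lyapunov exponents of the corresponding arithmetic Teichm\"uller curves to those of the ambient stratum, and converts this into convergence of the constants $\carea(\cL(S))$ via the Eskin--Kontsevich--Zorich formula. That route avoids any thin-cylinder analysis (the difficulty is absorbed into the cited dynamical results) at the price of heavy machinery; yours is more elementary and self-contained modulo the counting estimate, and, like the paper's, applies verbatim to any arithmetic $\GL$-invariant locus.
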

Formally speaking, the original Theorem is proved only for
the components of the strata of Abelian differentials.
However, all the arguments are applicable to any arithmetic
$\GL$-invariant submanifold, in particular
to the loci induced from strata of quadratic differentials by
the ramified double covering construction.

An alternative way to
derive~\eqref{eq:SVconstant:for:stratum} is to use the
following more elaborate technique. Neglecting exceptional
orbits of square-tiled surfaces in
$\QuadStrat(1^{4g-4+n},-1^n)$ containing negligibly small
number of square-tiled surfaces, we can arrange other
orbits in a sequence of affine invariant manifolds for
which the natural $\SL$-invariant measures supported on the
orbits converge to the invariant measure of the ambient
stratum. By Theorem~2.8 from~\cite{Bonatti:Eskin:Wilkinson}
all individual Lyapunov exponents of affine invariant
manifolds converge to the Lyapunov exponents of the ambient
stratum. Results from~\cite{Eskin:Kontsevich:Zorich} now
imply that the area Siegel--Veech constants of the
corresponding arithmetic Teichm\"uller discs converge to
the area Siegel--Veech constant of the ambient stratum,
which implies~\eqref{eq:SVconstant:for:stratum}.

\subsection{Proof of the formula for the area Siegel--Veech constant}
We have already evaluated the denominator
in~\eqref{eq:SVconstant:for:stratum}. Evaluating the numerator
following the lines of the initial computation we reduce the problem to
the evaluation of the sum~\eqref{eq:last} counted with the weight
$\frac{h_i}{w_i}=\frac{H_i}{b_i}$, for each $i=1,\dots,k$,
where we use the notation of Formula~\eqref{eq:last}:
\begin{equation}
\label{eq:last:carea:1}
\sum_{\substack{\boldsymbol{b}\cdot\boldsymbol{H}\le 2N\\b_j, H_j\in\N\\ \boldsymbol{b}\in\mathbb{L}_\Graph}}
\left(b_1 \cdots b_k
\cdot
\prod_{v\in V(\Graph)}N_{g_v,n_v}(\boldsymbol{b}_v)\right)\cdot\frac{H_i}{b_i}
\end{equation}
The numerator in~\eqref{eq:SVconstant:for:stratum}
is the sum of the above expressions with respect to
the summation index $i$ varying from $1$ to $k$.

Denote by $P(b_1,\dots,b_k)$ the homogeneous polynomial
$\prod N_{g_v,n_v}(\boldsymbol{b}_v)$
in the formula above.
It is easy to see that the condition
$b_i H_i<\boldsymbol{b}\cdot\boldsymbol{H}\le 2N$ implies that
the contribution of any monomial of $P(b_1,\dots,b_k)$
containing the variable $b_i$ to the above sum
is of order $o(N^{\dprinc})$, so it does not contribute to the limit~\eqref{eq:SVconstant:for:stratum}.
Thus, up to lower order terms the sum~\eqref{eq:last:carea:1}
coincides with the sum
\begin{equation}
\label{eq:last:carea:2}
\sum_{\substack{\boldsymbol{b}\cdot\boldsymbol{H}\le 2N\\b_j, H_j\in\N\\ \boldsymbol{b}\in\mathbb{L}_\Graph}}
b_1 \cdots b_{i-1}\cdot H_i\cdot b_{i+1}\cdots b_k
\cdot
P(b_1,\dots,b_{i-1},0,b_{i+1},\dots, b_k)\,.
\end{equation}
It is sufficient to interchange the notation $b_i$ and $H_i$
to see that
\begin{multline*}
\sum_{\substack{\boldsymbol{b}\cdot\boldsymbol{H}\le 2N\\b_i, H_i\in\N}}
b_1 \cdots b_{i-1}\cdot H_i\cdot b_{i+1}\cdots b_k
\cdot
P(b_1,\dots,b_{i-1},0,b_{i+1},\dots, b_k)
=\\=
\sum_{\substack{\boldsymbol{b}\cdot\boldsymbol{H}\le 2N\\b_i, H_i\in\N}}
b_1 \cdots b_k
\cdot
P(b_1,\dots,b_{i-1},0,b_{i+1},\dots, b_k)\,.
\end{multline*}
We already know how to evaluate the latter sum, so it remains to
study the impact of the extra condition $\boldsymbol{b}\in\mathbb{L}_\Graph$
present in the sum~\eqref{eq:last:carea:2}.

Recall the strategy of evaluation of the sum~\eqref{eq:last:carea:2}
(see the proof of analogous Lemma~3.7 in~\cite{AEZ:Dedicata} for
reduction to integral sums and the proof of
Lemma~\ref{lm:evaluation:for:monomial} for the impact of the
sublattice condition). The variables $H_1,\dots, H_{i-1}, b_i,
H_{i+1},\dots, H_k$ are considered as parameters. For each collection
of such parameters we evaluate the corresponding integral sum over a
simplex in the $k$-dimensional space with coordinates
$b_1,\dots,b_{i-1},H_i, b_{i+1},\dots, b_k$. After that we perform
summation with respect to the parameters $H_1,\dots, H_{i-1}, b_i,
H_{i+1},\dots, H_k$,

When the edge of the graph $\Graph$ corresponding to the
variable $b_i$ is a \textit{bridge} (i.e. when this edge is
separating), the \textit{parameter} $b_i$ is always
even. The space of integration now has coordinates
$b_1,\dots,b_{i-1},H_i, b_{i+1},\dots, b_k$; the sublattice
$\mathbb{L}_\Graph$ in it is defined by the system of
Equations~\eqref{eq:sublattice:L} where we let $b_i=0$.
Such sublattice has index $2^{|V(\Graph)|-2}$ and not
$2^{|V(\Graph)|-1}$ as before. Thus, on the level of
integration we gain the factor $2$ with respect to the initial
count. However, since the \textit{parameter} $b_i$ is now
always even, evaluating the corresponding sum with respect
to possible values of this parameter we get the sum
$$
\frac{1}{2^2}+\frac{1}{4^2}+\frac{1}{6^2}\dots=
\frac{1}{4}\cdot \zeta(2)
$$
instead of the original sum
$$
\frac{1}{1^2}+\frac{1}{2^2}+\frac{1}{3^2}\dots=\zeta(2)\,.
$$
Thus, when $b_i$ corresponds to a bridge
(i.e. to a separating edge), we get
\begin{multline*}
\sum_{\substack{\boldsymbol{b}\cdot\boldsymbol{H}\le 2N\\b_j, H_j\in\N\\ \boldsymbol{b}\in\mathbb{L}_\Graph}}
b_1 \cdots b_{i-1}\cdot H_i\cdot b_{i+1}\cdots b_k
\cdot
P(b_1,\dots,b_{i-1},0,b_{i+1},\dots, b_k)
=\\=
\frac{1}{2}\cdot\sum_{\substack{\boldsymbol{b}\cdot\boldsymbol{H}\le 2N\\b_j, H_j\in\N\\ \boldsymbol{b}\in\mathbb{L}_\Graph}}
b_1 \cdots b_k
\cdot
P(b_1,\dots,b_{i-1},0,b_{i+1},\dots, b_k)\,.
\end{multline*}

When $b_i$ corresponds to a non separating edge, the \textit{parameter}
$b_i$ in the sum~\eqref{eq:last:carea:2} can take even and odd values.
The new space of integration has coordinates
$b_1,\dots,b_{i-1},H_i, b_{i+1},\dots, b_k$;
where the sublattice in it is defined by the
system of Equations~\eqref{eq:sublattice:L}
in which we substitute $b_i=0$ or $b_i=1$
depending on the parity of the value of the parameter $b_i$.
The sublattice is linear in
the first case and affine in the second case.
Such a sublattice has index $2^{|V(\Graph)|-1}$
as before. Thus, when $b_i$ corresponds to a non-separating edge, we get
\begin{multline*}
\sum_{\substack{\boldsymbol{b}\cdot\boldsymbol{H}\le 2N\\b_j, H_j\in\N\\ \boldsymbol{b}\in\mathbb{L}_\Graph}}
b_1 \cdots b_{i-1}\cdot H_i\cdot b_{i+1} b_k
\cdot
P(b_1,\dots,b_{i-1},0,b_{i+1},\dots, b_k)
=\\=
\sum_{\substack{\boldsymbol{b}\cdot\boldsymbol{H}\le 2N\\b_j, H_j\in\N\\ \boldsymbol{b}\in\mathbb{L}_\Graph}}
b_1 \cdots b_k
\cdot
P(b_1,\dots,b_{i-1},0,b_{i+1},\dots, b_k)\,.
\end{multline*}

We have proved that
\begin{multline*}
\sum_{\substack{\boldsymbol{b}\cdot\boldsymbol{H}\le 2N\\b_j, H_j\in\N\\ \boldsymbol{b}\in\mathbb{L}_\Graph}}
\left(b_1 \cdots b_k
\cdot \prod_{v\in V(\Graph)}
N_{g_v,n_v}(\boldsymbol{b}_v)
\right)
\cdot\left(\sum_{i=1}^k\frac{H_i}{b_i}\right)
=\\=
\sum_{\substack{\boldsymbol{b}\cdot\boldsymbol{H}\le 2N\\b_j, H_j\in\N\\ \boldsymbol{b}\in\mathbb{L}_\Graph}}
b_1 \cdots b_k\cdot\cD_{\Graph}
\left(
\prod_{v\in V(\Graph)}
N_{g_v,n_v}(\boldsymbol{b}_v)
\right)
+\text{lower order terms}\,,
\end{multline*}
where operator $\cD_{\Graph}$ is defined in
Formula~\eqref{eq:operator:D}. Applying to the latter sum the same
technique as in the end of the proof of Theorem~\ref{th:volume} we
complete the proof of Theorem~\ref{th:carea}.

\subsection{Equivalence of two expressions for the Siegel--Veech constant}
\label{eq:proof:of:coincidence:of:two:SV:expressions}

In this section we prove Theorem~\ref{th:same:SV}.

We start the proof by establishing a natural correspondence
between the summands of the two expressions. For any stable
graph $\Gamma\in\cG_{g,n}$ and any edge $e$ of $\Gamma$
we define a combinatorial surgery
$\Cut_e\Graph$ of $\Gamma$. We describe it separately
in the case when $e$ is a bridge (i.e. a separating edge),
and when it is not.

We start with the case when $e$ is a bridge. Cut the edge
$e$ transforming it into two legs. Assign index $1$ to one
of the resulting graphs, and index $2$ to the remaining
one. We do not modify the genus decoration of the vertices.
The set of vertices $V(\Gamma)$ gets naturally partitioned
into two complementary subsets $V=V_1\sqcup V_2$. Define
$g_i=\sum_{v\in V_i} g_v$ for $i=1,2$. Similarly, the $n$
original legs are partitioned into $n_1$ legs which go to
$\Gamma_1$ and into $n_2$ legs which go to $\Gamma_2$. For
$i=1,2$, relabel the $n_i$ legs of $\Gamma_i$ to the
consecutive labels $1,2,\dots,n_i$ preserving the order
of labels. Assign the label $n_i+1$ to the new leg of
$\Gamma_i$ created during the surgery. The stability
condition $2 g_v - 2 + n_v > 0$, which is valid for every
vertex $v$ of $\Graph$, implies that we get two stable
graphs $\Gamma_i\in\cG_{g_i,n_i+1}$.

The only ambiguity in this construction is the choice of
the label ($1$ or $2$) for one of the components $\Gamma_i$
of the graph $\Gamma$ with removed bridge $e$. In general,
there are two choices, except the case when there is a
symmetry of $\Gamma$ acting on the edge $e$ as a flip (i.e.
a symmetry which sends $e$ to itself exchanging its two
ends).

Note that the surgery is reversible in the following sense.
Given two stable graphs $\Gamma_i\in\cG_{g_i,n_i+1}$ we can
glue the endpoint of the leg with index $n_1+1$ of
$\Gamma_1$ to the endpoint of the leg with index $n_2+1$ of
$\Gamma_2$ creating a connected graph with $n=n_1+n_2$ legs
and with an extra bridge joining $\Gamma_1$ to $\Gamma_2$.
The only ambiguity in this construction is in relabeling
the $n=n_1+n_2$ legs to a consecutive list $(1,2,\dots,n)$;
there are $\binom{n}{n_1}$ ways to do it.

We describe now the surgery $\Cut_e\Gamma$ in the
remaining case when the edge $e$ of $\Gamma$ is not a
bridge (i.e. is not separating). Cutting such edge we
transform it into two legs. We keep the same labels for the
preexisting legs and we associate labels $n+1$ and $n+2$ to
the two created legs. In general, there are two ways to do
that, except when there is a symmetry of $\Gamma$ acting on
the edge $e$ as a flip (i.e. a symmetry which sends $e$ to
itself exchanging its two ends). We get a stable graph
$\Gamma'\in\cG_{g-1,n+2}$.

The inverse operation (applicable to stable graphs with at
least two legs) consists in gluing the two legs of higher
index together transforming them into an edge and keeping
the same labeling for the other legs. Once again we do not
modify the genus decoration of the vertices.

Note that the operator $\partial_\Gamma$ is defined
in~\eqref{eq:operator:D} as a sum
$\sum \partial^e_{\Graph}$
over edges $e\in E(\Gamma)$ of a
stable graph $\Gamma$. Thus, our key sum $\sum_{\Graph \in
\cG_{g,n}} \cZ\left(\partial_{\Graph} P_\Gamma\right)$ in
the right-hand side of~\eqref{eq:carea} in
Theorem~\ref{th:carea} can be seen as the sum over all
pairs $(\Gamma,e)$, where $\Gamma\in\cG_{g,n}$ and $e\in
E(\Gamma)$. We show below that for every such pair
$(\Gamma,e)$, the corresponding term of the resulting sum
has simple expression in terms of the product
$\cZ(P_{\Gamma_1})\cZ(P_{\Gamma_2})$ when $e$ is a bridge
and in terms of $\cZ(P_{\Gamma'})$ when $e$ is not a
bridge, where $\Gamma_1\sqcup\Gamma_2$ (respectively $\Gamma'$)
are the stable graphs obtained under applying the surgery
$\Cut_e\Gamma$.

Having a stable graph $\Graph$ we associate to it the
polynomial
$$
\Pi_{\Graph}(\boldsymbol{b}):=
\prod_{v\in V(\Graph)}
N_{g_v,n_v}(\boldsymbol{b}_v)\,.
$$
By Definition~\eqref{eq:P:Gamma} of
$P_{\Graph}(\boldsymbol{b})$ we have
$$
P_{\Graph}(\boldsymbol{b})
=(\text{combinatorial factor})
\cdot\left(\prod_{e\in E(\Gamma)} b_e\right)
\cdot\Pi_{\Graph}(\boldsymbol{b})\,.
$$
A pair $(\Graph,e_0)$ provides a nonzero contribution
to the sum in the right-hand side of~\eqref{eq:carea}
if and only if the term
$\partial^{e_0}_{\Graph}=\chi_{\Graph}(e_0)b_{e_0}
\left.\frac{\partial}{\partial b_{e_0}}\right|_{b_{e_0}=0}$
in the operator $\partial_{\Graph}$
applied to
$\left(\prod_{e\in E(\Gamma)} b_e\right)
\cdot\Pi_{\Graph}(\boldsymbol{b})$
does not identically vanish
(see given by~\eqref{eq:operator:D:e}).
The latter is
equivalent to the condition that the polynomial
$\left.\Pi_{\Graph}\right|_{b_{e_0}=0}$ does not identically
vanish.

If the edge $e_0$ is a bridge, consider the stable
graphs $\Gamma_1, \Gamma_2$ obtained under the surgery
$\Cut_e\Gamma$. The polynomial
$\left.\Pi_{\Graph}\right|_{b_{e_0}=0}$
splits naturally into the product:
$
\left.\Pi_{\Graph}\right|_{b_j=0}
=\Pi_{\Graph_1}\Pi_{\Graph_2}\,,
$
so when $e_0$ is a bridge,
and when
$\left.\Pi_{\Graph}\right|_{b_{e_0}=0}$ does not identically
vanish, we get
\begin{multline}
\label{eq:separating}
\cZ\left(\partial_{\Graph}^{e_0}\left( \prod_e b_e \cdot
\Pi_{\Graph}\right)\right)
=\cZ\left(\frac{1}{2}\cdot \prod_e b_e
 \cdot  \left.\Pi_{\Graph}\right|_{b_{e_0}=0}\right)
\\
=\cZ\left(\frac{1}{2}\cdot b_{e_0}\cdot
 \prod_{e\in E(\Graph_1)}b_e\cdot \Pi_{\Graph_1}
 \cdot \prod_{e\in E(\Graph_2)}b_e\cdot \Pi_{\Graph_2}\right)\\
=\\=
\frac{1}{2}\cdot\frac{\pi^2}{6}
 \cdot\cZ\left(\prod_{e\in E(\Graph_1)}b_e\cdot \Pi_{\Graph_1}\right)
 \cdot\cZ\left(\prod_{e\in E(\Graph_2)}b_e\cdot \Pi_{\Graph_2}\right)\,.
\end{multline}

If the edge $e_0$ is
not a bridge
and
$\left.\Pi_{\Graph}\right|_{b_{e_0}=0}$ does not identically
vanish, we get
$\left.\Pi_{\Graph}\right|_{b_{e_0}=0}=\Pi_{\Graph'}$
so
\begin{multline}
\label{eq:non:separating}
\cZ\left(\partial_{\Graph}^{e_0}
\left( \prod_e b_e \cdot  \Pi_{\Graph}\right)\right)
=\\=
\cZ\left(b_{e_0}\cdot\prod_{e\in E(\Graph')}b_e\cdot \Pi_{\Graph'}\right)
=\frac{\pi^2}{6}\cdot\cZ\left(\prod_{e\in E(\Graph')}b_e\cdot \Pi_{\Graph'}\right)\,.
\end{multline}

Rewrite the right-hand side
of~\eqref{eq:carea} as
$$
\frac{3}{\pi^2}
\cdot
\sum_{\Graph \in \cG_{g,n}}
\cZ\left(\partial_{\Graph}
P_\Gamma\right)
=
\frac{3}{\pi^2}
\cdot
\sum_{\Graph \in \cG_{g,n}}\sum_{e\in E(\Gamma)}
\cZ\left(\partial^e_{\Graph}
P_\Gamma\right)
$$
Apply~\eqref{eq:separating}
and~\eqref{eq:non:separating}
to the resulting sum, keeping the
intersection numbers as formal expressions
and simplify the product of the factors
$\frac{3}{\pi^2}$ and
$\frac{\pi^2}{6}$.

Suppose now that $g\ge 1$ (the consideration in the case
$g=0$ is completely analogous). Consider the sum in the
right-hand side of~\eqref{eq:carea:Elise} and replace
$\Vol\cQ_{g_i,n_i}$ for $i=1,2$ and $\Vol\cQ_{g-1,n+2}$ in
it by the corresponding sums~\eqref{eq:square:tiled:volume}
over $\cG_{g_1,n_1}\times\cG_{g_2,n_2}$ and $\cG_{g-1,n+2}$
respectively (where we keep the intersection numbers as
formal expressions. It is easy to see, that we get
term-by-term the same sum as above.
Theorem~\ref{th:same:SV} is proved.


\section{Comparison with Mirzakhani's formula for $b_{g,n}$}
\label{s:comparison:with:Mirzakhani}

\subsection{Mirzakhani's expression for the volume of
$\cQ_g$} Consider a pair $(X,\lambda)$, where $X$ is a
hyperbolic surface of genus $g$ without punctures and
$\lambda$ is a measured lamination on $X$. In the
paper~\cite{Mirzakhani:earthquake} M.~Mirzakhani associates
to almost any such pair $(X,\lambda)$ a unique holomorphic
quadratic differential $q=F(\lambda,X)$ on the complex
curve $C=C(X)$ corresponding to the hyperbolic metric $X$.

Consider the measure $\mu_{\mathrm{WP}}$ on the moduli
space $\cM_g$ coming from the Weil--Petersson volume
element and consider Thurston measure $\mu_{\mathrm{Th}}$
on the space of measured laminations $\cM\cL_g$. Using
ergodicity arguments, M.~Mirzakhani proves
in~\cite{Mirzakhani:earthquake} that the pushforward
measure
\begin{equation}
\label{eq:F:ast:WP:times:Th}
\mu_g:= F_\ast(\mu_{\mathrm{WP}} \otimes \mu_{\mathrm{Th}})
\end{equation}
on $\cQ_g$ under the map
$$
F: \cM_g\times\cM\cL_g \to \cQ_g
$$
is proportional to the Masur--Veech measure.
Equation~\eqref{eq:F:ast:WP:times:Th} should be considered
as the definition of Mirzakhani's normalization of the
Masur---Veech measure on $\cQ_g$.

\begin{Remark}
Note that under such implicit definition of the
Masur--Veech measure it is not clear at all why its density
should be constant in period coordinates.
Definition~\eqref{eq:F:ast:WP:times:Th} does not provide
any distinguished lattice in period coordinates either.
Thus, though we know, by results of Mirzakhani, that the
density of the Masur--Veech measure defined
by~\eqref{eq:F:ast:WP:times:Th} differs from the
Masur--Veech volume element defined in
section~\ref{ss:background:strata} by a constant numerical
factor which depends only on $g$, evaluation of this factor
is not straightforward.
\end{Remark}

M.~Mirzakhani proves in~\cite{Mirzakhani:earthquake} that
the map $F$ identifies the length
$\ell_\lambda(X)$ of the measured lamination $\lambda$
evaluated in the hyperbolic metric $X$ with the norm
$\|q(\lambda,X)\|=\int_C |q|$ of the quadratic differential
$q$ (defined as the area of the flat surface associated to
the pair $(C,q)$):
\begin{equation}
\label{eq:norm:q:length:lambda}
\int_{C(X)} |q(\lambda,X)|=\ell_\lambda(X)\,.
\end{equation}

Relation~\eqref{eq:norm:q:length:lambda} implies that as
the image of $F$ restricted to the total space of the
bundle of ``unit balls'' $B_X$ over $\cM_g$,
defined by Equation~\eqref{eq:unit:ball},
one gets the
total space $\cQ^{\le 1}_g$ of the bundle of ``unit balls''
in $\cQ_g$, where
\begin{equation}
\label{eq:ball:of:radius:1:in:Q:g:n}
\cQ^{\Area\le 1}_g
=
\left\{(C,q)\in\cQ_g\,|\, \Area(C,q) \le 1\right\}\,.
\end{equation}
We have seen in Section~\ref{ss:MV:volume} that the real
hypersurface
\begin{equation}
\label{eq:sphere:of:radius:1:in:Q:g:n}
\cQ^{\Area=1}_g
=
\left\{(C,q)\in\cQ_{g,n}\,|\, \Area(C,q) = 1\right\}\,.
\end{equation}
can be seen as the unit cotangent bundle to $\cM_g$
(denoted by $\cQ^1\cM_g$ in~\cite{Mirzakhani:earthquake}).
M.~Mirzakhani defines the Masur--Veech volume of $\cQ_g$ as
\begin{equation}
\label{eq:Vol:Q:1:Mirzakhani:definition}
\Vol_{\mathrm{Mir}}\cQ_g:=\mu_g(\cQ^{\Area\le 1}_g)\,.
\end{equation}
The above observations imply the following formula
for $\Vol_{\mathrm{Mir}}\cQ_g$
(see Theorem~1.4 in~\cite{Mirzakhani:earthquake}):
\begin{NNTheorem}[Mirzakhani]
   %
The Masur--Veech volume of the moduli space of holomorphic
quadratic differentials on complex curves of genus $g$
defined by~\eqref{eq:Vol:Q:1:Mirzakhani:definition}
under normalizations~\eqref{eq:F:ast:WP:times:Th}
satisfies the following relation
\begin{equation}
\label{eq:MV:Volume:as:bg}
\Vol_{\mathrm{Mir}}\cQ_g = b_g.
\end{equation}
where $b_g = b_{g,0}$ is defined by~\eqref{eq:b:g:n}.
\end{NNTheorem}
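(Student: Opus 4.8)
The plan is to unwind the definition of the pushforward measure $\mu_g = F_\ast(\mu_{\mathrm{WP}}\otimes\mu_{\mathrm{Th}})$ and combine it with the area--length identity~\eqref{eq:norm:q:length:lambda} and Fubini's theorem. Since $\mu_g$ is by construction the pushforward of the product measure under $F$, Definition~\eqref{eq:Vol:Q:1:Mirzakhani:definition} immediately gives
\begin{equation*}
\Vol_{\mathrm{Mir}}\cQ_g = \mu_g(\cQ^{\Area\le 1}_g) = (\mu_{\mathrm{WP}}\otimes\mu_{\mathrm{Th}})\bigl(F^{-1}(\cQ^{\Area\le 1}_g)\bigr)\,.
\end{equation*}
The whole argument then reduces to identifying the preimage $F^{-1}(\cQ^{\Area\le 1}_g)$ inside $\cM_g\times\cML_g$ and integrating the product measure over it.

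First I would describe the preimage. By~\eqref{eq:ball:of:radius:1:in:Q:g:n} a pair $(C,q)$ lies in $\cQ^{\Area\le 1}_g$ precisely when $\Area(C,q)=\int_C|q|\le 1$. Writing $q=F(X,\lambda)$ with $C=C(X)$ and invoking the area--length identity~\eqref{eq:norm:q:length:lambda}, namely $\int_{C(X)}|q(\lambda,X)|=\ell_\lambda(X)$, this condition becomes $\ell_\lambda(X)\le 1$. Hence
\begin{equation*}
F^{-1}(\cQ^{\Area\le 1}_g)=\{(X,\lambda)\in\cM_g\times\cML_g : \ell_\lambda(X)\le 1\}\,,
\end{equation*}
which is exactly the total space of the bundle of unit balls $B_X$ over $\cM_g$ associated to~\eqref{eq:unit:ball}. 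Applying Fubini to the product measure, disintegrating over the $\cM_g$ factor, gives
\begin{equation*}
(\mu_{\mathrm{WP}}\otimes\mu_{\mathrm{Th}})\bigl(F^{-1}(\cQ^{\Area\le 1}_g)\bigr)=\int_{\cM_g}\mu_{\mathrm{Th}}\bigl(\{\lambda:\ell_\lambda(X)\le 1\}\bigr)\,d\mu_{\mathrm{WP}}(X)\,.
\end{equation*}
By~\eqref{eq:unit:ball} the inner integrand equals $\mu_{\mathrm{Th}}(B_X)=B(X)$, so the right-hand side is $\int_{\cM_g}B(X)\,dX=b_g$ by~\eqref{eq:b:g:n}, which closes the chain of equalities and yields $\Vol_{\mathrm{Mir}}\cQ_g=b_g$.

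The main obstacle is measure-theoretic rather than computational: one must justify that $F$ is a genuine measurable bijection, up to a set of measure zero in both $\cM_g\times\cML_g$ and $\cQ_g$, so that the pushforward and preimage manipulations are legitimate and no hidden multiplicity factor is introduced. This rests on Mirzakhani's construction via the Hubbard--Masur correspondence, under which, for each fixed $X$, the map $\lambda\mapsto F(X,\lambda)$ is a homeomorphism from $\cML_g$ onto the fiber of $\cQ_g$ over $C(X)$; one also needs measurability of $X\mapsto B(X)$ and the fact that the exceptional locus where $F$ is undefined carries no mass. A secondary point is that everything lives on orbifolds (quotients of Teichm\"uller space and of measured-lamination space by the mapping class group), so one should check that $\mu_{\mathrm{WP}}$, $\mu_{\mathrm{Th}}$ and their product descend compatibly and that Fubini applies at the level of these quotients. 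All of these facts are established in~\cite{Mirzakhani:earthquake} and may be taken as given here, so that the proof itself is the short chain of equalities above.
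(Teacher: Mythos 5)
Your proposal is correct and follows essentially the same route as the paper: the paper likewise derives~\eqref{eq:MV:Volume:as:bg} directly from Definition~\eqref{eq:Vol:Q:1:Mirzakhani:definition}, the identity~\eqref{eq:norm:q:length:lambda} identifying $F^{-1}(\cQ^{\Area\le 1}_g)$ with the unit-ball bundle, and integration of $\mu_{\mathrm{Th}}(B_X)=B(X)$ over $\cM_g$, deferring the measure-theoretic justification of the pushforward to Mirzakhani's work~\cite{Mirzakhani:earthquake}. Your write-up simply makes the Fubini step and the hypotheses on $F$ more explicit than the paper does.
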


Formula~\eqref{eq:Vol:g:n:b:g:n} from Corollary~\ref{cor:Vol:as:b:g:n}
relating $\Vol\cQ_g =\Vol\cQ_{g,0}$ and $b_g=b_{g,0}$
implies that
the Masur--Veech volume $\Vol_{\mathrm{Mir}}\cQ_g$ in
Mirzakhani's normalization~\eqref{eq:MV:Volume:as:bg}
and the Masur--Veech volume $\Vol\cQ_g$
in normalization of Formula~\eqref{eq:square:tiled:volume}
are related by the following factor:
\begin{equation}
\label{eq:Vol:Q:g:us:and:Mirzakhani}
\Vol\cQ_g
=\big((12g-12)\cdot
(4g-4)!\cdot 2^{4g-3}\big)\cdot
\Vol_{\mathrm{Mir}} \cQ_g\,.
\end{equation}

\subsection{Mirzakhani's formulae for the Masur--Veech volume of $\cQ_g$
and for asymptotic frequencies of simple closed geodesic multicurves}

As we have seen, $\Vol_{\mathrm{Mir}}\cQ_g$ that appears in
Formula~\eqref{eq:MV:Volume:as:bg} is a particular case of the more general
quantity $b_{g,n}$ defined in~\eqref{eq:b:g:n}.
The quantity $b_{g,n}$ is computed in Theorem~5.3 on page~118
in~\cite{Mirzakhani:grouth:of:simple:geodesics}.
To reproduce the corresponding formula and closely related
formula for the asymptotic frequencies $c(\gamma)$
of simple closed geodesic multicurves $\gamma$
of fixed topological type we need to remind the
notation from~\cite{Mirzakhani:grouth:of:simple:geodesics}.
\medskip

\noindent
\textbf{Simple closed multicurves.}
Depending on the context, we denote by the same symbol
$\gamma$ a collection of disjoint, essential, nonperipheral
simple closed curves, no two of which are in the same
homotopy class; a disjoint union of such curves; the
corresponding primitive multicurve; and the corresponding
orbit in the space $\cM\cL_{g,n}$ under the action of the
mapping class group $\Mod_{g,n}$,
$$
\gamma
=\gamma_1\sqcup\dots\sqcup\gamma_k
=\gamma_1+\dots+\gamma_k
=(\gamma_1,\dots,\gamma_k)\,.
$$
(M.~Mirzakhani uses
in~\cite{Mirzakhani:grouth:of:simple:geodesics} symbols
$\gamma$, $\hat\gamma$ and $\tilde\gamma$ for these objects
depending on the context.) To every such multicurve
$\gamma$ M.~Mirzakhani associates
in~\cite{Mirzakhani:grouth:of:simple:geodesics} a
collection of quantities $N(\gamma)$, $M(\gamma)$,
$\Sym(\gamma)$, $b_\gamma, \Vol_{\mathrm{WP}}(\cM_{g,n}(\gamma,x))$
involved in the formula for $b_{g,n}$. For the sake of
completeness, and to simplify formulae comparison we
reproduce the definitions of these quantities; see the
original paper~\cite{Mirzakhani:grouth:of:simple:geodesics}
of M.~Mirzakhani for details.
\medskip

\noindent
\textbf{The collection $\boldsymbol{\cS_{g,n}}$ of all topological types of primitive multicurves.}
Recall that by $S_{g,n}$ we denote a smooth orientable
topological surface of genus $g$ with $n$ punctures.
Consider the finite set $\cS_{g,n}$ defined as
\begin{equation}
\label{eq:S:cal:g:n}
\cS_{g,n}:=
\{\gamma\,|\,\gamma\ \text{is a union of simple closed curves on}
\ S_{g,n}\}/\Mod_{g,n}\,.
\end{equation}
(see formula~(5.4) on page~118
in~\cite{Mirzakhani:grouth:of:simple:geodesics}). It is
immediate to see that $\cS_{g,n}$ is in a canonical
bijection with the set $\cG_{g,n}$ of stable graphs defined
in section~\ref{ss:intro:Masur:Veech:volumes},
\begin{equation}
\label{eq:cS:cG}
\cS_{g,n}\simeq \cG_{g,n}\,.
\end{equation}

\noindent
\textbf{Symmetries $\boldsymbol{\Stab(\gamma)}$ and $\boldsymbol{N(\gamma)}$ of a primitive multicurve.}
For any set $A$ of homotopy classes of simple closed curves
on $S_{g,n}$, Mirzakhani defines $\Stab(A)$ as
$$
\Stab(A):=\{g\in\Mod{g,n}\,|\, g\cdot A=A\}\subset\Mod_{g,n}\,.
$$
Having a multicurve $\gamma$ on $S_{g,n}$ as above,
Mirzakhani defines
$$
\Sym(\gamma):=\Stab(\gamma)/\cap_{i=1}^k\Stab(\gamma_i)\,,
$$
(see the beginning of section~4 on page~112
of~\cite{Mirzakhani:grouth:of:simple:geodesics} for both
definitions). For any single connected simple closed curve
$\gamma_i$ one has $|\Sym(\gamma_i)|=1$.

For each connected simple closed curve
$\gamma_i$ define
$\Stab_0(\gamma_i)\subset\Stab(\gamma_i)$ as
the subgroup consisting of elements which
preserve the orientation of $\gamma_i$.
Define $N(\gamma)$ as
$$
N(\gamma):=\left|
\bigcap_{i=1}^k\Stab(\gamma_i)/
\bigcap_{i=1}^k\Stab_0(\gamma_i)\right|\,.
$$
(see page~113
of~\cite{Mirzakhani:grouth:of:simple:geodesics}).

Consider a stable graph $\Gamma(\gamma)$ associated
to a primitive multicurve $\gamma$.
It follows from definitions of $\Stab(\gamma)$, $N(\gamma)$
and $\Aut(\gamma)$ that
\begin{equation}
\label{eq:Sym:N:Aut}
|\Aut(\Gamma(\gamma))|=|\Sym(\gamma)|\cdot N(\gamma)\,.
\end{equation}

\noindent
\textbf{Number $\boldsymbol{M(\gamma)}$ of one-handles.}
Consider now the closed surface
\begin{equation}
\label{eq:decomposition:of:S:g:n}
S_{g,n}(\gamma)=\bigsqcup_{j=1}^s S_{g_j,n_j}
\end{equation}
obtained from $S_{g,n}$ by cutting along all
$\gamma_1,\dots,\gamma_k$. Here $S_{g_1,n_1},\dots,
S_{g_s,n_s}$ are the connected components of the resulting
surface $S_{g,n}(\gamma)$. Define
$$
M(\gamma):=|\{i\,|\, \gamma_i\
\text{separates off a one-handle from}\ S_{g,n}\}|\,,
$$
(see this formula in the statement of Theorem~4.1 on
page~114 of~\cite{Mirzakhani:grouth:of:simple:geodesics}).
By definition, a ``one-handle'' is a surface of genus one
with one boundary component, i.e., a surface of type
$S_{1,1}$.

\begin{Remark}
\label{rm:one:handle}
There is one very particular case,
when the index $i$ in the definition of $M(\gamma)$ should
be counted with multiplicity $2$. Namely, when $\gamma$ is
a connected separating simple closed curve on a surface
$S_2=S_{2,0}$ it simultaneously separates
off \textit{two} one-handles, and thus should be
counted with multiplicity $2$. In other words, the quantity
$M(\gamma)$ might be defined as
\begin{equation}
\label{eq:M:gamma}
M(\gamma):=\text{number of surfaces of type }\ S_{1,1}\
\text{in decomposition~\eqref{eq:decomposition:of:S:g:n}}
\end{equation}
without any exceptions and multiplicities.
\end{Remark}

A.~Wright suggested an alternative way to fix this issue; see footnote~2 on pages 12--13 in~\cite{Wright}.
\medskip

\noindent
\textbf{Volume polynomials $\boldsymbol{V_{g,n}}$.}
In~\cite{Mirzakhani:simple:geodesics:and:volumes}
and~\cite{Mirzakhani:volumes:and:intersection:theory}
M.~Mirzakhani
proves the following statement, that we reproduce from
Theorems~4.2 and~4.3
in~\cite{Mirzakhani:grouth:of:simple:geodesics}.

\begin{NNTheorem}[Mirzakhani]
The Weil--Petersson volume
$\Vol_{\textrm{WP}}\cM_{g,n}(b)$
of the moduli space
of bordered
hyperbolic surfaces of genus $g$ with hyperbolic
boundary components of lengths $b_1,\dots,b_n$
is a polynomial $V_{g,n}(b_1,\dots,b_n)$
in even powers of $b_1,\dots,b_n$; that is,
\begin{equation*}
\Vol_{\mathrm{WP}}\cM_{g,n}(b)
=V_{g,n}(b)=\sum_{\substack{\alpha\\|\alpha|\le 3g-3+n}} C_\alpha\cdot b^{2\alpha}\,,
\end{equation*}
where $C_\alpha>0$ lies in $\pi^{6g-6+2n-2|\alpha|}\cdot\Q$.
The coefficient $C_\alpha$ is given by
\begin{equation*}
C_\alpha=\frac{1}{2^{|\alpha|}\cdot\alpha!\cdot(3g-3+n-|\alpha|)!}
\int_{\overline{\cM}_{g,n}}
\psi_1^{\alpha_1}\cdots\psi_n^{\alpha_n}
\cdot\omega^{3g-3+n-|\alpha|}\,,
\end{equation*}
where $\omega$ is the Weil--Petersson symplectic form,
$\alpha!=\prod_{i=1}^n \alpha_i!$, and $|\alpha|=\sum_{i=1}^n\alpha_i$.
\end{NNTheorem}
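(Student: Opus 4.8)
The plan is to realize $V_{g,n}(b)$ as a \emph{symplectic} volume and to deduce both the polynomiality and the explicit coefficients from the Duistermaat--Heckman theorem. First I would equip the moduli space $\cM_{g,n}(b)$ of bordered hyperbolic surfaces with its Weil--Petersson symplectic form $\omega_b$, and consider the Fenchel--Nielsen twist flows along the $n$ boundary geodesics. By Wolpert's formula these twist vector fields are Hamiltonian, and together they generate a Hamiltonian action of the torus $T^n=(S^1)^n$ whose moment map has $i$-th component $\tfrac{1}{2}b_i^2$ (the boundary lengths enter quadratically). In this description $\cM_{g,n}(b)$ is the symplectic reduction of an ambient space at the moment value $(\tfrac{1}{2}b_1^2,\dots,\tfrac{1}{2}b_n^2)$, and the first Chern class of the $i$-th circle bundle coincides, on the compactification $\overline{\cM}_{g,n}$, with the $\psi$-class $\psi_i$.

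Second, the Duistermaat--Heckman theorem asserts that the cohomology class of the reduced symplectic form depends \emph{linearly} on the moment parameters, which here yields the identity
$$
[\omega_b]=[\omega_0]+\tfrac{1}{2}\sum_{i=1}^{n} b_i^2\,\psi_i
\quad\text{in } H^2(\overline{\cM}_{g,n};\R),
$$
where $\omega_0=\omega$ is the Weil--Petersson form on $\cM_{g,n}$ itself. Writing $d=3g-3+n=\dim_{\C}\overline{\cM}_{g,n}$, the Weil--Petersson volume is the top symplectic power,
$$
\Vol_{\mathrm{WP}}\cM_{g,n}(b)=\int_{\overline{\cM}_{g,n}}\frac{\omega_b^{\,d}}{d!}
=\frac{1}{d!}\int_{\overline{\cM}_{g,n}}\Bigl(\omega+\tfrac{1}{2}\textstyle\sum_i b_i^2\psi_i\Bigr)^{d}.
$$
Expanding by the multinomial theorem, the term with multiplicities $\alpha=(\alpha_1,\dots,\alpha_n)$ carries the coefficient $\binom{d}{\alpha_1,\dots,\alpha_n,d-|\alpha|}/d!=1/(\alpha!\,(d-|\alpha|)!)$, the factors $\tfrac12$ produce $2^{-|\alpha|}$, and integration of $\psi_1^{\alpha_1}\cdots\psi_n^{\alpha_n}\omega^{d-|\alpha|}$ gives exactly the stated $C_\alpha$. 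Since only even powers of the $b_i$ appear, this simultaneously proves that $V_{g,n}$ is a polynomial in the $b_i^2$ of degree at most $d$. (An independent route to \emph{polynomiality alone} would be to integrate Mirzakhani's generalized McShane identity over $\cM_{g,n}(b)$, obtaining a recursion that lowers $d$ and proving the statement by induction; but the clean closed form for $C_\alpha$ is most transparent from the Duistermaat--Heckman computation.)

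The remaining arithmetic claims follow from standard facts. By Wolpert's theorem the Weil--Petersson class $[\omega]$ equals a positive rational multiple of $\pi^2\kappa_1$, so each of the $d-|\alpha|$ factors of $\omega$ contributes one power of $\pi^2$, giving $C_\alpha\in\pi^{2(d-|\alpha|)}\cdot\Q=\pi^{6g-6+2n-2|\alpha|}\cdot\Q$. Positivity $C_\alpha>0$ holds because the $\psi_i$ are nef classes while $[\omega]$ lies in the interior of the K\"ahler cone, so the integral of a product of nef classes against a positive power of a K\"ahler class is strictly positive.

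The hard part will be the symplectic input of the first two paragraphs: identifying the boundary twist flows as a genuine Hamiltonian $T^n$-action with moment map $\tfrac12 b_i^2$, and checking that the Duistermaat--Heckman linearity persists across the Deligne--Mumford boundary with the reduction Chern class being precisely $\psi_i$. This is the technical core of Mirzakhani's argument in~\cite{Mirzakhani:volumes:and:intersection:theory}; once it is in place, the expansion and the arithmetic are routine.
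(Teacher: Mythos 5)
This statement is reproduced by the paper from Mirzakhani's work without an independent proof, and your proposal follows essentially the same route as the cited source \cite{Mirzakhani:volumes:and:intersection:theory}: realize $\cM_{g,n}(b)$ as a symplectic reduction of a larger Hamiltonian $T^n$-space with moment map $(\tfrac12 b_1^2,\dots,\tfrac12 b_n^2)$, invoke Duistermaat--Heckman to get $[\omega_b]=[\omega]+\tfrac12\sum_i b_i^2\psi_i$ on $\overline{\cM}_{g,n}$, and expand $\int\omega_b^{\,d}/d!$ by the multinomial theorem. Your extraction of $C_\alpha$, the $\pi^{2(d-|\alpha|)}$-rationality via Wolpert's identification $[\omega]=2\pi^2\kappa_1$, and the strict positivity (which ultimately rests on the known positivity of the mixed $\psi$--$\kappa_1$ intersection numbers) are all correct, so this matches the proof the paper relies on by citation.
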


\begin{Remark}[M.~Kazarian]
\label{rm:Kazarian}
For one particular pair $(g,n)$, namely
for $(g,n)=(1,1)$, Mirzakhani's normalization
\begin{equation}
\label{eq:Vol:M11:Mirzakhani}
\Vol^{\mathit{Mir}}_{\textrm{WP}}\cM_{1,1}(b)
=\frac{1}{24}(b^2+4\pi^2)\,,
\end{equation}
(see equation~(4.5) on page~116
in~\cite{Mirzakhani:grouth:of:simple:geodesics}
or~\cite{Mirzakhani:volumes:and:intersection:theory}) is
twice bigger than the normalization in many other papers.
Topologically $\overline{\cM}_{1,1}$ is homeomorphic to
$\CP$. However, since every elliptic curve with a marked
point admits an involution, the fundamental class of the
orbifold $\overline{\cM}_{1,1}$ used in the integration
equals
$[\overline{\cM}_{1,1}]=\frac{1}{2}\left[\CP\right]\in
H_2(\CP)$ which gives the value
\begin{equation}
\label{eq:Vol:M11:Kazarian}
\Vol_{\textrm{WP}}\cM_{1,1}(b)
=\frac{1}{48}(b^2+4\pi^2)\,.
\end{equation}

\end{Remark}

Denote by $V^{top}_{g,n}(b)$ the homogeneous part of the
top degree $6g-6+2n$ of $V_{g,n}(b)$. It follows from
the definition of the volume polynomial $V_{g,n}(b)$ that
\begin{equation}
\label{eq:Vgn}
V^{top}_{g,n}(b)=\sum_{|\alpha|= 3g-3+n} C_\alpha\cdot b^{2\alpha}\,,
\end{equation}
where $C_\alpha$ is given by
\begin{equation}
\label{eq:C:alpha}
C_\alpha=\frac{1}{2^{3g-3+n}\cdot\alpha!}
\int_{\overline{\cM}_{g,n}}
\psi_1^{\alpha_1}\cdots\psi_n^{\alpha_n}\,.
\end{equation}
Comparing the definition of $V^{top}_{g,n}(b)$
with the Definition~\eqref{eq:N:g:n}--\eqref{eq:correlator}
of the polynomial $N_{g,n}(b)$ we get the following result.
\begin{Lemma}
The homogeneous parts of top degree of Mirzakhani's
volume polynomial $V_{g,n}$ and of Kontsevich's polynomial
$N_{g,n}$ coincide up to the constant factor:
\begin{equation}
\label{eq:Vgn:through:Ngn}
V^{top}_{g,n}(b)
=\begin{cases}
\hspace*{12pt}
2^{2g-3+n}\cdot N_{g,n}(b)
&\text{for}\ (g,n)\neq(1,1)\,;
\\
2\cdot 2^{2g-3+n}\cdot N_{g,n}(b)
&\text{for}\ (g,n)=(1,1)\,.
\end{cases}
\end{equation}
\end{Lemma}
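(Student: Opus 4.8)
The plan is to prove the identity by a direct comparison of coefficients, monomial by monomial, using the explicit expressions of both polynomials in terms of $\psi$-class intersection numbers. First I would observe that, by~\eqref{eq:Vgn} and~\eqref{eq:N:g:n}, both $V^{top}_{g,n}(b)$ and $N_{g,n}(b)$ are homogeneous of degree $6g-6+2n$ and are supported on exactly the same set of monomials $b^{2\alpha}$ indexed by multi-indices $\alpha$ with $|\alpha|=3g-3+n$. Hence it suffices to show that the coefficient $C_\alpha$ of $b^{2\alpha}$ in $V^{top}_{g,n}$ and the coefficient $c_{\boldsymbol d}$ of $b^{2\boldsymbol d}$ in $N_{g,n}$ (with $\boldsymbol d=\alpha$) differ by a single multiplicative constant that is independent of $\alpha$.

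The key step is then purely arithmetic. Comparing Formula~\eqref{eq:C:alpha} for $C_\alpha$ with Formulae~\eqref{eq:c:subscript:d}--\eqref{eq:correlator} for $c_{\boldsymbol d}$, I note that both coefficients are built from the \emph{same} intersection number $\int_{\overline{\cM}_{g,n}}\psi_1^{\alpha_1}\cdots\psi_n^{\alpha_n}$ and both carry the \emph{same} factorial $\alpha!=\boldsymbol d!$ in the denominator. The only discrepancy is the power of $2$: the prefactor is $2^{-(3g-3+n)}$ in $C_\alpha$ but $2^{-(5g-6+2n)}$ in $c_{\boldsymbol d}$. Taking the ratio gives
\[
\frac{C_\alpha}{c_\alpha}=2^{(5g-6+2n)-(3g-3+n)}=2^{2g-3+n},
\]
which is independent of $\alpha$. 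Summing over all admissible $\alpha$ yields $V^{top}_{g,n}(b)=2^{2g-3+n}\,N_{g,n}(b)$, which is the asserted relation for every $(g,n)\neq(1,1)$.

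It remains to account for the exceptional pair $(g,n)=(1,1)$, and this is the only place requiring genuine care — though the difficulty is one of convention rather than of mathematics. As recalled in Remark~\ref{rm:Kazarian}, Mirzakhani normalizes $V_{1,1}$ using the fundamental class $[\CP]$, whereas the correlators~\eqref{eq:correlator} entering $N_{g,n}$ are computed against the orbifold fundamental class $[\overline{\cM}_{1,1}]=\tfrac12[\CP]$. This introduces an extra factor of $2$ in $V^{top}_{1,1}$ relative to the generic computation above, so that $V^{top}_{1,1}=2\cdot 2^{2g-3+n}N_{1,1}=2\,N_{1,1}$ (using $2^{2g-3+n}=2^0=1$ here). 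As a consistency check I would verify this against the explicit values: the table gives $N_{1,1}(b)=\tfrac{1}{48}b^2$, while the top-degree part of $V_{1,1}$ from~\eqref{eq:Vol:M11:Mirzakhani} is $\tfrac{1}{24}b^2=2\cdot\tfrac{1}{48}b^2$, confirming the extra factor. Assembling the two cases gives the case-split formula~\eqref{eq:Vgn:through:Ngn}.
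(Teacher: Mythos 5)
Your proposal is correct and follows essentially the same route as the paper, which likewise obtains the factor $2^{2g-3+n}$ by directly comparing the coefficient formula~\eqref{eq:C:alpha} with~\eqref{eq:c:subscript:d} (the only difference being the powers of $2$, since the intersection number and the factorial $\alpha!=\boldsymbol{d}!$ are identical) and handles $(g,n)=(1,1)$ via the normalization discrepancy explained in Remark~\ref{rm:Kazarian}. Your write-up is in fact more detailed than the paper's one-line justification, and the consistency check $\tfrac{1}{24}b^2=2\cdot\tfrac{1}{48}b^2$ matches the paper's own remark following the lemma.
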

In the exceptional case $(g,n)=(1,1)$ the polynomial
$V^{top}_{1,1}(b)=\frac{1}{24} b^2$ used by Mirzakhani is
twice larger than $N_{1,1}(b)=\frac{1}{48} b^2$. The origin
of this extra factor $2$ is explained in
Remark~\ref{rm:Kazarian}.
\medskip

\noindent \textbf{Volume polynomial
$\boldsymbol{\Vol_{\mathrm{WP}}(\cM_{g,n}(\gamma,x))}$
associated to a multicurve $\boldsymbol{\gamma}$.}
Assuming that the initial surface $S_{g,n}$
is endowed with a hyperbolic metric, and
that the simple closed curves $\gamma_1,\dots,\gamma_k$
are realized by simple closed hyperbolic geodesics
of hyperbolic lengths $x_1=\ell_{\gamma_1}(X),\dots,
x_k=\ell_{\gamma_k}(X)$,
the boundary $\partial S_{g,n}(\gamma)$
gets $k$ pairs of distinguished boundary
components of lengths $x_1,\dots,x_k$.
We assume that the $n$ marked points
of $S_{g,n}$ are represented by hyperbolic cusps,
i.e. by hyperbolic boundary components of zero length.
Denote by $g_j$ the genus and by $n_j$ the number of boundary
components (including cusps) of each surface $S_j$,
where $j=1,\dots,s$. To each boundary component
of each surface
$S_j$ we have assigned a length variable which is
equal to $x_i$ if the boundary component comes from
the cut along $\gamma_i$, or is equal to $0$ if the boundary
component comes from a cusp (one of the $n$ marked points).
Consider the corresponding Mirzakhani--Weil--Petersson
volume polynomial $V_{g_j,n_j}(x)$
and define
\begin{equation}
\label{eq:Vol:Mgn:gamma:x}
\Vol_{\mathrm{WP}}(\cM_{g,n}(\gamma,x))
:=\frac{1}{N(\gamma)}
\prod_{j=1}^s V_{g_j,n_j}(x)\,,
\end{equation}
(see formula~(4.1) on page~113 of~\cite{Mirzakhani:grouth:of:simple:geodesics}).
Denote by $(2d_1,\dots,2d_k)_\gamma$ the coefficient
of $x^{2d_1}\cdots x^{2d_k}$ in this polynomial and
let
\begin{equation}
\label{eq:b:gamma:5:3}
b_\gamma(2d_1,\dots,2d_k):= (2d_1,\dots,2d_k)_\gamma
\frac{\prod_{i=1}^k(2d_i+1)!}{(6g-6+2n)!}\,,
\end{equation}
(see equation~(5.3) on page~118 in~\cite{Mirzakhani:grouth:of:simple:geodesics}).

Now everything is ready to state Mirzakhani's result and to
prove Theorem~\ref{th:our:density:equals:Mirzakhani:density}.
\medskip

\noindent
\textbf{Mirzakhani's results and proof of comparison theorems.}
Let $\gamma=\gamma_1+\dots+\gamma_k$ be a
primitive simple closed multicurve as above. Let
$\gamma_a=\sum_{i=1}^k a_i\gamma_i$, where $a_i\in\N$ for
$i=1,\dots,k$. (To follow original notation of Mirzakhani,
we denote integer weights of components of a multicurve by
$a_i$ and not by $H_i$ as before.)

\begin{NNTheorem}[Mirzakhani~\cite{Mirzakhani:grouth:of:simple:geodesics}]
The frequency $c(\gamma_a)$ of a multi-curve
$\gamma_a=\sum_{i=1}^k a_i\gamma_i$ is equal to
\begin{equation}
\label{eq:c:gamma}
c(\gamma_a)
=\frac{2^{-M(\gamma)}}{|\Sym(\gamma)|}
\cdot\sum_{\substack{d\\|d|=3g-3+n-k}}
\frac{b_\gamma(2d_1,\dots,2d_k)}{a_1^{2d_1+2}\dots a_k^{2d_k+2}}\,.
\end{equation}
The expression $b_{g,n}$ given by the
integral~\eqref{eq:b:g:n} can be represented as
the sum
\begin{equation}
\label{eq:bgn:as:sum:B:gamma}
b_{g,n}=\sum_{\gamma\in\cS_{g,n}} B_\gamma\,,
\end{equation}
where for $\gamma=\bigsqcup_{i=1}^k \gamma_i$
one has
\begin{equation}
\label{eq:B:gamma}
B_\gamma
=\frac{2^{-M(\gamma)}}{|\Sym(\gamma)|}
\cdot\sum_{|d|=3g-3+n-k}
b_\gamma(2d_1,\dots,2d_k)\prod_{i=1}^k\zeta(2s_i+2)\,.
\end{equation}
\end{NNTheorem}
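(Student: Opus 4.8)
The plan is to recover both formulae from Mirzakhani's integration (``unfolding'') formula together with the definition~\eqref{eq:b:g:n:as:sum:of:c:gamma} of $b_{g,n}$ as a sum of frequencies. First I would fix a primitive multicurve $\gamma=\gamma_1+\dots+\gamma_k$ and, writing $x_i=\ell_{\gamma_i}(X)$, observe that the hyperbolic length of the weighted geodesic multicurve $\gamma_a=\sum_i a_i\gamma_i$ equals $\sum_i a_i x_i$. Hence $s_X(L,\gamma_a)$ is obtained by summing the indicator $\mathbf{1}_{\{\sum_i a_i x_i\le L\}}$ over the orbit $\Mod_{g,n}\cdot\gamma$. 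Mirzakhani's integration formula then expresses the mean of this count over moduli space as
\[
\int_{\cM_{g,n}} s_X(L,\gamma_a)\,dX
=\frac{2^{-M(\gamma)}}{|\Sym(\gamma)|}
\int_{\substack{x_i>0\\ \sum_i a_i x_i\le L}}
\Big(\prod_{i=1}^k x_i\Big)\,
\Vol_{\mathrm{WP}}\big(\cM_{g,n}(\gamma,x)\big)\,dx_1\cdots dx_k,
\]
where $\Vol_{\mathrm{WP}}(\cM_{g,n}(\gamma,x))$ is the product~\eqref{eq:Vol:Mgn:gamma:x} of volume polynomials of the pieces of $S_{g,n}(\gamma)$, the factors $\prod_i x_i$ arising from the Fenchel--Nielsen twist parameters along the $\gamma_i$.

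Next I would extract the leading term in $L$. Since $\Vol_{\mathrm{WP}}(\cM_{g,n}(\gamma,x))$ is a polynomial in the $x_i^2$ of total degree $6g-6+2n-2k$, a degree count confirms the growth rate $L^{6g-6+2n}$ and shows that only its top-degree part contributes, the subleading terms producing $o(L^{6g-6+2n})$. Using the exact simplex integral
\[
\int_{\substack{x_i>0\\ \sum_i a_i x_i\le L}}\prod_{i=1}^k x_i^{2d_i+1}\,dx
=\frac{L^{6g-6+2n}}{(6g-6+2n)!}\cdot\prod_{i=1}^k\frac{(2d_i+1)!}{a_i^{2d_i+2}}
\]
and recognising the coefficient of $x_1^{2d_1}\cdots x_k^{2d_k}$ in $\Vol_{\mathrm{WP}}(\cM_{g,n}(\gamma,x))$ together with~\eqref{eq:b:gamma:5:3}, the right-hand side becomes $\tfrac{2^{-M(\gamma)}}{|\Sym(\gamma)|}\sum_{|d|=3g-3+n-k} b_\gamma(2d_1,\dots,2d_k)\,a_1^{-2d_1-2}\cdots a_k^{-2d_k-2}$ times $L^{6g-6+2n}$. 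On the other hand, integrating the pointwise asymptotic $s_X(L,\gamma_a)\sim B(X)\,c(\gamma_a) b_{g,n}^{-1} L^{6g-6+2n}$ over $\cM_{g,n}$ and using $\int_{\cM_{g,n}}B(X)\,dX=b_{g,n}$ from~\eqref{eq:b:g:n} gives $\int_{\cM_{g,n}} s_X(L,\gamma_a)\,dX\sim c(\gamma_a)\,L^{6g-6+2n}$. Comparing the two leading coefficients yields~\eqref{eq:c:gamma}.

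Finally, to obtain~\eqref{eq:bgn:as:sum:B:gamma} I would start from~\eqref{eq:b:g:n:as:sum:of:c:gamma}, in which the sum runs over all $\Mod_{g,n}$-orbits of integral multicurves, and group these orbits according to their primitive topological type $\gamma\in\cS_{g,n}\simeq\cG_{g,n}$ and their integer weight vector $a\in\N^k$. Summing~\eqref{eq:c:gamma} over $a$ and interchanging the absolutely convergent sums produces the Dirichlet factors $\sum_{a_i\ge 1}a_i^{-2d_i-2}=\zeta(2d_i+2)$ for each component, which gives $B_\gamma=\sum_{a\in\N^k}c(\gamma_a)$ as in~\eqref{eq:B:gamma} and hence $b_{g,n}=\sum_{\gamma\in\cS_{g,n}}B_\gamma$.

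The hard part is the integration formula itself and the careful bookkeeping of the symmetry factors. Establishing it requires analysing the intermediate covering of $\cM_{g,n}$ by the moduli space of surfaces carrying a marked geodesic multicurve of type $\gamma$, disintegrating the Weil--Petersson form into Fenchel--Nielsen length-and-twist coordinates along the $\gamma_i$ and the volume forms of the complementary pieces, and tracking how the stabiliser of $\gamma$ acts. This is precisely where the constants $|\Sym(\gamma)|$ and $N(\gamma)$ (related by~\eqref{eq:Sym:N:Aut}) and the delicate factor $2^{-M(\gamma)}$ enter; the one-handle subtlety flagged in Remark~\ref{rm:one:handle}, including the hyperelliptic exception for $S_2$, must be handled with care. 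A secondary technical point is justifying the exchange of limit and integral when passing from the pointwise asymptotics of $s_X(L,\gamma_a)$ to its integral over the non-compact space $\cM_{g,n}$, which needs a uniform integrable dominating bound.
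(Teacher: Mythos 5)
This statement is quoted by the paper from Mirzakhani's work \cite{Mirzakhani:grouth:of:simple:geodesics} (Theorem 5.3 and formula (5.5) there) and is not proved in the paper at all; it is used as a black box in Section~\ref{s:comparison:with:Mirzakhani}. So there is no internal proof to compare against, and your proposal should be judged as a reconstruction of Mirzakhani's original argument. As such it is essentially correct: the unfolding (integration) formula with the constant $2^{-M(\gamma)}/|\Sym(\gamma)|$ and the factor $\prod_i x_i$ from the twist parameters, the degree count isolating the top-degree part of $\Vol_{\mathrm{WP}}(\cM_{g,n}(\gamma,x))$, the Dirichlet simplex integral producing $\prod_i (2d_i+1)!/a_i^{2d_i+2}$ divided by $(6g-6+2n)!$ (which is exactly how $b_\gamma$ in~\eqref{eq:b:gamma:5:3} is normalized), and the summation over weight vectors $a\in\N^k$ yielding the zeta factors are all the right steps, and you correctly locate the genuinely hard content in the integration formula and the bookkeeping of $|\Sym(\gamma)|$, $N(\gamma)$ and $2^{-M(\gamma)}$. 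One small remark on logical order: in Mirzakhani's paper the quantity $c(\gamma)$ is defined intrinsically via Thurston measure, the unfolded integral $\int_{\cM_{g,n}} s_X(L,\gamma_a)\,dX$ is computed exactly as a polynomial in $L$, and the pointwise asymptotics of $s_X(L,\gamma_a)$ are established separately; your route, which recovers $c(\gamma_a)$ by integrating the pointwise asymptotics over the non-compact space $\cM_{g,n}$, needs the uniform domination you flag at the end, whereas her order of argument avoids this interchange. Either way the identification of the leading coefficient is the same computation.
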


\begin{proof}[Proof of Theorem~\ref{th:our:density:equals:Mirzakhani:density}]
Let $\gamma=\gamma_1+\dots+\gamma_k$
be a primitive simple closed multicurve in $\cS_{g,n}$.
Let $\Gamma$ be the associated decorated graph.
The sets $\cS_{g,n}$ and $\cG_{g,n}$ are
in the canonical one-to-one correspondence.

Defining $M(\gamma)$ as in~\eqref{eq:M:gamma} we see that
$M(\gamma)$ is the number of factors $V_{1,1}$ in the
product~\eqref{eq:Vol:Mgn:gamma:x}, i.e. the number of indices
$j$ in the range $\{1,\dots,s\}$ such that
$(g_j,n_j)=(1,1)$. Thus, the factor $2^{-M(\gamma)}$ in
Equations~\eqref{eq:c:gamma} and~\eqref{eq:B:gamma}
compensates the difference in
normalizations~\eqref{eq:Vol:M11:Mirzakhani}
and~\eqref{eq:Vol:M11:Kazarian}, see
Remark~\ref{rm:Kazarian}. Hence,
applying~\eqref{eq:Vgn:through:Ngn} to the right-hand side
of~\eqref{eq:Vol:Mgn:gamma:x} we get
$$
2^{-M(\gamma)}\prod_{j=1}^s V_{g_j,n_j}(x)
=
\left(\prod_{j=1}^s N_{g_j,n_j}(x)\right)\cdot
\left(\prod_{j=1}^s 2^{2g_j-3+n_j}\right)\,.
$$
For a decorated graph $\Gamma$ one has
\begin{multline*}
g=1-\chi(\Gamma)+\sum_{v_j\in V(\Gamma)} g(v_j)
=1-|V(\Gamma)|+|E(\Gamma)|+\sum_{j=1}^s g_j
=\\=
1+\sum_{j=1}^s \left(g_j -1 +\frac{n_j}{2}\right)
-\frac{(\text{total number of half-edges (legs)})}{2}
\,.
\end{multline*}
(By convention $E(\Gamma)$ denotes the set of ``true''
edges of $\Gamma$ versus $n$ half-edges (legs)
corresponding to $n$ marked points on $S_{g,n}$.) Hence,
$$
\sum_{j=1}^s (2g_j-3+n_j)
=2g-2+n
-|V(\Gamma)|
=2g-3+n
-(|V(\Gamma)|-1)\,.
$$

Note also that by definition
$$
|\Aut(\Gamma)|=|\Sym(\gamma)|\cdot N(\gamma)\,.
$$
We have proved that
\begin{multline}
\label{eq:two:polynomials}
\frac{2^{-M(\gamma)}}{|\Sym(\gamma)|\cdot N(\gamma)}
\prod_{j=1}^s V_{g_j,n_j}(x)
=\\=
2^{2g+n-3}\cdot
\left(\frac{1}{2^{|V(\Graph)|-1}} \cdot
\frac{1}{|\operatorname{Aut}(\Graph)|}
\cdot
\prod_{v\in V(\Graph)}
N_{g_v,n_v}(x)\right)\,.
\end{multline}

Let us prove now Relation~\eqref{eq:Vol:gamma:c:gamma}.
Expressions~\eqref{eq:b:gamma:5:3} and~\eqref{eq:c:gamma}
represent $c(\gamma_{\boldsymbol{H}})$ as the sum of terms
constructed using the polynomial in the left-hand side
of~\eqref{eq:two:polynomials}.
Expression~\eqref{eq:volume:contribution:of:stable:graph}
represents $\Vol\big(\Gamma,\boldsymbol{H}\big)$ as the
sum of the corresponding terms constructed using the
proportional polynomial in the brackets on the right-hand
side of~\eqref{eq:two:polynomials}. The only difference
between the two sums comes from the global normalization
factors shared by all terms of the sums. Namely,
Expression~\eqref{eq:b:gamma:5:3} has an extra factor $(6g-6+2n)!$ in
the denominator, while the first line of
Expression~\eqref{eq:P:Gamma} has the extra global factor
$\cfrac{2^{6g-5+2n} \cdot (4g-4+n)!}{(6g-7+2n)!}$. Taking
into consideration the coefficient of proportionality
$2^{2g+n-3}$ relating the two polynomials
in~\eqref{eq:two:polynomials} we get
$$
(6g-6+2n)!\cdot c(\gamma_a)=2^{2g+n-3}\cdot
\left(\cfrac{2^{6g-5+2n} \cdot (4g-4+n)!}{(6g-7+2n)!}\right)^{-1}
\cdot\Vol\big(\Gamma,\boldsymbol{H}\big)\,,
$$
and~\eqref{eq:Vol:gamma:c:gamma} follows.

It remains to notice that by Definitions~\eqref{eq:B:gamma}
and~\eqref{eq:c:gamma} of $B_\gamma$ and $c(\gamma_a)$
respectively one has
$$
B_\gamma=\sum_{\boldsymbol{H}\in\N^k} c(\gamma_{\boldsymbol{H}})\,.
$$
Similarly, by~\eqref{eq:Vol:Q:as:sum:of:Vol:gamma}
one has
$$
\Vol(\Gamma)=\sum_{\boldsymbol{H}\in\N^k} \Vol\left(\Gamma,\boldsymbol{H})\right)\,.
$$
Since the coefficient of proportionality
$const_{g,n}$ between $\Vol\left(\Gamma,\boldsymbol{H}\right)$
and $c(\gamma_{\boldsymbol{H}})$ in~\eqref{eq:Vol:gamma:c:gamma}
is common for all $\boldsymbol{H}\in\N^k$,
Relation~\eqref{eq:Vol:gamma:c:gamma}
for the individual terms
implies the analogous relation
$\Vol(\Gamma)=const_{g,n}\cdot B_\gamma$
for the above sums of all terms
over all $\boldsymbol{H}\in\N^k$
and for the corresponding sums
$\Vol\cQ_{g,n}=const_{g,n}\cdot b_{g,n}$
(see~\eqref{eq:square:tiled:volume}
and~\eqref{eq:bgn:as:sum:B:gamma}) over all stable graphs
$\Gamma\in\cG_{g,n}$.
Theorem~\ref{th:our:density:equals:Mirzakhani:density}
and Corollary~\ref{cor:Vol:as:b:g:n} are proved.
\end{proof}


\section{Large genus asymptotics for frequencies
of simple closed curves and of one-cylinder
square-tiled surfaces}
\label{s:2:correlators}

\subsection{Universal bounds for $2$-correlators}

Following Witten~\cite{Witten} define
\begin{equation}
\label{eq:tau:correlator}
\langle \tau_{d_1}\dots\tau_{d_n}\rangle=\int_{\overline{\cM}_{g,n}}\psi^{d_1}\dots\psi^{d_n}\,,
\end{equation}
where $d_1+\dots+d_n=3g-3+n$.

Consider the following normalization of the $2$-correlators
$\langle\tau_k\tau_{3g-1-k}\rangle_g$
introduced in~\cite{Zograf:2:correlators}:
\begin{equation}
\label{eq:a:g:k}
a_{g,k}
=\frac{(2k+1)!!\cdot(6g-1-2k)!!}{(6g-1)!!}
\cdot 24^g\cdot g!
\cdot \langle\tau_k\tau_{3g-1-k}\rangle_g\,.
\end{equation}
By~(7) in~\cite{Zograf:2:correlators} under such
normalization the differences of $2$-correlators admit the
following explicit expression:
\begin{multline}
\label{eq:a:g:k:difference}
a_{g,k+1}-a_{g,k}
=\\=
\cfrac{(6g-3-2k)!!}{(6g-1)!!}
\cdot
\begin{cases}
\cfrac{(6j-1)!!}{j!}
\cdot\cfrac{(g-1)!}{(g-j)!}
\cdot(g-2j)\,,
\quad&\text{ for }k=3j-1\,,
\\ \ & \ \\
-2\cdot\cfrac{(6j+1)!!}{j!}
\cdot\cfrac{(g-1)!}{(g-1-j)!}\,,
\quad&\text{ for }k=3j\,,
\\ \ & \ \\
2\cdot\cfrac{(6j+3)!!}{j!}
\cdot\cfrac{(g-1)!}{(g-1-j)!}\,,
\quad&\text{ for }k=3j+1\,,
\end{cases}
\end{multline}
where $k=0,1,\dots,\left[\cfrac{3g-1}{2}\right]-1$, and
$a_{g,0}=1$.

It follows from Definition~\eqref{eq:a:g:k} that
$a_{g,k}=a_{g,3g-1-k}$, so
Relations~\eqref{eq:a:g:k:difference} determine $a_{g,k}$
for all $g\ge 1$ and for all $k$ satisfying $0\le k\le
3g-1$.

\begin{Proposition}
\label{pr:main:bounds}
For all $g\in\N$ and for all integer $k$ in the range
$\{2,3,\dots,3g-3\}$ the following bounds are valid:
\begin{equation}
\label{eq:main:bounds}
1-\frac{2}{6g-1}=a_{g,1} =a_{g,3g-2}
< a_{g,k}
< a_{g,0}=a_{g,3g-1}=1\,.
\end{equation}
\end{Proposition}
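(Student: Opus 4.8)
The plan is to study the finite sequence $(a_{g,k})_{0\le k\le 3g-1}$ through its increments $\delta_k:=a_{g,k+1}-a_{g,k}$, which are given explicitly by~\eqref{eq:a:g:k:difference}. First I would record the boundary data. The value $a_{g,0}=1$ is prescribed, and the case $k=3j$ with $j=0$ of~\eqref{eq:a:g:k:difference} gives $\delta_0=\frac{(6g-3)!!}{(6g-1)!!}\cdot(-2)=-\frac{2}{6g-1}$, whence $a_{g,1}=1-\frac{2}{6g-1}$. The symmetry $a_{g,k}=a_{g,3g-1-k}$ noted after~\eqref{eq:a:g:k:difference} immediately yields the two endpoint identities $a_{g,3g-2}=a_{g,1}$ and $a_{g,3g-1}=a_{g,0}=1$, and also shows that the increments are antisymmetric, $\delta_{3g-2-k}=-\delta_k$. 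It therefore suffices to prove the strict inequalities for $2\le k\le\lfloor(3g-1)/2\rfloor$, the remaining indices following by symmetry.

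Next I would read off the sign of $\delta_k$ from its residue modulo $3$. The common prefactor $r_k:=\frac{(6g-3-2k)!!}{(6g-1)!!}$ is positive and obeys $r_{k+1}=r_k/(6g-3-2k)$, so it decays fast. By~\eqref{eq:a:g:k:difference}, $\delta_{3j}<0$, $\delta_{3j+1}>0$, and $\delta_{3j+2}$ has the sign of $g-2(j+1)$; a one-line estimate shows that $3j+2\le(3g-1)/2$ forces $g-2(j+1)\ge 0$, so every increment of the third kind is nonnegative on the reduced range. Hence the increments there run in blocks $-,+,+$, so that $a_{g,k}$ has its local minima exactly at indices $k\equiv 1\pmod 3$ and its local maxima exactly at indices $k\equiv 0\pmod 3$.

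The two bounds then reduce to monotonicity of these extremal subsequences. For the lower bound I would prove that the local minima increase, i.e.
\[
\delta_{3j+1}+\delta_{3j+2}+\delta_{3j+3}>0,
\]
so that $a_{g,1}<a_{g,4}<a_{g,7}<\cdots$; for the upper bound I would prove that the local maxima decrease, i.e.
\[
\delta_{3j}+\delta_{3j+1}+\delta_{3j+2}<0,
\]
so that $1=a_{g,0}>a_{g,3}>a_{g,6}>\cdots$. Each triple shares the positive factor $r_{3j+1}\cdot\frac{(g-1)!}{(g-1-j)!}\cdot\frac{(6j+3)!!}{j!}$; dividing it out and simplifying the remaining ratios with $r_{k+1}/r_k=1/(6g-3-2k)$, $\frac{(6j+5)!!}{(6j+3)!!}=6j+5$ and $\frac{(6j+7)!!}{(6j+3)!!}=(6j+5)(6j+7)$ converts each statement into a polynomial inequality in $g$ and $j$, which is the combinatorial core of the proof.

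The main obstacle is establishing these polynomial inequalities uniformly up to the midpoint. Away from the center the decay $r_{k+1}\approx r_k/(6g)$ beats the factorial growth of the combinatorial weights, the leading term of each triple controls the sign, and the inequalities hold with room to spare; but as $k\to(3g-1)/2$ the three increments become comparable in magnitude and nearly cancel, so the sign must be extracted from an exact computation rather than by crude domination. Granting the two displayed inequalities, the monotone subsequences give that the minimum of $(a_{g,k})_{k\ge 1}$ over the reduced range is $a_{g,1}$, attained only at $k=1$, while its maximum is the first interior local maximum $a_{g,3}$ (since $a_{g,1}<a_{g,2}\le a_{g,3}$ and later local maxima are smaller), which the $j=0$ case of the second inequality places strictly below $a_{g,0}=1$. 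Thus $a_{g,1}<a_{g,k}<1$ for all interior $k$ in the reduced range, and the symmetry $a_{g,k}=a_{g,3g-1-k}$ extends both strict inequalities to the whole range $2\le k\le 3g-3$, proving Proposition~\ref{pr:main:bounds}.
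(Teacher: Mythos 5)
Your reduction of the problem is sound and genuinely different from the paper's: you propose to track the two envelope subsequences (local minima at $k\equiv 1\pmod 3$, local maxima at $k\equiv 0 \pmod 3$) and show the first increases while the second decreases, whereas the paper instead compares every $a_{g,k}$ with $k\ge 6$ to the explicitly computed $a_{g,5}$, bounds each increment in absolute value by $R(g,2)$ times a ratio at most $1$ (Lemmas~\ref{lm:P:Q} and~\ref{lm:R}), multiplies by the number of steps, and checks that the resulting total variation is smaller than the explicit margins $\elow$ and $\eup$ separating $a_{g,5}$ from the two endpoints. Your sign analysis of the increments, the symmetry bookkeeping, and the identification of $a_{g,1}$ and $a_{g,3}$ as the extremal values are all correct.

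The genuine gap is that the two block inequalities
$\delta_{3j}+\delta_{3j+1}+\delta_{3j+2}<0$ and
$\delta_{3j+1}+\delta_{3j+2}+\delta_{3j+3}>0$
are never proved; you name them as ``the combinatorial core'' and then proceed by ``granting'' them. This is exactly the hard part, and it cannot be waved through: writing $g=2j+2+t$, the second block sum is \emph{identically zero} at $t=0$ (as it must be, since the symmetry $a_{g,k}=a_{g,3g-1-k}$ forces $a_{g,3j+4}=a_{g,3j+1}$ there), so any proof must produce a strictly positive quantity that degenerates at the midpoint --- a leading-order or domination argument will not do, and the required exact polynomial positivity (e.g.\ $108j^2+312j+201>0$ at $t=1$, and its analogue for general $t$) is precisely the computation you defer. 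The paper's own closing remark in Appendix~\ref{a:proof:2:correlators} --- that the third residue-class subsequence $a_{g,2},a_{g,5},\dots$ is \emph{not} monotone for $g\ge 17$ --- shows that monotonicity of such subsequences is delicate and cannot be taken on faith; your two subsequences do appear to be the correct ones, but the claim must actually be established. The paper's cruder total-variation strategy deliberately avoids needing any sharp cancellation within blocks, at the cost of the explicit computations for $k\le 5$; your route would be shorter and more structural if completed, but as written the proof of the Proposition is not there.
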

Proposition~\ref{pr:main:bounds} is proved in Appendix~\ref{a:proof:2:correlators}.

\subsection{Asymptotic volume contribution of one-cylinder
square-tiled surfaces.}
\label{s:simple:closed:geodesics:asymptotics}

In this section we compute the large genus asymptotics for
the contributions of the stable graphs having a single edge
to the Masur--Veech volume $\Vol\cQ_g$. In other words, we compute
the asymptotic contributions of one-cylinder square-tiled surfaces to the
Masur--Veech volume of the principal stratum $\cQ(1^{4g-4})$.
These contributions obviously provide lower bounds for $\Vol\cQ_g$.

Denote by $\Graph_1(g)$ the stable graph having a single
vertex and having a single edge and such that the vertex is
decorated with integer $g-1$
(see~Figure~\ref{fig:non:separating}). The single edge
forms a loop, so this stable graph represents a surface of
genus $g$ without marked points. The stable graph
$\Graph_1(g)$ encodes the orbit of a simple closed
nonseparating curve on a surface of genus $g$.

\begin{figure}[htb]
\includegraphics{genus_two_graph_11.eps}
\begin{picture}(0,0)(158,10) 
\put(44,-14){$g-1$}
\end{picture}
\includegraphics{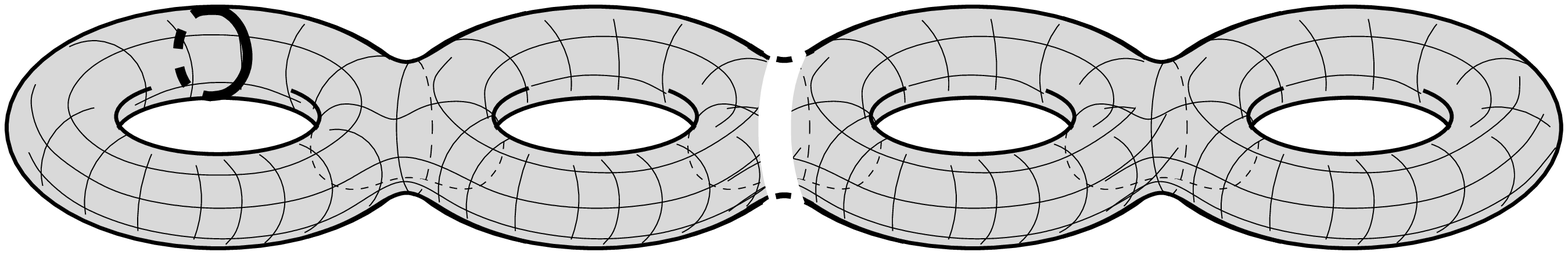}
\vspace*{40pt}
\caption{
\label{fig:one:vertex:one:loop}
The graph $\Graph_1(g)$ on the left
represents a non-separating simple closed curve on a surface
of genus $g$.
}
\label{fig:non:separating}
\end{figure}

\begin{Theorem}
\label{th:contribution:Gamma:1}
Consider the stable graph $\Graph_1(g)$ having a single
vertex decorated with label $g-1$ and single edge (see
Figure~\ref{fig:non:separating}). The contribution
$\Vol\Graph_1(g)$ of $\Graph_1(g)$ to the Masur--Veech
volume $\Vol\cQ_g$ of the moduli space $\cQ_g$ of
holomorphic quadratic differentials on complex curves of
genus $g$ has the following asymptotics:
\begin{equation}
\label{eq:asymptotic:contribution:Gamma:1}
\Vol\Graph_1(g)
=\sqrt{\frac{2}{3\pi g}}
\cdot\left(\frac{8}{3}\right)^{4g-4}
\cdot\left(1+O\left(\frac{1}{g}\right)\right)
\quad\text{as }g\to+\infty\,.
\end{equation}
\end{Theorem}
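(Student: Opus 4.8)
The plan is to specialize the identity $\Vol(\Graph)=\cZ(P_\Graph)$ of Theorem~\ref{th:volume} to the single stable graph $\Graph_1(g)$ and then extract the large-genus asymptotics. This graph has $n=0$ legs, one vertex decorated by $g-1$, a single loop edge (so the associated valence is $n_v=2$ and the loop variable $b_1$ is repeated), $|V(\Graph_1(g))|=1$ and $|\Aut(\Graph_1(g))|=2$ (the flip of the loop). Hence Formula~\eqref{eq:P:Gamma} reads
\[
P_{\Graph_1(g)}(b_1)=\frac{2^{6g-5}\,(4g-4)!}{(6g-7)!}\cdot\frac12\cdot b_1\cdot N_{g-1,2}(b_1,b_1).
\]
Since $N_{g-1,2}$ is homogeneous of degree $6g-8$, setting $b_2=b_1$ collapses it to a multiple of $b_1^{6g-8}$; using~\eqref{eq:N:g:n}--\eqref{eq:c:subscript:d} that multiple is $2^{-(5g-7)}A_{g-1}$, where $A_{g-1}:=\sum_{d_1+d_2=3g-4}\langle\tau_{d_1}\tau_{d_2}\rangle_{g-1}/(d_1!\,d_2!)$. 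Thus $P_{\Graph_1(g)}(b_1)=2^{g+1}\tfrac{(4g-4)!}{(6g-7)!}A_{g-1}\,b_1^{6g-7}$, and as $\cZ(b_1^{6g-7})=(6g-7)!\,\zeta(6g-6)$ by~\eqref{eq:cZ}, I obtain the exact identity $\Vol\Graph_1(g)=2^{g+1}(4g-4)!\,A_{g-1}\,\zeta(6g-6)$, in which the factor $\zeta(6g-6)\to1$ exponentially fast and may be absorbed into the eventual error.

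The heart of the argument is to evaluate $A_{g-1}$ to leading order. I would rewrite each correlator through the normalized quantity $a_{g-1,d_1}$ of~\eqref{eq:a:g:k}: writing $g'=g-1$ and using $6g'-1-2d_1=2d_2+1$,
\[
A_{g'}=\frac{(6g'-1)!!}{24^{g'}\,g'!}\sum_{d_1+d_2=3g'-1}\frac{a_{g',d_1}}{d_1!\,d_2!\,(2d_1+1)!!\,(2d_2+1)!!}.
\]
The key simplification is the elementary identity $\frac1{d!\,(2d+1)!!}=\frac{2^d}{(2d+1)!}$, which turns the \emph{unweighted} sum (all $a_{g',d_1}$ replaced by $1$) into a convolution: $\sum_{d_1+d_2=3g'-1}\frac1{(2d_1+1)!\,(2d_2+1)!}$ is the coefficient of $x^{6g'}$ in $\sinh^2 x=\tfrac12(\cosh 2x-1)$, hence equals $2^{6g'-1}/(6g')!$, and the prefactor $2^{3g'-1}$ brought in by the identity gives an unweighted value $2^{9g'-2}/(6g')!$. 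To reinstate the weights I would invoke Proposition~\ref{pr:main:bounds}: since $1-\tfrac{2}{6g'-1}\le a_{g',k}\le 1$ uniformly in $k$, the weighted sum agrees with the unweighted one up to a factor $1+O(1/g)$. This uniform control is the step I expect to be the crux of the proof; it is precisely what legitimizes the replacement $a_{g',k}\approx 1$, and without it the terms could in principle conspire against a clean estimate. Everything else is bookkeeping, and this decisive input is exactly what Proposition~\ref{pr:main:bounds} provides.

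It then remains to assemble the pieces and apply Stirling's formula. Combining the above yields $A_{g-1}=\frac{(6g-7)!!}{24^{g-1}(g-1)!}\cdot\frac{2^{9g-11}}{(6g-6)!}\,(1+O(1/g))$. Using $(6g-7)!!=(6g-6)!/\bigl(2^{3g-3}(3g-3)!\bigr)$, the factorials telescope and the powers of $2$ and $3$ collect to
\[
\Vol\Graph_1(g)=\frac{2^{4g-4}}{3^{g-1}}\cdot\frac{(4g-4)!}{(g-1)!\,(3g-3)!}\cdot\Bigl(1+O\bigl(\tfrac1g\bigr)\Bigr)
=\frac{2^{4g-4}}{3^{g-1}}\binom{4g-4}{g-1}\Bigl(1+O\bigl(\tfrac1g\bigr)\Bigr).
\]
Finally Stirling gives $\binom{4g-4}{g-1}\sim\bigl(\tfrac{256}{27}\bigr)^{g-1}\sqrt{\tfrac{2}{3\pi g}}$, and since $\frac{2^{4g-4}}{3^{g-1}}\bigl(\tfrac{256}{27}\bigr)^{g-1}=\bigl(\tfrac83\bigr)^{4g-4}$, the claimed asymptotics~\eqref{eq:asymptotic:contribution:Gamma:1} follows.
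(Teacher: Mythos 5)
Your proposal is correct and follows essentially the same route as the paper: specialize Theorem~\ref{th:volume} to the loop graph, pass to the normalized $2$-correlators $a_{g,k}$, use the uniform bounds of Proposition~\ref{pr:main:bounds} to replace each $a_{g,k}$ by $1$ up to $1+O(1/g)$, evaluate the resulting unweighted sum in closed form, and finish with Stirling. The only cosmetic difference is that you extract the unweighted sum as the coefficient of $x^{6g'}$ in $\sinh^2 x$, whereas the paper converts the double factorials into binomial coefficients and invokes $\sum_{k=0}^{n-1}\binom{2n}{2k+1}=2^{2n-1}$ — the same identity in different clothing.
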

\begin{proof}
It would be slightly more convenient to shift $g$ by $1$.
Assign variable $b_1$ to the only edge of the graph.
Formula~\eqref{eq:volume:contribution:of:stable:graph} from
Theorem~\ref{th:volume} applied to the graph
$\Graph_1(g+1)$ gives
\begin{multline}
\label{eq:contribution:graph:G1:initial}
\left(\frac{(4g)!}{(6g)!}\cdot 2^{6g}\cdot 12g\right) \cdot
\frac{1}{2}\cdot 1\cdot
b_1 \cdot
N_{g,2}(b_1,b_1)
\\
\xmapsto{\ \cZ\ }
\Vol\Graph_1(g+1)
=(4g)!\cdot2^{g+2}\cdot\zeta(6g)
\cdot\sum_{d_1+d_2=3g-1}
\frac{\langle\psi_1^{d_1}\psi_2^{d_2}\rangle}{d_1!\cdot d_2!}
=\\=
(4g)!\cdot2^{g+2}\cdot\zeta(6g)
\cdot\sum_{k=0}^{3g-1}
\frac{\langle\tau_k\tau_{3g-1-k}\rangle_g}{k!\cdot(3g-1-k)!}
\,.
\end{multline}

Now pass to the normalization~\eqref{eq:a:g:k}
of the correlators
$\langle\tau_k\tau_{3g-1-k}\rangle_g$:
\begin{equation*}
a_{g,k}
=\frac{(2k+1)!!\cdot(6g-1-2k)!!}{(6g-1)!!}
\cdot 24^g\cdot g!
\cdot \langle\tau_k\tau_{3g-1-k}\rangle_g\,.
\end{equation*}
By Proposition~\ref{pr:main:bounds}, the $2$-correlators
admit the following uniform bounds under such
normalization:
$$
1-\frac{2}{6g-1}
\le a_{g,k}
\le 1\,,\quad\text{for }k=0,1,\dots,3g-1\,.
$$

Rewriting the Expression~\eqref{eq:contribution:graph:G1:initial}
for $\Vol\Graph_1(g+1)$ in terms of $a_{g,k}$
we get
\begin{multline}
\label{eq:contribution:graph:G1:a:g:k:init}
\Vol\Graph_1(g+1)/\zeta(6g)
=\\=
(4g)!\cdot2^{g+2}
\cdot\frac{(6g-1)!!}{24^g\cdot g!}
\cdot\sum_{k=0}^{3g-1}
\frac{a_{g,k}}
{k!\cdot(2k+1)!!\cdot(3g-1-k)!\cdot(6g-1-2k)!!}\,.
\end{multline}
Passing from double factorials to factorials,
\begin{align*}
(6g-1)!! &= \frac{(6g)!}{(3g)!\cdot 2^{3g}}\\
(2k+1)!! &= \frac{(2k+1)!}{k!\cdot 2^k}\\
(6g-1-2k)!! &=\frac{(6g-1-2k)!}{(3g-1-k)!\cdot 2^{3g-k-1}}\,,
\end{align*}
we rewrite and simplify Expression~\eqref{eq:contribution:graph:G1:a:g:k:init}
as follows:
\begin{multline}
\label{eq:contribution:graph:G1:a:g:k:intermediate}
\Vol\Graph_1(g+1)/\zeta(6g)
=\\=
(4g)!\cdot 2^{g+2}
\cdot\frac{1}{3^g\cdot 2^{3g}}
\cdot\frac{1}{g!\cdot(3g)!}
\cdot\frac{1}{2}
\sum_{k=0}^{3g-1}
\binom{6g}{2k+1}\cdot a_{g,k}
=\\=
\frac{(4g)!}{g!\cdot(3g)!}
\cdot\frac{1}{3^g\cdot 2^{2g}}
\cdot 2\sum_{k=0}^{3g-1}
\binom{6g}{2k+1}\cdot a_{g,k}\,.
\end{multline}
Taking the difference of binomial expansions of
the left-hand sides of the
identities $(1-1)^{2n}=0$ and $(1+1)^{2n}=2^{2n}$
we derive the classical identity
$$
\sum_{k=0}^{n-1} \binom{2n}{2k+1}= 2^{2n-1}\,.
$$
Combining the latter identity evaluated
for $n=3g$ with
bounds~\eqref{eq:main:bounds} we get the following bounds
for $\Vol\Graph_1(g+1)$:
\begin{equation}
\label{eq:contribution:graph:G1:bounds}
\binom{4g}{g}
\cdot\left(\frac{2^4}{3}\right)^g
\cdot\left(1-\frac{2}{6g-1}\right)
\le
\Vol\Graph_1(g+1)/\zeta(6g)
\le
\binom{4g}{g}
\cdot\left(\frac{2^4}{3}\right)^g
\,.
\end{equation}
Note that
$$
\zeta(6g)\to 1\quad\text{as }g\to+\infty
$$
and the convergence is exponentially fast.

Applying Stirling's formula to the factorials
in the binomial coefficient
$\binom{4g}{g}$ in the latter expression
we get
\begin{equation}
\label{eq:binom:4g:g}
\binom{4g}{g}
=\sqrt{\frac{2}{3\pi g}}
\cdot\left(\frac{2^8}{3^3}\right)^g
\cdot\left(1+O\left(\frac{1}{g}\right)\right)
\quad\text{as }g\to+\infty\,.
\end{equation}
Combining the latter equality with
bounds~\eqref{eq:contribution:graph:G1:bounds}
we get the desired Formula~\eqref{eq:asymptotic:contribution:Gamma:1}
in genus $g+1$:
$$
\Vol\Graph_1(g+1)
=\sqrt{\frac{2}{3\pi g}}
\cdot\left(\frac{8}{3}\right)^{4g}
\cdot\left(1+O\left(\frac{1}{g}\right)\right)
\quad\text{as }g\to+\infty\,.
$$
\end{proof}

\begin{Remark}
\label{rm:potential:detalisation}
Actually, we have a very good control on the asymptotic
expansions of correlators $a_{g,k}$ in powers of
$\frac{1}{g}$, so it would not be difficult to
specify several terms of the asymptotic expansion of
$O\left(\frac{1}{g}\right)$ in
Formula~\eqref{eq:asymptotic:contribution:Gamma:1}. We do
not do it only because we do not currently see any specific
need for a more precise expression.
\end{Remark}

\begin{proof}[Proof of Theorem~\ref{th:asymptotic:lower:bound}]
Inequality~\eqref{eq:asymptotic:lower:bound} now follows from~\eqref{eq:contribution:graph:G1:bounds},
where we use the following estimates for the factorials
involved in the binomial coefficient. By Theorem~1.6 in~\cite{Batir}
for all positive real numbers $x\ge 1$ one has
$$
x^x e^{-x}\sqrt{2\pi(x+a)} < \Gamma(x+1) <
x^x e^{-x}\sqrt{2\pi(x+b)}\,,
$$
where $a = 1/6 = 0.1666666\dots$ and
$b = \frac{e^2}{2\pi}-1 = 0.176005\dots$.
\end{proof}

We proceed with the remaining graphs having a single edge.
This time it has two vertices joined by the edge as in
Figure~\ref{fig:separating}. The two vertices are decorated
with strictly positive integers $g_1,g_2\in\N$ such that
$g_1+g_2=g$ (see~Figure~\ref{fig:non:separating}). Without
loss of generality we may assume that $g_1\le g_2$. This
stable graph encodes the orbit of a simple closed curve
separating the compact surface of genus $g$ without
punctures into subsurfaces of genera $g_1$ and $g_2$.

\begin{figure}[htb]
\includegraphics{genus_two_graph_12.eps}
\begin{picture}(0,0)(145,-5)
\put(-2,-8){$g_1$}
\put(31,-8){$g_2$}
\end{picture}
\includegraphics{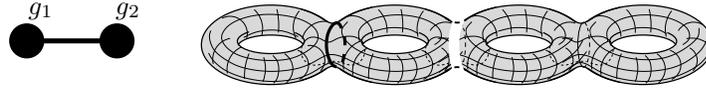}
\vspace*{40pt}
\caption{
\label{fig:two:verticies:one:edge}
The graph $\Separating(g_1,g_2)$
on the left represents a simple closed curve
on a surface of genus $g$ separating the surface
into surfaces of genera $g_1$ and $g_2$, where $g_1+g_2=g$
(on the right).
}
\label{fig:separating}
\end{figure}

\begin{Proposition}
\label{pr::contribution:Gamma:1}
Consider the stable graph $\Separating(g_1,g-g_1)$ having a
single edge joining two vertices decorated with labels
$g_1$ and $g-g_1$ respectively (see
Figure~\ref{fig:separating}). The contribution
$\Vol(\Separating(g_1,g-g_1))$ of $\Separating(g_1,g-g_1)$
to the Masur--Veech volume $\Vol\cQ_g$ is
\begin{equation}
\label{eq:asymptotic:contribution:segment}
\Vol(\Separating(g_1,g-g_1))
=\frac{4\cdot\zeta(6g-6)}{|\operatorname{Aut}(\Separating(g_1,g-g_1))|}
\cdot \binom{4g-4}{g}
\cdot\frac{1}{12^g}
\cdot \binom{g}{g_1}
\cdot \binom{3g-4}{3g_1-2}
\,.
\end{equation}
\end{Proposition}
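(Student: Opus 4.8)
The plan is to evaluate $\Vol(\Separating(g_1,g-g_1))$ directly from Theorem~\ref{th:volume}, using the identity $\Vol(\Gamma)=\cZ(P_\Gamma)$ of~\eqref{eq:volume:contribution:of:stable:graph}. Write $g_2=g-g_1$ and abbreviate $\Separating=\Separating(g_1,g_2)$. This stable graph has $|V(\Separating)|=2$, a single edge $e$ (a bridge joining the two vertices) and no legs, so each vertex has valency $1$. Assigning the variable $b_1$ to the edge, Formula~\eqref{eq:P:Gamma} specializes (with $n=0$) to
\[
P_{\Separating}(b_1)=\frac{2^{6g-5}\cdot(4g-4)!}{(6g-7)!}\cdot\frac{1}{2}\cdot\frac{1}{|\Aut(\Separating)|}\cdot b_1\cdot N_{g_1,1}(b_1)\cdot N_{g_2,1}(b_1)\,.
\]

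First I would make the one-vertex polynomials explicit. By~\eqref{eq:N:g:n}--\eqref{eq:c:subscript:d} the polynomial $N_{g_i,1}(b_1)$ is a single monomial of degree $6g_i-4$, namely
\[
N_{g_i,1}(b_1)=\frac{1}{2^{5g_i-4}\,(3g_i-2)!}\cdot\langle\tau_{3g_i-2}\rangle_{g_i}\cdot b_1^{6g_i-4}\,.
\]
The essential arithmetic input is the classical one-point intersection number $\langle\tau_{3g-2}\rangle_g=\tfrac{1}{24^g\,g!}$, a standard consequence of the Witten--Kontsevich theorem (and directly checkable for small $g$, e.g. $\langle\tau_1\rangle_1=\tfrac{1}{24}$). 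Substituting this and multiplying the two monomials, the product $b_1\cdot N_{g_1,1}(b_1)\cdot N_{g_2,1}(b_1)$ becomes a single monomial $c\cdot b_1^{6g-7}$ with
\[
c=\frac{1}{2^{5g-8}\cdot 24^g\cdot(3g_1-2)!\,(3g_2-2)!\,g_1!\,g_2!}\,,
\]
using $(6g_1-4)+(6g_2-4)+1=6g-7$ and $(5g_1-4)+(5g_2-4)=5g-8$.

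Next I would apply $\cZ$. Since $\cZ(b_1^m)=m!\,\zeta(m+1)$ by~\eqref{eq:cZ}, evaluating at $m=6g-7$ gives the factor $(6g-7)!\,\zeta(6g-6)$, and the $(6g-7)!$ cancels the denominator of the combinatorial prefactor in $P_{\Separating}$. Collecting the powers of $2$ yields $2^{6g-5}/(2\cdot 2^{5g-8})=2^{g+2}$, and the identity $2^{g+2}/24^g=4/12^g$ (from $24^g=2^{3g}3^g$ and $12^g=2^{2g}3^g$) produces the overall factor $\tfrac{4\,\zeta(6g-6)}{|\Aut(\Separating)|\,12^g}$ multiplied by the residual factorial ratio $\frac{(4g-4)!}{g_1!\,g_2!\,(3g_1-2)!\,(3g_2-2)!}$.

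The final step is purely combinatorial: I would verify
\[
\frac{(4g-4)!}{g_1!\,g_2!\,(3g_1-2)!\,(3g_2-2)!}=\binom{4g-4}{g}\binom{g}{g_1}\binom{3g-4}{3g_1-2}
\]
by expanding the three binomials on the right, where $g_2=g-g_1$ and $3g-4-(3g_1-2)=3g_2-2$ make the factors $g!$ and $(3g-4)!$ cancel telescopically, leaving exactly the left-hand side. Assembling everything yields~\eqref{eq:asymptotic:contribution:segment}. No step presents a genuine obstacle: the only non-bookkeeping ingredient is the one-point intersection number formula, and the sole place demanding care is the accounting of the accumulated powers of $2$.
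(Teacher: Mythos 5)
Your proposal is correct and follows essentially the same route as the paper: specialize Formula~\eqref{eq:P:Gamma} to the two-vertex, one-edge graph, insert the one-point intersection number $\langle\tau_{3g-2}\rangle_g=\tfrac{1}{24^g g!}$ into $N_{g_i,1}$, apply $\cZ$ to the resulting monomial, and repackage the factorial ratio as the product of three binomial coefficients. The power-of-two bookkeeping ($2^{6g-5}/(2\cdot 2^{5g-8})=2^{g+2}$, then $2^{g+2}/24^g=4/12^g$) and the telescoping identity all check out against the paper's computation.
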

\begin{proof}
Let $g_2=g-g_1$.
The contribution of the graph $\Separating(g_1,g_2)$ is given by
Formula~\eqref{eq:square:tiled:volume} from
Theorem~\ref{th:volume}:
\begin{equation}
\label{eq:Vol:Graph:g1:g2}
\Vol(\Separating(g_1,g_2))
=\frac{2^{6g-5} \cdot (4g-4)!}{(6g-7)!}\cdot \\
\frac{1}{2} \cdot
\frac{1}{|\operatorname{Aut}(\Graph(g_1,g_2))|}
\cdot
\cZ\big(b\cdot
N_{g_1,1}(b)\cdot N_{g_2,1}(b)
\big)\,,
\end{equation}
where
\begin{equation}
\label{eq:Aut}
|\operatorname{Aut}(\Graph(g_1,g_2))|=
\begin{cases}
2,&\text{when }g_1=g_2\,,\\
1,&\text{otherwise}\,.
\end{cases}
\end{equation}

By the result of E.~Witten~\cite{Witten} one has
the following closed expression for $1$-correlators:
$$
\langle \psi_1^{3g-2} \rangle
=
\int_{\overline{\cM}_{g,1}} \psi_1^{3g-2}
=
\frac{1}{24^g\cdot g!}\,.
$$

Applying Definitions~\eqref{eq:N:g:n}
and~\eqref{eq:c:subscript:d} to $N_{g,1}(b)$
and using the above expression
for $\langle \psi_1^{3g-2} \rangle$
we get the following closed form expression
for the polynomial $N_{g,1}(b)$:
\begin{multline}
N_{g,1}(b)=c_{3g-2} b^{2(3g-2)}=
\frac{1}{2^{5g-6+2}\cdot(3g-2)!}
\cdot\langle \psi_1^{3g-2} \rangle \cdot b^{6g-4}
=\\=
\frac{1}{2^{8g-4}\cdot 3^g\cdot g!\cdot (3g-2)!}
\cdot b^{6g-4}\,.
\end{multline}

Using the Definition~\eqref{eq:cZ} of $\cZ$
and the assumption $g_1+g_2=g$
we can
now develop the rightmost factor
in~\eqref{eq:cZ} as follows
\begin{multline*}
\cZ\big(b\cdot N_{g_1,1}(b)\cdot N_{g_2,1}(b)\big)
=\\=
\cZ\left(
\frac{1}{2^{8g_1-4}\!\cdot\! 2^{8g_2-4}}
\!\cdot\!\frac{1}{3^{g_1}\!\cdot\! 3^{g_2}}
\!\cdot\! \frac{1}{g_1!\, g_2!}
\!\cdot\! \frac{1}{(3g_1-2)! (3g_2-2)!}
\!\cdot\! b\cdot b^{6g_1-4}\cdot b^{6g_2-4}\right)
=\\=
\frac{1}{2^{8g-8}}
\cdot\frac{1}{3^g}
\cdot \frac{1}{g_1!\, (g-g_1)!}
\cdot \frac{1}{(3g_1-2)! (3g-3g_1-2)!}
\cdot(6g-7)!\cdot\zeta(6g-6)
\,.
\end{multline*}
Plugging this expression into~\eqref{eq:Vol:Graph:g1:g2};
multiplying and dividing by $g!$ and by $(3g-4)!$ to
pass to binomial coefficients
and simplifying we get the desired
Formula~\eqref{eq:asymptotic:contribution:segment}.
\end{proof}

We complete this section with the computation of the cumulative
contribution of the graphs $\Separating(g_1,g-g_1)$ to
$\Vol\cQ_g$ coming from all
$g_1=1,\dots,\left[\frac{g}{2}\right]$.

\begin{Proposition}
\label{pr:sum:of:contributions:of:separating:graphs}
As $g\to\infty$, we have the following asymptotic relation
\begin{equation}
\label{eq:sum:of:contributions:of:separating:graphs}
\sum_{g_1=1}^{\left[\frac{g}{2}\right]}
\Vol(\Separating(g_1,g-g_1))
\sim
\frac{2}{3\pi g}\cdot\frac{1}{4^g}
\cdot\left(\frac{8}{3}\right)^{4g-4}\,,
\end{equation}
\end{Proposition}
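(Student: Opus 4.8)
The plan is to begin from the closed expression for $\Vol(\Separating(g_1,g-g_1))$ furnished by Proposition~\ref{pr::contribution:Gamma:1} and reduce the half-range sum to a single binomial convolution whose large genus asymptotics is controlled by a Gaussian. First I would record that the $\Aut$-free summand
\[
f(g_1):=4\,\zeta(6g-6)\cdot\binom{4g-4}{g}\cdot\frac{1}{12^g}\cdot\binom{g}{g_1}\binom{3g-4}{3g_1-2}
\]
is invariant under $g_1\mapsto g-g_1$, since $\binom{g}{g_1}=\binom{g}{g-g_1}$ and $(3g_1-2)+(3(g-g_1)-2)=3g-4$ force $\binom{3g-4}{3g_1-2}=\binom{3g-4}{3(g-g_1)-2}$. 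The factor $|\Aut(\Separating(g_1,g-g_1))|$ equals $2$ exactly on the diagonal $g_1=g-g_1$ and $1$ otherwise, so $1/|\Aut|$ precisely compensates the double counting incurred in passing from $1\le g_1\le\lfloor g/2\rfloor$ to the full range. Hence
\[
\sum_{g_1=1}^{\lfloor g/2\rfloor}\Vol(\Separating(g_1,g-g_1))=\frac12\sum_{g_1=1}^{g-1} f(g_1),
\]
and the whole problem reduces to the asymptotics of $\Sigma(g):=\sum_{g_1=1}^{g-1}\binom{g}{g_1}\binom{3g-4}{3g_1-2}$.

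Next I would estimate $\Sigma(g)$ by a local central limit argument, exploiting that both factors are sharply peaked at $g_1=g/2$. Writing $g_1=g/2+t$, one has $3g_1-2=(3g-4)/2+3t$, so the Gaussian approximation for central binomials gives $\binom{g}{g/2+t}\sim\binom{g}{g/2}\,e^{-2t^2/g}$ and $\binom{3g-4}{(3g-4)/2+3t}\sim\binom{3g-4}{(3g-4)/2}\,e^{-6t^2/g}$, whose product is the peak value times $e^{-8t^2/g}$. Summing over integer $g_1$ is a Riemann sum of step one in $t$, so $\sum_t e^{-8t^2/g}\sim\int_{-\infty}^{\infty}e^{-8t^2/g}\,dt=\sqrt{\pi g/8}$, the tails $|t|\gg\sqrt g$ being exponentially negligible. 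Evaluating the peak values by Stirling, $\binom{g}{g/2}\sim 2^g\sqrt{2/(\pi g)}$ and $\binom{3g-4}{(3g-4)/2}\sim 2^{3g-4}\sqrt{2/(3\pi g)}$, and multiplying by the Gaussian mass $\sqrt{\pi g/8}$ yields $\Sigma(g)\sim 2^{4g-4}/\sqrt{6\pi g}$.

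Finally I would substitute back. Using $\zeta(6g-6)\to 1$ exponentially fast together with the Stirling asymptotics $\binom{4g-4}{g}\sim\sqrt{\tfrac{2}{3\pi g}}\cdot 4^{4g-4}/3^{3g-4}$ (obtained exactly as in the derivation of~\eqref{eq:binom:4g:g}), the reduced sum becomes
\[
\frac12\sum_{g_1=1}^{g-1}f(g_1)\sim 2\cdot\sqrt{\frac{2}{3\pi g}}\cdot\frac{4^{4g-4}}{3^{3g-4}}\cdot\frac{2^{4g-4}}{12^g}\cdot\frac{1}{\sqrt{6\pi g}}.
\]
The prefactor $2$ times the product of the two square-root factors collapses to $\frac{2}{3\pi g}$, and collecting the powers of $2$ and $3$ in $\frac{4^{4g-4}}{3^{3g-4}}\cdot\frac{2^{4g-4}}{12^g}$ gives $2^{10g-12}/3^{4g-4}=4^{-g}(8/3)^{4g-4}$, so the right-hand side equals $\frac{2}{3\pi g}\cdot\frac{1}{4^g}\cdot(8/3)^{4g-4}$, which is precisely~\eqref{eq:sum:of:contributions:of:separating:graphs}. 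The main obstacle is the rigorous justification of the Gaussian estimate for $\Sigma(g)$: one must combine a uniform local central limit expansion for the two binomials near their common peak with an explicit exponential tail bound away from it, so that the replacement of the sum by $\sqrt{\pi g/8}$ carries only a multiplicative $(1+o(1))$ error. Everything else is bookkeeping of explicit powers of $2$, $3$, and $g$.
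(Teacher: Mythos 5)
Your proposal is correct and follows essentially the same route as the paper: symmetrize the summand to reduce the half-range sum to $\tfrac12\sum_{g_1=1}^{g-1}f(g_1)$, isolate the binomial convolution $S(g)=\sum\binom{g}{g_1}\binom{3g-4}{3g_1-2}$, establish $S(g)\sim 2^{4g-4}/\sqrt{6\pi g}$ by a Gaussian/central-limit estimate around the common peak $g_1=g/2$, and finish with Stirling bookkeeping. The paper phrases the Gaussian step via de Moivre--Laplace and the variance formula for a product of normal densities with common mean (yielding $\sigma^2\sim g/16$, the same as your exponent $e^{-8t^2/g}$), so the two arguments coincide.
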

\begin{proof}
Applying Formula~\eqref{eq:Vol:Graph:g1:g2},
making the summation index range from $1$ to $g-1$
(instead of up to $\left[\frac{g}{2}\right]$)
and taking into consideration that when $g=2g_1$,
the term $\Vol\Separating(g_1,g_1)$ has
$|\operatorname{Aut}(\Separating(g_1,g_1))|=2$
(see Equation~\eqref{eq:Aut}) we get
$$
\sum_{g_1=1}^{\left[\frac{g}{2}\right]}
\Vol(\Separating(g_1,g-g_1))
=
2\cdot\zeta(6g-6)
\cdot \binom{4g-4}{g}
\cdot\frac{1}{12^g}
\cdot
\sum_{g_1=1}^{g-1}
\binom{g}{g_1}
\cdot \binom{3g-4}{3g_1-2}
\,.
$$

The zeta value $\zeta(6g-6)$ tends to $1$ exponentially
fast when $g\to+\infty$. Stirling's formula provides the
following asymptotic value of the binomial coefficient
$$
\binom{4g-4}{g}
\sim
\frac{2}{\sqrt{6\pi g}}
\cdot 3^g\cdot\left(\frac{4}{3}\right)^{4g-4}\,.
$$
Thus, to complete the proof of Formula~\eqref{eq:log:sep:over:non:sep}
and, thus, of Theorem~\ref{th:separating:over:non:separating},
it remains to prove the Lemma below.
\end{proof}

\begin{Lemma}
\label{lm:sum:of:products:of:two:binomials}
The following asymptotic formula holds
\begin{equation}
\label{eq:sum:of:products:of:binomials:asymptotics}
S(g)=\sum_{g_1=1}^{g-1}
\binom{g}{g_1}
\binom{3g-4}{3g_1-2}
\sim
\frac{1}{\sqrt{6\pi g}}\cdot 2^{4g-4}
\end{equation}
as $g\to+\infty$.
\end{Lemma}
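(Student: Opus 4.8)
The plan is to reduce the two-parameter binomial sum $S(g)$ to the extraction of a single central coefficient of an explicit polynomial, and then to read off the asymptotics by the saddle-point method.

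First I would rewrite each factor as a coefficient, using $\binom{g}{g_1}=[y^{3g_1}](1+y^3)^g$ (the powers of $y$ in $(1+y^3)^g$ are multiples of $3$, so no other residues interfere) and $\binom{3g-4}{3g_1-2}=[y^{3g_1-2}](1+y)^{3g-4}$. Extending the sum harmlessly over all $g_1\in\Z$ and applying the convolution identity $\sum_k\alpha_k\beta_{k-2}=[y^{-2}]\,A(1/y)B(y)$ with $A(y)=(1+y^3)^g$, $B(y)=(1+y)^{3g-4}$, one finds $S(g)=[y^{3g-2}]\big((1+y^3)^g(1+y)^{3g-4}\big)$. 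The factorization $1+y^3=(1+y)(1-y+y^2)$ then collapses this to
\[
S(g)=[y^{3g-2}]\Big((1+y)^{4g-4}(1-y+y^2)^g\Big).
\]
This form is convenient: the polynomial has degree $6g-4$ and is palindromic (both factors are invariant under $y\mapsto 1/y$ up to the appropriate power of $y$), so $y^{3g-2}$ is exactly its central — hence largest — coefficient. Small cases confirm the identity: it gives $S(2)=4$ and $S(3)=30$, matching direct summation.

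Next I would apply Cauchy's formula on the unit circle,
\[
S(g)=\frac{1}{2\pi i}\oint_{|y|=1}\exp\big(g\,h(y)\big)\,\phi(y)\,dy,
\]
where $h(y)=4\log(1+y)+\log(1-y+y^2)-3\log y$ and $\phi(y)=y/(1+y)^4$, and locate the saddle on the positive real axis. One computes $h'(1)=\tfrac{4}{2}+1-3=0$, so $y_0=1$ is a saddle, with $h(1)=4\log 2$, $h''(1)=-1+1+3=3$, and $\phi(1)=\tfrac{1}{16}$. Parametrizing $y=e^{i\theta}$ gives $h(e^{i\theta})=4\log 2-\tfrac{3}{2}\theta^2+O(\theta^3)$, and the Gaussian integral $\int_{\R}e^{-3g\theta^2/2}\,d\theta=\sqrt{2\pi/(3g)}$ yields
\[
S(g)\sim\frac{1}{2\pi}\cdot 2^{4g}\cdot\frac{1}{16}\cdot\sqrt{\frac{2\pi}{3g}}=\frac{2^{4g-4}}{\sqrt{6\pi g}},
\]
exactly the claimed asymptotics.

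The hard part will be justifying that only the neighborhood of $\theta=0$ contributes, i.e. that $M(\theta):=|1+e^{i\theta}|^{4}\,|1-e^{i\theta}+e^{2i\theta}|$ is maximized at $\theta=0$ (the modulus of the integrand on the circle is $M(\theta)^g$ up to the bounded factor $|\phi|$). The subtlety is that $|1-e^{i\theta}+e^{2i\theta}|=|2\cos\theta-1|$ is \emph{not} bounded by its value $1$ at $\theta=0$: it grows to $3$ near $\theta=\pi$. The resolution is that this growth is overwhelmed by the vanishing of $(1+e^{i\theta})^{4}$ there. Writing $u=\cos^2(\theta/2)\in[0,1]$, one has $|1+e^{i\theta}|^4=16u^2$ and $|2\cos\theta-1|=|4u-3|$, so $M=16\,u^2\,|4u-3|$; an elementary one-variable analysis shows $M$ increases on $[3/4,1]$ to $M(1)=16$, while on $[0,3/4]$ it attains at most $M=4$ (at $u=\tfrac12$, i.e. $\theta=\tfrac{\pi}{2}$). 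Hence $M(\theta)<16$ for $\theta\neq0$, the off-center contribution is exponentially negligible, and the saddle-point estimate is rigorous. The remaining steps are the routine tail and error-term bounds of the Laplace expansion, supplying the $(1+o(1))$ factor.
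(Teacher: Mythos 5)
Your proof is correct, and it takes a genuinely different route from the paper. The paper argues probabilistically: by de Moivre--Laplace the normalized coefficient sequences $\binom{g}{g_1}$ and $\binom{3g-4}{3g_1-2}$ approximate Gaussian densities with common mean $g/2$ and variances $\sim g/4$ and $\sim g/12$, the product of two Gaussians with the same mean is a Gaussian with variance $\sigma_1^2\sigma_2^2/(\sigma_1^2+\sigma_2^2)\sim g/16$, whence $S(g)\sim\binom{g}{[g/2]}\binom{3g-4}{3[g/2]-2}\sqrt{2\pi g/16}$, which Stirling evaluates to the stated answer. Your route instead packages $S(g)$ as the single central coefficient $[y^{3g-2}]\bigl((1+y)^{4g-4}(1-y+y^2)^g\bigr)$ and runs a saddle-point analysis at $y=1$; all the computations check ($h'(1)=0$, $h''(1)=3$, $\phi(1)=1/16$, and the maximum-modulus bound $M=16u^2|4u-3|<16$ for $u\neq 1$ is right), and the small cases $S(2)=4$, $S(3)=30$ confirm the identity. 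What your approach buys is a cleaner path to rigor -- the paper's passage from weak convergence of the normalized distributions to the asymptotics of the sum implicitly needs a local limit theorem with uniform error control, which is not spelled out -- and an exact closed form that also explains the Zeilberger recursion recorded in the paper's subsequent remark; what it costs is the explicit tail estimate you correctly identify as the delicate step. Two cosmetic points: ``palindromic hence the central coefficient is largest'' is not a valid implication in general (reciprocal polynomials need not have unimodal coefficients), though you never use it; and $\phi(y)=y/(1+y)^4$ is unbounded at $y=-1$, so in the tail bound you should keep the full integrand, e.g.\ bound it by $M(\theta)^{g-1}\,|4u-3|\le 3\,M(\theta)^{g-1}$, rather than by $M(\theta)^g\sup|\phi|$.
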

\begin{proof}
The probability density of the normal distribution
$\cN(\mu,\sigma^2)$ is given by the function
$$
f(x\,|\,\mu,\sigma^2)=
\frac{1}{\sqrt{2\pi\sigma^2}}
\cdot e^{-\tfrac{(x-\mu)^2}{2\sigma^2}}\,.
$$

Let $g$ be a large positive integer.
By the de Moivre--Laplace theorem,
after normalization by
$2^g$ the distribution of the binomial coefficients
$\binom{g}{k}$, where $k=0,1,\dots,g$, tends to the normal
distribution
$\cN\!\!\left(\tfrac{g}{2},\tfrac{g}{4}\right)$ as
$g\to+\infty$.

Let $m$ be any positive integer which we use as a fixed
parameter. The normalized distribution of the binomial
coefficients $\binom{m\cdot g}{k}$, where $k=0,1,\dots,
m\cdot g$, tends to the normal distribution
$\cN\!\!\left(\tfrac{m\cdot g}{2},\tfrac{m\cdot
g}{4}\right)$ as $g\to\infty$. Hence, the normalized
distribution of binomial coefficients $\binom{m\cdot
g}{m\cdot k}$, where $k=0,1,\dots, g$, tends to the normal
distribution
$\cN\!\!\left(\tfrac{g}{2},\tfrac{g}{4m}\right)$ as
$g\to+\infty$. In particular, letting $m=3$ we see that the normalized distribution
of binomial coefficients $\binom{3g-4}{3k-2}$, where
$k=1,\dots,g-1$, tends to the normal distribution
$\cN\!\!\left(\tfrac{g}{2},\tfrac{g}{12}-\tfrac{1}{9}\right)$.

We have
$$
f(x\,|\,\mu,\sigma_1^2)\cdot f(x\,|\,\mu,\sigma_1^2)
=
\frac{\sqrt{2\pi\sigma^2}}{\sqrt{2\pi\sigma_1^2}\cdot\sqrt{2\pi\sigma_2^2}}\cdot f(x\,|\,\mu,\sigma^2)\,,
\quad\text{where}\quad
\sigma^2=\frac{\sigma_1^2\sigma_2^2}{\sigma_1^2+\sigma_2^2}\,.
$$
In other words,
the normalized distribution of the product of two independent normal
distributions $\cN(\mu,\sigma_1^2)$ and
$\cN(\mu,\sigma_2^2)$ sharing the same mean $\mu$ is the
normal distribution
$\cN\!\!\left(\mu,\tfrac{\sigma_1^2\sigma_2^2}{\sigma_1^2+\sigma_2^2}\right)$.
Hence, after normalization by $S(g)$, the product
$\binom{g}{g_1}\binom{3g-4}{3g_1-2}$ of the two binomial
distributions, where $g_1=1,\dots,g-1$, tends to the normal
distribution $\cN(g/2,\sigma^2)$ with
$$
\sigma^2=
\frac
{\tfrac{g}{4}\left(\tfrac{g}{12}-\tfrac{1}{9}\right)}
{\tfrac{g}{4}+\left(\tfrac{g}{12}-\tfrac{1}{9}\right)}
\sim
\frac{g}{16}\,,\quad\text{as } g\to+\infty\,.
$$
Thus, the asymptotic value of the sum $S(g)$ can be
computed as the value of the product distribution
$\binom{g}{g_1}\binom{3g-4}{3g_1-2}$ at $\mu=g/2$
multiplied by $\sqrt{2\pi\sigma^2}$, that is
\begin{equation}
\label{eq:value:at:mean:times:sqrt:2:pi:sigma}
S(g)\sim
\binom{g}{\left[\frac{g}{2}\right]}
\cdot\binom{3g-4}{3\left[\frac{g}{2}\right]-2}
\cdot\sqrt{2\pi \tfrac{g}{16}}\,.
\end{equation}

From Stirling's formula we get
$$
\binom{2m}{m}\sim \frac{2^{2m}}{\sqrt{\pi m}}
\quad\text{and}\quad
\binom{2m+1}{m}\sim \frac{2^{2m+1}}{\sqrt{\pi m}}
\,.
$$
Applying these asymptotic formulae to each of the binomial
coefficients
in~\eqref{eq:value:at:mean:times:sqrt:2:pi:sigma} we get
$$
S(g)
\sim\frac{2^g}{\sqrt{\pi \tfrac{g}{2}}}
\cdot\frac{2^{3g-4}}{\sqrt{\pi \tfrac{3g-4}{2}}}
\cdot\frac{\sqrt{\pi g}}{\sqrt{8}}
\sim
\frac{1}{\sqrt{6\pi g}}\cdot 2^{4g-4}\,.
$$

\end{proof}

\begin{Remark}
   %
The hypergeometric sum $S(g)$ on the left hand side
of~\eqref{eq:sum:of:products:of:binomials:asymptotics}
satisfies the following recursive relation obtained
applying Zeilberger's algorithm:
\begin{multline*}
S(g+2)=
2\cdot\frac{(324g^4+432 g^3+123 g^2-49 g-8)}{(6g-1)(3g+4)(3g-1)(g+1)}\cdot S(g+1)
+\\+
36\cdot\frac{(6g+5)(4g-1)(4g-3)}{(6g-1)(3g+4)(3g-1)}\cdot S(g)\,.
\end{multline*}
\end{Remark}

\subsection{Frequencies of simple closed geodesics}
\label{ss:simple:closed:geodesics}

In this section we use the setting and the notation as in
Theorem~6.1 of M.~Mirzakhani~\cite{Mirzakhani:grouth:of:simple:geodesics}
reproduced at the end of Section~\ref{ss:Square:tiled:surfaces:and:associated:multicurves}.

Recall that equivalence classes of smooth simple closed
curves on a compact oriented surface of genus $g$ without
boundary are classified as follows. The curve can be
separating or non-separating. All non-separating curves as
in Figure~\ref{fig:non:separating} belong to the same
class; we denote the corresponding frequency by
$c_{g,nonsep}$.

Separating simple closed curves are classified by the
genera $g_1,g_2$ of components in which the curve separates
the surface; see Figure~\ref{fig:separating}. Here
$g_1+g_2=g$; $g_1,g_2\ge 1$, and pairs $g_1,g_2$ and
$g_2,g_1$ correspond to the same equivalence class.
Denote a simple closed curve of this type
by $\gamma_{g_1,g_2}$. The
stable graph corresponding to $\gamma_{g_1,g_2}$ is
$\Separating(g_1,g_2)$, see Figure~\ref{fig:separating}.

Recall that the volume contribution $\Vol(\Separating(g_1,g_2))$
comes from all one-cylinder square-tiled surfaces of genus
$g=g_1+g_2$ such that the waist curve of the single cylinder
separates the surface into two surfaces of genera $g_1$ and
$g_2$ respectively. This single horizontal cylinder can be composed of
$a=1,2,\dots$ horizontal bands of squares.
The contribution $\Vol(\Separating(g_1,g_2))$
is the sum of contributions $\Vol(\Separating(g_1, g_2), H)$
of square-tiled surfaces having a fixed value $H\in\N$,
$$
\Vol(\Separating(g_1,g_2))
=\sum_{H=1}^{+\infty} \Vol(\Separating(g_1,g_2), H)\,.
$$
Recall also, that
$$
\Vol(\Separating(g_1,g_2), H)
=\frac{1}{H^d} \Vol(\Separating_{g_1,g_2})\,,
$$
where $d=6g-6=\dim\cQ_g$, which implies that
$$
\label{eq:Vol:Gamma:g1:g2:1}
\Vol(\Separating(g_1,g_2))
=\zeta(6g-6)\cdot\Vol(\Separating_{g_1,g_2}, 1)\,.
$$
Combining the latter relation with
Formula~\eqref{eq:Vol:gamma:c:gamma} from
Theorem~\ref{th:our:density:equals:Mirzakhani:density} we get
$$
\frac{\Vol(\Separating(g_1,g_2))}{\zeta(6g-6)}
=2\cdot(6g-6)\cdot
(4g-4)!\cdot 2^{4g-3}\cdot
c(\gamma_{g_1,g_2})\,.
$$
Applying
the Expression~\eqref{eq:asymptotic:contribution:segment} for
$\Vol\Graph(g_1,g_2)$ we get the following formula:
\begin{multline}
\label{eq:Mirzakhani:separating}
c(\gamma_{g_1,g-g_1})=
\frac{1}{|\Aut\Separating(g_1,g-g_1)|}
\cdot\\
\cdot\frac{1}{
2^{3g-4}
\cdot 24^g\cdot
g_1!\cdot (g-g_1)!\cdot (3g_1-2)!\cdot(3(g-g_1)-2)!\cdot (6g-6)
}\,.
\end{multline}

In this way we reproduce the formula for the frequency of
simple closed separating geodesics first proved by
M.~Mirzakhani (see page~124
in~\cite{Mirzakhani:grouth:of:simple:geodesics}).

\begin{Remark}
The formula on page~124
in~\cite{Mirzakhani:grouth:of:simple:geodesics} contains
two misprints: the power in the first factor in the
denominator is indicated as $2^{3g-2}$ while it should be
read as $2^{3g-4}$ and the fifth factor is indicated as
$(3g-2)!$ while it should be read as $(3i-2)!$.
Indeed, following Mirzakhani's calculation we have to use
formula~(5.5)
from~\cite{Mirzakhani:grouth:of:simple:geodesics} for
$c(\gamma)$. Using notation of this formula applied to our
particular $\gamma$ we have $n=0$, $k=1$, $a_1=1$,
$s_1=3g-4$. Mirzakhani assumes for simplicity that $g>2i>2$,
which implies that $M(\gamma)=0$ and
$|\operatorname{Sym}(\gamma)|=1$
(and implies that $|\Aut\Separating(g_1,g-g_1)|=1$ in notation
of Formula~\eqref{eq:Mirzakhani:separating} above).
Thus, applying
formula~(5.5)
from~\cite{Mirzakhani:grouth:of:simple:geodesics}
Mirzakhani gets
$$
c(\gamma)=b_\Gamma(2\cdot(3g-4))
=(2\cdot(3g-4))_\Gamma\cdot\frac{(2\cdot(3g-4)+1)!}{(6g-6)!}\,,
$$
where $b_\Gamma(2s_1)$ and $(2s_1)_\Gamma$ are introduced
in~\cite{Mirzakhani:grouth:of:simple:geodesics}
in formula~(5.3) and in the line above it
respectively.

In order to evaluate $(2\cdot(3g-4))_\Gamma$ we compute following
Mirzakhani the product of the coefficients of the leading
terms of the polynomials $V_{i,1}(x)$ and $V_{g-i,1}(x)$
(see the bottom of page~123
in~\cite{Mirzakhani:grouth:of:simple:geodesics}). In this
way we get
\begin{multline*}
(2\cdot(3g-4))_\Gamma
=\\=
\frac{1}{(3i-2)!\cdot i!\cdot 24^i\cdot 2^{3i-2}}
\cdot
\frac{1}{(3(g-i)-2)!\cdot (g-i)!\cdot 24^{g-i}\cdot 2^{3(g-i)-2}}
=\\=
\frac{1}{2^{3g-4}\cdot 24^g\cdot i!\cdot (g-i)!\cdot (3i-2)!\cdot (3(g-i)-2)!}\,,
\end{multline*}
compare to~\eqref{eq:Mirzakhani:separating} replacing
$g_1$ with $i$. The remaining last factor $(6g-6)$ in the
denominator of the formula of Mirzakhani comes from
$\frac{(2\cdot(3g-4)+1)!}{(6g-6)!}=\frac{1}{6g-6}$.
\end{Remark}

\begin{Remark}
The computation of $c(\alpha_2)$ on page~123
in~\cite{Mirzakhani:grouth:of:simple:geodesics},
where $\alpha_2$ denotes a separating simple closed curve
on a surface of genus $2$,
contains some misprints. It is correctly written
that $N(\alpha_2)=2$. However, the factor $\frac{1}{N(\alpha_2)}$
involved in the definition (4.1) of $\Vol_{\mathrm{WP}}(\cM_{g,n}(\Gamma,\boldsymbol{x}))$
on page 113~of~\cite{Mirzakhani:grouth:of:simple:geodesics} is missing
in the formula for $\Vol(\cM(S(\alpha_2),\ell_{\alpha_2}=x))$ on page~123.
We assume that this formula should be read as
$$
\Vol\big(\cM(S(\alpha_2),\ell_{\alpha_2}=x)\big)
=\frac{1}{N(\alpha_2)}\cdot V_{1,1}(x)\times V_{1,1}(x)
=\frac{1}{2}\cdot\left(\frac{x^2}{24}+\frac{\pi^2}{6}\right)^2\,.
$$
Also, the factor $2^{-M(\alpha_2)}$ present in the general formula~(5.5)
on page~118 in~\cite{Mirzakhani:grouth:of:simple:geodesics}
is missing in the computation of $c(\alpha_2)$. Finally, in this
particular case (and only in this case) either $M(\alpha_2)$ should be readjusted
as $M(\alpha_2)=2$, as we suggest in Remark~\ref{rm:one:handle},
or one has to redefine the symmetry group taking into consideration the
hyperelliptic involutions, as suggested in
footnote~2 on pages 12--13 in~\cite{Wright}. We get the
following value for $c(\alpha_2)$:
$$
c(\alpha_2)=c(\gamma_{1,1})=\frac{1}{8\times 24\times 24\times 6}\,.
$$

In the above computation we followed the normalization
conventions chosen by M.~Mirzakhani
in~\cite{Mirzakhani:grouth:of:simple:geodesics}. As it was
pointed out in Remark~\ref{rm:11:and:20}, the case
$(g,n)=(2,0)$ admits an alternative normalization.
Nevertheless, such an alternative normalization changes
$c(\alpha_1)$ and $c(\alpha_2)$ by common scaling factor
and, hence, does not affects the ratio:
$$
\frac{c(\alpha_1)}{c(\alpha_2)}
=\frac{c(\gamma_{nonsep,2})}{c(\gamma_{1,1})}
=48\,.
$$
This value is independently confirmed in~\cite{Bell} experimentally and in~\cite{Arana:Herrera:square:tiled}
and in~\cite{Erlandsson:Souto} theoretically.
\end{Remark}

Denote
by $c(\gamma_{sep,g})$ the sum of the frequencies
$c(\gamma_{g_1,g_2})$ over all equivalence classes
of separating curves, i.e.
over all unordered pairs $(g_1,g_2)$ satisfying
$g_1+g_2=g$; $g_1,g_2\ge 1$.
We are now ready to prove
Theorem~\ref{th:separating:over:non:separating}.

\begin{proof}[Proof of Theorem~\ref{th:separating:over:non:separating}]
By Theorem~\ref{th:our:density:equals:Mirzakhani:density}
we have
$$
\frac{c_{g,sep}}{c_{g,nonsep}}=
\frac{\sum_{g_1=1}^{\left[\frac{g}{2}\right]}
\Vol(\Separating(g_1,g-g_1))}
{\Vol(\Graph_1(g))}\,.
$$

Plugging the asymptotic
values~\eqref{eq:asymptotic:contribution:Gamma:1}
and~\eqref{eq:sum:of:contributions:of:separating:graphs}
respectively in the denominator and numerator of the ratio
on the right hand side we obtain the desired asymptotic
value for the ratio on the left hand side.
\end{proof}

In the table below we present the exact (first line) and
approximate (second line) values of the ratio
$\tfrac{c_{g,sep}}{c_{g,nonsep})}$ of the
two frequencies in small genera and the value given by the
asymptotic Formula~\eqref{eq:log:sep:over:non:sep} (third
line).

$$
\begin{array}{c|c|c|c|c||c}
g&2&3&4&5&11\\
\hline &&&&&\\
[-\halfbls]
\text{\small Exact}
&\frac{1}{48}
&\frac{5}{1776}
&\frac{605}{790992}
&\frac{4697}{27201408}
&\frac{166833285883}{5360555755385245488}
\\ &&&&& \\
[-\halfbls]
\hline &&&&&\\
[-\halfbls]
\text{\small Approximate}
& \scriptstyle 2.08\cdot 10^{-2}
& \scriptstyle 2.82\cdot 10^{-3}
& \scriptstyle 7.65\cdot 10^{-4}
& \scriptstyle 1.73\cdot 10^{-4}
& \scriptstyle 3.11\cdot 10^{-8}
\\ &&&&& \\
[-\halfbls]
\hline &&&&&\\
[-\halfbls]
\text{\small Asymp. formula}
& \scriptstyle 2.03\cdot 10^{-2}
& \scriptstyle 4.16\cdot 10^{-3}
& \scriptstyle 9.00\cdot 10^{-4}
& \scriptstyle 2.01\cdot 10^{-4}
& \scriptstyle 3.31\cdot 10^{-8}
\end{array}
$$


\appendix

\section{Proof of the asymptotic formula for $2$-correlators.}
\label{a:proof:2:correlators}

In this Appendix we prove Proposition~\ref{pr:main:bounds}.
\medskip

\noindent\textbf{Structure of the proof.~}
The equality $a_{g,1}=1-\frac{2}{6g-1}$ immediately
follows from the fact that $a_{g,0}=1$ and the recursive
relations~\eqref{eq:a:g:k:difference}. For $g=1$
bounds~\eqref{eq:main:bounds} are trivial. The
symmetry $a_{g,3g-1-k}=a_{g,k}$ allows us to confine $k$ to
the range $\{2,3,\dots,\left[\frac{3g-1}{2}\right]\}$.

Using recursive relations~\eqref{eq:a:g:k:difference} we
evaluate $a_{g,k}$ explicitly for $k=2,\dots,5$ and prove
in Lemma~\ref{lm:k:up:to:5}
bounds~\eqref{eq:main:bounds} for these small values
of $k$. In genera $g=2,3,4$, the expression
$\left[\frac{3g-1}{2}\right]$ is bounded by $5$ which
implies bounds~\eqref{eq:main:bounds} for any $k$ when
$g=2,3,4$. From this point we always assume that $g\ge 5$
and $k$ is in the range
$\{6,\dots,\left[\frac{3g-1}{2}\right]\}$.

We start the
main part of the proof by rewriting the recurrence
Relations~\eqref{eq:a:g:k:difference} in a form
convenient for estimates. Namely, we introduce the
function
\begin{equation}
\label{eq:R}
R(g,j)=\frac{\binom{3g}{3j} \binom{g}{j}}{\binom{6g}{6j}}
\end{equation}
and express the right-hand side of each of the recurrence
Relations~\eqref{eq:a:g:k:difference} as a product
$R\cdot\frac{P_i}{Q}$, where $P_i$, $i=1,2,3$, and $Q$
are explicit polynomials in $g$ and $j$. In
Lemma~\ref{lm:P:Q} we show that for any $g$ the absolute
value of each of the rational functions $P_i/Q$ on the
range of $j$ corresponding to $k\in
\{6,\dots,\left[\frac{3g-1}{2}\right]\}$ is bounded from
above by $1$. In Lemma~\ref{lm:R} we show that for any
fixed $g$ and $0\le j\le \left[\frac{g-1}{2}\right]$, the
expression $R(g,j)$ is monotonically decreasing as a
function of $j$. Combining these two lemmas we obtain in
Lemma~\ref{lm:difference:bounded:by:jmax:R2} the estimate
$-R(g,2)\cdot\frac{g-3}{2} \le a_{g,k}-a_{g,5} \le
R(g,2)\cdot\frac{3g-11}{3}$ valid for all $g$ and $k$ under
consideration.

We use explicit expressions for rational functions $\elow$
and $\eup$ in $a_{g,5}=1-\frac{2}{6g-1}+\elow =1-\eup$
obtained in Lemma~\ref{lm:k:up:to:5} to prove in
Lemma~\ref{lm:R2:jmax:smaller:than:epsilon} that for
$g\ge5$ the inequalities $R(g,2)\cdot\frac{g-3}{3}<\elow$
and $R(g,2)\cdot\frac{3g-11}{3} <\eup$ hold.

\subsection{Small values of $k$ and $g$}
Recall that $a_{g,0}=1$. Recursive
relations~\eqref{eq:a:g:k:difference} provide the following
first several terms $a_{g,k}$ for $k=1,2,3,4,5$, where for
each $k$ in this range we indicate the smallest value of
$g$ starting from which the corresponding equality holds:
\begin{align}
\label{eq:a:g:0}
a_{g,0}&=1
&\text{ for }g\ge 1\,,
\\
\label{eq:a:g:1}
a_{g,1}&=1-\frac{2}{6g-1}
&\text{ for }g\ge 1\,,
\\
\notag
a_{g,2}
&=1-\frac{12(g-1)}{(6g-1)(6g-3)}
&\text{ for }g\ge 2\,,
\\
\notag
a_{g,3}
&=1-\frac{3(24g^2-49g+30)}{(6g-1)(6g-3)(6g-5)}
&\text{ for }g\ge 3\,,
\\
\notag
a_{g,4}
&=1-\frac{2}{6g-1}+\frac{9(g-2)(34g-35)}{(6g-1)(6g-3)(6g-5)(6g-7)}
&\text{ for }g\ge 3\,,
\\
\label{eq:epsilon:lower:init}
a_{g,5}
&=1-\frac{2}{6g-1}
+\frac{27(68g^3-308g^2+519g-280)}{(6g\!-\!1)(6g\!-\!3)(6g\!-\!5)(6g\!-\!7)(6g\!-\!9)}
=\\
\label{eq:epsilon:upper:init}
&=1-\frac{9 (g-2) (288 g^3-780 g^2+1012 g-525)}{(6g-1)(6g-3)(6g-5)(6g-7)(6g-9)}
&\text{ for }g\ge 4\,.
\end{align}

\begin{Lemma}
\label{lm:k:up:to:5}
For all $g\in\N$ and for all $k\in\N$ satisfying
$2\le k\le \min\left(5,\left[\frac{3g-1}{2}\right]\right)$
the Relations~\eqref{eq:main:bounds} are valid:
\begin{equation*}
1-\frac{2}{6g-1}=a_{g,1} < a_{g,k} < a_{g,0}=1\,.
\end{equation*}
\end{Lemma}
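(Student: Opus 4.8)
The plan is to prove both inequalities by direct substitution of the explicit closed forms for $a_{g,k}$, $k=2,\dots,5$, recorded in \eqref{eq:a:g:1}--\eqref{eq:epsilon:upper:init}; these are obtained from the recursion \eqref{eq:a:g:k:difference} by telescoping from $a_{g,0}=1$. Since every factor $(6g-1),(6g-3),\dots,(6g-9)$ appearing in the denominators is strictly positive for the genera under consideration, each of the two desired inequalities clears to a polynomial inequality in $g$, and the whole lemma reduces to checking the sign of finitely many polynomials over an explicit range of $g$. I note at the outset that the constraint $k\le\lfloor(3g-1)/2\rfloor$ forces $g\ge 2,3,3,4$ for $k=2,3,4,5$ respectively, which is precisely the threshold beyond which these signs stabilize.

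For the lower bound $a_{g,1}<a_{g,k}$ I would split according to the form of the closed expression. When $k=2,3$ the correction is subtracted from $1$, so after cancelling the common factor $(6g-1)^{-1}$ the inequality $a_{g,k}>a_{g,1}$ becomes $\tfrac{2}{6g-1}>(\text{correction})$, which simplifies to a linear inequality ($-12<-6$ for $k=2$) or to $51g>60$ (for $k=3$), both clearly valid on the range. When $k=4,5$ the expressions \eqref{eq:epsilon:lower:init} already have the shape $1-\tfrac{2}{6g-1}+(\text{positive})$, so that $a_{g,k}-a_{g,1}$ equals the displayed correction term: for $k=4$ it is $9(g-2)(34g-35)$ over a positive denominator, and for $k=5$ it is $27(68g^3-308g^2+519g-280)$ over a positive denominator. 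Both are products of factors manifestly positive for $g\ge 3$ (resp.\ $g\ge 4$), giving the lower bound at once.

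For the upper bound $a_{g,k}<1$ the cases $k=2,3$ are again immediate: the subtracted numerators are $12(g-1)$ and $3(24g^2-49g+30)$, the latter a quadratic of negative discriminant, hence positive, so $a_{g,k}=1-(\text{positive})<1$. For $k=4,5$ I would pass to the single-fraction form $1-(\dots)$: for $k=5$ this is exactly \eqref{eq:epsilon:upper:init}, whose numerator $9(g-2)(288g^3-780g^2+1012g-525)$ is positive for $g\ge4$; for $k=4$ I would combine the two fractions over the common denominator $(6g-1)(6g-3)(6g-5)(6g-7)$, reducing $a_{g,4}<1$ to the cubic inequality $432g^3-1386g^2+1779g-840>0$.

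The only genuinely non-mechanical step is verifying positivity of the cubic polynomials arising for $k=4,5$ (such as $432g^3-1386g^2+1779g-840$ and $288g^3-780g^2+1012g-525$) over the relevant integer range. I would handle these uniformly by evaluating at the smallest admissible genus ($g=3$ or $g=4$), where each is positive, and then certifying that the polynomial stays positive for all larger $g$ — for instance by checking that its derivative is already positive there, so the cubic is increasing, or by the cruder bound that the leading term dominates once $g$ exceeds the largest real root. This is the main obstacle only in a bookkeeping sense; conceptually the entire lemma is a finite sign check on explicit rational functions.
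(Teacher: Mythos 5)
Your proposal is correct: every inequality you reduce to does hold on the stated range (I checked the cubic $432g^3-1386g^2+1779g-840$ arising for the $k=4$ upper bound is positive and increasing for $g\ge 3$, and likewise $288g^3-780g^2+1012g-525$ and $68g^3-308g^2+519g-280$ for $g\ge 4$), and your closing remark about certifying cubic positivity via the smallest admissible genus plus monotonicity of the derivative covers the only non-mechanical steps. The paper's proof uses the same explicit closed forms \eqref{eq:a:g:0}--\eqref{eq:epsilon:upper:init} but organizes the case analysis differently: it first reads off from the recursion \eqref{eq:a:g:k:difference} that $a_{g,m+1}<a_{g,m}$ exactly when $m\equiv 0\pmod 3$, and then at each $k$ it obtains \emph{one} of the two bounds by chaining through the previously established bound (e.g.\ $a_{g,3}>a_{g,2}>a_{g,1}$ gives the lower bound for $k=3$ for free; $a_{g,4}<a_{g,3}<1$ gives the upper bound for $k=4$ for free), reserving the explicit formula for the remaining bound only. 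This halves the number of polynomial sign checks and in particular avoids your $k=4$ cubic entirely; it also pre-figures the sign bookkeeping reused later in Lemma~\ref{lm:difference:bounded:by:jmax:R2}. Your all-explicit route buys uniformity and independence of the cases at the cost of a few extra (but routine) polynomial verifications; both are complete proofs.
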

\begin{proof}

By~\eqref{eq:a:g:0}
and~\eqref{eq:a:g:1} the terms $a_{g,0}$ and $a_{g,1}$ indeed have
values as claimed in the statement of
Lemma~\ref{lm:k:up:to:5}.

It follows from recurrence
relations~\eqref{eq:a:g:k:difference} that for $k$
satisfying $0\le k\le \left[\frac{3g-1}{2}\right]-1$ we
have $a_{g,k+1}<a_{g,k}$ if and only if $k\equiv
0\,(\operatorname{mod} 3)$ and we have $a_{g,k+1}>a_{g,k}$
for the remaining $k$ in this range. In particular, for
$g\ge 2$ the difference $a_{g,2}-a_{g,1}$ is strictly
positive, which implies the desired lower
bound~\eqref{eq:main:bounds} for $a_{g,2}$ when $g\ge 2$.
The explicit expression for $a_{g,2}$ when $g\ge 2$ implies
the desired strict upper bound~\eqref{eq:main:bounds}.

Recursive relations~\eqref{eq:a:g:k:difference} imply that
for $g\ge 3$ the difference $a_{g,3}-a_{g,2}$ is strictly
positive. Since $a_{g,2}$ satisfies the desired lower
bound~\eqref{eq:main:bounds}, the term $a_{g,3}$ also does.
The quadratic polynomial $(24g^2-49g+30)$ in the numerator
of the explicit expression for $a_{g,3}$ admits only
strictly positive values, which implies the upper
bound~\eqref{eq:main:bounds} for $a_{g,3}$ when $g\ge 3$.

Recursive relations~\eqref{eq:a:g:k:difference} imply that
for $g\ge 3$ the difference $a_{g,4}-a_{g,3}$ is strictly
negative. Since $a_{g,3}$ satisfies the desired upper
bound~\eqref{eq:main:bounds}, the term $a_{g,4}$ also does.
The explicit expression for $a_{g,4}$ implies the lower
bound~\eqref{eq:main:bounds} for $a_{g,4}$ when $g\ge 3$.

Finally, recursive relations~\eqref{eq:a:g:k:difference}
imply that for $g\ge 4$ the difference $a_{g,5}-a_{g,4}$ is
strictly positive, which implies the desired lower
bound~\eqref{eq:main:bounds} for $a_{g,5}$ when $g\ge 4$.
It remains to verify that the polynomial $(288 g^3-780
g^2+1012 g-525)$ in the explicit
Expression~\eqref{eq:epsilon:upper:init} for $a_{g,5}$
attains only strictly positive values for $g\ge 4$ to prove
the desired upper bound~\eqref{eq:main:bounds} for
$a_{g,5}$. For $g\ge 4$ we have:
\begin{multline*}
288 g^3-780 g^2+1012 g-525
>\\>
288 g^3 - 864 g^2 + 864 g  -576
=288((g-1)^3-1)>0\,.
\end{multline*}
\end{proof}

\begin{Corollary}
\label{cor:g:up:to:4}
For any $g$ in $\{1,2,3,4\}$ and for all $k\in\N$
satisfying $2\le k\le 3g-3$, the desired
bounds~\eqref{eq:main:bounds} are valid:
\begin{equation*}
1-\frac{2}{6g-1}=a_{g,1} < a_{g,k} < a_{g,0}=1\,.
\end{equation*}
\end{Corollary}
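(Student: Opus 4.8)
The plan is to deduce Corollary~\ref{cor:g:up:to:4} directly from Lemma~\ref{lm:k:up:to:5} together with the symmetry relation $a_{g,k}=a_{g,3g-1-k}$ recorded just after Formula~\eqref{eq:a:g:k:difference}. First I would dispose of the case $g=1$: here $3g-3=0$, so the index range $\{2,\dots,3g-3\}$ is empty and there is nothing to prove.

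For $g\in\{2,3,4\}$ I would observe that the midpoint $\left[\frac{3g-1}{2}\right]$ of the symmetric index range never exceeds $5$; indeed one computes $\left[\frac{5}{2}\right]=2$, $\left[\frac{8}{2}\right]=4$, and $\left[\frac{11}{2}\right]=5$ for $g=2,3,4$ respectively. Consequently $\min\!\left(5,\left[\frac{3g-1}{2}\right]\right)=\left[\frac{3g-1}{2}\right]$ for each such $g$, so Lemma~\ref{lm:k:up:to:5} already establishes the bounds $a_{g,1}<a_{g,k}<1$ for every $k$ with $2\le k\le\left[\frac{3g-1}{2}\right]$, i.e.\ on the lower half of the range.

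It remains to cover the upper half, namely $\left[\frac{3g-1}{2}\right]<k\le 3g-3$. Here I would invoke the symmetry $a_{g,k}=a_{g,3g-1-k}$: as $k$ runs over this upper block, the reflected index $3g-1-k$ runs over a subset of $\{2,\dots,\left[\frac{3g-1}{2}\right]\}$, for which the bounds have just been proved. Concretely, for $g=2$ one has $a_{2,3}=a_{2,2}$; for $g=3$ one has $a_{3,5}=a_{3,3}$ and $a_{3,6}=a_{3,2}$; and for $g=4$ one has $a_{4,6}=a_{4,5}$, $a_{4,7}=a_{4,4}$, $a_{4,8}=a_{4,3}$, and $a_{4,9}=a_{4,2}$. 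Since the common lower and upper bounds $a_{g,1}$ and $1$ are themselves invariant under the reflection, the strict inequalities transfer verbatim to the upper half.

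There is no genuine obstacle in this argument: the corollary is a bookkeeping consequence of work already carried out. The only point requiring care is the elementary arithmetic verification that for $g\le 4$ the midpoint $\left[\frac{3g-1}{2}\right]$ stays within the reach $k\le 5$ of Lemma~\ref{lm:k:up:to:5} — which is precisely why the threshold $5$ was singled out in that lemma. For $g\ge 5$ this fails, and the more delicate estimates of the subsequent subsections (via the functions $R(g,j)$, $\elow$, and $\eup$) are required to handle the intermediate values of $k$.
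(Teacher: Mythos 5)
Your proof is correct and follows the same route as the paper's: reduce to $2\le k\le\left[\frac{3g-1}{2}\right]$ via the symmetry $a_{g,k}=a_{g,3g-1-k}$, note that this midpoint is at most $5$ for $g\le 4$, and invoke Lemma~\ref{lm:k:up:to:5}. Your version merely spells out the case $g=1$ and the explicit reflections, which the paper leaves implicit.
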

\begin{proof}
Recall that the symmetry $a_{g,k}=a_{g,3g-1-k}$ allows to
limit $k$ to the range $2\le k\le
\left[\frac{3g-1}{2}\right]$. Thus, for $g\le 4$ the
largest possible value of $k$ satisfying $k\le
\left[\frac{3g-1}{2}\right]$ equals to $\left[\frac{3\cdot
4-1}{2}\right]=5$. The proof of
bounds~\eqref{eq:main:bounds} for $k\le 5$ and any $g$ is
already completed in Lemma~\ref{lm:k:up:to:5}.
\end{proof}

\subsection{Alternative form of recurrence relations}

We start by extracting the common factor in
Relations~\eqref{eq:a:g:k:difference} and by simplifying
it.

Define the following polynomial in $g$ and $j$:
\begin{equation}
\label{eq:Q}
Q(g,j)=g(6g-6j-1)(6g-6j-3)\,,
\end{equation}
Rewriting double factorials in terms of factorials
and rearranging we get
\begin{multline*}
\cfrac{(6g-6j-5)!!}{(6g-1)!!}\cdot
\cfrac{(6j-1)!!\cdot(g-1)!}{j!\,(g-j)!}
=\\=
\left(\frac{(6g-6j-5)!}{(3g-3j-3)!\,2^{3g-3j-3}}\right)
\left(\frac{(3g-1)!\,2^{3g-1}}{(6g-1)!}\right)
\left(\frac{(6j-1)!}{(3j-1)!\,2^{3j-1}}\right)
\frac{(g-1)!}{j!\,(g-j)!}
=\\=
8\cdot
\left(\frac{(6j-1)!\,(6g-6j-5)!}{(6g-1)!}\right)
\left(\frac{(3g-1)!}{(3j-1)!\,(3g-3j-3)!}\right)
\left(\frac{(g-1)!}{j!\,(g-j)!}\right)
=\\=
8\cdot
\left(\frac{(6j)!\,(6g-6j)!}{(6g)!}\right)
\left(\frac{(3g)!}{(3j)!\,(3g-3j)!}\right)
\left(\frac{g!}{j!\,(g-j)!}\right)
\cdot\\ \cdot
\frac{6g}{(6j)\cdot(6g-6j)(6g-6j-1)(6g-6j-2)(6g-6j-3)(6g-6j-4)}
\cdot\\ \cdot
\frac{(3j)\cdot(3g-3j)(3g-3j-1)(3g-3j-2)}{3g}
\cdot\frac{1}{g}
=\\=
\frac{\binom{3g}{3j} \binom{g}{j}}{\binom{6g}{6j}}
\cdot
\frac{1}{g\cdot(6g-6j-1)(6g-6j-3)}
=R(g,j)\cdot\frac{1}{Q(g,j)}\,.
\end{multline*}
Defining the following polynomials in $g$ and $j$:
\begin{align}
\label{eq:p1}
P_1(g,j)&=(6g-6j-1)(6g-6j-3)(g-2j)\,,\\
\label{eq:p2}
P_2(g,j)&=-2(6g-6j-3)(6j+1)(g-j)\,,\\
\label{eq:p3}
P_3(g,j)&=2(6j+1)(6j+3)(g-j)\,,
\end{align}
we can express the recurrence
relations~\eqref{eq:a:g:k:difference} as
\begin{align}
\label{eq:R:P1:Q}
a_{g,3j}-a_{g,3j-1} &= R(g,j)\cdot \frac{P_1(g,j)}{Q(g,j)}\,,
&\text{ where }3\le 3j\le\left[\frac{3g-1}{2}\right]\,,
\\
\label{eq:R:P2:Q}
a_{g,3j+1}-a_{g,3j} &= R(g,j)\cdot \frac{P_2(g,j)}{Q(g,j)}\,,
&\text{ where }1\le 3j+1\le\left[\frac{3g-1}{2}\right]\,,
\\
\label{eq:R:P3:Q}
a_{g,3j+2}-a_{g,3j+1} &= R(g,j)\cdot \frac{P_3(g,j)}{Q(g,j)}\,,
&\text{ where }2\le 3j+2\le\left[\frac{3g-1}{2}\right]\,.
\end{align}

\begin{Lemma}
\label{lm:P:Q}
For any $g\in\N$ the following bounds are valid
\begin{align}
\label{eq:R:P1:Q:bound}
0<\frac{P_1(g,j)}{Q(g,j)}<1\,,
&&\text{ where }3\le 3j\le\left[\frac{3g-1}{2}\right]\,,
\\
\label{eq:R:P2:Q:bound}
-1<\frac{P_2(g,j)}{Q(g,j)}<0\,,
&&\text{ where }1\le 3j+1\le\left[\frac{3g-1}{2}\right]\,,
\\
\label{eq:R:P3:Q:bound}
0<\frac{P_3(g,j)}{Q(g,j)}<1\,,
&&\text{ where }2\le 3j+2\le\left[\frac{3g-1}{2}\right]\,.
\end{align}
\end{Lemma}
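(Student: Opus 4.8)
The plan is to treat the three rational functions separately, exploiting that in each case the apparent cubic-over-cubic expression collapses. The bound~\eqref{eq:R:P1:Q:bound} is essentially free: cancelling the common factor $(6g-6j-1)(6g-6j-3)$ gives $P_1(g,j)/Q(g,j)=(g-2j)/g$. The range $3\le 3j\le\left[\frac{3g-1}{2}\right]$ forces $j\ge 1$ and $6j\le 3g-1$, hence $0<g-2j<g$, which yields both strict inequalities in~\eqref{eq:R:P1:Q:bound} simultaneously.

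For~\eqref{eq:R:P2:Q:bound} I would cancel the factor $(6g-6j-3)$ to obtain $P_2(g,j)/Q(g,j)=-2(6j+1)(g-j)/\big(g(6g-6j-1)\big)$, which is manifestly negative. The bound $>-1$ then amounts to the polynomial inequality $g(6g-6j-1)-2(6j+1)(g-j)>0$, and a short expansion rewrites the left-hand side as $6(g-j)(g-2j)-3g+2j$. The range $1\le 3j+1\le\left[\frac{3g-1}{2}\right]$ gives $6j\le 3g-3$, so $g-2j\ge 1$ and $g-j>g/2$; thus $6(g-j)(g-2j)\ge 6(g-j)>3g>3g-2j$, completing the estimate.

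The genuinely delicate case is~\eqref{eq:R:P3:Q:bound}, because the ratio does not simplify and in fact tends to $1$ as $j$ approaches the top of its range (one checks that $P_3/Q\to 1$ when $6j\approx 3g$). Positivity is immediate since every factor is positive on the range. For the upper bound I would set $v=6j$ and $w=6(g-j)$, so that $g=(v+w)/6$ and the inequality $P_3<Q$ becomes, after clearing denominators, $(v+w)(w^2-4w+3)>2w(v+1)(v+3)$. The crucial input is the arithmetic consequence of the floor: from $3j+2\le\left[\frac{3g-1}{2}\right]\le\frac{3g-1}{2}$ one deduces $6j+5\le 3g$, i.e.\ $w\ge v+10$. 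I then propose to verify that the difference $D(v,w)=(v+w)(w^2-4w+3)-2w(v+1)(v+3)$ is positive on $\{v\ge 0,\ w\ge v+10\}$ by evaluating it along the boundary line $w=v+10$, where it collapses to $14v^2+200v+570>0$, and by checking that $\partial D/\partial w>0$ throughout the region (there $3w^2-8w-3>0$ for $w\ge 10$, while the remaining terms assemble into $2v(w-v-6)\ge 8v\ge 0$). Monotonicity then propagates positivity from the boundary to all admissible $(v,w)$.

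The main obstacle is precisely this last step: because the bound is tight at the edge of the range, no crude factor-by-factor comparison suffices, and the argument hinges both on extracting the sharp integer inequality $w\ge v+10$ from the floor and on the boundary-plus-monotonicity analysis of the cubic $D(v,w)$. Everything else reduces to routine cancellation and sign analysis.
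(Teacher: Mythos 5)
Your proposal is correct, and I checked the key computations: the reduction of \eqref{eq:R:P2:Q:bound} to $6(g-j)(g-2j)-3g+2j>0$, the substitution $v=6j$, $w=6(g-j)$ turning $P_3<Q$ into $(v+w)(w^2-4w+3)>2w(v+1)(v+3)$, the boundary value $D(v,v+10)=14v^2+200v+570$, and the derivative $\partial D/\partial w=(3w+1)(w-3)+2v(w-v-6)>0$ on $\{v\ge 0,\ w\ge v+10\}$ all check out. (One cosmetic point: in the chain $6(g-j)(g-2j)\ge 6(g-j)>3g>3g-2j$ the last inequality is an equality when $j=0$, but the conclusion survives because the middle step is already strict.) For the first two bounds your route is essentially the paper's: the paper also cancels the common factors, though for \eqref{eq:R:P2:Q:bound} it keeps the ratio in the form $-\bigl(1+\tfrac{1}{6g-6j-1}\bigr)\cdot\tfrac{2j+1/3}{g}$ and bounds each factor by its value at $2j=g-1$ using monotonicity in $j$, rather than clearing denominators. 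The genuine divergence is in \eqref{eq:R:P3:Q:bound}: the paper observes the factorization $\tfrac{P_3}{Q}=\bigl|\tfrac{P_2}{Q}\bigr|\cdot\tfrac{6j+3}{6g-6j-3}$, notes that the second factor is increasing in $j$ and, thanks to $6j\le 3g-5$, is at most $\tfrac{3g-2}{3g+2}<1$, and concludes since the first factor was already shown to be $<1$. That argument is shorter and recycles the $P_2$ bound, absorbing the tightness at the edge of the range into the single factor $\tfrac{3g-2}{3g+2}$; your boundary-plus-monotonicity analysis of the cubic $D(v,w)$ is more computational but self-contained, and it makes explicit exactly where the sharp integer consequence $w\ge v+10$ of the floor is used. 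Either proof is acceptable.
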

\begin{proof}
The bounds $3\le 3j\le\left[\frac{3g-1}{2}\right]$
in~\eqref{eq:R:P1:Q:bound} imply that $0 < j <
\frac{g}{2}$. Dividing Expression~\eqref{eq:p1} for
$P_1(g,j)$ by Expression~\eqref{eq:Q} for $Q(g,j)$ we get
$$
\frac{P_1(g,j)}{Q(g,j)}=\frac{g-2j}{g}\,.
$$
Clearly,
$$
0<\frac{g-2j}{g}<1
$$
for any $g\in\N$ and for all $j$ satisfying $0 < j <
\frac{g}{2}$.

The bounds $1\le 3j+1\le\left[\frac{3g-1}{2}\right]$
in~\eqref{eq:R:P2:Q:bound} imply that $0\le j
<\frac{g}{2}$. Dividing Expression~\eqref{eq:p2} for
$P_2(g,j)$ by Expression~\eqref{eq:Q} for $Q(g,j)$ we get
\begin{multline*}
\frac{P_2(g,j)}{Q(g,j)}
=-2\cdot\frac{6j+1}{6g-6j-1}\cdot\frac{g-j}{g}
=-\frac{6g-6j}{6g-6j-1}\cdot\frac{2j+\frac{1}{3}}{g}
=\\=
-\left(1+\frac{1}{6g-6j-1}\right)
\cdot\left(\frac{2j+\frac{1}{3}}{g}\right)\,.
\end{multline*}
Since $0\le 2j \le g-1$, the latter expression is always
strictly negative for this range of $j$. Both factors in
the brackets in the latter expression are monotonically
increasing on this range of $j$, so the maximum of the
absolute value of the product is attained at $2j=g-1$. We
get
\begin{multline}
\label{eq:P2:over:Q}
0<\left|\frac{P_2(g,j)}{Q(g,j)}\right|
\le\left(1+\frac{1}{6g-(3g-3)-1}\right)
\cdot\left(\frac{g-\frac{2}{3}}{g}\right)
=\\=
\left(1+\frac{1}{3g+2}\right)
\cdot\left(1-\frac{1}{\frac{3}{2}g}\right)<1
\quad\text{ for }g\in\N\
\text{ and }0\le j < \frac{g}{2}\,.
\end{multline}

The bounds $2\le 3j+2\le\left[\frac{3g-1}{2}\right]$
in~\eqref{eq:R:P3:Q:bound} imply that $0\le j<\frac{g}{2}$
and that $6j\le 3g-5$. Dividing Expression~\eqref{eq:p3}
for $P_3(g,j)$ by Expression~\eqref{eq:Q} for $Q(g,j)$ we
get
$$
\frac{P_3(g,j)}{Q(g,j)}
=
2\cdot\frac{(6j+1)(6j+3)(g-j)}{g(6g-6j-1)(6g-6j-3)}
=\left|\frac{P_2(g,j)}{Q(g,j)}\right|
\cdot\frac{6j+3}{6g-6j-3}\,.
$$
The expression $\frac{6j+3}{6g-6j-3}$ is strictly positive
and is monotonically increasing on the range of $j$ under
consideration, so it attains its maximum on the largest
possible value of $j$. Since $6j\le 3g-5$ we get
$$
0<\frac{6j+3}{6g-6j-3}\le\frac{3g-2}{3g+2}<1
\quad\text{for }
2\le 3j+2\le\left[\frac{3g-1}{2}\right]
\text{ and }g\in\N\,.
$$
Combined with~\eqref{eq:P2:over:Q} this proves the desired
bounds~\eqref{eq:R:P3:Q:bound}
which completes the proof of Lemma~\ref{lm:P:Q}.
\end{proof}

\begin{Lemma}
\label{lm:R}
For any fixed value of $g\in\N$, the expression $R(g,j)$
considered as a function of $j$ is strictly monotonically
decreasing on the range
$\left\{0,1,\dots,\left[\frac{g-1}{2}\right]\right\}$ of the
argument $j$.
\end{Lemma}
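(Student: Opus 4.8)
The plan is to prove the strict decrease by analysing the consecutive ratio $R(g,j+1)/R(g,j)$ and showing it is $<1$ for every $j$ with $0\le j\le \left[\frac{g-1}{2}\right]-1$, which is exactly what strict monotonic decrease on the stated range requires. First I would write each of the three binomial ratios explicitly, using
\[
\frac{\binom{3g}{3j+3}}{\binom{3g}{3j}}=\frac{(3g-3j)(3g-3j-1)(3g-3j-2)}{(3j+1)(3j+2)(3j+3)}, \qquad \frac{\binom{g}{j+1}}{\binom{g}{j}}=\frac{g-j}{j+1},
\]
together with the six-factor ratio $\binom{6g}{6j}/\binom{6g}{6j+6}$. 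The key simplification is to exploit the identities $6g-6j=2(3g-3j)$, $6g-6j-2=2(3g-3j-1)$, $6g-6j-4=2(3g-3j-2)$ and $6j+2=2(3j+1)$, $6j+4=2(3j+2)$, $6j+6=2(3j+3)$; after cancelling the resulting factors of $8$ and the common cubic factors, the ratio collapses to the tidy expression
\[
\frac{R(g,j+1)}{R(g,j)}=\frac{g-j}{j+1}\cdot\frac{(6j+1)(6j+3)(6j+5)}{(6g-6j-1)(6g-6j-3)(6g-6j-5)}.
\]

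Next I would substitute $p=j+1$ and $q=g-j$. In the range under consideration the constraint $j+1\le\left[\frac{g-1}{2}\right]$ forces $2j\le g-3$, hence $q\ge p+2$, and in particular $q>p\ge 1$. Since $6j+1=6p-5$, $6j+3=6p-3$, $6j+5=6p-1$ while $6g-6j-1=6q-1$, $6g-6j-3=6q-3$, $6g-6j-5=6q-5$, the desired inequality $R(g,j+1)<R(g,j)$ becomes, after clearing denominators,
\[
q\,\phi(p)<p\,\phi(q), \qquad\text{where } \phi(t):=(6t-1)(6t-3)(6t-5).
\]
The crucial structural observation is that the two triples on the two sides are values of the \emph{same} cubic $\phi$: this turns a seemingly asymmetric three-factor inequality into the single-variable comparison $\phi(p)/p<\phi(q)/q$.

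It then remains to prove that $t\mapsto\phi(t)/t$ is strictly increasing for $t\ge 1$. Expanding gives $\phi(t)=216t^3-324t^2+138t-15$, so $\phi(t)/t=216t^2-324t+138-15\,t^{-1}$, whose derivative $432t-324+15\,t^{-2}$ is strictly positive for $t\ge 1$ (already $432t-324\ge 108>0$). Hence $\phi(q)/q>\phi(p)/p$ whenever $q>p\ge1$, which holds here because $q\ge p+2$; this yields $R(g,j+1)<R(g,j)$ throughout the range and proves the lemma. I expect the main obstacle to be the first step: carrying out the cancellations in the binomial ratio cleanly enough to reveal the symmetric form, since the bookkeeping of the six-, three-, and one-factor factorials is where arithmetic slips are easy. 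Once that symmetry is exposed, reducing to monotonicity of $\phi(t)/t$ and verifying it is routine.
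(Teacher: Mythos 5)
Your proof is correct. Both you and the paper start from the identical consecutive-ratio computation
\[
\frac{R(g,j+1)}{R(g,j)}=\frac{g-j}{j+1}\cdot\frac{(6j+1)(6j+3)(6j+5)}{(6g-6j-1)(6g-6j-3)(6g-6j-5)}\,,
\]
but the arguments diverge from there. The paper factors this ratio into four terms, each strictly increasing as a function of $j$ for fixed $g$, and then observes that the product is identically equal to $1$ when $2j=g-1$; monotonicity in $j$ then forces the ratio to be strictly below $1$ for all smaller $j$, which gives the decrease. You instead make the underlying symmetry explicit: substituting $p=j+1$, $q=g-j$ (with $q\ge p+2\ge 3$ on the relevant range, which you verify correctly from $j+1\le\left[\frac{g-1}{2}\right]$) exhibits both triples as values of the single cubic $\phi(t)=(6t-1)(6t-3)(6t-5)$, and the inequality reduces to the strict increase of $\phi(t)/t$ on $[1,+\infty)$, settled by a one-line derivative computation. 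The two routes exploit the same symmetry about $j=(g-1)/2$ --- the paper's boundary evaluation at $2j=g-1$ is exactly the case $p=q$ of your identity --- but your reduction to a single-variable monotonicity statement is arguably tidier, replacing the four separate monotonicity checks and the endpoint evaluation with one calculus verification; the paper's version keeps the factorized form of the ratio visible, which it reuses later in the appendix when discussing the large-genus asymptotics of $R(g,j+1)/R(g,j)$.
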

\begin{proof}
It is immediate to verify that
\begin{multline}
\label{eq:ratio:of:R}
R(g,j+1)/R(g,j)=
\frac{(6j+5)(6j+3)(6j+1)}{(6g-6j-1)(6g-6j-3)(6g-6j-5)}
\cdot\frac{g-j}{j+1}
=\\=
\frac{j+5/6}{j+1}
\cdot\frac{6j+1}{6g-6j-5}
\cdot\frac{6j+3}{6g-6j-3}
\cdot\frac{6g-6j}{6g-6j-1}
=\\=
\left(1-\frac{1}{6j+6}\right)
\cdot\frac{6j+1}{6g-6j-5}
\cdot\frac{6j+3}{6g-6j-3}
\cdot\left(1+\frac{1}{6g-6j-1}\right)
\end{multline}
For any fixed $g\in\N$ each of the four terms in the last
line of the above expression is
strictly monotonically increasing as
a function of $j$ on the range
$\{0,1,...\left[\frac{g-1}{2}\right]\}$.
It is immediate to verify that
when $2j=g-1$, the product of four terms
in the last line of the above expression
is identically equal to $1$ for all $g\in\N$,
and the Lemma follows.
\end{proof}

\begin{Lemma}
\label{lm:difference:bounded:by:jmax:R2}
For any $g\in\N$ and for any
integer $k$ in the range
$6\le k\le \left[\frac{3g-1}{2}\right]$
the following bounds hold:
\begin{equation}
\label{eq:agk:minus:ag5}
-R(g,2)\cdot\frac{g-3}{2}
\le a_{g,k}-a_{g,5}
\le R(g,2)\cdot\frac{3g-11}{3}\,.
\end{equation}
\end{Lemma}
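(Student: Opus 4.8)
The plan is to realize $a_{g,k}-a_{g,5}$ as a telescoping sum and to bound each summand by combining the sign and size information of Lemma~\ref{lm:P:Q} with the monotonicity of $R(g,j)$ from Lemma~\ref{lm:R}. First I would write, for every $k$ with $6\le k\le\left[\tfrac{3g-1}{2}\right]$,
\[
a_{g,k}-a_{g,5}=\sum_{m=6}^{k}\big(a_{g,m}-a_{g,m-1}\big),
\]
and match each consecutive difference to one of the three cases of the recurrence in the form~\eqref{eq:R:P1:Q}--\eqref{eq:R:P3:Q}. The step landing on the index $m$ carries the factor $R(g,j)$ with $j=\left[\tfrac{m}{3}\right]$, and for $m\ge 6$ one has $j\ge 2$; moreover $j$ stays inside the interval $\{2,\dots,\left[\tfrac{g-1}{2}\right]\}$ on which Lemma~\ref{lm:R} gives $R(g,j)\le R(g,2)$. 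Together with the bounds $|P_i/Q|<1$ of Lemma~\ref{lm:P:Q}, this shows that every individual step has absolute value strictly smaller than $R(g,2)$.

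Next I would sort the steps by sign. By Lemma~\ref{lm:P:Q} the step onto index $m$ is positive precisely when $m\equiv 0,2\pmod 3$ (the $P_1$ and $P_3$ cases) and negative when $m\equiv 1\pmod 3$ (the $P_2$ case). To obtain the upper bound I would discard all negative steps and then enlarge the range of summation from $k$ up to $K:=\left[\tfrac{3g-1}{2}\right]$; since only positive terms remain, both operations can only increase the sum, so $a_{g,k}-a_{g,5}$ is bounded above by $R(g,2)$ times the number of indices $m\in\{6,\dots,K\}$ with $m\equiv 0,2\pmod 3$. Symmetrically, keeping only the negative steps bounds $a_{g,k}-a_{g,5}$ below by $-R(g,2)$ times the number of indices $m\in\{6,\dots,K\}$ with $m\equiv 1\pmod 3$.

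It then remains to estimate these two cardinalities and compare them with the constants $\tfrac{3g-11}{3}$ and $\tfrac{g-3}{2}$ appearing in~\eqref{eq:agk:minus:ag5}. A direct count shows that the number of negative steps is at most $\tfrac{g-3}{2}$ and the number of positive steps is at most $\tfrac{3g-11}{3}$ (in fact the positive count equals $g-4\le\tfrac{3g-11}{3}$, so the stated constant is comfortably safe); inserting these into the two inequalities of the previous paragraph yields exactly~\eqref{eq:agk:minus:ag5}. The one place that requires genuine care is this bookkeeping: the floor in $K=\left[\tfrac{3g-1}{2}\right]$ behaves differently according to the parity of $g$, so the counts of residue classes modulo $3$ in the window $\{6,\dots,K\}$ must be verified separately in the even and odd cases. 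Once the counts are pinned down, the estimate follows by mechanically assembling Lemmas~\ref{lm:P:Q} and~\ref{lm:R} along the telescoping identity, with no further analytic input.
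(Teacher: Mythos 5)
Your proof is correct and follows essentially the same route as the paper: telescoping the difference $a_{g,k}-a_{g,5}$, bounding each step in absolute value by $R(g,2)$ via Lemmas~\ref{lm:P:Q} and~\ref{lm:R}, sorting steps by sign according to the residue of the index modulo $3$, and counting. The only cosmetic difference is that you enlarge the summation window to $\left[\tfrac{3g-1}{2}\right]$ before counting, whereas the paper bounds the counts $n_{\pm}(k)$ as functions of $k$ first and then uses $k\le\left[\tfrac{3g-1}{2}\right]$; your parity check of the window and the resulting counts ($g-4$ positive, at most $\tfrac{g-3}{2}$ negative) agree with the paper's.
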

\begin{proof}
We can assume that $g\ge 5$; otherwise the range of $k$ is
empty and the statement is vacuous.

Consider the sequence
$\left\{a_{g,5},a_{g,6},\dots,a_{g,k}\right\}$, and denote
by $n_+(k)$ the number of entries $m$ in the set
$\{5,6,\dots,k-1\}$, for which the inequality
$a_{g,m+1}>a_{g,m}$ holds. Similarly, denote by $n_-(k)$
the number of entries in the same set for which inequality
$a_{g,m+1}<a_{g,m}$ holds.

Combining the recurrence relations in the
form~\eqref{eq:R:P1:Q}--\eqref{eq:R:P3:Q},
bounds~\eqref{eq:R:P1:Q:bound}--\eqref{eq:R:P3:Q:bound} and
Lemma~\ref{lm:R} we conclude that
$$
-R(g,2)\cdot n_-(k)
\le a_{g,k}-a_{g,5}
\le R(g,2)\cdot n_+(k)\,.
$$
It remains to translate the restriction $k\le
\left[\frac{3g-1}{2}\right]$ into upper bounds for $n_+(k)$
and $n_-(k)$ as functions of $g$.

Recall that it follows from recurrence
relations~\eqref{eq:a:g:k:difference} that for $m$
satisfying $0\le m\le \left[\frac{3g-1}{2}\right]-1$ we
have $a_{g,m+1}<a_{g,m}$ if and only if $m\equiv
0\,(\operatorname{mod} 3)$ and we have $a_{g,m+1}>a_{g,m}$
for the remaining $m$ in this range. This implies that
\begin{align*}
n_+&=2(j-2)
&n_-&=j-2\,,
&\text{when }k=3j-1\,,
\\
n_+&=2(j-2)+1
&n_-&=j-2\,,
&\text{when }k=3j\,,\hspace*{17pt}
\\
n_+&=2(j-2)+1
&n_-&=j-1\,,
&\text{when }k=3j+1\,.
\end{align*}
In all these cases we have
\begin{align*}
n_+(k) &\le \frac{2k-10}{3}\,,
\\
n_-(k) &\le \frac{k-4}{3}\,.
\end{align*}
By assumption $k\le \left[\frac{3g-1}{2}\right]$,
so the latter bounds imply that
\begin{align*}
n_+(k) &\le \frac{3g-11}{3}
\\
n_-(k) &\le \frac{g-3}{2}\,.
\end{align*}
and~\eqref{eq:agk:minus:ag5} follows.
\end{proof}

We assume that $g\ge 5$ and
$k\in\{6,\dots,\left[\frac{3g-1}{2}\right]\}$. Define
\begin{align}
\label{eq:epsilon:lower:def}
\elow&=\frac{27(68g^3-308g^2+519g-280)}{(6g-1)(6g-3)(6g-5)(6g-7)(6g-9)}\,,
\\
\label{eq:epsilon:upper:def}
\eup&=\frac{9 (g-2) (288 g^3-780 g^2+1012 g-525)}{(6g-1)(6g-3)(6g-5)(6g-7)(6g-9)}\,.
\end{align}
In these notation
Expressions~\eqref{eq:epsilon:lower:init}
and~\eqref{eq:epsilon:upper:init} for $a_{g,5}$ can be
written as
\begin{equation}
\label{eq:a:g:5:in:terms:of:epsilon}
a_{g,5}=1-\frac{2}{6g-1}+\elow =1-\eup\,.
\end{equation}

\begin{Lemma}
\label{lm:R2:jmax:smaller:than:epsilon}
For any integer $g$ satisfying $g\ge 5$
the following strict inequalities are valid:
\begin{align*}
R(g,2)&\cdot\frac{g-3}{2}\quad   <\elow
\\
R(g,2)&\cdot\frac{3g-11}{3} <\eup\,.
\end{align*}
\end{Lemma}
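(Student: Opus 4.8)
The plan is to reduce both inequalities to elementary polynomial inequalities in $g$ by computing $R(g,2)$ in closed form. The crucial observation is that the denominators of $\elow$ and $\eup$ are exactly the product $(6g-1)(6g-3)(6g-5)(6g-7)(6g-9)$, which is strictly positive for $g\ge 5$, so I may clear it from both sides without altering the direction of the inequalities. First I would expand
\[
R(g,2)=\frac{\binom{3g}{6}\binom{g}{2}}{\binom{6g}{12}}
=\frac{12!\cdot (3g)(3g-1)(3g-2)(3g-3)(3g-4)(3g-5)\cdot g(g-1)}{1440\cdot\prod_{i=0}^{11}(6g-i)}\,,
\]
and split $\prod_{i=0}^{11}(6g-i)$ into its even-offset and odd-offset factors. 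The even-offset factors satisfy
\[
(6g)(6g-2)(6g-4)(6g-6)(6g-8)(6g-10)=64\,(3g)(3g-1)(3g-2)(3g-3)(3g-4)(3g-5)\,,
\]
which cancels the sextuple product in the numerator, while the odd-offset factors are precisely $(6g-1)(6g-3)(6g-5)(6g-7)(6g-9)(6g-11)$. Since $12!/(1440\cdot 64)=10395/2$, this yields the clean identity
\[
R(g,2)\cdot(6g-1)(6g-3)(6g-5)(6g-7)(6g-9)=\frac{10395}{2}\cdot\frac{g(g-1)}{6g-11}\,.
\]

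With this identity in hand I would multiply each target inequality through by $(6g-1)\cdots(6g-9)$ and then by the factors $6g-11$ and, for the second one, $3g-11$, all of which are positive for $g\ge 5$. The first inequality $R(g,2)\cdot\frac{g-3}{2}<\elow$ becomes
\[
10395\,g(g-1)(g-3)<108\,(6g-11)\,(68g^3-308g^2+519g-280)\,,
\]
which after expansion is equivalent to the single quartic inequality
\[
p_1(g):=44064\,g^4-290763\,g^3+743796\,g^2-829197\,g+332640>0\,.
\]
The second inequality $R(g,2)\cdot\frac{3g-11}{3}<\eup$ reduces in the same way to a quintic inequality $p_2(g)>0$ coming from $10395\,g(g-1)(3g-11)<54\,(6g-11)(g-2)(288g^3-780g^2+1012g-525)$; here the right-hand side already exceeds the left by roughly $6.5\cdot 10^{7}$ against $8.3\cdot 10^{5}$ at $g=5$, so this case carries a wide margin.

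The only remaining task, and the genuine (if routine) obstacle, is to establish positivity of $p_1$ and $p_2$ for every integer $g\ge 5$. Both leading coefficients are positive, so the inequalities certainly hold for large $g$; the substance is controlling the finitely many intermediate values uniformly. I would handle this by the shift $g=h+5$ with $h\ge 0$: expanding $p_i(h+5)$ and checking that every coefficient in $h$ is nonnegative gives $p_i(g)>0$ for all $g\ge 5$ at once. For $p_1$ this works transparently, the coefficients of $h^4,\dots,h^0$ being $44064$, $590517$, $2991951$, $6833538$ and $p_1(5)=5976180$, all positive; the same mechanical expansion settles $p_2$, where the slack noted above makes positivity immediate. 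This finite computation completes the proof of Lemma~\ref{lm:R2:jmax:smaller:than:epsilon}, and together with Lemmas~\ref{lm:difference:bounded:by:jmax:R2} and~\ref{lm:k:up:to:5} it completes the proof of Proposition~\ref{pr:main:bounds}.
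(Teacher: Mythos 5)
Your proof is correct and follows essentially the same route as the paper: both rest on the closed-form evaluation $R(g,2)=\tfrac{10395}{2}\,g(g-1)/\bigl((6g-1)(6g-3)(6g-5)(6g-7)(6g-9)(6g-11)\bigr)$ and a subsequent reduction to an explicit elementary inequality in $g$. The only divergence is the final certification step — the paper performs polynomial division with remainder and bounds the quotient below by $385/4$ (resp.\ $385/2$), whereas you clear denominators and certify positivity of the resulting quartic $p_1$ and quintic $p_2$ via the shift $g=h+5$ and nonnegativity of coefficients; I verified that the coefficients of $p_2(h+5)$, which you did not list ($93312$, $1722384$, $12727287$, $46983591$, $86621346$, $63914130$), are indeed all positive, so both endgames go through.
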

\begin{proof}
The proof is a straightforward calculation.

First note that all the quantities $R(g,2), (g-3), (3g-11),
\elow, \eup$ are strictly positive for $g\ge 5$, where
strict positivity of $\elow$ and of $\eup$ was proved in
Lemma~\ref{lm:k:up:to:5}. Thus, it is sufficient to prove
that
\begin{align}
\label{eq:low}
\frac{2\,\elow}{R(g,2)\cdot(g-3)} > 1
\quad\text{for }g\ge 5\,,
\\
\label{eq:up}
\frac{3\,\eup}{R(g,2)\cdot(3g-11)} > 1
\quad\text{for }g\ge 5\,.
\end{align}

Applying Definition~\eqref{eq:R} to evaluate $R(g,2)$ and
cancelling out common factors in the numerator and in the
denominator of the resulting expression we get
\begin{equation}
\label{eq:R:g:2}
R(g,2)=
\frac{10395}{2}
\cdot\frac{g(g-1)}{(6g-1)(6g-3)(6g-5)(6g-7)(6g-9)(6g-11)}\,.
\end{equation}

Plug Expression~\eqref{eq:epsilon:lower:def} for $\elow$
and the above Expression~\eqref{eq:R:g:2} for $R(g,2)$ into
the left-hand side of~\eqref{eq:low} and cancel out the
common factors in the numerator and in the denominator of
the resulting expression. Applying polynomial division with
remainder to the resulting numerator and denominator we get
\begin{multline*}
\frac{2\,\elow}{R(g,2)\cdot(g-3)}=
2\cdot 27\cdot\frac{2}{10395}
\cdot\frac{(68g^3-308g^2+519g-280)(6g-11)}{g(g-1)(g-3)}
=\\=\frac{4}{385}
\cdot\left(
408 g -964 + \frac{1422 g^2- 4497 g + 3080}{g(g-1)(g-3)}
\right)\,.
\end{multline*}
It is immediate to check that $(1422 g^2- 4497 g + 3080)$
is positive for $g\ge 5$. It remains to note that for $g\ge
5$ we have $(408g-964)\ge (408\cdot 5-964)=1076> 385/4$ which completes the
proof of~\eqref{eq:low}.

Performing analogous manipulations we get
\begin{multline*}
\frac{3\,\eup}{R(g,2)\cdot(3g-11)}
=\\=
3\cdot 9\cdot\frac{2}{10395}
\cdot\frac{(g-2) (288 g^3-780 g^2+1012 g-525)(6g-11)}{g(g-1)(3g-11)}
=\\=
\frac{2}{385}
\cdot\left(
576 g^2 - 1080 g + 2964
+\frac{9790 g^2 + 1735 g - 11550}{g(g-1)(3g-11)}
\right)\,.
\end{multline*}
It is immediate to check that $(9790 g^2 + 1735 g - 11550)$
is positive for $g\ge 5$ as well as the denominator of the
corresponding fraction. It remains to note that the
function $(576 g^2 - 1080 g + 2964)$ is monotonically increasing on the interval $[5;+\infty[$, so for $g\ge 5$ we
get:
$$
576 g^2 - 1080 g + 2964\ge
576\cdot 5^2 - 1080\cdot 5 + 2964= 11964
> 385/2\,,
$$
which completes the proof of~\eqref{eq:up}.
\end{proof}

\begin{proof}[Proof of Proposition~\ref{pr:main:bounds}]
For small genera, $g=1,2,3,4$,
Proposition~\ref{pr:main:bounds} was proved in
Corollary~\ref{cor:g:up:to:4}.

For genera $g\ge 5$ and $k=2,3,4,5$,
Proposition~\ref{pr:main:bounds} was proved in
Lemma~\ref{lm:k:up:to:5}. The symmetry
$a_{g,k}=a_{g,3g-1-k}$ implies
Proposition~\ref{pr:main:bounds} for symmetric values of
$k$.

For $g\ge 5$ and $k$ in the range $6\le k\le
\left[\frac{3g-1}{2}\right]$
Proposition~\ref{pr:main:bounds} immediately follows from
combination of
Lemma~\ref{lm:difference:bounded:by:jmax:R2},
Expression~\eqref{eq:a:g:5:in:terms:of:epsilon} for
$a_{g,5}$ and Lemma~\ref{lm:R2:jmax:smaller:than:epsilon}.
The symmetry $a_{g,k}=a_{g,3g-1-k}$ implies
Proposition~\ref{pr:main:bounds} for symmetric values of
$k$.
\end{proof}

\subsection{Asymptotic behavior of normalized $2$-correlators
in large genera}

In this section we briefly describe the behavior of $a_{g,k}$
for $g\gg 1$. More detailed discussion would be given in a
separate (and more general) paper.

When $g\to +\infty$ and $j$ remains bounded,
Expressions~\eqref{eq:p1}--\eqref{eq:p3} and~\eqref{eq:Q}
for polynomials $P_i(g,j)$, $i=1,\dots,4$, and $Q(g,j)$
respectively imply that
\begin{align*}
\frac{P_1(g,j)}{Q(g,j)}
&=1-(2j)\cdot\frac{1}{g}\,,
\\
\frac{P_2(g,j)}{Q(g,j)}
&=-\left(2j+\frac{1}{3}\right)\cdot\frac{1}{g}+o\left(\frac{1}{g}\right)\,,
\\
\frac{P_3(g,j)}{Q(g,j)}
&=\left(2j+\frac{1}{3}\right)\left(j+\frac{1}{2}\right)\cdot\frac{1}{g^2}+o\left(\frac{1}{g^2}\right)\,,
\end{align*}
as $g\to+\infty$. In
particular, for $g\gg1$ we see that for small values of $j$
the ratio $\frac{P_1(g,j)}{Q(g,j)}$ is close to $1$, while
the ratio $\frac{P_2(g,j)}{Q(g,j)}$ is of the order
$\frac{1}{g}$ and the ratio $\frac{P_3(g,j)}{Q(g,j)}$ is of
the order $\frac{1}{g^2}$. Thus, assuming that $g\gg 1$,
and taking consecutive terms $\left\{a_{g,3j-1},\,a_{g,3j},\,a_{g,3j+1},\,a_{g,3j+1}\right\}$
with $j\ll g$
we observe certain increment from $a_{g,3j-1}$ to $a_{g,3j}$,
much smaller decrement from $a_{g,3j}$ to $a_{g,3j+1}$
and very small increment from $a_{g,3j+1}$ to $a_{g,3j+2}$.

For any fixed $g\gg 1$ and $j\ll g$ the quantity $R(g,j)$
defined by~\eqref{eq:R} is very rapidly decreasing
as $j$ grows. We conclude from Expression~\eqref{eq:ratio:of:R}
that for bounded $j$ and $g\to+\infty$ one has
$$
R(g,j+1)=R(g,j)\cdot\frac{(j+\frac{5}{6})(j+\frac{3}{6})(j+\frac{1}{6})}{(j+1)}
\cdot\frac{1}{g^2}\cdot(1+o(1))\,.
$$
Since $R(g,0)=1$ we get the following expressions for $j=0,1,2,3$:
\begin{align*}
R(g,0)&=1\,,
\\
R(g,1)&
=\frac{(0+\frac{5}{6})(0+\frac{3}{6})(0+\frac{1}{6})}{(0+1)}
\cdot\frac{1}{g^2}\cdot\big(1+o(1)\big)
&
=\frac{5}{72}\cdot\frac{1}{g^2}\cdot\big(1+o(1)\big)\,,
\\
R(g,2)&
=\frac{(1+\frac{5}{6})(1+\frac{3}{6})(1+\frac{1}{6})}{(1+1)}
\cdot\frac{5}{72}\cdot\frac{1}{g^4}\cdot\big(1+o(1)\big)
&
=\frac{385}{3456}\cdot\frac{1}{g^4}\cdot\big(1+o(1)\big)\,,
\\
R(g,3)&
=\frac{(2+\frac{5}{6})(2+\frac{3}{6})(2+\frac{1}{6})}{(2+1)}
\cdot\frac{385}{3456}\cdot\frac{1}{g^6}\cdot\big(1+o(1)\big)
\hspace*{-2.8pt}
&
\hspace*{-5.1pt}
=\frac{425425}{746496}\cdot\frac{1}{g^6}\cdot\big(1+o(1)\big)\,.
\end{align*}
Lemma~\ref{lm:difference:bounded:by:jmax:R2}
admits the following immediate generalization:
\begin{Lemma}
\label{lm:difference:bounded:by:Rj}
For any $g\in\N$ and for any
integer $k$ in the range
$3j\le k\le \left[\frac{3g-1}{2}\right]$
the following bounds hold:
\begin{equation}
\label{eq:agk:minus:ag:j}
-R(g,j)\cdot\frac{g-j-1}{2}
\le a_{g,k}-a_{g,3j-1}
\le R(g,j)\cdot\frac{3g-2j-7}{3}\,.
\end{equation}
\end{Lemma}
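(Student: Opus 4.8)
The plan is to reproduce verbatim the proof of Lemma~\ref{lm:difference:bounded:by:jmax:R2}, which is exactly the special case $j=2$: indeed, for $j=2$ one has $3j-1=5$, $R(g,j)=R(g,2)$, $\frac{g-j-1}{2}=\frac{g-3}{2}$ and $\frac{3g-2j-7}{3}=\frac{3g-11}{3}$, so~\eqref{eq:agk:minus:ag:j} becomes precisely the estimate of that lemma. All that changes is that the base index is shifted from $5$ to $3j-1$ and the reference value $R(g,2)$ is replaced by $R(g,j)$. As before one may assume $3j\le\left[\frac{3g-1}{2}\right]$, since otherwise the range of $k$ is empty and the statement is vacuous; thus the object to control is the telescoping sum $a_{g,k}-a_{g,3j-1}=\sum_{m=3j-1}^{k-1}\big(a_{g,m+1}-a_{g,m}\big)$.

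First I would establish the two-sided estimate
\[
-R(g,j)\cdot n_-(k)\ \le\ a_{g,k}-a_{g,3j-1}\ \le\ R(g,j)\cdot n_+(k),
\]
where $n_+(k)$ (respectively $n_-(k)$) counts the $m\in\{3j-1,\dots,k-1\}$ for which $a_{g,m+1}>a_{g,m}$ (respectively $a_{g,m+1}<a_{g,m}$). Each consecutive difference equals $R(g,J_m)\cdot\frac{P_i(g,J_m)}{Q(g,J_m)}$ for the appropriate recurrence among~\eqref{eq:R:P1:Q}--\eqref{eq:R:P3:Q}, where the running index $J_m$ satisfies $J_m\ge j$ precisely because $m\ge 3j-1$. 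By Lemma~\ref{lm:P:Q} each factor $P_i/Q$ has absolute value strictly less than $1$, and by the monotonicity of $R(g,\cdot)$ from Lemma~\ref{lm:R} one has $R(g,J_m)\le R(g,j)$ on the relevant range (which stays inside $\{0,\dots,\left[\frac{g-1}{2}\right]\}$ because $3J_m\le k\le\left[\frac{3g-1}{2}\right]$). Hence every increment is bounded by $R(g,j)$ and every decrement by $-R(g,j)$, and grouping the summands by sign yields the displayed inequality. The signs are governed by the parity rule already recorded in the proof of Lemma~\ref{lm:difference:bounded:by:jmax:R2}: $a_{g,m+1}<a_{g,m}$ iff $m\equiv 0\ (\operatorname{mod}\ 3)$, so $n_-(k)$ counts the multiples of $3$ in $\{3j-1,\dots,k-1\}$ and $n_+(k)$ the non-multiples.

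It then remains to bound $n_\pm(k)$ by the claimed quantities. Since both counts are non-decreasing in $k$, it suffices to bound them at $k=\left[\frac{3g-1}{2}\right]$; writing this maximal $k$ in one of the forms $3J-1$, $3J$, $3J+1$, a direct count gives $n_-\le J-j+1$ and $n_+\le 2(J-j)+1$. I would then split according to the parity of $g$: for $g$ even, $\left[\frac{3g-1}{2}\right]=\tfrac{3g}{2}-1$ is of the form $3J-1$ with $J=g/2$, while for $g$ odd it is of the form $3J+1$ with $J=(g-1)/2$. Substituting the corresponding value of $J$ and simplifying confirms $n_-\le\frac{g-j-1}{2}$ and $n_+\le\frac{3g-2j-7}{3}$ for all $j\ge 2$, which combined with the displayed estimate gives~\eqref{eq:agk:minus:ag:j}.

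The routine but slightly delicate point --- and the one place where care is genuinely needed --- is this last arithmetic. The naive uniform intermediate bounds $n_-\le\frac{k-3j+2}{3}$ and $n_+\le\frac{2k-6j+3}{3}$ are not tight at $k=\left[\frac{3g-1}{2}\right]$ when this value has the ``wrong'' residue modulo $3$, so substituting $k\le\frac{3g-1}{2}$ directly loses a little and does not quite reach the stated constants for the borderline case $j=2$. Evaluating $n_\pm$ exactly at the maximal $k$ via the parity split above is what makes the constants in~\eqref{eq:agk:minus:ag:j} come out correctly, and this is the main obstacle to a completely mechanical transcription of the earlier proof.
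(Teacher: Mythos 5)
Your approach is exactly the paper's: the paper offers no separate proof of this lemma, presenting it as an ``immediate generalization'' of Lemma~\ref{lm:difference:bounded:by:jmax:R2}, and your transplantation of that proof --- telescoping, the sign rule $a_{g,m+1}<a_{g,m}\iff m\equiv 0\pmod 3$, the bound $R(g,J_m)\le R(g,j)$ from Lemma~\ref{lm:R}, and the count of $n_\pm$ at the maximal $k$ --- is precisely the intended argument. One arithmetic step needs tightening, however. The uniform bounds $n_-\le J-j+1$ and $n_+\le 2(J-j)+1$ that you extract as the worst case over the three residue forms are not sharp enough when $g$ is even: there the maximal $k$ equals $3J-1$ with $J=g/2$, and substituting into $J-j+1$ gives $n_-\le\frac{g-2j+2}{2}$, which exceeds the target $\frac{g-j-1}{2}$ unless $j\ge 3$; likewise $2(J-j)+1=g-2j+1$ exceeds $\frac{3g-2j-7}{3}$ unless $j\ge 3$. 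So, as literally written, your last step does not even recover the original $j=2$ lemma. The fix is to use the exact counts attached to the form that actually occurs (exactly as in the table in the proof of Lemma~\ref{lm:difference:bounded:by:jmax:R2}): for $k=3J-1$ one has $n_-=J-j$ and $n_+=2(J-j)$, giving $n_-=\frac{g-2j}{2}\le\frac{g-j-1}{2}$ and $n_+=g-2j\le\frac{3g-2j-7}{3}$ for $j\ge 2$, while for $g$ odd the form is $3J+1$ with $J=\frac{g-1}{2}$ and the values $n_-=\frac{g-2j+1}{2}$, $n_+=g-2j$ satisfy the same inequalities for $j\ge 2$. Your restriction to $j\ge 2$ is indeed necessary for this counting argument (for $j=1$ one gets $n_+=g-2>\frac{3g-9}{3}$), a hypothesis the paper's statement leaves implicit.
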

Thus, having found the asymptotic expansion (when
$g\to+\infty$) in $\frac{1}{g}$ up to the term
$\frac{1}{g^{2j-2}}$ for some $a_{g,3j-1}$, we get the same
asymptotic expansion up to the term $\frac{1}{g^{2j-2}}$
for all $a_{g,k}$ with $k$ satisfying $3j-1\le k\le 3g-3j$.

Finally, no matter whether $g=2j$ or $g=2j+1$ one easily
derives from Stirling's formula that
$$
R(g,j)\approx\frac{1}{2^{2g-1}}\cdot\frac{1}{\sqrt{\pi g}}\,.
$$
Morally, when $g\gg1$ and the index $k$ is located
sufficiently far from the extremities of the range
$\{0,1,\dots,3g-1\}$, the values of $a_{g,k}$ become,
basically, indistinguishable.

\begin{figure}[hbt]
\includegraphics{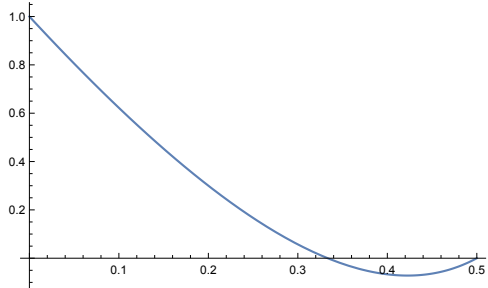}
\vspace{90bp}
\caption{
\label{fig:graph}
Graph of $f(x)$}
\end{figure}

It is curious to note that the sequence
$\{a_{g,2},\,a_{g,5},\dots,a_{g,3j_{max}-1}\}$, where
$j_{max}$ is the maximum integer satisfying
$3j_{max}-1\le\left[\frac{3g-1}{2}\right]$, is not
monotonically increasing for $g\ge 17$. Let $x=\frac{j}{g}$.
Our bounds on $j$ imply that $0<x<\frac{1}{2}$.
For any fixed $g\gg1$ define
$$
f(x)=f\left(\frac{j}{g}\right)=\frac{(P_1(g,j)+P_2(g,j)+P_3(g,j))}{Q(g,j)}\,.
$$
For large values of $g$ the graph of $f(x)$ has the form as
in Figure~\ref{fig:graph}, so up to some
point the function $f$ remains positive and the sequence
$a_{g,2},\,a_{g,5},\dots$ monotonically increases, but then
it attains its maximum and very slowly monotonically
decreases down to $a_{g,3j_{max}-1}$.

\section{Stable graphs: formal definition}
\label{s:stable:graphs}

Following M.~Kontsevich~\cite{Kontsevich}
we now introduce the definition of a stable graph. Let $S$
be a closed oriented surface of genus $g$ without boundary
endowed with $n$ labelled marked points. Let $\gamma$ be
a simple closed primitive multicurve on $S$. Consider the
decomposition of $S \setminus \gamma$ into a union of
surfaces with boundaries endowed with marked points. The
dual graph $\Graph$ to this decomposition is constructed as
follows.
\begin{itemize}
\item Each connected component $S_j$ of $S\setminus\gamma$
gives rise to a vertex $v_j$ of $\Graph$ decorated by the
genus $g(S_j)$. The marked points on $S_j$ are encoded by
the \textit{legs} attached to $v_j$, the boundary
components of $S_j$ correspond to half-edges incident to
$v_j$.
\item Each component $\gamma_i$ of $\gamma$ gives rise to
an edge of $\Gamma$. When $\gamma_i$ is the common boundary
of two distinct connected components of $S\setminus\gamma$,
the corresponding edge of $\Gamma$ joins the two distinct
vertices of $\Gamma$ representing these two connected
components. When both sides of $\gamma_i$ are at the boundary
of the same connected component of
$S\setminus\gamma$, the edge of $\Gamma$ dual to $\gamma_i$
is a loop joining the vertex associated to the
corresponding connected component to itself.
\end{itemize}
Figures in the tables of Appendix~\ref{a:2:0} illustrate the
correspondence between multicurves and stable graphs in genus 2.

We now present the following formal definition

\begin{Definition}
\label{def:stable:graph}
Consider a 6-tuple
$\Graph = (V, H, \iota, \alpha, \mathbf{g}, L)$, where
\begin{itemize}
\item $V$ is a finite set of \textit{vertices}.
\item $H$ is a finite set of \textit{half-edges}.
\item $\iota: H \to H$ is an involution. The fixed points
of $\iota$ are called the \textit{legs} and the 2-cycles of
$\iota$ are called the \textit{edges} of $\Graph$.
\item $\alpha: H \to V$ is a map that attaches a half-edge
to a vertex. The number of half-edges at a given vertex $v$
is denoted by $n_v := |\alpha^{-1}(v)|$.
\item The graph is connected: for each pair of vertices
$(u,v)$ there exists a sequence of
half-edges $(h_1, h'_1, h_2, h'_2, \ldots, h_k, h'_k)$
such that $\iota(h_i) = h'_i$, $u = \alpha(h_1)$,
$v = \alpha(h'_k)$ and $\alpha(h'_i) = \alpha(h_{i+1})$.
\item $\boldsymbol{g} = \{g_v\}_{v \in V}$ is a set
of non-negative integers, one at each vertex,
called the \textit{genus decoration}.
\item $L$ is a bijection from the set of legs to
$\{1,\ldots,n\}$.
\end{itemize}
Such a 6-tuple $\Graph$ is called a
\textit{stable graph} for $\overline{\cM}_{g,n}$ if the
genus decoration $\boldsymbol{g}$ satisfies
the following conditions:
\begin{itemize}
\item
$g(\Graph) = h_1(\Graph) + \sum_{v \in V} g_v$,
where $h_1(\Graph)$ is the first Betti number of the graph,
\item \textit{stability condition}
$2 g_v - 2 + n_v > 0$
at each vertex $v$ of $\Graph$.
\end{itemize}
\end{Definition}

To a stable graph $\Graph$ we associate an underlying graph
whose vertex set is $V$ and each 2-cycle $(h,h')$ of
$\iota$ gives an edge attached to $\alpha(h)$ and
$\alpha(h')$. We denote the set of these edges by $E =
E(\Graph)$. Such a graph can have multiple edges and loops.
The additional information carried by a stable graph is the
genus decoration $\mathbf{g}$ and the $n$ legs.

Two stable graphs $\Graph =
(V,H,\iota, \alpha,\boldsymbol{g}, L)$ and
$\Graph'=(V',H',\iota', \alpha',\boldsymbol{g'}, L')$ are
isomorphic if there exists two bijections $\phi: V \to V'$
and $\psi: H \to H'$ that preserve edges, legs and
genus decoration, that is
\[
\psi\circ\iota   = \iota' \circ \psi\,,
\qquad
L'(\psi(h)) = L(h)\,,
\qquad
g'_{\phi(v)} = g_v\,.
\]
Note that automorphisms of stable graphs are allowed to
interchange edges and vertices respecting the decoration
but not the legs which are numbered by $L$.

We denote by $\cG_{g,n}$ the set of isomorphism classes of
stable graphs with given genus $g$ and number of legs $n$.

As we already mentioned, each stable graph in $\cG_{g,n}$
corresponds to a cycle of the Deligne--Mumford
compactification $\overline{\cM}_{g,n}$. More precisely,
each vertex $v$ of the graph corresponds to the component
of a nodal curve of genus $g_v$ and contains the marked
points corresponding to the legs attached to this vertex.
Each edge of $E(\Graph)$ represents a node. (See the
survey~\cite{Vakil} for an excellent introduction to this
subject and for beautiful illustrations.) Hence, the unique
stable graph with no edge and $n$ legs in $\cG_{g,n}$
corresponds to the component $\overline{\cM}_{g,n}$ (the
smooth curves).

\section{Examples of explicit calculations}
\label{s:explicit:calculations}

\subsection{The cases of $\cQ_{0,3}$ and $\cQ_{1,1}$}
\label{ss:Q03:and:Q11}

We now consider the moduli spaces $\cM_{0,3}$ and $\cM_{1,1}$ and the associated
cotangent bundles $\cQ_{0,3}$ and $\cQ_{1,1}$.

There is a unique complex curve $C$ in $\cM_{0,3}$ which is $\C\Proj^1\setminus \{0,1,\infty\}$. This curve does not admit any non-zero quadratic differentials with
at most simple poles at the marked points $0$, $1$ and $\infty$ and with no other poles
(which is
coherent with the fact that the tangent space to a point is $0$). There is a
unique stable graph for $\cM_{0,3}$:
\[\cG(0,3)=\left\lbrace\begin{array}{c}
\includegraphics{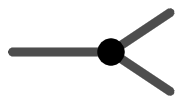}
\begin{picture}(0,0)(0,0)
\put(25,-15){$0$}
\end{picture}
\hspace{60pt}
\vspace{10pt}
\end{array}\right\rbrace.\]
Let us denote this graph by $\Phi_{0,3}$. If we try to apply
Theorem~\ref{th:volume} to define the value of $\Vol \cQ_{0,3}$, the polynomial
$P_{\Phi_{0,3}}$ defined by~\eqref{eq:P:Gamma} is ill-defined since
$4g-4+n = 6g-7+2n = -1$ and $(-1)!$ makes no sense. However, taking limits
as in~\eqref{eq:minus:one:factorial}, we obtain the value
\[
P_{\Phi_{0,3}} = 2\cdot\left.\frac{(-4+n)!}{(-7+2n)!}\right|_{n=3} = 4\,,
\]
leading to $\Vol \cQ_{0,3}=4$ by means of~\eqref{eq:square:tiled:volume}
which is coherent with the value in~\eqref{eq:vol:genus:0} evaluated for $n=3$
and also coherent with~\eqref{eq:convention:Q03}.
\smallskip

For $(g,n) = (1,1)$ there are two stable graphs as given below.
\[\cG(1,1)=\left\lbrace\begin{array}{c}
\includegraphics{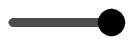}
\begin{picture}(0,0)(0,0)
\put(30,-15){$1$}
\end{picture}
\vspace{10pt}
\hspace{40pt}
\end{array},
\begin{array}{c}
\includegraphics{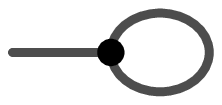}
\begin{picture}(0,0)(0,0)
\put(25,-15){$0$}
\end{picture}
\vspace{10pt}
\hspace{60pt}
\end{array}\right\rbrace.\]
Only the second graph contributes to
Expression~\eqref{eq:square:tiled:volume} from
Theorem~\ref{th:volume} for $\Vol \cQ_{1,1}$ and to
Expression~\eqref{eq:carea} from Theorem~\ref{th:carea} for
$\carea(\cQ_{1,1})$. Let us denote this graph by
$\Phi_{1,1}$. Applying~\eqref{eq:P:Gamma}, one finds
$P_{\Phi_{1,1}} = 4 b_1$ from which we deduce
\[
\Vol \cQ_{1,1} = \cZ\left(P_{\Phi_{1,1}}\right) = 4 \zeta(2) = \frac{2 \pi^2}{3}
\quad \text{and} \quad
\carea(\cQ_{1,1}) = \frac{3}{\pi^2} \cZ(\partial_\Phi P_\Phi)
= 2 \pi^2\,,
\]
which coincides with our convention~\ref{eq:convention:Q11}.

Note that any element of $\cQ_{1,1}$ is the square of a
holomorphic one form (regular at the marked point). In
other words, the principal stratum $\cQ(1,-1)$ is empty and
Definition~\ref{eq:Vol:sq:tiled} of $\Vol \cQ_{g,n}$ does
not apply. One can make geometric sense of the values $\Vol
\cQ_{1,1}$ and $\carea(\cQ_{1,1})$ by considering
square-tiled surfaces for the stratum $\cH(0)$ of
holomorphic Abelian differentials.

\subsection{Holomorphic quadratic differentials in genus two.}
\label{a:2:0}
We start by evaluation of the Siegel--Veech constant
$\carea(Q_2)$. Note that certain graphs do not
contribute to $\carea$ at all.
\medskip

$$
\hspace*{20pt}
\begin{array}{ll|l}

\includegraphics{genus_two_21.eps}
\begin{picture}(0,0)(0,0)
\put(-24,0){$b_1$}
\put(66,0){$b_2$}
\end{picture}
\hspace*{67pt}

&
\frac{128}{5}\cdot
1 \cdot
\frac{1}{8}\cdot
b_1 b_2 \cdot
N_{0,4}(b_1,b_1,b_2,b_2)=

&

(1+1)\cdot

\\


\includegraphics{genus_two_graph_21.eps}
\begin{picture}(0,0)(0,-10)
\put(-9,-19){$b_1$}
\put(52,-19){$b_2$}
\put(23,-30){$0$}
\end{picture}
  %

&
\rule{0pt}{12pt}
=\frac{16}{5}\cdot b_1 b_2\cdot\left(\frac{1}{4}(2b_1^2+2b_2^2)\right)

&

\cdot\!\frac{8}{5}\!\cdot\! 3!\zeta(4)\!\cdot\!1!\zeta(2)

\\

&

\hspace*{45pt}
\xmapsto{\partial_{\Graph}}\frac{8}{5}(1\cdot b_1 b_2^3+1\cdot b_1^3 b_2)\xmapsto{\cZ}

&

=1\cdot\frac{8}{225}\cdot \pi^6

\\
\vspace*{-18pt}\\
&&\\
\hline
&&\\


\includegraphics{genus_two_22.eps}
\begin{picture}(0,0)(0,0)
\put(-24,0){$b_1$}
\put(23,-14){$b_2$}
\end{picture}
\rule{0pt}{12pt}

&

\frac{128}{5}\!\cdot\!
\frac{1}{2}\!\cdot\!
\frac{1}{2}\!\cdot\!
b_1 b_2\!\cdot\! N_{0,3}(b_1,b_1,b_2)\!\cdot\! N_{1,1}(b_2)

&
1\!\cdot\!
\frac{2}{15}\!\cdot\! 1!\zeta(2)\!\cdot\!3!\zeta(4)

\\


\includegraphics{genus_two_graph_22.eps}
\begin{picture}(0,0)(0,0)
\put(-19,-10){$b_1$}
\put(23,-16){$b_2$}
\put(6,-10){$0$}
\put(41,-10){$1$}
\end{picture}

&
\rule{0pt}{15pt}
=\frac{32}{5}\!\cdot\! b_1 b_2 \!\cdot\! 1
\!\cdot\! \left(\frac{1}{48} b_2^2\right)
\xmapsto{\partial_{\Graph}}
\!\frac{2}{15}\!\cdot\! 1\!\cdot\!  b_1 b_2^3\xmapsto{\cZ}
&
=1\cdot\frac{1}{675}\cdot\pi^6

\\&&\\
\vspace*{-18pt}\\
&&\\
\hline
&&\\


\includegraphics{genus_two_31.eps}
\begin{picture}(0,0)(0,0)
\put(-24,0){$b_1$}
\put(23,-14){$b_2$}
\put(66,0){$b_3$}
\end{picture}

&
\frac{128}{5}\!\cdot\!
\frac{1}{2}\!\cdot\!
\frac{1}{8}\!\cdot\!
b_1 b_2 b_3 \!\cdot\!
N_{0,3}(b_1,b_1,b_2)\cdot

&

(1+\frac{1}{2}+1)\cdot

\\


\includegraphics{genus_two_graph_31.eps}
\begin{picture}(0,0)(0,5)
\put(-18,-10){$b_1$}
\put(23,-16){$b_2$}
\put(63,-10){$b_3$}
\put(5.5,-10){$0$}
\put(42,-10){$0$}
\end{picture}

\rule{0pt}{12pt}

&

\cdot N_{0,3}(b_2,b_3,b_3)
=\frac{8}{5}\!\cdot\!
{b_1} {b_2} {b_3}
\!\cdot\! (1) \!\cdot\! (1)

&
\cdot\frac{8}{5}\cdot \left(1!\,\zeta(2)\right)^3
\\


&
\rule{0pt}{15pt}
\xmapsto{\partial_{\Graph}}
\!\frac{8}{5}\!\left(1\!\cdot\! b_1 b_2 b_3
\!+\!\frac{1}{2} b_1 b_2 b_3
\!+\! 1\!\cdot\! b_1 b_2 b_3\right)\!\!\xmapsto{\cZ}\!
\hspace*{-3pt}
&
=\frac{5}{2}\cdot\frac{1}{135}\cdot\pi^6

\\
\vspace*{-18pt}\\
&&\\
\hline
&&\\


\includegraphics{genus_two_32.eps}
\begin{picture}(0,0)(0,0)
\put(-24,0){$b_1$}
\put(20,12){$b_2$}
\put(66,0){$b_3$}
\end{picture}

&

\frac{128}{5}\!\cdot\!
\frac{1}{2}\!\cdot\!
\frac{1}{12}\!\cdot\!
b_1 b_2 b_3 \!\cdot\!
N_{0,3}(b_1,b_1,b_2)\cdot

&

(1+1+1)\cdot

\\

\includegraphics{genus_two_graph_32.eps}
\begin{picture}(0,0)(0,0)
\put(4,-16){$b_1$}
\put(27.5,-16){$b_2$}
\put(43,-16){$b_3$}
\put(31,2){$0$}
\put(31,-35){$0$}
\end{picture}

&

\rule{0pt}{15pt}

\cdot N_{0,3}(b_2,b_3,b_3)
=\frac{16}{15}\!\cdot\! {b_1 b_2 b_3}\!\cdot\! (1) \!\cdot\! (1)

&
\frac{16}{15}\!\cdot\! \left(1!\,\zeta(2)\right)^3
\\


&
\rule{0pt}{15pt}

\xmapsto{\partial_{\Graph}}
\!\frac{8}{5}\!\left(1\!\cdot\! b_1 b_2 b_3
\!+\!\frac{1}{2} b_1 b_2 b_3
\!+\! 1\!\cdot\! b_1 b_2 b_3\right)\!\!\xmapsto{\cZ}\!
\hspace*{-3pt}
&
=3\cdot\frac{2}{405}\cdot\pi^6
\end{array}
$$
\bigskip

Taking the sum of the four contributions we obtain:
$$
\left(
\left(1\cdot\frac{8}{225}+1\cdot\frac{1}{675}\right)
+
\left(\frac{5}{2}\cdot\frac{1}{135}+3\cdot\frac{2}{405}\right)
\right)\cdot \pi^6
=
\left(\frac{1}{27}+\frac{1}{30}\right)\cdot \pi^6
=
\frac{19}{270}\cdot \pi^6\,.
$$
Dividing by $\Vol\cQ_2=\cfrac{1}{15}\cdot\pi^6$ we get the answer which
matches the value found in~\cite{Goujard:carea}:
$
\cfrac{\pi^2}{3}\cdot \carea(\cQ_2)=\left(\cfrac{19}{270}\pi^6\right):
\left(\cfrac{1}{15}\pi^6\right)=\cfrac{19}{18}\,.
$

The computation of the
Masur--Veech volume $\Vol\cQ_2$ was presented in
Table~\ref{tab:2:0}
in
Section~\ref{ss:intro:Masur:Veech:volumes}.
The first two graphs in Table~\ref{tab:2:0} represent the
contribution to the volume $\Vol\cQ_2$ of square-tiled
surfaces having single maximal cylinder. The resulting
contribution
$
\left(\tfrac{16}{945}+\tfrac{1}{2835}\right)\pi^6
=\frac{7}{405} \pi^6 = \frac{49}{3} \zeta(6)
$
was found in Appendix~C
in~\cite{DGZZ:initial:arXiv:one:cylinder:with:numerics} by
completely different technique.

The third and the fourth graph together represent the
volume contribution
$
\left(\tfrac{8}{225}+\tfrac{1}{675}\right)\pi^6
=\frac{1}{27}\pi^6
$
of square-tiled surfaces having two maximal cylinders. The last two
graphs --- the contribution
$
\left(\tfrac{1}{135}+\tfrac{2}{405}\right)\pi^6
=\frac{1}{81}\pi^6
$
of square-tiled surfaces having three maximal cylinders.

Normalizing the contribution of $1,2,3$-cylinder
square-tiled surfaces by the entire volume $\Vol\cQ_2$
of the stratum we get the quantity $p_k(\cQ_2)$ which
can be interpreted as the ``probability'' for a ``random''
square-tiled surface in the stratum $\cQ_2$ to have
exactly $k$ horizontal cylinders. These same quantities
$p_k$ provide ``probabilities'' of getting a $k$-band
generalized interval exchange transformation (linear
involution) taking a ``random'' generalized interval
exchange transformation in the Rauzy class representing the
stratum $\cQ_2$ (see section~3.2
in~\cite{DGZZ:initial:arXiv:one:cylinder:with:numerics} for
details). The latter quantities are particularly simple to
evaluate in numerical experiments. The resulting
proportions
$$
\big(p_1(\cQ_2),p_2(\cQ_2),p_3(\cQ_2)\big)=
\left(\frac{7}{405},\frac{1}{27},\frac{1}{81}\right):\frac{1}{15}=
\left(\frac{7}{27},\frac{15}{27},\frac{5}{27}\right)
$$
match the numerical experiments obtained earlier
in Appendix~C
of~\cite{DGZZ:initial:arXiv:one:cylinder:with:numerics}.

\subsection{Holomorphic quadratic differentials in genus three.}
\label{a:3:0}

In genus three there are already $41$  different decorated ribbon
graphs. We do not provide the graph-by-graph calculation as we
did in genus two but only the contributions of $k$-cylinder
square-tiled surfaces for all possible values $k=1,\dots,6$ of
cylinders.

$$
\begin{array}{|c|c|c|c|}
\hline &&&\\
[-\halfbls]
\text{Number of}       &\text{Number of}          &\text{Contribution}               &\text{Relative}          \\
\text{cylinders } k    &\text{graphs } \Graph    &\text{to the volume}              &\text{contribution } p_k \\
&&& \\
[-\halfbls]
\hline
&&&\\
[-\halfbls]
1                      &           2              &\frac{94667}{126299250}\cdot\pi^{12}  &\frac{757336}{3493125}   \\
&&& \\
[-\halfbls]
\hline
&&&\\
[-\halfbls]
2                      &           5              &\frac{150749}{108256500}\cdot\pi^{12} &\frac{4220972}{10479375} \\
&&& \\
[-\halfbls]
\hline
&&&\\
[-\halfbls]
3                      &           9              &\frac{84481}{86605200}\cdot\pi^{12}   &\frac{591367}{2095875}   \\
&&& \\
[-\halfbls]
\hline
&&&\\
[-\halfbls]
4                      &           12             &\frac{5989}{21651300}\cdot\pi^{12}    &\frac{167692}{2095875}   \\
&&& \\
[-\halfbls]
\hline
&&&\\
[-\halfbls]
5                      &           8              &\frac{1}{17820}\cdot\pi^{12}          &\frac{28}{1725}          \\
&&& \\
[-\halfbls]
\hline
&&&\\
[-\halfbls]
6                      &           5              &\frac{1}{144342}\cdot\pi^{12}         &\frac{56}{27945}         \\
&&& \\
\hline
\end{array}
$$

The resulting contribution of $1$-cylinder surfaces was confirmed by
the alternative combinatorial study of the Rauzy class of the stratum
$\cQ(1^8)$ (see section~3.2
in~\cite{DGZZ:initial:arXiv:one:cylinder:with:numerics}.
The approximate values
$p_k(\cQ(1^8))$ were confirmed by numerical experiments with
statistics of $k$-band generalized interval exchange transformations.

Taking the sum of all contributions we get the volume of the moduli
space $\cQ_3$ of holomorphic quadratic differentials in genus $3$:
$
\Vol\cQ_3=\cfrac{115}{33264}\cdot \pi^{12}\,.
$

\subsection{Meromorphic quadratic differentials in genus one.}
\label{a:1:2}

In this section we apply Formula~\eqref{eq:square:tiled:volume} to compute
the Masur--Veech volume $\Vol\cQ(1^2,-1^2)$ of the moduli space
$\cQ_{1,2}$ of meromorphic quadratic differentials in genus $g=1$
with two simple poles $p=2$.

We use the same convention on the order of numerical factors
in every first line of the middle column as in section~\ref{a:2:0}.
Namely, for $(g,p)=(1,2)$ we have
$
\zeroes!\cdot 2\dprinc
\cdot
\frac{2^{\dprinc}}{\dprinc!}
=\frac{32}{5}\,,
$
which is the first factor. The second factor is $1/2^{|V(\Graph)|-1}$.

The third factor is $|\Aut(\Graph)|^{-1}$. We remind
that the vertices and edges of $\Graph$ are not labeled while
the two legs are labeled. An automorphism of $\Graph$ preserves
the decoration of vertices and the labeling of the legs. For example,
the graph $\Graph$ in the second line does not have any nontrivial
automorphisms.

\begin{table}[hbt]
$$
\hspace*{20pt}
\begin{array}{llr}


\includegraphics{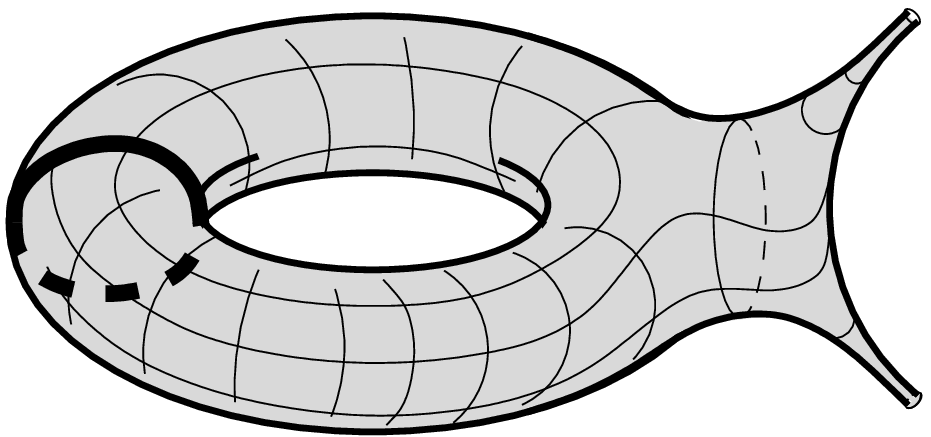}
\begin{picture}(0,0)(0,0)
\put(-9,0){$b_1$}
\end{picture}
\hspace*{45pt} 
\vspace*{10pt}

&

\frac{32}{3}\cdot
1\cdot
\frac{1}{2}\cdot
b_1 \cdot
N_{0,4}(b_1,b_1,0,0) =

&

\\ 
\includegraphics{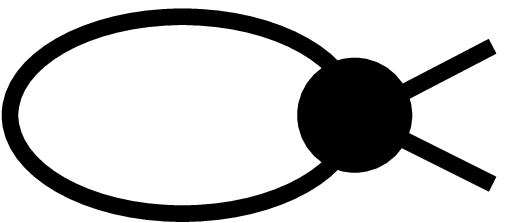}
\begin{picture}(0,0)(0,0)
\put(12,0){$b_1$}
\put(40,-9){$0$}
\end{picture}
\vspace*{10pt}

&

\hspace*{17pt}
=
\frac{16}{3}\cdot
b_1 \cdot \left(\frac{1}{4}(2b_1^2)\right)
\hspace*{5pt} =\hspace*{5pt}
\frac{8}{3}\cdot b_1^3\
\hspace*{1pt}
\xmapsto{\ \cZ\ }

&

\hspace*{-20pt}
\frac{8}{3}\cdot 3!\cdot\zeta(4)
=
\frac{8}{45}\pi^4

\\\hline&&\\


\includegraphics{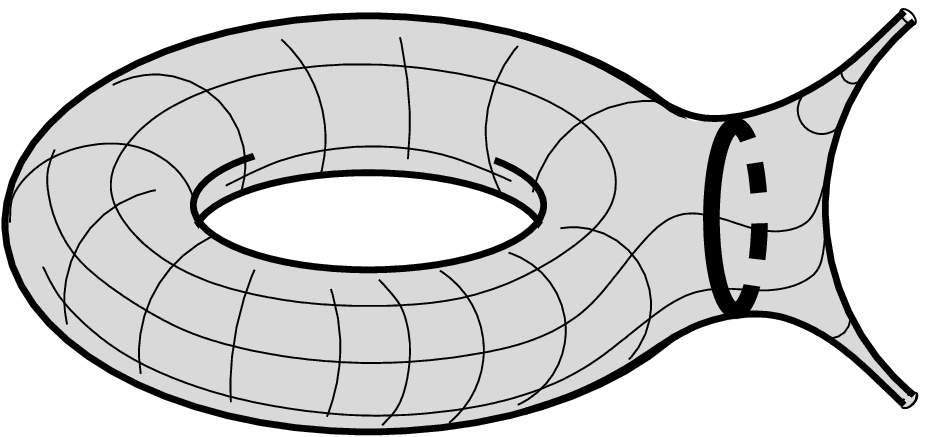}
\begin{picture}(0,0)(0,0)
\put(38,12){$b_1$}
\end{picture}
\vspace*{10pt}

&

\frac{32}{3}\cdot
\frac{1}{2}\cdot
1\cdot
b_1 \cdot
N_{1,1}(b_1)\cdot N_{0,3}(0,0,b_1) =

&

\\ 

\includegraphics{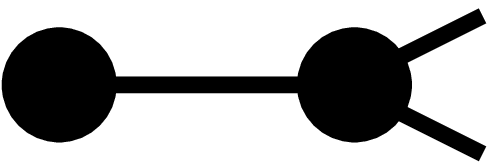}
\begin{picture}(0,0)(0,0)
\put(29,5){$b_1$}
\put(22,-9){$1$}
\put(39.5,-9){$0$}
\end{picture}
\vspace*{10pt}

&

\hspace*{17pt}
= \frac{16}{3}\cdot b_1 \cdot \left(\frac{1}{48} b_1^2\right) \cdot (1)
=\frac{1}{9}\cdot b_1^3\ \xmapsto{\ \cZ\ }

&

\hspace*{-20pt}
\frac{1}{9}\cdot 3!\cdot \zeta(4) = \frac{1}{135}\cdot\pi^4

\\\hline&&\\


\includegraphics{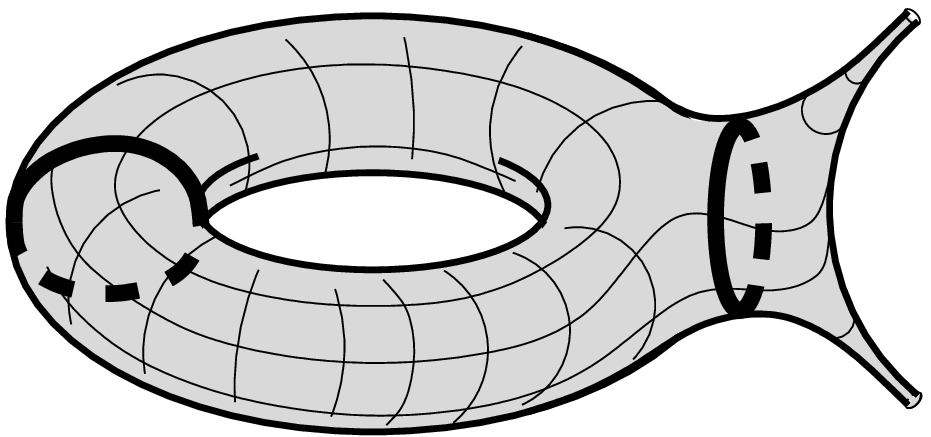}
\begin{picture}(0,0)(0,0)
\put(-9,0){$b_1$}
\put(38,12){$b_2$}
\end{picture}
\vspace*{10pt}

&

\frac{32}{3}\cdot
\frac{1}{2}\cdot
\frac{1}{2}\cdot
b_1 b_2\cdot
N_{0,3}(b_1,b_1,b_2)\cdot N_{0,3}(b_1,0,0)

&

\\ 

\includegraphics{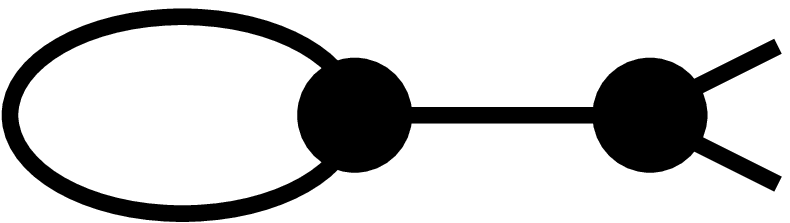}
\begin{picture}(0,0)(0,0)
\put(-5,0){$b_1$}
\put(29,5){$b_2$}
\put(22,-9){$0$}
\put(39.5,-9){$0$}
\end{picture}
\vspace*{10pt}

&

\hspace*{17pt}
= \frac{8}{3}\cdot
b_1 b_2\cdot (1)\cdot (1)=
\frac{8}{3}\cdot b_1 b_2
\
\hspace*{1pt}
\xmapsto{\ \cZ\ }

&

\hspace*{-20pt}
\frac{8}{3}\cdot \big(\zeta(2)\big)^2
=
\frac{2}{27}\pi^4

\\\hline&&\\


\includegraphics{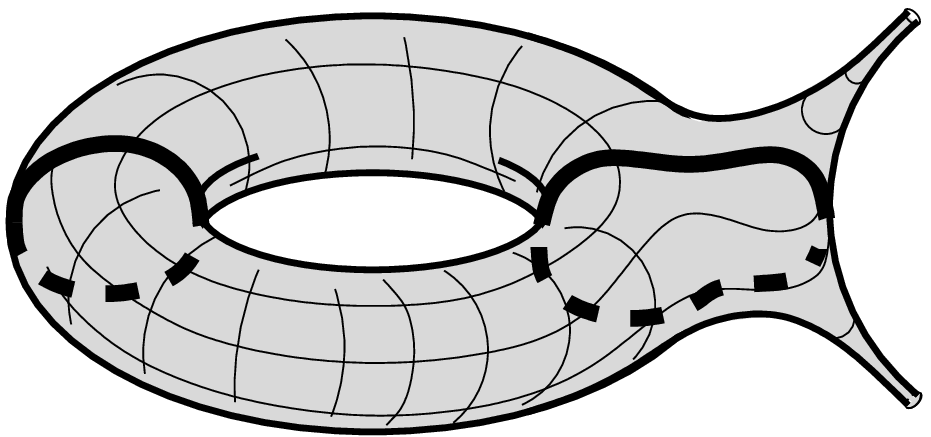}
\begin{picture}(0,0)(0,0)
\put(-9,0){$b_1$}
\put(38,12){$b_2$}
\end{picture}
\vspace*{15pt}

&

\frac{32}{3}\cdot
\frac{1}{2}\cdot
\frac{1}{2}\cdot
b_1 b_2\cdot
N_{0,3}(0,b_1,b_2)\cdot N_{0,3}(b_1,b_2,0)

&

\\ 

\includegraphics{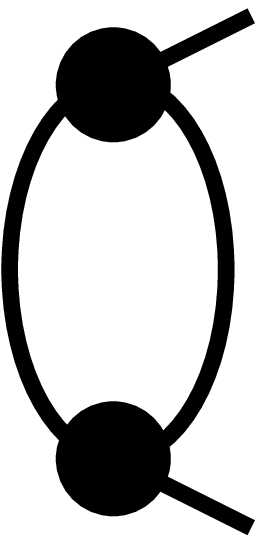}
\begin{picture}(0,0)(0,0)
\put(15,0){$b_1$}
\put(41,0){$b_2$}
\put(29.5,1.5){$0$}
\put(29.5,-20){$0$}
\end{picture}
\vspace*{20pt}

&

\hspace*{17pt}
= \frac{8}{3}\cdot
b_1 b_2\cdot (1)\cdot (1)=
\frac{8}{3}\cdot b_1 b_2

\
\hspace*{1pt}
\xmapsto{\ \cZ\ }

&

\hspace*{-20pt}
\frac{8}{3}\cdot \big(\zeta(2)\big)^2
=
\frac{2}{27}\pi^4
\end{array}
$$
\caption{
\label{tab:1:2}
Computation of $\Vol\cQ_{1,2}$. The left column represents
stable graphs $\Gamma\in\cG_{1,2}$ and associated multicurves;
the middle column gives associated polynomials $P_\Gamma$; the right column
provides volume contributions$\Vol(\Gamma)$.
}
\end{table}

The resulting value
\begin{equation*}
\Vol\cQ_{1,2}=
\left(\left(\frac{8}{45}+\frac{1}{135}\right)+\left(\frac{2}{27}+\frac{2}{27}\right)\right)\cdot\pi^4
=
\left(\frac{5}{27}+\frac{4}{27}\right)\cdot\pi^4
=\cfrac{\pi^4}{3}
\end{equation*}
matches the one found in~\cite{Goujard:volumes}. The
contribution of $1$-cylinder square-tiled surfaces and the
proportion $5:4$ between $1$-cylinder and $2$-cylinder
contributions match the corresponding quantities found in
Appendix~C
in~\cite{DGZZ:initial:arXiv:one:cylinder:with:numerics}.

\newpage

Now we evaluate the Siegel--Veech constant $\carea(Q(1^4))$.
\bigskip

$$
\hspace*{5pt}
\begin{array}{llr}


\includegraphics{genus_one_21.eps}
\begin{picture}(0,0)(0,0)
\put(-9,0){$b_1$}
\put(38,12){$b_2$}
\end{picture}
\vspace*{10pt}
\hspace*{43pt}
&

\frac{32}{3}\cdot
\frac{1}{2}\cdot
\frac{1}{2}\cdot
b_1 b_2\cdot
N_{0,3}(b_1,b_1,b_2)\cdot N_{0,3}(b_1,0,0)

&

\\ 

\includegraphics{genus_one_graph_21.eps}
\begin{picture}(0,0)(0,0)
\put(-5,0){$b_1$}
\put(29,5){$b_2$}
\put(22,-9){$0$}
\put(39.5,-9){$0$}
\end{picture}
\vspace*{10pt}

&

= \frac{8}{3}\cdot
b_1 b_2 \cdot 1\cdot 1=
\frac{8}{3}\cdot  b_1 b_2
\
\xmapsto{\cZ\circ\partial_{\Graph}}

&

\hspace*{-45pt}
\left(1 + \frac{1}{2}\right)\cdot
\frac{8}{3}\cdot \big(\zeta(2)\big)^2
=
\frac{3}{2}\cdot
\frac{2}{27}\pi^4

\\\hline&&\\


\includegraphics{genus_one_22.eps}
\begin{picture}(0,0)(0,0)
\put(-9,0){$b_1$}
\put(38,12){$b_2$}
\end{picture}
\vspace*{15pt}

&

\frac{32}{3}\cdot
\frac{1}{2}\cdot
\frac{1}{2}\cdot
b_1 b_2\cdot
N_{0,3}(0,b_1,b_2)\cdot N_{0,3}(b_1,b_2,0)

&

\\ 

\includegraphics{genus_one_graph_22.eps}
\begin{picture}(0,0)(0,0)
\put(15,0){$b_1$}
\put(41,0){$b_2$}
\put(29.5,1.5){$0$}
\put(29.5,-20){$0$}
\end{picture}
\vspace*{20pt}

&

= \frac{8}{3}\cdot
b_1 b_2\cdot 1\cdot 1=
\frac{8}{3}\cdot b_1 b_2

\
\hspace*{1pt}
\xmapsto{\cZ\circ\partial_{\Graph}}

&

\hspace*{-45pt}
(1+1)\cdot
\frac{8}{3}\cdot \big(\zeta(2)\big)^2
=
2\cdot
\frac{2}{27}\pi^4
\end{array}
$$
\bigskip

Taking the sum of the two contributions we obtain the answer:

$$
\left(\frac{3}{2}\cdot\frac{2}{27}+2\cdot\frac{2}{27}\right)\cdot\pi^4
=
\frac{7}{27}\cdot \pi^4
\,.
$$
Dividing by $\Vol\cQ(1^2,-1^2)=\frac{\pi^4}{3}$ we get the answer which
matches the value found in~\cite{Goujard:carea}.
$$
\frac{\pi^2}{3}\cdot \carea(\cQ(1^2,-1^2))=
\left(\frac{7}{27}\cdot\pi^4\right):
\left(\frac{1}{3}\cdot\pi^4\right)=\frac{7}{9}\,.
$$

\section{Tables of volumes and of Siegel--Veech constants}
\label{a:tables}

In this appendix we present numerical data for $\Vol(\cQ_{g,n})$ and
$\carea(\cQ_{g,n})$ corresponding to small values of $g$ and $n$.
Table~\ref{table:vol:SV} gathers the numerical values of the
volumes, of the Siegel--Veech constants and of the sums
$\Lambda^+$ (respectively $\Lambda^-$) of
the top $g$ (respectively $g_{\mathit{eff}}$)
Lyapunov exponents of the Kontsevich--Zorich cocycle over the principal
stratum $\QuadStrat(1^{4g-4+n}, -1^n)$.
By Formulae~(2.3) and~(2.4) in the paper of A.~Eskin,
M.~Kontsevich and A.~Zorich~\cite{Eskin:Kontsevich:Zorich},
these quantities are related as follows:
$$
\begin{aligned}
\Lambda^+&=\frac{(5g-5-n)}{18} +\frac{\pi^2}{3}\carea(\cQ_{g,n})\,.
\\
\Lambda^-   &= \Lambda^+ + \frac{(g-1+n)}{3}\,.
\end{aligned}
$$
It is possible to approximate $\Lambda^+$
and $\Lambda^-$ numerically
by computer simulations of the accelerated Rauzy induction.  The values of $\Lambda^+$ and of $\Lambda^-$ based on the values of $\carea(\cQ_{g,n})$ computed in this paper match the
approximate values obtained in these numerical experiments.

One has $\Lambda^+=0$ in genus zero, so in genus zero the
Siegel--Veech constant admits a simple closed formula. By
Formula~(1.1) in the paper of J.~Athreya, A.~Eskin and
A.~Zorich~\cite{AEZ:genus:0} one has
$$
\Vol\cQ(1^{n-4},-1^n)=2\pi^2\left(\frac{\pi^2}{2}\right)^{n-4}\,.
$$
We get the same expressions in the cells of Table~\ref{table:vol:SV}
corresponding to genus $0$ computed by Formulae~\eqref{eq:square:tiled:volume}
and~\eqref{eq:carea} of the current paper.

\begin{table}[ht]
 \renewcommand{\arraystretch}{1.2}
$\begin{array}{|c|c|c|c|c|c|c|}
\hline
g& n & \textrm{Stratum} &\textrm{Volume} & {\pi^2}/{3}\cdot \carea & \Lambda^+ & \Lambda^- \\
\hline
0 & 5 & \cQ(1, -1^5) & \pi^4 &{5}/{9} &  0 & {4}/{3} \\
\hline
0 & 6 & \cQ(1^2, -1^6) & {1}/{2}\cdot\pi^6 & {11}/{18} & 0 & {5}/{3} \\
\hline
0 & 7 & \cQ(1^3, -1^7) & {1}/{4}\cdot\pi^8 & 2/3 & 0 & 2\\
\hline
1 & 2 & \cQ(1^2, -1^2) & {1}/{3}\cdot\pi^4 & {7}/{9} & {2}/{3} & {4}/{3}\\
\hline
1 & 3 & \cQ(1^3, -1^3) & {11}/{60}\cdot\pi^6 & {47}/{66} & {6}/{11} & {17}/{11}\\
\hline
1 & 4 & \cQ(1^4, -1^4) & {1}/{10}\cdot\pi^8 & {44}/{63} & {10}/{21} & {38}/{21}\\
\hline
1 & 5 & \cQ(1^5, -1^5) & {163}/{3024}\cdot\pi^{10} & {2075}/{2934} & {70}/{163} & {1025}/{489}\\
\hline
2 & 0 & \cQ(1^4) & {1}/{15}\cdot\pi^6 & {19}/{18} & {4}/{3} & {5}/{3} \\
\hline
2 & 1 & \cQ(1^5, -1) & {29}/{840}\cdot\pi^8 & {230}/{261} & {32}/{29} & {154}/{87}\\
\hline
2 & 2 & \cQ(1^6, -1^2) & {337}/{18144}\cdot\pi^{10} & {8131}/{10110} & {1636}/{1685} & {3321}/{1685}\\
\hline
3 & 0 & \cQ(1^8) & {115}/{33264}\cdot\pi^{12} & {24199}/{25875}  & {4286}/{2875}  & {18608}/{8625} \\
\hline
4 & 0 & \cQ(1^{12}) & \pi^{18}\cdot{2106241}/ & {283794163}/ &{91179048}/ &{143835073}/\\
& & & {11548293120} & {315936150} & {52656025} & {52656025} \\
\hline
\end{array}$
\vspace{0.5cm}
\caption{
\label{tab:Vol:Q:g:n}
Numerical values of volumes, of Siegel--Veech constants and of
sums of Lyapunov exponents for low-dimensional strata\label{table:vol:SV}}
\end{table}

The recent paper of D.~Chen, M.~M\"oller and
A.~Sauvaget~\cite{Chen:Moeller:Sauvaget} suggested
alternative formulae for the Masur--Veech volumes and for
the area Siegel--Veech constants of the principal strata
$\QuadStrat(1^{4g-4+n},-1^n)$ as weighted sums of certain
very special linear Hodge integrals. The subsequent paper
of M.~Kazarian~\cite{Kazarian} provided very efficient
recursive formula for these Hodge integrals, which allows
one to compute $\Vol\cQ_{g,n}$ and $\carea(\cQ_{g,n})$ for
all sufficiently small values of $g$ and $n$ fast enough.
In particular, following this alternative approach one
obtains the same data as in Table~\ref{table:vol:SV}

Further numerical data can be found in~\cite{DGZZ:tables}
where we apply Formulae~\eqref{eq:square:tiled:volume}
and~\eqref{eq:carea} to express respectively the
Masur--Veech volumes $\Vol(\cQ_{g,n})$ and the Siegel--Veech
constants $\carea(\cQ_{g,n})$ as polynomials in the
intersection numbers of $\psi$-classes.
Recall that applying Formula~\eqref{eq:carea:Elise}
one can express the
Siegel--Veech constant $\carea$ in terms of the volumes of the
principal boundary strata.
One more table in~\cite{DGZZ:tables} provides the
corresponding explicit expressions for low-dimensional strata.


\end{document}